\numberwithin{equation}{section}
\newtheorem{theorem}{Theorem}[section]
\newtheorem{corollary}{Corollary}[section]
\newtheorem{assumption}{Hypothesis}[section]
\newtheorem{proposition}{Proposition}[section]
\theoremstyle{definition}
\newtheorem{definition}[assumption]{Definition}
\newtheorem{remark}[assumption]{Remark}
\newcommand{\ep}{\varepsilon}
\title[ ] 
      { 
      Principal eigenvalue for some elliptic operators with large drift: Neumann boundary conditions}
\author
{Shuang Liu, \,\, Yuan Lou,\,\, Maolin Zhou}
\thanks{{S. Liu}: School of Mathematics and Statistics, Beijing Institute of Technology, Beijing, 100081, China.
}
\thanks{{Y. Lou}: School of Mathematical Sciences,  CMA-Shanghai, Shanghai Jiao Tong
University, Shanghai, 200240, China.
}
\thanks{{M. Zhou}: Chern Institute of Mathematics, Nankai University, Tianjin, 300071,  China. }
 \email{liushuangnqkg@bit.edu.cn; yuanlou@sjtu.edu.cn; zhouml123@nankai.edu.cn}
\subjclass[2010]{35P15, 35P20, 
34C25 
}
 \keywords{Principal eigenvalue,
 asymptotic behavior, large drift,  omega-limit set.}
\begin{document}
\maketitle
\newcommand\tbbint{{-\mkern -16mu\int}}
\newcommand\tbint{{\mathchar '26\mkern -14mu\int}}
\newcommand\dbbint{{-\mkern -19mu\int}}
\newcommand\dbint{{\mathchar '26\mkern -18mu\int}}
\newcommand\bint{
{\mathchoice{\dbint}{\tbint}{\tbint}{\tbint}}
}
\newcommand\bbint{
{\mathchoice{\dbbint}{\tbbint}{\tbbint}{\tbbint}}
}

\begin{abstract}
The paper  is concerned with  the principal eigenvalue of some linear elliptic operators with drift  in two dimensional space.  We provide a refined description of the asymptotic behavior for the principal eigenvalue  as the  drift rate approaches infinity. Under some non-degeneracy assumptions, our results illustrate that these asymptotic behaviors are completely determined by some connected components in the omega-limit set of the  system of ordinary differential equations associated with the drift term, which includes stable fixed points,  stable limit cycles,  hyperbolic saddles connecting homoclinic orbits,  and  families of closed orbits. Some discussions on degenerate cases are also included.
\end{abstract}

\bigskip
\section{\bf Introduction}\label{S1}
We consider the linear eigenvalue problem 
\begin{equation}\label{1}
 \left\{\begin{array}{ll}
\medskip
-\Delta\varphi-A\mathbf{b}\cdot\nabla\varphi+c(x)\varphi=\lambda\varphi &\mathrm{in} \,\, \Omega,\\
  \nabla\varphi\cdot \mathbf{n}=0 & \mathrm{on}~\partial\Omega,  
  \end{array}\right.
 \end{equation}
 where $\Omega$ is 
 a bounded domain of $\mathbb{R}^{2}$ with smooth boundary $\partial\Omega$, and $\mathbf{n}(x)$ is the outward unit normal vector at $x\in\partial\Omega$. Here $\mathbf{b}(x)$ denotes a 
 $C^1$ vector field in $\mathbb{R}^2$,  the positive parameter $A$ represents the 
 drift
 rate, and $c\in C(\bar\Omega)$.
It is well-known  that problem \eqref{1}
admits a principal eigenvalue,
denoted by $\lambda (A)$, which is real and simple, and the corresponding eigenfunction can be chosen to be positive. Furthermore, $\lambda (A)<{\rm Re}(\lambda)$ holds for any
other eigenvalue $\lambda$ of \eqref{1}. This paper is 
devoted to the study of
the asymptotic behavior of $\lambda (A)$ as $A\to \infty$ for general vector field $\mathbf{b}(x)$,
under zero Neumann boundary conditions.

The question concerning the influence of 
drift on 
the principal eigenvalue 
 of problem \eqref{1} 
arises naturally in 
many biological and physical problems.
  In the study of spatial population dynamics in advective environments, 
 reaction-diffusion-advection models have been proposed to understand the  persistence, competetion, and evolution of  single or multiple  species. The analysis
of these models, particularly the stability of equilibrium solutions,  requires a deep understanding of the asymptotic behaviour of  principal eigenvalue of problem \eqref{1}
\cite{ALL2017,LLP2022,LZZ2019}. Another related area of active research concerns the effect of drift on  the speed of  propagation of travelling fronts in heterogeneous media \cite{FP1994, HZ2013,N2010,SK2014}. Therein  the analysis is centered around a variational formula 
\cite{BH2005,BHN2005-1}, which
is in turn characterized by the principal eigenvalue of \eqref{1}. The asymptotic   behaviour of  principal eigenvalue thereby   plays a fundamental role in this issue \cite{BHN2005}. 
Problem \eqref{1} is also connected with  the enhancement of diffusive mixing by  a fast advecting flow \cite{CKR2001,CKR2008} and the rearrangement inequalities of principal eigenvalue for  non-symmetric elliptic operators \cite{HNR2011}.

 Hodge decomposition theorem implies that  
 a $C^1$ vector field {\bf b} can be decomposed in the form ${\bf b}={\bf v}+\nabla m$ with  a divergence-free field ${\bf v}$ satisfying $\nabla\cdot{\bf v}=0$ in $\Omega$ and ${\bf v}\cdot{\bf n}=0$ on $\partial\Omega$, as well as a gradient  field $\nabla m$.
When  ${\bf b}={\bf v}$ is a divergence-free vector field, 
the asymptotic behavior for the principal eigenvalue of problem \eqref{1} is 
established by Berestycki et al.
\cite{BHN2005}. Among other things, they 
showed that
 \begin{equation}\label{liu0913-2}
     \lim_{A\to\infty}\lambda(A)=\inf_{\phi\in \mathbf{I}}\left[\frac{\int_{\Omega} (|\nabla\phi|^2+c(x)\phi^2)\mathrm{d}x}{\int_{\Omega }\phi^2\mathrm{d}x}\right],
 \end{equation}
 whereas $\mathbf{I}:=\{\phi\in H^1(\Omega):  \mathbf{v}\cdot\nabla\phi =0 \text{ a.e. in }\Omega\}$ is
 the set of the first integrals of vector field ${\bf v}$.
 As the drift rate $A$ approaches infinity,
 \eqref{liu0913-2} 
 suggests that the normalized principal eigenfunction 
converges to some  first integral in $\mathbf{I}$,
which 
reflects the mixing effect
of divergence-free vector fields.

On the other hand,  when ${\bf b}=\nabla m$ is a gradient field, by assuming that all critical points of $m$ are non-degenerate (i.e. the Hessian matrix
of $m$ at every critical point is non-singular),  
it is proved in \cite{CL2008}
that 
 \begin{equation}\label{liu0913-1}
     \lim_{A\to\infty}\lambda(A)=\min_{x\in\mathcal{M}}c(x).
 \end{equation}
Here $\mathcal{M}$ denotes the set of points of local maximum of $m$. 
Instead of the 
mixing effect for the divergence-free vector field,
\eqref{liu0913-1} 
indicates that a properly normalized principal eigenfunction of \eqref{1} 
is concentrated on some local maximum of $m$.
Problem \eqref{1} involving  degenerate  potential  $m$
in dimension one has been investigated in \cite{PZ2018}, where 
the limiting behavior of the principal  eigenvalue is determined by some ``degenerate intervals" on which $m$ remains constant. 
We refer to \cite{CL2012, LL2020, LLPZ2021-1,LLPZ2021-2, N2010, PZZ2019} for more
 discussions.

An interesting question is to understand the  connection between
two rather different asymptotic behaviors
of the principal eigenvalue, namely, \eqref{liu0913-2} and \eqref{liu0913-1}, 
which are associated with divergence-free
vector fields and  gradient fields, respectively.
This issue turns out to be  rather complicated,
as illustrated by the following  example.

{\it Example}.
  Consider problem \eqref{1} with $\Omega=\{x\in\mathbb{R}^2:|x|^2<1\}$ and
$$\mathbf{b}(x)=(1-\alpha)(-x_2,x_1)+\alpha(-1,0)$$
for $\alpha\in[0,1]$, which is
a combination of  a divergence-free
vector field and a gradient field. In this case, the limiting behavior of the principal eigenvalue $\lambda(A)$ can be  characterized as follows, for which the proof is postponed to the appendix.

\begin{proposition}\label{theorem0822}
    The following assertions hold.
\begin{itemize}
    \item[(i)] For   $\alpha\in[0,\frac{1}{2})$, set $B_\alpha:=\{x\in \Omega: |x-(0,-\frac{\alpha}{1-\alpha})|
    <\frac{1-2\alpha}{1-\alpha}\}$, then
$$\lim_{A\to\infty}\lambda(A)=\inf_{\phi\in \mathbf{I}_\alpha}\frac{\int_{B_\alpha} (|\nabla\phi|^2+c(x)\phi^2){\rm d}x}{\int_{B_\alpha} \phi^2{\rm d}x},$$
whereas  $\mathbf{I}_\alpha=\{\phi\in H^1(B_\alpha): \mathbf{b}\cdot\nabla\phi=0 \text{ a.e. in } B_\alpha\}$.
\smallskip
\item[(ii)] For   $\alpha\in[\frac{1}{2},1]$, set $x_\alpha:=(-\frac{\sqrt{2\alpha-1}}{\alpha},\,-\frac{1-\alpha}{\alpha})$, then
$$\lim_{A\to\infty}\lambda(A)=c(x_\alpha).$$
\end{itemize}
\end{proposition}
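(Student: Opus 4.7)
The plan is to reduce the proposition to applications of the main theorems of the paper after an explicit dynamical analysis of the planar linear flow $\dot x = \mathbf{b}(x)$ on $\Omega$. The vector field has a unique zero $p_\alpha := (0, -\alpha/(1-\alpha))$ in $\mathbb{R}^2$, every orbit is a circle around $p_\alpha$ traversed counterclockwise at angular speed $1-\alpha$, and $\phi_0(x) := x_1^2 + (x_2 + \alpha/(1-\alpha))^2$ is the associated first integral. The two cases of the proposition correspond exactly to $p_\alpha$ being inside or outside $\overline\Omega$.

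For $\alpha \in [0, \tfrac{1}{2})$ we have $p_\alpha \in \Omega$, and the level curve $\{\phi_0 = r^2\}$ with $r := (1-2\alpha)/(1-\alpha)$ is precisely $\partial B_\alpha$; this circle is internally tangent to $\partial\Omega$ at the single point $(0,-1)$ and otherwise lies in $\Omega$. Hence $B_\alpha$ is flow-invariant and foliated by closed periodic orbits (with the fixed point $p_\alpha$ as the common center), while every trajectory starting in $\Omega \setminus \overline{B_\alpha}$ exits $\Omega$ in finite time. The recurrent part of the omega-limit set in $\overline\Omega$ is therefore precisely $\overline{B_\alpha}$, a \emph{family of closed orbits} component in the sense of the abstract, and the corresponding case of the main theorem delivers the stated Rayleigh-quotient formula (here $\mathbf{I}_\alpha$ is exactly the set of first integrals of $\mathbf{b}|_{B_\alpha}$, namely $H^1$ functions of $\phi_0$).

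For $\alpha \in [\tfrac{1}{2}, 1]$, $p_\alpha \notin \Omega$, there is no interior equilibrium, every orbit in $\Omega$ is a circular arc joining two points of $\partial\Omega$, and the relevant attractor is supplied by the boundary-projected flow. Parameterizing $\partial\Omega$ by $x = (\cos\theta, \sin\theta)$, a direct computation gives $\mathbf{b}\cdot\tau = (1-\alpha) + \alpha\sin\theta$ and $\mathbf{b}\cdot\mathbf{n} = -\alpha\cos\theta$, so the tangential speed has exactly the two zeros $x_\alpha^\pm := (\pm\sqrt{2\alpha-1}/\alpha, -(1-\alpha)/\alpha)$; at $x_\alpha = x_\alpha^-$ the tangential linearization $\alpha\cos\theta = -\sqrt{2\alpha-1}$ is negative (stable for the boundary dynamics) and the outward normal component equals $\sqrt{2\alpha-1} > 0$, so in the vanishing-diffusion limit the reflected dynamics concentrate mass at $x_\alpha$. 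Viewing $x_\alpha$ as the unique stable equilibrium of the generalized omega-limit set, the corresponding case of the main theorem yields $\lim_{A\to\infty}\lambda(A) = c(x_\alpha)$. The main obstacle is exactly this case, since $x_\alpha$ is not an interior equilibrium but a boundary one, so one must verify it fits the main theorem's framework and its non-degeneracy hypothesis, i.e. hyperbolicity of the boundary linearization; this holds for all $\alpha > \tfrac{1}{2}$ but degenerates at $\alpha = \tfrac{1}{2}$, where $x_{1/2} = (0,-1)$ coincides with the tangency point of case (i) and hence requires a separate limiting argument. Two consistency checks are reassuring: at $\alpha = 0$, $B_0 = \Omega$ and (i) reduces to \eqref{liu0913-2}; as $\alpha \nearrow \tfrac{1}{2}$, $B_\alpha$ shrinks to $\{(0,-1)\} = \{x_{1/2}\}$, matching (ii).
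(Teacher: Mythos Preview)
Your reduction to Theorem \ref{mainresult} cannot work, because the standing assumption \eqref{assumption1} of that theorem fails for every $\alpha>0$: on $\partial\Omega$ one has $\mathbf{b}\cdot\mathbf{n}=-\alpha x_1$, which changes sign. This is not a technicality that can be finessed. For part (i), the closed-orbit region $B_\alpha$ touches $\partial\Omega$ at $(0,-1)$ and the trajectories through $\Omega\setminus\overline{B_\alpha}$ enter and leave $\Omega$ through the boundary, so the limit set of \eqref{system} in $\Omega$ is not contained in the interior and Hypothesis \ref{assum1} is not met. For part (ii) the situation is worse: the point $x_\alpha$ is not a fixed point of $\mathbf{b}$ at all (the unique zero $p_\alpha$ lies outside $\overline\Omega$), so there is no ``stable fixed point'' case of the main theorem to invoke. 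Your appeal to a ``boundary-projected flow'' and ``reflected dynamics'' is intuition, not something supplied by the paper; the paper explicitly remarks that \eqref{assumption1} is imposed so that the limit set stays interior and that removing it ``can be removed by some additional discussions'' left for future work.

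The paper therefore proves Proposition \ref{theorem0822} directly in the appendix, by hand-built super/sub-solutions adapted to this geometry, not by citing Theorem \ref{mainresult}. For (i) it starts from the super-solution $\overline\phi_\delta$ on a slight enlargement $O_\delta$ of $B_\alpha$ coming from Proposition \ref{theorem0820}, then corrects it by subtracting a term $\zeta(r)\theta$ in polar coordinates centered at $p_\alpha$; this correction makes $-A\mathbf{b}\cdot\nabla\overline\varphi$ favorable on the annular arcs outside $B_\alpha$ and allows one to extend $\overline\varphi$ across $\Omega\setminus O_\delta$ while keeping $\nabla\overline\varphi\cdot\mathbf{n}\ge 0$ on the portion of $\partial\Omega$ where $\mathbf{b}\cdot\mathbf{n}>0$. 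For (ii) the paper again works in polar coordinates about $p_\alpha$ and constructs $\overline\varphi$ satisfying monotonicity conditions in $r$ and $\theta$ (equation block \eqref{liu-09}) that force the differential inequality and the Neumann sign condition simultaneously; the point $x_\alpha$ enters only through the choice of the neighborhood where $\overline\varphi\equiv 1$. None of this is a specialization of the main theorem, and you would need to reproduce these constructions (or something equivalent) to have a proof.
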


\begin{figure}[http!!]
  \centering
\includegraphics[height=1.3in]{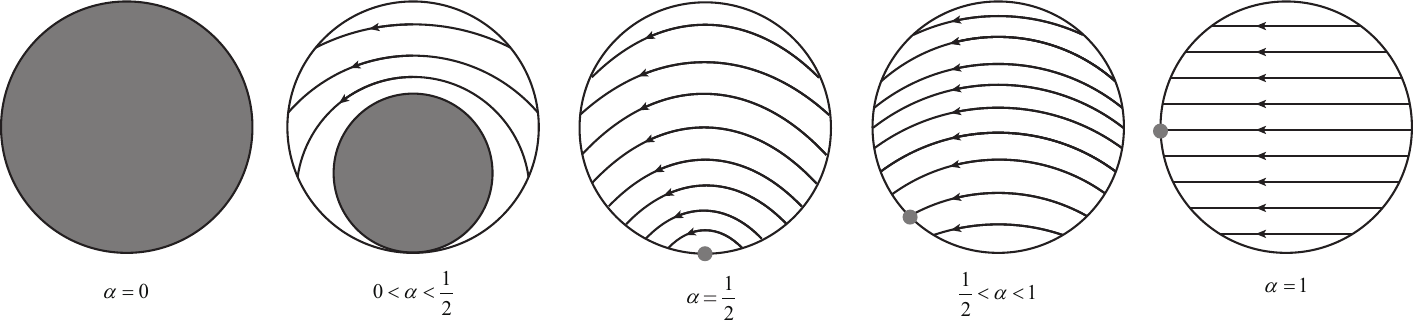}
  \caption{\small Illustrations for the phase-portraits of the vector field  $\mathbf{b}(x)=(1-\alpha)(-x_2,x_1)+\alpha(-1,0)$ for $\alpha\in[0,1]$, whereas the shaded components  represent the region $B_\alpha$ and the point $x_\alpha$ defined in Proposition \ref{theorem0822}.
  }\label{figureliu0822}
  \end{figure}

When $\alpha=0$ and $\alpha=1$, Proposition \ref{theorem0822} is  a direct consequence of \eqref{liu0913-2} and \eqref{liu0913-1},
respectively. 
When $\alpha\in(0,\frac{1}{2})$,
 the limit of the principal  eigenvalue as $A\to\infty$ is  determined by the minimization of the
Rayleigh quotient over all first integrals on the 
region $B_\alpha$, which is the whole domain when $\alpha=0$ as given in \eqref{liu0913-2}. 
See the shaded  regions in Fig. \ref{figureliu0822}. As $\alpha\nearrow \frac{1}{2}$, the 
region $B_\alpha$ shrinks 
to the 
point $(0,-1)$, whence the
asymptotic behavior of the principal eigenvalue  is  determined by the point $x_\alpha$ whenever $\alpha\in[\frac{1}{2},1)$. See the shaded  points in Fig. \ref{figureliu0822}.  In particular, as $\alpha\nearrow 1$, the critical point $x_\alpha$  moves
to $(-1,0)$ along the boundary $\partial\Omega$, which  is consistent with 
\eqref{liu0913-1}.
The transitions from $\alpha=0$
to  $\alpha=1$ in Fig. 1
suggest the  
complexity of problem \eqref{1} once
that the 
vector field is not necessarily a divergence-free field or a gradient field.
The methods developed
 in \cite{BHN2005,CL2008} are inapplicable for this example.

 The 
 goal of this paper is to investigate problem \eqref{1} involving  general 
 vector field ${\bf b}$ and provide an  unified characterization for the asymptotic behavior of the principal eigenvalue.
Our analysis is based on the dynamics of  the system
of ordinary differential equations
\begin{equation}\label{system}
   \frac{{\rm d}x(t)}{{\rm d} t} =\mathbf{b}(x(t)),\qquad t>0. 
\end{equation}
Under some non-degenerate assumptions, we will prove that these asymptotic
behaviors are completely determined by some connected components in the omega-limit set of \eqref{system},
 which include stable fixed points,
stable limit cycles, hyperbolic saddles connecting homoclinic orbits, and  family of closed orbits. Some degenerate situations, which generalizes the results in \cite{PZ2018} to two-dimensional space, are also discussed.
The proofs are based upon delicate constructions of super/sub-solutions and the
applications of comparison principles. 

This paper is organized as follows: 
We formulate  some assumptions and
state the main results in Section \ref{Main result}. In Section \ref{S2}  we establish the asymptotic behavior of the principal eigenvalue 
when  the omega-limit set of  system \eqref{system} consists of a finite number of hyperbolic fixed points only. 
Section \ref{S3} is devoted to the cases that the omega-limit set
of \eqref{system}
contains limits cycles, which can be stable, unstable, or semi-stable, by 
formulating a new coordinate near the limit cycles. The new coordinate is used in Section \ref{S5} to analyze the limiting behavior of the principal eigenvalue when the omega-limit set of \eqref{system} contains  homoclinic orbits, 
by combining with the delicate analysis near the hyperbolic saddles. In Section \ref{S4-1},
we investigate the case when the omega-limit set of \eqref{system} contains a family of closed orbits and establish the results analogues to  \cite{BHN2005} but without 
the divergence-free assumption. Some  degenerate cases are discussed in Section \ref{S5-1},
and the proof of Proposition \ref{theorem0822} 
is 
given in the appendix.

\section{\bf Preliminaries and main results}\label{Main result}
We first recall some definitions  associated with  system \eqref{system}. For each $x\in\Omega$,  by $\Phi^t(x)$ we denote the unique solution of system \eqref{system} with
 initial value $x$ at $t=0$, which defines a {\it solution curve}  of \eqref{system}. 
Define $O(x):=\{\Phi^t(x): t\in I_x\}$ as the {\it orbit} of \eqref{system} passing through the point $x$ with
$I_x$ being the maximal interval of definition of $\Phi^t(x)$. 

\begin{definition}\label{definition1}
Assume that ${\bf b}$ is a $C^1$ vector field in $\mathbb{R}^2$.
\begin{itemize}
    \item[{(1)}] A {\it fixed point} $x$ is a point in $\Omega$ satisfying ${\bf {b}}(x)=0$. Let ${\rm B}$ be  the Jacobian matrix of ${\bf b}$ evaluated at $x$. The fixed point $x$ is called {\it hyperbolic} if the real parts of all eigenvalues of ${\rm B}$ are non-zero.  A hyperbolic fixed point is {\it stable} (resp.  {\it unstable}) provided the real parts of all eigenvalues of ${\rm B}$ are negative (resp.  positive); otherwise, the hyperbolic fixed point is {\it a saddle};
    \smallskip
    \item[{(2)}] A {\it periodic orbit} is the orbit $O(x)$  of some point $x$ which satisfies $\Phi^t(x)=x$ for some $t\in I_x$. A {\it limit cycle}  is an isolated periodic orbit;
    \smallskip
   \item[{(3)}] For a fixed point $x_*\in\Omega$, the orbit $ O(x)$ ($x\neq x_*$) is called a {\it homoclinic orbit} with respect to $x_*$ provided that $\lim_{t\to+\infty}\Phi^t(x)=\lim_{t\to-\infty}\Phi^t(x)=x_*$.
\end{itemize}
\end{definition}

Throughout the paper, we assume
\begin{equation}\label{assumption1}
    {\bf b}(x)\cdot \mathbf{n}(x)<0,\quad x\in\partial\Omega,
\end{equation}
where $\mathbf{n}(x)$ is the outward unit normal vector on $\partial \Omega$.
We denote by $\omega(x)$ and $\alpha(x)$ the omega-limit set and  alpha-limit set of  point $x\in\Omega$ in the usual way.
Denote by $\{\omega(x)\cup \alpha(x):x\in\Omega\}$ the limit set  of system \eqref{system} in $\Omega$, which is nonempty, compact, and connected. 
Assumption \eqref{assumption1} implies that the limit set is contained in the interior of $\Omega$.

Next,
we impose the following assumptions on vector field ${\bf b}$. 
\begin{assumption}\label{assum1}
 Suppose that  the limit set 
of system \eqref{system} in $\Omega$ contains 
a finite number of connected components, 
and each connected component 
is  one of the following cases:
\begin{itemize}
  \item[{(i)}] A hyperbolic fixed point;
    \smallskip
    \item[{(ii)}] A limit cycle;
    \smallskip
    \item[{(iii)}] A union of two  homoclinic orbits  connected by a hyperbolic saddle (
    $\infty$-shaped orbit);
    \smallskip
    \item[{(iv)}] An isolated  homoclinic orbit with respect to a hyperbolic saddle;
    \smallskip
     \item[{(v)}] A family of  closed orbits composed of the union of periodic orbits, and/or two  homoclinic orbits  connected by a hyperbolic saddle, and center points, for which the boundary consists of periodic orbits.
\end{itemize}
\end{assumption}

\begin{remark}\label{remark_assumption}
The well-known Poincar\'e-Bendixson theorem  indicates that  the structure of the limit
set of system \eqref{system} can be described by fixed points, limit
cycles, or finite number of saddles together with homoclinic and heteroclinic orbits connecting them. The main  restriction in Hypotheses \ref{assum1} is two-fold.
(1) 
All fixed points of \eqref{system} except for center points are hyperbolic. 
This assumption implies $|\Omega_*|=0$, where  $\Omega_*:=\{x\in\Omega:{\bf b}(x)=0\}$ denotes the collection of fixed points. The degenerate case $|\Omega_*|>0$ turns out to be complicated, which is discussed in Section \ref{S5-1}. (2) 
We require that the limit set of  \eqref{system} allows for two homoclinic
saddle connections only, and exclude  the 
homoclinic and heteroclinic orbits connected by multiple saddles. This requirement is mainly 
for the sake of clarity in the presentation of the paper, and we leave the general cases for future studies.
See Fig. \ref{figureliuintroduction3}  for some examples of connected components in  Hypotheses \ref{assum1}.
\end{remark}

\begin{figure}[http!!]
  \centering
\includegraphics[height=4.8in]{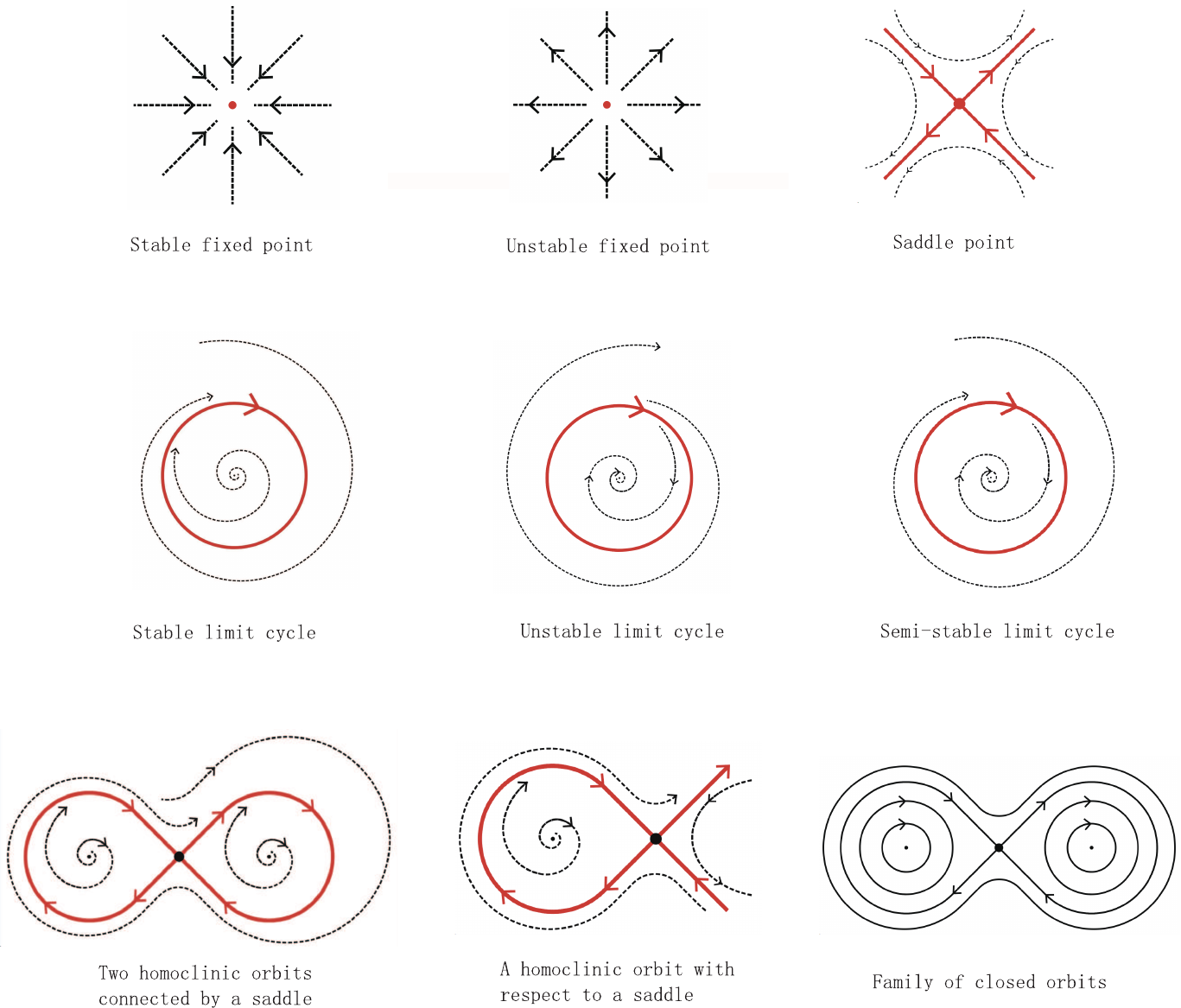}
  \caption{\small Illustrations of some typical examples of the connected components in the limit set of \eqref{system} as 
  in Hypothesis \ref{assum1}, for which the stability is given in Definition \ref{definition2}.
  }\label{figureliuintroduction3}
  \end{figure}

In the sequel, we present some definitions associated to the stability of  connected components 
given in Hypothesis \ref{assum1}.

\begin{definition}\label{definition2}
The stability of limit cycles and closed orbits family can be defined as follows.
\begin{itemize}
    \item[{\rm (1)}]{\it Stability of limit cycles}. For each limit cycle $\Gamma$, Jordan
curve theorem states that $\Gamma$ separates
any neighborhood $U_\Gamma$ of $\Gamma$ into two disjoint sets having $\Gamma$ as a boundary. We can regard $U_\Gamma$ as the disjoint union of $U_{\rm i}\cup \Gamma\cup U_{\rm e}$, where $U_{\rm i}$ and $U_{\rm e}$ are open sets
situated in the interior and exterior of $\Gamma$,  respectively.  If  $\lim_{t\to+\infty} d_{\mathcal{H}}(\Phi^t(x),\Gamma) = 0$ (resp. $\lim_{t\to-\infty} d_{\mathcal{H}}(\Phi^t(x),\Gamma) = 0$) for any $x\in U_{\rm i}\cup U_{\rm e}$ with $d_{\mathcal{H}}(\cdot,\cdot)$ being the distance between sets in the Hausdorff sense, then $\Gamma$ is said to be {\it stable} (resp. {\it unstable}). If  $\lim_{t\to+\infty}
d_{\mathcal{H}}(\Phi^t(x),\Gamma) = 0$  for any $x\in U_{\rm i}$ whereas $\lim_{t\to-\infty} d_{\mathcal{H}}(\Phi^t(x),\Gamma ) = 0$  for any $x\in U_{\rm e}$ (or the other way round),  then  $\Gamma$ is said to be {\it semi-stable}.
\medskip

\item[{\rm (2)}] {\it  Stability of the family of closed orbits}. Given any connected boundary $\Gamma$ of the closed orbit  family $\mathcal{K}$ which is a periodic orbit as assumed in Hypothesis \ref{assum1}, we define $U_{\rm e}\subset U_\Gamma$ as the open set
situated in the exterior of $\mathcal{K}$.
If $\lim_{t\to+\infty} d_{\mathcal{H}}(\Phi^t(x),\Gamma) = 0$ (resp. $\lim_{t\to-\infty} d_{\mathcal{H}}(\Phi^t(x),\Gamma) = 0$)  for any $x\in U_{\rm e}$, then we say that $\mathcal{K}$ is {\it stable} (resp. {\it unstable}) on the boundary  $\Gamma$. 
The  family  $\mathcal{K}$ is stable (resp. unstable) if $\mathcal{K}$ is stable (resp. unstable) on all connected boundary.
\end{itemize}

\end{definition}
It is well-known that any limit cycle  of system \eqref{system} is either stable, unstable or semi-stable (see e.g. \cite{P2001}).  The stability of the union of two homoclinic orbits connected by a hyperbolic saddle can be defined by  the same way as in Definition \ref{definition2}. 



The main result in this paper can be formulated as follows.

\begin{theorem}\label{mainresult}
Suppose that \eqref{assumption1} is fulfilled and the limit set of system \eqref{system} in $\Omega$ consists of a finite number of connected components    $\{\mathcal{K}_i:1\leq i\leq n\}$ satisfying Hypothesis {\rm \ref{assum1}}. 
Let  $\lambda(A)$ denote the principal eigenvalue of \eqref{1}.
  Then
  $$\lim_{A\to\infty}\lambda(A)=\min_{1\leq i\leq n}\{\Lambda(\mathcal{K}_i) 
  \},$$
  whereas the value $\Lambda(\mathcal{K}_i)$ is defined as follows.
\begin{itemize}
    \item[{(i)}] If $\mathcal{K}_i$ is a stable fixed point, then $\Lambda(\mathcal{K}_i)=c(\mathcal{K}_i)$;
    \smallskip
    \item[{(ii)}] If $\mathcal{K}_i$ is a stable limit cycle given by $\mathcal{K}_i=\{P_i(t)\in \mathbb{R}^2:t\in[0,T)\}$ with some $T$-periodic solution $P_i(t)$ of \eqref{system},  then
    $$\Lambda(\mathcal{K}_i)=\frac{1}{T}\int_0^T c(P_i(t)){\rm d}t;$$
    \item[{(iii)}] If $\mathcal{K}_i$ is a union of two stable homoclinic orbits connected by a hyperbolic saddle $x_i$, 
    then  $\Lambda(\mathcal{K}_i)=c(x_i)$;
    \smallskip
     \item[{(iv)}] If $\mathcal{K}_i$ is a family of  closed orbits, 
     then
     $$\Lambda(\mathcal{K}_i)= \inf_{\phi\in \mathbf{I}}\left[\frac{\int_{\mathcal{K}_i} (|\nabla\phi|^2+c(x)\phi^2)\mathrm{d}x}{\int_{\mathcal{K}_i} \phi^2\mathrm{d}x}\right],$$
     whereas  $\mathbf{I}\subset H^1(\mathcal{K}_i)$ can be defined as follows: Let $\Gamma$ be  limit cycles on $\partial \mathcal{K}_i$ such that  $\mathcal{K}_i$ is unstable  on $\Gamma$ and is stable on  $\partial\mathcal{K}_i\setminus \Gamma$, which allows for an empty set. 
        Then $\mathbf{I}:=\{\phi\in H^1(\mathcal{K}_i): \phi=0 \text{ on }  \Gamma, \  \mathbf{b}\cdot\nabla\phi =0 \text{ a.e. in }\mathcal{K}_i\}$;
   \smallskip
   \item[{(v)}] Otherwise,  $\mathcal{K}_i$ is 
   either an unstable fixed point, or a saddle,  or an unstable/semi-stable limit cycle, or  unstable/semi-stable homoclinic orbits, then
   $\Lambda(\mathcal{K}_i)=+\infty$.
\end{itemize}
\end{theorem}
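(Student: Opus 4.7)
The plan is to reduce the theorem to component-by-component upper and lower estimates, based on the min-max characterizations of the non-self-adjoint principal eigenvalue
\begin{align*}
\lambda(A) &= \inf\bigl\{\lambda\in\mathbb{R} : \exists\,\varphi\in C^2(\bar\Omega),\ \varphi>0,\ L_A\varphi\le\lambda\varphi \text{ in }\Omega,\ \nabla\varphi\cdot\mathbf{n}=0 \text{ on }\partial\Omega\bigr\}, \\
\lambda(A) &= \sup\bigl\{\lambda\in\mathbb{R} : \exists\,\varphi\in C^2(\bar\Omega),\ \varphi>0,\ L_A\varphi\ge\lambda\varphi \text{ in }\Omega,\ \nabla\varphi\cdot\mathbf{n}=0 \text{ on }\partial\Omega\bigr\},
\end{align*}
where $L_A:=-\Delta-A\mathbf{b}\cdot\nabla+c$. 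The task then reduces to constructing positive sub- and supersolutions with the correct leading eigenvalue. By Hypothesis \ref{assum1}, assumption \eqref{assumption1}, and the Poincar\'e--Bendixson theorem, every forward orbit of \eqref{system} in $\Omega$ accumulates on exactly one component $\mathcal{K}_i$; this drives the splitting of the proof into one case per type of $\mathcal{K}_i$.

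For the upper bound $\limsup_{A\to\infty}\lambda(A)\le\min_i\Lambda(\mathcal{K}_i)$, I would fix any $i$ with $\Lambda(\mathcal{K}_i)<\infty$ and produce a positive $\varphi_{A,i}$ on $\bar\Omega$, concentrated near $\mathcal{K}_i$ and smoothly extended, satisfying $L_A\varphi_{A,i}\le(\Lambda(\mathcal{K}_i)+o_A(1))\varphi_{A,i}$ with the Neumann condition. The Ansatz is dictated by the type of $\mathcal{K}_i$. When $\mathcal{K}_i=\{x_i\}$ is a stable hyperbolic fixed point, I use a Freidlin--Wentzell-type Gaussian $\exp(-A\psi)$ with $\psi$ a positive definite quadratic quasipotential built from the stable Jacobian at $x_i$: the $O(A^2)$ Hamilton--Jacobi contribution is non-positive and the leading eigenvalue collapses to $c(x_i)$. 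When $\mathcal{K}_i$ is a stable limit cycle parametrized by the $T$-periodic $P_i(\cdot)$, I pass to tubular coordinates $(s,r)$ and try $\exp(-Ar^2/(2\sigma(s)))\eta(s)$; an asymptotic expansion in $1/A$ yields a linear periodic ODE along the cycle whose principal Floquet exponent equals the time-average $T^{-1}\int_0^T c(P_i(t))\,\mathrm{d}t$. When $\mathcal{K}_i$ is a homoclinic saddle connection at $x_i$, an anisotropic Gaussian at $x_i$ aligned with the stable/unstable eigendirections, glued to a transport-dominated profile along the loop, yields $c(x_i)$ at leading order. When $\mathcal{K}_i$ is a family of closed orbits, I follow \cite{BHN2005}, testing with a near-minimizer $\phi_\varepsilon\in\mathbf{I}$ of the constrained Rayleigh quotient, adding an $O(1/A)$ corrector to absorb the $A\mathbf{b}\cdot\nabla\phi_\varepsilon$ remainder (only an a.e.\ identity for $H^1$ first integrals), and enforcing the vanishing on the unstable boundary piece $\Gamma$ of $\partial\mathcal{K}_i$.

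For the lower bound $\liminf_{A\to\infty}\lambda(A)\ge\min_i\Lambda(\mathcal{K}_i)$, I argue by contradiction: assume $\lambda(A_n)\to\mu<\min_i\Lambda(\mathcal{K}_i)$ along a subsequence, normalize the corresponding principal eigenfunctions $\varphi_n$, and analyze their concentration. Any accumulation point of $\varphi_n^2\,\mathrm{d}x$ (or of the invariant measure built from $\varphi_n$ and the adjoint eigenfunction) is supported on the limit set $\bigcup_i\mathcal{K}_i$; since unstable components, for which $\Lambda(\mathcal{K}_i)=+\infty$, cannot carry this mass, it must localize on some stable $\mathcal{K}_i$. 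A sharp local estimate, obtained by inverting the Ansatz of the upper bound into a local supersolution of $L_A\psi\ge(\Lambda(\mathcal{K}_i)-\varepsilon)\psi$, then yields $\mu\ge\Lambda(\mathcal{K}_i)$, a contradiction. Equivalently, one can directly exhibit, for every $\varepsilon>0$ and large $A$, a globally positive supersolution of $(L_A-\min_i\Lambda(\mathcal{K}_i)+\varepsilon)\psi\ge0$ by patching local supersolutions via a partition of unity subordinate to the basins of attraction, using Lyapunov functions to guide the gluing along flow lines; this is the super/subsolution plus comparison route announced in the introduction.

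The main obstacle I anticipate is the homoclinic case, where $\Lambda(\mathcal{K}_i)=c(x_i)$ collapses the entire loop onto the single saddle: the $\mathbf{b}$-flow spends an asymptotically infinite fraction of time near $x_i$, so the Ansatz must be simultaneously an anisotropic Gaussian at $x_i$ in the stable/unstable decomposition and a transport-dominated profile along the bulk of the loop, and matching the two regimes at the exit from a fixed neighborhood of the saddle requires sharp local computations along the unstable manifold. A secondary difficulty arises in the family-of-closed-orbits case, where the absence of the divergence-free structure used in \cite{BHN2005} blocks a direct adaptation; the density of smooth first integrals in the $H^1$-closure, the construction of the $1/A$ corrector, and the handling of the Dirichlet-type condition on the unstable boundary $\Gamma$ must all be redone via a carefully chosen corrector and a boundary-layer analysis near $\Gamma$.
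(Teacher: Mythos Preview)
Your overall architecture---super/sub-solutions plus comparison, handled component by component---is exactly the paper's strategy. But the concrete implementations you propose differ from the paper's in two essential ways, and one of them is a genuine gap.

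\textbf{Local constructions.} The paper does \emph{not} use Freidlin--Wentzell WKB profiles $e^{-A\psi}$. Near a stable fixed point the supersolution is simply $8/\epsilon+\|x-x_i\|^2$ (no $A$-dependence); near a stable limit cycle it is $\exp\{A^{-1}\int_0^t c(P)\,ds+\varepsilon r^2\}$, i.e.\ a constant plus an $O(1/A)$ corrector and a tiny transverse quadratic. The work is done by arranging the \emph{sign} $\mathbf{b}\cdot\nabla\overline\varphi\le 0$ via a clever coordinate: for limit cycles the paper introduces a one-parameter family of periodic orbits of the modified field $(\mathbf{I}+\eta_\ell\mathcal{J})\mathbf{b}$ foliating a tube, which guarantees the correct sign without any hyperbolicity or Floquet assumption. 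Your $e^{-A\psi}$ ansatz, by contrast, produces an $O(A)$ contribution $A\Delta\psi$ which does not vanish at the fixed point (where $\nabla\psi=0$ but $\Delta\psi>0$), so $L_A\varphi/\varphi\sim A\Delta\psi(x_i)+c(x_i)\to+\infty$ there; this is neither a sub- nor a supersolution giving the right constant. The homoclinic case in the paper is again handled by $\exp\{\overline\phi/A+\varepsilon\ell^2\}$-type profiles with a delicate estimate on $\partial_y\overline\phi$ near the saddle, not by an anisotropic Gaussian.

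\textbf{Gluing.} This is the real gap. A partition of unity $\overline\varphi=\sum_i\chi_i\overline\varphi_i$ generates cross terms $-A(\mathbf{b}\cdot\nabla\chi_i)\overline\varphi_i$ of size $O(A)$ with no sign control, so the supersolution inequality is destroyed for large $A$. The paper instead builds \emph{piecewise} $C^2$ super/sub-solutions, only continuous across interfaces $\partial O_\delta(\mathcal{K}_i)$, and imposes the one-sided jump condition $(\nabla_-\overline\varphi-\nabla_+\overline\varphi)\cdot\nu>0$; a barrier/comparison argument then still yields $\lambda(A)\ge\cdots$. In the transient region $\Omega\setminus\bigcup_i O_\delta(\mathcal{K}_i)$, where orbits stay only finite time, the paper invokes \cite[Lemma~2.3]{DEF1974} to produce a smooth $G$ with $\mathbf{b}\cdot\nabla G<0$ matching the prescribed boundary values and normal-derivative ordering; then $-A\mathbf{b}\cdot\nabla G\to+\infty$ dominates everything else. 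Your alternative lower-bound route via concentration of $\varphi_n^2\,dx$ is not used in the paper, and the claim that the limit measure avoids unstable components would itself require the kind of local supersolution barriers the paper constructs.
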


Under assumption \eqref{assumption1}, the connected components in the limit set of system \eqref{system} must include at least one stable  component, which is either a fixed point, or a limit cycle, or a union of two homoclinic orbits, or the family of closed orbits. Hence,  the limit of the principal eigenvalue given in Theorem \ref{mainresult} 
is unaffected by the unstable and semi-stable connected components 
as listed in Theorem \ref{mainresult}{\rm (v)}.

\begin{remark}\label{remark_theorem}
If ${\bf b}=\nabla m$ is a gradient field  with potential $m\in C^2(\Omega)$,  Hypothesis \ref{assum1} indicates that the limit set of \eqref{system} consists of a finite number of hyperbolic fixed points $\{x_i\}_{i=1}^n$. Among them the stable ones are denoted by $\{x_i\}_{i=1}^k$ for  $k\leq n$ without loss of generality, which turns out to be the local maximum  points
of potential $m$. Using  notations in Theorem \ref{mainresult}, we find that $\Lambda(x_i)=c(x_i)$ for $1\leq i\leq k$, while $\Lambda(x_i)=+\infty$ for $k+1\leq i\leq n$, then  
Theorem \ref{mainresult} yields
 $$\lim_{A\to\infty}\lambda(A)=\min\left\{\min_{1\leq i\leq k}\{c(x_i)
  \},\,\,+\infty\right\}=\min_{1\leq i\leq k}\{c(x_i)
  \},$$
 which is consistent with \eqref{liu0913-1}.
Consequently, Theorem  \ref{mainresult} extends the result in \cite{CL2008} to  general drift in two dimensional space.
\end{remark}

In the context of vanishing viscosity 
for advection-diffusion operators, the effect of the fixed points and limit cycles on the principal eigenvalue
can be traced back to
the works 
\cite{DEF1974,DF1978,WFS1997,F1973, K1980}. 
Some related results can also be found in  the Freidlin-Wentzell theory
 \cite{FW1984,W1975}. 
 However, the finding that the  saddles connecting homoclinic orbits
have the significant effect in our setting  appears to be new. 

\begin{remark}\label{saddle}
   When the limit set of \eqref{system} includes the combination of two stable homoclinic orbits connected by a hyperbolic saddle,  Theorem \ref{mainresult}{\rm(iii)} implies that the limit of the principal eigenvalue is dependent on the saddle only instead of the whole homoclinic orbits. This seems to be 
   surprising at the first look as such saddle is actually unstable. In fact, different from the isolated saddles, the saddles connecting homoclinic orbits can be viewed as a singular point on
the closed orbits constituted by homoclinic orbits. Hence,
    the limit in Theorem \ref{mainresult}{\rm(iii)} can be understood  as the average of function $c$ along the closed orbits with ``infinity large” weight on the saddle, which turns out to be the value of $c$ located at the saddle. See Remark \ref{remarkcoro} for more details and see also \cite{XCJ2022} for  some related discussions in the context of the limiting measures for stochastic ordinary differential equations.
\end{remark}

 To 
 facilitate the underlying connections among assertions {\rm (i)}-{\rm (iii)} in Theorem \ref{mainresult}, we consider the following  example.


{\it Example}. Set ${\rm H}(x):=\frac{x_2^2}{2}+\frac{x_1^4}{4}-\frac{x_1^2}{2}\in [-\frac{1}{4},\infty)$. Given any $\alpha\in[-\frac{1}{4},1)$, we define
$${\bf b}_\alpha(x)=(-\partial_{x_2}{\rm H}, \partial_{x_1}{\rm H})-({\rm H}(x)-\alpha)\nabla {\rm H}.$$
For any $\alpha\in(-\frac{1}{4},1)$,  system  \eqref{system} associated with the vector field ${\bf b}_\alpha$ possesses three hyperbolic fixed points $x_0=(0,0)$, $x_+=(1,0)$, and $x_-=(-1,0)$, with $x_0$ being a saddle. Notice that for any solution $\Phi^t(x)$ of  \eqref{system}, it holds that
\begin{equation}\label{liu1026-1}
    \frac{{\rm d}{\rm H}(\Phi^t(x))}{{\rm d}t}=-({\rm H}(\Phi^t(x))-\alpha)|\nabla {\rm H}(\Phi^t(x))|^2.
\end{equation}
This implies function $F(x)=(H(x)-\alpha)^2$ is a Lyapunov function for \eqref{system},
whence 
${\rm H}^{-1}(\alpha)$ is the unique stable component in the limit set of system \eqref{system}. See Fig. \ref{figureliu1-1-1} for some illustrations on the phase-portraits of system \eqref{system}.

The following result is a consequence of Theorem \ref{mainresult}, for which the proof  is
postponed to the appendix.

\begin{corollary}\label{coro1}
Assume $\Omega={\rm H}^{-1}([-\frac{1}{4},1))$ and let $\lambda(A)$ be the principal eigenvalue of \eqref{1} with
drift ${\bf b}={\bf b}_\alpha$ and  $\alpha\in[-\frac{1}{4},1)$. Then the following assertions hold.
\begin{itemize}
    \item[{\rm (i)}]  If $\alpha\in(0,1)$, then ${\rm H}^{-1}(\alpha)$ is a stable limit cycle of \eqref{system} which can be parameterized by ${\rm H}^{-1}(\alpha)=\{P_\alpha(t)\in \mathbb{R}^2:t\in[0,T_\alpha)\}$ with some $T_\alpha$-periodic solution $P_\alpha(t)$. Then
        $$\lim_{A\to\infty}\lambda(A)=\frac{1}{T_\alpha}\int_0^{T_\alpha} c(P_\alpha(t)){\rm d}t;$$
\item[{\rm (ii)}]  If $\alpha=0$, then ${\rm H}^{-1}(0)$ is a stable component consisting of two homoclinic orbits
connected by the hyperbolic  saddle $x_0$. Then
 $$\lim_{A\to\infty}\lambda(A)=c(x_0);$$
 \item[{\rm (iii)}] If $\alpha\in(-\frac{1}{4},0)$, then ${\rm H}^{-1}(\alpha)$ is composed of two stable limit cycles given by ${\rm H}^{-1}(\alpha)=\{P_{\alpha+}(t)\in \mathbb{R}^2:t\in[0,T_{\alpha+})\}\cup \{P_{\alpha-}(t)\in \mathbb{R}^2:t\in[0,T_{\alpha-})\}$. Then
         $$\lim_{A\to\infty}\lambda(A)=\min\left\{\frac{1}{T_{\alpha+}}\int_0^{T_{\alpha+}} c(P_{\alpha+}(t)){\rm d}t, \,\,\, \frac{1}{T_{\alpha-}}\int_0^{T_{\alpha-}} c(P_{\alpha-}(t)){\rm d}t\right\}.$$
    \item[{\rm (iv)}] If $\alpha=-\frac{1}{4}$, then ${\rm H}^{-1}(-\frac{1}{4})=\{x_+, x_-\}$ consists of two stable fixed points. Then
     $$\lim_{A\to\infty}\lambda(A)=\min\left\{c(x_+), \,\,\, c(x_-)\right\}.$$
\end{itemize}
\end{corollary}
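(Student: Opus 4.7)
The plan is to verify the hypotheses of Theorem~\ref{mainresult} for each $\alpha\in[-\tfrac14,1)$ and then read off the conclusion. First, $\partial\Omega=H^{-1}(1)$ is a regular level set (the critical points of $H$ lie in $H^{-1}(\{-\tfrac14,0\})$), so the outward normal is proportional to $\nabla H$; since the Hamiltonian part $(-H_{x_2},H_{x_1})$ is orthogonal to $\nabla H$, we obtain
$$\mathbf{b}_\alpha\cdot\mathbf{n}=-(1-\alpha)|\nabla H|<0\quad\text{on }\partial\Omega,$$
so~\eqref{assumption1} holds for every $\alpha<1$.

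\textbf{Fixed points and limit set.} Because the Hamiltonian and gradient parts of $\mathbf{b}_\alpha$ are pointwise orthogonal, they vanish simultaneously only where $\nabla H=0$; hence $\mathbf{b}_\alpha$ has exactly the three fixed points $x_0=(0,0)$ and $x_\pm=(\pm 1,0)$. A direct Jacobian computation gives trace $0$ and determinant $-(\alpha^2+1)<0$ at $x_0$ (a hyperbolic saddle for every $\alpha$), and trace $\tfrac34+3\alpha$ together with strictly positive determinant at $x_\pm$ (a repelling source whenever $\alpha>-\tfrac14$). From~\eqref{liu1026-1}, $F(x)=(H(x)-\alpha)^2$ is a Lyapunov function with $\dot F=-2(H-\alpha)^2|\nabla H|^2\le 0$, and LaSalle's invariance principle confines the limit set of~\eqref{system} to $H^{-1}(\alpha)\cup\{x_0,x_\pm\}$. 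On the regular part of $H^{-1}(\alpha)$ the dynamics reduces to the Hamiltonian flow, which preserves level sets; hence every smooth connected component of $H^{-1}(\alpha)$ is a periodic orbit of~\eqref{system}, and its stability is inherited from $F$.

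\textbf{Case analysis.} A direct inspection of the level curves of $H$ identifies the component type in Hypothesis~\ref{assum1} for each $\alpha$ and matches it with a clause of Theorem~\ref{mainresult}. For $\alpha\in(0,1)$, $H^{-1}(\alpha)$ is a single smooth closed curve enclosing all three critical points, a stable limit cycle, and Theorem~\ref{mainresult}(ii) yields (i). For $\alpha=0$, $H^{-1}(0)$ is the figure-eight through $x_0$, i.e., two homoclinic orbits connected at the hyperbolic saddle $x_0$; stability follows from $F$, and Theorem~\ref{mainresult}(iii) gives $c(x_0)$. For $\alpha\in(-\tfrac14,0)$, $H^{-1}(\alpha)$ splits into two disjoint ovals around $x_\pm$, giving two stable limit cycles; the minimum formula in Theorem~\ref{mainresult} combined with two applications of (ii) produces (iii).

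\textbf{Main obstacle.} Case (iv), $\alpha=-\tfrac14$, is the most delicate because the Jacobian of $\mathbf{b}_{-1/4}$ at $x_\pm$ has purely imaginary eigenvalues $\pm i\sqrt{2}$, so $x_\pm$ are attracting but \emph{not} hyperbolic in the sense of Definition~\ref{definition1}, and Hypothesis~\ref{assum1}(i) does not apply directly. A local expansion gives $F\sim(\xi^2+\tfrac12\eta^2)^2$ near $x_+$, so the attraction is of quartic order. I plan to resolve this either via the degenerate framework of Section~\ref{S5-1}, or by a perturbation argument: for $\alpha_\varepsilon=-\tfrac14+\varepsilon$ case (iii) applies, and the two limit cycles $H^{-1}(\alpha_\varepsilon)$ collapse to $\{x_\pm\}$ in the Hausdorff sense as $\varepsilon\to 0^+$, so the averages along these cycles converge to $c(x_\pm)$ by continuity of $c$; the transfer of the $A\to\infty$ asymptotics to $\alpha=-\tfrac14$ is then achieved by comparison (sub-/super-solutions) using the smooth dependence of $\mathbf{b}_\alpha$ on $\alpha$ and monotonicity in $\alpha$ of the relevant constructions.
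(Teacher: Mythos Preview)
Your handling of cases (i)--(iii) matches the paper's approach exactly: verify~\eqref{assumption1}, check Hypothesis~\ref{assum1} via the Lyapunov identity~\eqref{liu1026-1}, and invoke Theorem~\ref{mainresult}. You also correctly identify the obstacle in case (iv): the Jacobian of $\mathbf{b}_{-1/4}$ at $x_\pm$ has trace zero, so $x_\pm$ are attracting but \emph{not} hyperbolic, and Hypothesis~\ref{assum1}(i) fails.

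However, neither of your two proposals for (iv) is viable. The degenerate framework of Section~\ref{S5-1} is built for the situation $|\Omega_*|>0$ (a region of fixed points with positive measure), and its conclusions are phrased in terms of principal eigenvalues of $-\Delta+c$ on $\Omega_*$; here $\Omega_*=\{x_+,x_-\}$ has measure zero, so that machinery does not apply. Your perturbation idea computes $\lim_{\varepsilon\to 0}\lim_{A\to\infty}\lambda_{\alpha_\varepsilon}(A)$, but what you need is $\lim_{A\to\infty}\lambda_{-1/4}(A)$, and the interchange of limits is not free. The vague appeal to ``monotonicity in $\alpha$'' is not supported: there is no obvious comparison between the operators $-\Delta-A\mathbf{b}_\alpha\cdot\nabla+c$ for different $\alpha$, and the super-solutions in Section~\ref{S3} are \emph{not} uniform as the limit cycles collapse (the tubular neighborhood coordinate $(\tau,\ell)$ degenerates).

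The paper's resolution of (iv) is more elementary and avoids both detours. It mimics the proof of Theorem~\ref{liuprop1} directly, replacing the quadratic super-solution near a hyperbolic sink by one built from the Lyapunov function itself: near $x_\pm$ one sets
\[
\overline{\varphi}(x):=\tfrac{\epsilon}{4}\,H(x)+1.
\]
Then $\mathbf{b}_{-1/4}\cdot\nabla\overline{\varphi}=-\tfrac{\epsilon}{4}\bigl(H+\tfrac14\bigr)|\nabla H|^2\le 0$ everywhere (no hyperbolicity needed), while $\Delta\overline{\varphi}=\tfrac{3\epsilon}{4}x_1^2$ is bounded, so the first line of the super-solution inequality holds on a sublevel set $H^{-1}\bigl([-\tfrac14,-\tfrac14+\delta)\bigr)$. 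Outside this set the only remaining piece of the limit set is the hyperbolic saddle $x_0$, and the construction from Theorem~\ref{liuprop1} finishes the job. The point is that when the explicit Lyapunov function is available, it substitutes for the Jacobian-based constructions of Section~\ref{S2} and makes the non-hyperbolicity of $x_\pm$ irrelevant.
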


\medskip
 \begin{figure}[http!!]
  \centering
\includegraphics[height=3.8in]{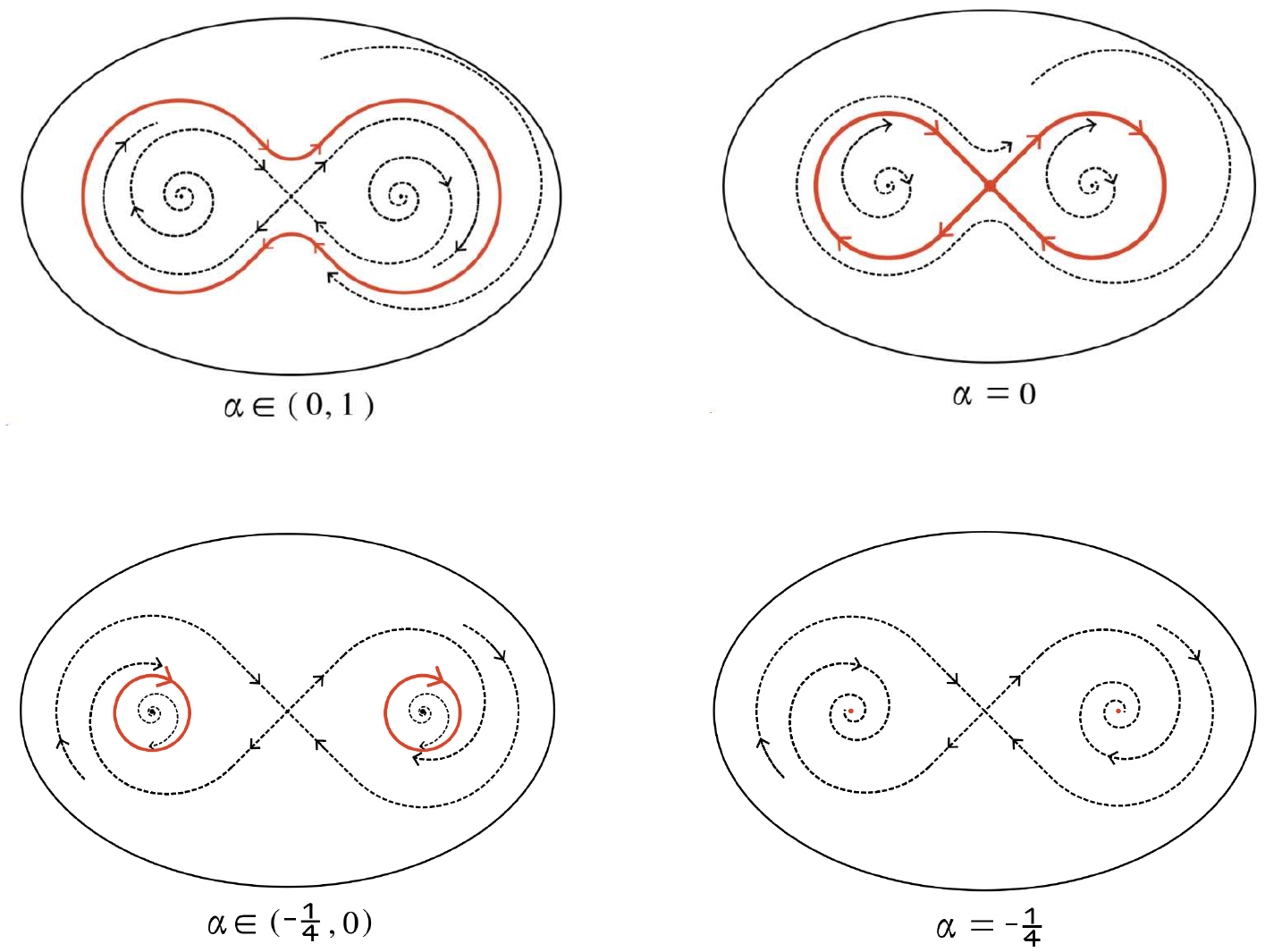}
   \caption{\small Illustrations for the phase-portraits of system \eqref{system} with ${\bf b}={\bf b}_\alpha$ for different $\alpha\in[-\frac{1}{4},1)$ as given in Corollary \ref{coro1}.
   }\label{figureliu1-1-1}
  \end{figure}

\begin{remark}\label{remarkcoro}
We illustrate the underlying connections among Corollary \ref{coro1}{\rm(i)} and {\rm(ii)}. For  $\alpha\in(0,1)$, let 
${\rm H}^{-1}(\alpha)=\{P_\alpha(t)\in \mathbb{R}^2:t\in[0,T_\alpha)\}$ as in Corollary \ref{coro1}{\rm(i)}.  We observe that
        $$\frac{1}{T_\alpha}\int_0^{T_\alpha} c(P_\alpha(t)){\rm d}t=\frac{\int_{{\rm H}^{-1}(\alpha)}\frac{c(x)}{|{\bf b}_\alpha(x)|}{\rm d}s}{\int_{{\rm H}^{-1}(\alpha)}\frac{1}{|{\bf b}_\alpha(x)|}{\rm d}s},$$
        where ${\rm d}s$ is the arc-length element  along  ${\rm H}^{-1}(\alpha)$.
Since $x_0$ is a hyperbolic saddle, 
we find
$$\int_{{\rm H}^{-1}(0)}\frac{1}{|{\bf b}_0(x)|}{\rm d}s=+\infty.$$
Hence, due to ${\bf b}_0(x)\neq0$ for all $x\in {\rm H}^{-1}(0)$ and $x\neq x_0$, by Corollary \ref{coro1}{\rm(i)} one can deduce
   $$\lim_{\alpha\to 0}\lim_{A\to\infty}\lambda(A)=\lim_{\alpha\to 0}\frac{\int_{{\rm H}^{-1}(\alpha)}\frac{c(x)}{|{\bf b}_\alpha(x)|}{\rm d}s}{\int_{{\rm H}^{-1}(\alpha)}\frac{1}{|{\bf b}_\alpha(x)|}{\rm d}s}=c(x_0), 
   $$
   which is consistent
with Corollary \ref{coro1}{\rm(ii)}. This means that the limit of $\lambda(A)$ as $A\to\infty$ is continuous at $\alpha=0$.
\end{remark}

In this example,
Corollary \ref{coro1} gives a complete description of the asymptotic behaviors
of the principal eigenvalue for different $\alpha\in [-\frac{1}{4},1)$. Since  $\Omega={\rm H}^{-1}([-\frac{1}{4},1))$,
the restriction  to $\alpha<1$ is  to ensure that \eqref{assumption1} holds true. In fact,  assumption \eqref{assumption1}  is imposed for the sake of convenience, 
so that the limit set of \eqref{system} lies in the interior of $\Omega$, and thereby simplifies our analysis on boundary $\partial \Omega$. Such restriction can be removed by some additional discussions.   We leave this for  future studies.




\section{\bf Case of hyperbolic fixed points}\label{S2}

In this section we establish Theorem \ref{mainresult}
in the case when  the limit set of  system \eqref{system} consists of hyperbolic fixed points only. The proofs will be used in subsequent sections to construct suitable super/sub-solutions near the fixed points.

\begin{theorem}\label{liuprop1}
Assume the limit set of  system \eqref{system} consists of a finite number of hyperbolic fixed points, and let $\{x_1,\cdots,x_k\}\subset\Omega$ denote the set of stable fixed points. Then
$$\lim_{A\to\infty}\lambda(A)=\min_{1\leq i\leq k}\{c(x_i)\}.$$
\end{theorem}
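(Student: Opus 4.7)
My plan is to prove matching upper and lower bounds on $\lambda(A)$ via the standard sub/super-solution characterization of the principal eigenvalue: $\lambda(A)\le\mu$ (resp.\ $\ge\mu$) holds whenever there exists $\psi>0$ satisfying $L_A\psi\le\mu\psi$ (resp.\ $\ge\mu\psi$) in $\Omega$ with the compatible Neumann sign on $\partial\Omega$, where $L_A:=-\Delta-A\mathbf b\cdot\nabla+c$. Set $\lambda^*:=\min_{1\le i\le k}c(x_i)$, fix $\epsilon>0$, and aim to show $|\lambda(A)-\lambda^*|<\epsilon$ for all sufficiently large $A$.

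\emph{Upper bound.} Pick $i_0$ with $c(x_{i_0})=\lambda^*$. Since $x_{i_0}$ is stable hyperbolic, $B:=D\mathbf b(x_{i_0})$ has only eigenvalues with strictly negative real parts, so the Lyapunov equation $B^{\mathrm T}P+PB=-\mathrm{Id}$ admits a positive definite solution $P$, and the quadratic form $V(x):=(x-x_{i_0})^{\mathrm T}P(x-x_{i_0})$ satisfies $\mathbf b\cdot\nabla V\le-\tfrac12|x-x_{i_0}|^2$ on some ball $B_\delta(x_{i_0})$. Under the rescaling $y=\sqrt{A}\,(x-x_{i_0})$, the local eigenvalue problem is modelled by the Ornstein--Uhlenbeck-type operator $-\Delta_y-By\cdot\nabla_y+c(x_{i_0})$, whose principal eigenvalue on $\mathbb R^2$ is $c(x_{i_0})=\lambda^*$ with essentially constant eigenfunction. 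I would take the subsolution $\psi_A$ to be this local profile multiplied by a smooth $x$-cutoff supported in $B_\delta(x_{i_0})$, patched to a small positive background so that $\psi_A>0$ throughout $\Omega$; the subsolution inequality $L_A\psi_A\le(\lambda^*+\epsilon)\psi_A$ then follows by splitting $\Omega$ into the inner region (controlled by the Ornstein--Uhlenbeck analysis and continuity of $c$), the transition annulus (where the exponential decay $e^{-AV}$ of the bump dominates bounded corrections coming from the cutoff derivatives), and the outer region (where the background is arranged so the ratio $c(x)\psi_A/\psi_A$ stays below $\lambda^*+\epsilon$). The Neumann sign is handled by taking the background constant near $\partial\Omega$.

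\emph{Lower bound.} Construct a positive supersolution of the form $\eta_A=e^{A\phi}$, for which
$$\frac{L_A\eta_A}{\eta_A}=c(x)-A^2\bigl(|\nabla\phi|^2+\mathbf b\cdot\nabla\phi\bigr)-A\Delta\phi.$$
The function $\phi\in C^2(\bar\Omega)$ is designed so that (i) $\phi\equiv 0$ on small neighborhoods $U_i$ of each stable fixed point $x_i$, reducing the ratio there to $c(x)\ge\lambda^*-\epsilon/2$ by continuity; (ii) outside $\bigcup_i U_i$ the eikonal-type inequality $|\nabla\phi|^2+\mathbf b\cdot\nabla\phi\le-\kappa<0$ holds, so the $A^2$-term drives the ratio to $+\infty$; and (iii) $\partial_{\mathbf n}\phi\le 0$ on $\partial\Omega$. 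Such $\phi$ is modelled on the Freidlin--Wentzell quasipotential, propagated outward from each $\partial U_i$ along characteristics of the flow; hyperbolicity of the unstable fixed points and saddles ensures no pathological accumulation, the Poincar\'e--Bendixson structure guarantees every forward orbit is captured by some stable basin, and \eqref{assumption1} keeps the construction compatible with the Neumann condition.

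\emph{Main obstacle.} The principal difficulty is the lower bound: producing a globally $C^2$ function $\phi$ that realizes the eikonal inequality across all basins of attraction while remaining smooth across their separatrices — in particular, the stable manifolds of the saddle points. The quasipotential is a priori only Lipschitz and requires careful regularization. The hyperbolicity of all fixed points and condition \eqref{assumption1} together render this construction tractable; the explicit construction obtained here is meant to serve as a template for the later treatments of limit cycles, homoclinic orbits, and closed-orbit families in subsequent sections.
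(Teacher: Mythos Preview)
Your lower-bound construction has a genuine gap near the \emph{unstable} fixed points and saddles. You ask for $\phi\in C^2(\bar\Omega)$ with $|\nabla\phi|^2+\mathbf b\cdot\nabla\phi\le-\kappa<0$ outside the neighborhoods $U_i$ of the stable points; but the unstable nodes and saddles lie in this exterior region, and there $\mathbf b(x)\to 0$. Since $|\nabla\phi|^2\ge 0$, the inequality forces $\mathbf b\cdot\nabla\phi\le-\kappa$, hence $|\nabla\phi|\ge\kappa/|\mathbf b|\to\infty$, which is incompatible with $\phi\in C^2$. No regularization of the quasipotential will repair this: the obstruction is pointwise, not just along separatrices. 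The paper avoids this by abandoning the exponential ansatz altogether and instead building a \emph{piecewise} super-solution: on a small ball $B^i_\delta$ around each fixed point it writes down an explicit quadratic $\overline\varphi(x)=C+(x-x_i)^{\rm T}Q_i(x-x_i)$ with $Q_i$ adapted to the Jordan form of $D\mathbf b(x_i)$ (different signs for stable, unstable, saddle cases), checks $\mathbf b\cdot\nabla\overline\varphi\le 0$ and $-\Delta\overline\varphi+(c-\lambda^*+\epsilon)\overline\varphi\ge 0$ there, and on the complement (where orbits stay only finite time) invokes \cite[Lemma 2.3]{DEF1974} to produce a function with $\mathbf b\cdot\nabla\overline\varphi<0$ strictly. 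The pieces are glued not by smoothness but by a one-sided jump condition on the normal derivative across $\partial B^i_\delta$, and comparison is applied in the generalized sense.

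Your upper-bound sketch also has a patching problem: once you add a constant background $\eta>0$ to the Gaussian bump, the ratio $L_A\psi_A/\psi_A$ in the outer region becomes essentially $c(x)$, which need not be $\le\lambda^*+\epsilon$ (indeed $\lambda^*=\min_i c(x_i)$ can be the global minimum of $c$). The paper sidesteps this by working with a sub-solution supported in a single ball $B^i_\delta$ that \emph{vanishes} on $\partial B^i_\delta$, comparing $\lambda(A)$ with the Dirichlet eigenvalue on that ball; no global positivity is needed.
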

\begin{proof}
{\bf Lower bound estimate}: We first establish that the lower bound estimate
\begin{equation}\label{liu0619-1}
  \liminf_{A\to\infty}\lambda(A)\geq \min_{1\leq i\leq k}\{c(x_i)\}.
\end{equation}
Set ${\mathbb{S}}:=\{x_1,\cdots,x_k\}$. Let $n$ be
 the number of fixed points. We assume $n>k$, and the case $n=k$, namely all fixed points are stable, can be prove by the similar and simpler arguments.
 Denote by  ${\mathbb{U}}:=\{x_{k+1},\cdots,x_n\}\subset\Omega$  the set of the unstable fixed points and saddles of \eqref{system}. Given any $\epsilon>0$, we choose $\delta>0$ small such that $B^i_\delta\subset\Omega$ and $|c(x)-c(x_i)|\leq \epsilon/2$ for all $x\in B^i_\delta$ and $1\leq i\leq n$, where $B^i_\delta:=\{x\in\mathbb{R}^2: |x-x_i|<\delta\}$. To prove \eqref{liu0619-1}, we shall construct the positive
 super-solution $\overline{\varphi}\in C(\Omega)\cap C^2(\Omega\setminus(\cup_{i=1}^n\partial B^i_\delta))$ such that
\begin{equation}\label{liu0619-2}
 \left\{\begin{array}{ll}
-\Delta\overline{\varphi}-A\mathbf{b}\cdot\nabla\overline{\varphi}+c(x)\overline{\varphi}\geq\left[\min\limits_{1\leq i\leq k}\{c(x_i)\}-\epsilon\right] \overline{\varphi}&\mathrm{in} \,\, \Omega\setminus(\cup_{i=1}^n\partial B^i_\delta),\\
\medskip
(\nabla_-\overline{\varphi}(x)-\nabla_+\overline{\varphi}(x))\cdot \nu_i(x)> 0 & \mathrm{on}~\cup_{i=1}^n\partial B^i_\delta,\\
  \nabla\overline{\varphi}\cdot \mathbf{n}\geq 0 & \mathrm{on}~\partial\Omega  
  \end{array}\right.
 \end{equation}
provided that  $A$ is sufficiently large   and  $\delta$ is sufficiently small. Here $\nu_i(x)
$ denotes the outward unit normal vector on $\partial B^i_\delta$, and
\begin{equation}\label{liu0621-3}
    \nabla\overline{\varphi}_\pm(x)\cdot \nu_i(x):=\pm
 \lim_{t\to 0^+
 } \frac{\overline{\varphi}(x\pm t\nu_i(x))-\overline{\varphi}(x)}{t},\quad \forall x\in \partial B^i_\delta. 
\end{equation}
Then 
\eqref{liu0619-1} follows from the comparison principle and the arbitrariness of $\epsilon$.

{\bf Step 1.} For any $1\leq i\leq n$, we first construct $\overline{\varphi}\in C^2(B^i_\delta)$ such that \eqref{liu0619-2} holds on $B^i_\delta$. Given any fixed point  $x_i$,  denote by $\lambda_1$ and $\lambda_2$ the eigenvalues of the Jacobi matrix $D{\bf b}(x_i)$. We shall define the desired super-solution $\overline{\varphi}$ by considering different $\lambda_1$ and $\lambda_2$.

{\it Case 1}.  {\underline { $\lambda_1,\lambda_2\in\mathbb{R}$ and $\lambda_1\neq\lambda_2$}}: In this case,   there exists some invertible matrix $P$ such that
$
D{\bf b}(x_i)=P^{-1}\left(\begin{array}{cc}
    \lambda_1 & 0  \\
   0  & \lambda_2
\end{array}\right)P.
$
Next, we assume $P=I$ is an identity matrix, namely,
\begin{equation}\label{liu0620-1}
    D{\bf b}(x_i)=\left(\begin{array}{cc}
    \lambda_1 & 0  \\
   0  & \lambda_2
\end{array}\right)
\end{equation}
and the general case can be proved by the similar arguments.

{\rm (i)} If $\lambda_1,\lambda_2<0$,  i.e. $x_i\in \mathbb{S}$, then we define
\begin{equation}\label{supersolution1}
\overline{\varphi}(x):=\frac{8}{\epsilon}+\|x-x_i\|^2, \qquad \forall x\in B^i_\delta.
\end{equation}
By \eqref{liu0620-1} and in view of $\lambda_1,\lambda_2\neq 0$ by assumption, we note that
\begin{align*}
\mathbf{b}\cdot\nabla\overline{\varphi}(x)&=(x-x_i)^{\rm T}D{\bf b}(x_i)\nabla\overline{\varphi}(x)+o(\|x-x_i\|^2)\\
&\leq -(\min\{|\lambda_1|,|\lambda_2|\})\|x-x_i\|^2\leq 0, \quad\forall x\approx x_i.
\end{align*}
Hence, we may choose $\delta$ small if necessary  such that
\begin{align*}
&-\Delta\overline{\varphi}-A\mathbf{b}\cdot\nabla\overline{\varphi}+(c(x)-\min\limits_{1\leq i\leq k}\{c(x_i)\}+\epsilon)\\
\geq&-\Delta\overline{\varphi}+(c(x)-c(x_i)+\epsilon)\overline{\varphi}\\
    \geq & -4+\frac{\epsilon\overline{\varphi}}{2}\geq0,\qquad \forall x\in B^i_\delta,
\end{align*}
and thus  the constructed $\overline{\varphi}$ in \eqref{supersolution1} satisfies  \eqref{liu0619-2}.

{\rm (ii)} If $\lambda_1,\lambda_2>0$, i.e. $x_i\in {\mathbb{U}}$, then we define
\begin{equation}\label{supersolution2}
\overline{\varphi}(x):=\frac{8}{\epsilon^3}-\frac{\|x-x_i\|^2}{\sqrt{\delta}}, \qquad \forall x\in B^i_\delta.
\end{equation}
Similar to {\rm(i)}, by direct calculations we can choose $\delta$ small such that
$\mathbf{b}(x)\cdot\nabla\overline{\varphi}(x)\geq 0$ for $x\in B^i_\delta$, and furthermore,
\begin{align*}
    &-\Delta\overline{\varphi}-A\mathbf{b}\cdot\nabla\overline{\varphi}+(c(x)-\min\limits_{1\leq i\leq k}\{c(x_i)\})\overline{\varphi}
    \geq \frac{4}{\sqrt{\delta}}-\frac{16\|c\|_\infty}{\epsilon^3}>0,\quad \forall x\in B^i_\delta.
\end{align*}
Hence, such $\overline{\varphi}$ defined by \eqref{supersolution2} also satisfies \eqref{liu0619-2} on $B^i_\delta$.

{\rm (iii)} If $\lambda_1\lambda_2<0$, then we assume $\lambda_1<0<\lambda_2$ without loss of generality and define
$$\overline{\varphi}(x):=\frac{8}{\epsilon^2}-(x-x_i)^{\rm T}\left(\begin{array}{cc}
    \lambda_1/\sqrt{\delta} & 0  \\
   0  & \lambda_2
\end{array}\right)(x-x_i), \qquad \forall x\in B^i_\delta.$$
By \eqref{liu0620-1} we can verify that
\begin{align*}
\mathbf{b}\cdot\nabla\overline{\varphi}(x)
\leq -(\min\{\lambda_1^2/\sqrt{\delta},\lambda_2^2\}) \|x-x_i\|^2\leq 0, \quad\forall x\approx x_i.
\end{align*}
By choosing $\delta$ small, we arrive at
\begin{align*}
    &-\Delta\overline{\varphi}-A\mathbf{b}\cdot\nabla\overline{\varphi}+(c(x)-\min\limits_{1\leq i\leq k}\{c(x_i)\})\overline{\varphi}
    \geq \frac{-2\lambda_1}{\sqrt{\delta}}-2\lambda_2-\frac{16\|c\|_\infty}{\epsilon^2}>0,\quad \forall x\in B^i_\delta.
\end{align*}
Therefore,  the constructed $\overline{\varphi}$ satisfies \eqref{liu0619-2} on $B^i_\delta$.

{\it Case 2}.  {\underline {  $\lambda_1=\lambda_2$}}: If there exists some  invertible matrix  $P$ such that $
D{\bf b}(x_i)=P^{-1}\left(\begin{array}{cc}
    \lambda & 0  \\
   0  & \lambda
\end{array}\right)P,
$ for some $\lambda\in\mathbb{R}$,
then the super-solution $\overline{\varphi}$ can be constructed by the same way as in {\it Case 1}. Otherwise,  it holds that
$$
D{\bf b}(x_i)=P^{-1}\left(\begin{array}{cc}
    \lambda & 1  \\
   0  & \lambda
\end{array}\right)P,
$$
for some invertible matrix  $P$  and $\lambda\in\mathbb{R}$. We assume $P=I$ as in  {\it Case 1}.

{\rm (i)} If $\lambda<0$, then $x_i\in \mathbb{S}$ and we define
$$\overline{\varphi}(x):=\frac{4}{\epsilon}+\frac{1}{1/\lambda^2+1}(x-x_i)^{\rm T}\left(\begin{array}{cc}
    \frac{1}{\lambda^2} & 0  \\
   0  & 1
\end{array}\right)(x-x_i), \qquad \forall x\in B^i_\delta.$$
Direct calculations yield 
\begin{align*}
\mathbf{b}\cdot\nabla\overline{\varphi}(x)=&(x-x_i)^{\rm T}\left(\begin{array}{cc}
    \lambda & 1  \\
   0  & \lambda
\end{array}\right)\nabla\overline{\varphi}(x)+o(\|x-x_i\|^2)\\
=&-\frac{2}{1/\lambda^2+1}\left[\left(\left(\frac{1}{\sqrt{-2\lambda}},-\sqrt{-2\lambda}\right)^{\rm T}(x-x_i)\right)^2-(x-x_i)^{\rm T}\left(\begin{array}{cc}
    \frac{1}{-2\lambda} & 0  \\
   0  & \frac{-\lambda}{2}
\end{array}\right)(x-x_i)\right]\\
&+o(\|x-x_i\|^2)\leq 0, \quad\forall x\approx x_i,
\end{align*}
whence we can choose $\delta$ small such that 
\begin{align*}
&-\Delta\overline{\varphi}-A\mathbf{b}\cdot\nabla\overline{\varphi}+(c(x)-\min\limits_{1\leq i\leq k}\{c(x_i)\}+\epsilon)\overline{\varphi}\\
\geq&-\Delta\overline{\varphi}+(c(x)-c(x_i)+\epsilon)\overline{\varphi}\\
    \geq &-2+\frac{\epsilon}{2}\overline{\varphi}\geq0,\qquad \forall x\in B^i_\delta.
\end{align*}

{\rm (ii)} If $\lambda>0$, then  $x_i\in \mathbb{U}$ and we define
$$\overline{\varphi}(x):=\frac{4}{\epsilon^2}-\frac{(x-x_i)^{\rm T}}{\sqrt{\delta}}\left(\begin{array}{cc}
    \frac{1}{\lambda^2} & 0  \\
   0  & 1
\end{array}\right)(x-x_i), \qquad \forall x\in B^i_\delta.$$
Similar to {\rm(i)}, we can choose $\delta$ small such that
$\mathbf{b}\cdot\nabla\overline{\varphi}(x)\geq 0$, $\forall x\in B^i_\delta$, so that
\begin{align*}
    &-\Delta\overline{\varphi}-A\mathbf{b}\cdot\nabla\overline{\varphi}+(c(x)-\min\limits_{1\leq i\leq k}\{c(x_i)\})\overline{\varphi}\\
    \geq & \frac{\frac{1}{\lambda^2}+1}{\sqrt{\delta}}-\frac{8\|c\|_\infty}{\epsilon^2}>0,\quad \forall x\in B^i_\delta.
\end{align*}
Hence,  the constructed $\overline{\varphi}$ satisfies \eqref{liu0619-2} on $B^i_\delta$ in this case.

{\it Case 3}.  {\underline {  $\lambda_1=\alpha+i\beta$ and $\lambda_2=\alpha-i\beta$ with $\alpha,\beta\in\mathbb{R}$}}: In this case,
we assume that
$$
D{\bf b}(x_i)=\left(\begin{array}{cc}
    \alpha & \beta  \\
   -\beta  & \alpha
\end{array}\right)
$$
 without loss of generality.
Then we can define $\overline{\varphi}$ by \eqref{supersolution1} for $\alpha<0$, and define 
$\overline{\varphi}$ 
by \eqref{supersolution2} for $\alpha>0$. By the similar arguments as in {\it Case 1}, we can verify that $\overline{\varphi}$ satisfies \eqref{liu0619-2}. 


{\bf Step 2.} We complete the construction of $\overline{\varphi}$ on $\Omega$ such that \eqref{liu0619-2} holds.  By the construction  in Step 1, we can choose $\epsilon$  small if necessary such that
\begin{equation}\label{liu0621-1}
    \max_{x\in \cup_{x_i\in \mathbb{S}}B^i_\delta}\overline{\varphi}(x)<\min_{x\in \cup_{x_i\in {\mathbb{U}}}B^i_\delta}\overline{\varphi}(x).
\end{equation}
By assumption, we find the orbits of \eqref{system}
remain in $\Omega\setminus(\cup_{i=1}^n B^i_\delta)$ only a finite time.
By \eqref{assumption1} and \eqref{liu0621-1}, we may apply  \cite[Lemma 2.3]{DEF1974}  to deduce that there exists some positive function $\overline{\varphi}\in C^2(\Omega\setminus(\cup_{i=1}^n B^i_\delta))$ such that $\overline{\varphi}\in C(\Omega)$, $\nabla \overline{\varphi}\cdot {\bf n}\geq 0$ on $\partial\Omega$, and
\begin{equation}\label{supersolution3}
   {\bf b}(x)\cdot \nabla \overline{\varphi}(x)<0, \qquad \forall x\in \overline{\Omega}\setminus( \cup_{i=1}^n B^i_\delta).
\end{equation}
 Furthermore, according to the construction in Step 1, by \eqref{supersolution3} we can verify that
 $$(\nabla_-\overline{\varphi}(x)-\nabla_+\overline{\varphi}(x))\cdot \nu_i(x)> 0,\quad \forall x\in\cup_{i=1}^n\partial B^i_\delta,$$
 provided that $\delta$ is chosen  small if necessary. This implies  the boundary conditions in \eqref{liu0619-2} hold. Due to \eqref{supersolution3}, we can choose $A$ large such that the first inequality in \eqref{liu0619-2} holds. Combining with the construction of $\overline{\varphi}$ on $\cup_{i=1}^n  B^i_\delta$ as in Step 1, we conclude that the constructed super-solution $\overline{\varphi}$ satisfies \eqref{liu0619-2}  and thus the lower bound estimate \eqref{liu0619-1} follows.

\medskip

{\bf Upper bound estimate}: We next prove the upper bound estimate
\begin{equation}\label{liu0621-2}
  \limsup_{A\to\infty}\lambda(A)\leq \min_{1\leq i\leq k}\{c(x_i)\}.
\end{equation}
For any $x_i\in \mathbb{S}$, we assume without loss of generality that \eqref{liu0620-1} holds for some $\lambda_1,\lambda_2<0$. To prove \eqref{liu0621-2}, it suffices to show $\limsup_{A\to\infty}\lambda(A)\leq c(x_i)$ for any $x_i\in \mathbb{S}$.

To this end, we  fix  $x_i\in \mathbb{S}$ and choose some  $\delta>0$ such that $B^i_\delta\subset\Omega$, and $x_i$ is the unique fixed point of \eqref{system} in $B^i_\delta$, and moreover ${\bf b}(x)\cdot \nu_\delta(x)<0$, $\forall x\in \partial B^i_\delta$, with $\nu_\delta$ being the outward unit normal vector on $\partial B^i_\delta$. In what follows, given any $\epsilon>0$, we shall choose $r\in(0,\delta)$ and define positive sub-solution  $\underline{\varphi}\in C(B^i_\delta)$ such that
\begin{equation}\label{liu0621-4}
 \left\{\begin{array}{ll}
\medskip
-\Delta\underline{\varphi}-A\mathbf{b}\cdot\nabla\underline{\varphi}+c(x)\underline{\varphi}\leq\left(c(x_i)+\epsilon\right) \underline{\varphi}&\mathrm{in} \,\, B^i_\delta\setminus \partial B^i_r,\\
\medskip
(\nabla_-\underline{\varphi}(x)-\nabla_+\underline{\varphi}(x))\cdot \nu_i(x)< 0 & \mathrm{on}~\partial B^i_r,\\
  \underline{\varphi}= 0 & \mathrm{on}~\partial B^i_\delta,  
  \end{array}\right.
 \end{equation}
provided that $A$ is sufficiently large. Here
$\nabla\underline{\varphi}_\pm(x)\cdot \nu_i(x)$ is defined as in \eqref{liu0621-3}. Then  \eqref{liu0621-2} follows from the comparison principle and the arbitrariness of $x_i\in \mathbb{S}$.

For this purpose, we define
\begin{equation}\label{liu0621-5}
    \underline{\varphi}(x):=1-\frac{\epsilon}{8}\|x-x_i\|^2, \qquad \forall x\in B^i_r.
\end{equation}
Direct calculations from \eqref{liu0620-1} yield that
\begin{align*}
\mathbf{b}(x)\cdot\nabla\underline{\varphi}(x)&=(x-x_i)^{\rm T}D{\bf b}(x_i)\nabla\underline{\varphi}(x)+o(\|x-x_i\|^2)\\
&\geq \frac{\epsilon \min\{-\lambda_1,-\lambda_2\}}{8} \|x-x_i\|^2\geq 0, \quad\forall x\approx x_i.
\end{align*}
Thus, by \eqref{liu0621-5} we can choose $r$ small  such that
\begin{align*}
    &-\Delta\underline{\varphi}-A\mathbf{b}\cdot\nabla\underline{\varphi}+(c(x)-c(x_i)-\epsilon)\underline{\varphi}
    \leq \frac{\epsilon}{4}-\frac{\epsilon}{2}\underline{\varphi}\leq 0,\quad \forall x\in B^i_r.
\end{align*}
Note that  the orbits of \eqref{system} remain in $B^i_\delta\setminus B^i_r$ only a finite time, and
$\nabla_+\underline{\varphi}(x)\cdot \nu_i(x)=-\epsilon r/4<0$, $\forall x\in \partial B^i_r$.  We can apply  \cite[Lemma 2.3]{DEF1974} to define $\underline{\varphi}\in C(\overline{B}_\delta(x_i))$ such that the boundary conditions in \eqref{liu0621-4} hold, and
   ${\bf b}\cdot \nabla \underline{\varphi}>0$ on $B^i_\delta\setminus  B^i_r$,
from which we can choose $A$ large such that \eqref{liu0621-4} holds. This implies the upper bound estimate  \eqref{liu0621-2}. The proof of Theorem \ref{liuprop1} is thus complete.
\end{proof}

\section{\bf  Case of limit cycles 
}\label{S3}
In this section, we assume that the limit set of system \eqref{system} may contain limit cycles and determine the asymptotic behavior of the principal eigenvalue. 
The main result is formulated in Theorem \ref{Limit-cycle-4}, for which the proof is a combination of those of Theorems \ref{Limit-cycle}-\ref{Limit-cycle-3}.

\begin{theorem}[Stable case]\label{Limit-cycle}
Suppose that the limit set of system \eqref{system} consists of a finite number of hyperbolic fixed points and an {\bf stable  
limit cycle
} $\mathcal{C}:=\{P(t): t\in [0,T)\}$ inside $\Omega$ with period $T$.
Then there holds
$$\lim_{A\to\infty}\lambda(A)=\min\left\{\frac{1}{T}\int_0^T c(P(t)) {\rm d}t, \,\,\, \min_{1\leq i\leq k}\{c(x_i)\}\right\},$$
where  $\{x_1,\cdots,x_k\}$ denotes the set of stable fixed points.
\end{theorem}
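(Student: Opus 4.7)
Following the super/sub-solution and comparison-principle framework of Theorem \ref{liuprop1}, set $\bar c := T^{-1}\int_0^T c(P(t))\,dt$ and $\lambda^\star := \min\{\bar c,\min_{1\le i\le k}c(x_i)\}$. I will prove the matching bounds $\liminf_{A\to\infty}\lambda(A)\ge \lambda^\star-\epsilon$ and $\limsup_{A\to\infty}\lambda(A)\le \lambda^\star+\epsilon$ for every $\epsilon>0$. The new ingredient is a construction near $\mathcal{C}$: on $\mathcal{N}_\delta:=\{x\in\Omega:\mathrm{dist}(x,\mathcal{C})<\delta\}$ introduce Floquet-type coordinates $(\theta,\rho)\in(\mathbb{R}/T\mathbb{Z})\times(-\delta,\delta)$ with $\theta$ the flow time along $\mathcal{C}$ (so $\dot P(\theta)=\mathbf{b}(P(\theta))$) and $\rho$ the signed normal distance. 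In these coordinates $\mathbf{b}\cdot\nabla f=\partial_\theta f-\mu(\theta)\rho\,\partial_\rho f+O(\rho^2)$, and stability of $\mathcal{C}$ forces $\int_0^T\mu>0$; after reparametrising we may assume $\mu(\theta)\ge\mu_0>0$. The central averaging ansatz is the $T$-periodic function
$$\phi_A(\theta):=\exp\!\Big(\tfrac1A\int_0^\theta(c(P(s))-\bar c)\,ds\Big),\qquad -A\,\phi_A'(\theta)/\phi_A(\theta)=\bar c-c(P(\theta)),$$
which exactly cancels the leading $O(A)$ tangential imbalance.

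\textbf{Lower bound.} Near hyperbolic fixed points I reuse the super-solutions of Theorem \ref{liuprop1}. On the tube take $\bar\varphi(\theta,\rho):=\phi_A(\theta)(1+K\rho^2)$ with $K:=\epsilon/4$. A direct computation yields, for $\rho$ in the tube,
$$\frac{-A\mathbf{b}\cdot\nabla\bar\varphi}{\bar\varphi}=\bar c-c(P(\theta))+\frac{2AK\mu(\theta)\rho^2}{1+K\rho^2}+O(\rho),\qquad \frac{-\Delta\bar\varphi}{\bar\varphi}=-2K+O(K\rho)+O(1/A),$$
so using $c(x)=c(P(\theta))+O(\rho)$ the super-solution inequality reduces to $2AK\mu(\theta)\rho^2-C\rho-2K+\epsilon\ge O(1/A)$ for a constant $C$; viewed as a quadratic in $\rho$, its discriminant becomes negative once $A$ is large, so the inequality holds uniformly for $\rho\in[-\delta,\delta]$. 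Outside the tube and the balls I extend $\bar\varphi$ by \cite[Lemma 2.3]{DEF1974} so that $\mathbf{b}\cdot\nabla\bar\varphi<0$, choosing additive levels so that the tube value $\phi_A(\theta)(1+K\delta^2)\approx 1$ lies strictly between the stable-ball value $8/\epsilon$ and the unstable-ball value $8/\epsilon^3$; this guarantees that the jump conditions across $\partial\mathcal{N}_\delta$ and each $\partial B_\delta^i$ have the correct sign. The comparison principle then gives $\liminf_{A\to\infty}\lambda(A)\ge\lambda^\star-\epsilon$.

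\textbf{Upper bound.} Theorem \ref{liuprop1} already furnishes $\limsup_{A\to\infty}\lambda(A)\le c(x_i)$ for each stable $x_i$, via sub-solutions supported in small balls disjoint from $\mathcal{C}$; so it remains to prove $\limsup_{A\to\infty}\lambda(A)\le\bar c+\epsilon$. Fix $r$ small and pick a cutoff $\chi\in C_c^\infty((-2r,2r))$ with $\chi\equiv 1$ on $|\rho|\le r$ and $\rho\chi'(\rho)\le 0$. Define $\underline\varphi(\theta,\rho):=\phi_A(\theta)\chi(\rho)$ on $\mathcal{N}_{2r}$ and zero elsewhere. On $|\rho|<r$ the computation of the previous step (with $K=0$) gives $-\Delta\underline\varphi-A\mathbf{b}\cdot\nabla\underline\varphi+c\underline\varphi\le(\bar c+\epsilon)\underline\varphi$. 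On the collar $r\le|\rho|\le 2r$, the drift term $-A\mathbf{b}\cdot\nabla\underline\varphi$ contains the contribution $A\mu(\theta)\rho\chi'(\rho)\phi_A(\theta)$, which is of order $-A$ by the sign choice of $\chi$ and the stability $\mu\ge\mu_0>0$; this term dominates all bounded remainders and delivers the sub-solution inequality on the collar as well. Comparing $\underline\varphi$ against a principal eigenfunction of \eqref{1} yields $\lambda(A)\le \bar c+\epsilon$ for $A$ large.

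\textbf{Main obstacle.} The delicate step is the super-solution estimate on $\mathcal{N}_\delta$: the averaging $\phi_A$ kills the $O(A)$ tangential imbalance, but the quadratic bump $K\rho^2$ simultaneously introduces a negative Laplacian contribution $-2K$ at $\rho=0$ (forcing $K$ to be small, of order $\epsilon$) and the beneficial transverse term $2AK\mu(\theta)\rho^2$ which must absorb the $O(\rho)$ normal error arising from $c(x)-c(P(\theta))$ and from the $O(\rho)$ correction to $\mathbf{b}\cdot\nabla$. This absorption balances only because $\mathcal{C}$ is stable, so $\mu>0$ on average; in the unstable or semi-stable cases the sign of $\mu$ reverses and no such super-solution exists, which is consistent with $\Lambda(\mathcal{K})=+\infty$ in Theorem \ref{mainresult}(v). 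A second subtle point is the height matching required to make the jump conditions of \cite[Lemma 2.3]{DEF1974} correct across $\partial\mathcal{N}_\delta$ and every $\partial B_\delta^i$ simultaneously, which is resolved by the hierarchy $1\approx\phi_A(1+K\delta^2)\ll 8/\epsilon\ll 8/\epsilon^3$.
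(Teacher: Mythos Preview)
Your averaging ansatz $\phi_A(\theta)=\exp\bigl(A^{-1}\int_0^\theta(c(P)-\bar c)\bigr)$ together with the quadratic normal bump is exactly the construction the paper uses in the \emph{special case} it calls Part~1 (see equation~\eqref{liu0524-2} with the factor $e^{\varepsilon r^2}$), and your gluing via \cite[Lemma~2.3]{DEF1974} matches Step~2 there. The genuine gap is your sentence ``after reparametrising we may assume $\mu(\theta)\ge\mu_0>0$.'' Topological stability of $\mathcal{C}$ in the sense of Definition~\ref{definition2} does \emph{not} force $\int_0^T\mu>0$: a non-hyperbolic attracting cycle such as $\dot r=-(r-1)^3,\ \dot\theta=1$ has $\mu\equiv 0$, and then no rescaling of the normal coordinate can manufacture a strictly positive $\mu$. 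In that case your discriminant argument collapses (the term $2AK\mu\rho^2$ vanishes), and the sub-solution collar estimate based on $A\mu(\theta)\rho\chi'(\rho)$ likewise loses its sign. Even in the hyperbolic case, the claim that a $\theta$-dependent rescaling of $\rho$ leaves the Laplacian contribution at the level $-2K+O(K\rho)+O(1/A)$ needs justification; the metric picks up nontrivial cross terms.

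The paper confronts precisely this difficulty in its Part~2. Instead of linearising the normal dynamics, it builds a foliation of the tube by genuine periodic orbits $x_\ell(\tau)$ of the \emph{rotated} fields $(\mathbf{I}+\eta_\ell\mathcal{J})\mathbf{b}$ (equation~\eqref{liu_04-0729}); the sign condition $\ell\eta_\ell\ge 0$ used in \eqref{liu0729-5} follows directly from the Poincar\'e map being topologically contracting, with no hyperbolicity needed, and $\eta_\ell\to 0$ as $\ell\to 0$ by continuity. In those $(\tau,\ell)$ coordinates the drift term again has the right sign pointwise, so your Part~1-style computation goes through verbatim. Thus your outline is correct for hyperbolic stable cycles but is missing the key construction (the $\eta_\ell$ foliation) that handles the general stable case assumed in the theorem.
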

\begin{proof}
{\bf Lower bound estimate}: We first prove that the lower bound estimate
\begin{equation}\label{liu-05}
  \liminf_{A\to\infty}\lambda(A)\geq \min\left\{\bbint c(P(t)) {\rm d}t, \,\,\, \min_{1\leq i\leq k}\{c(x_i)\}\right\},
\end{equation}
where we denote $\bbint c(P(t)) {\rm d}t:=\frac{1}{T}\int_0^T c(P(t)) {\rm d}t$ for simplicity.
Given any $\delta>0$, define  $\mathcal{O}_{\delta}:=\{x\in\Omega: d_{\mathcal{H}}(x,\mathcal{C})<\delta\}$ as the $\delta$-neighbourhood of  $\mathcal{C}$, with  $d_{\mathcal{H}}(\cdot,\cdot)$ being the distance between sets in the Hausdorff sense.
For any $x\in \mathcal{O}_{\delta}$ we perform a $C^2$ smooth change of coordinate $x\mapsto(t,r)$ such that
\begin{equation}\label{liu-change}
   x-P(t)=r\frac{\mathcal{J}{\bf b}(P(t))}{|{\bf b}(P(t))|}, \quad \forall (t,r)\in[0,T)\times (-\delta,\delta),
\end{equation}
where $\mathcal{J}=\left(
  \begin{array}{cc}
    0 & -1 \\
    1 & 0 \\
  \end{array}
\right)$, and thus $\frac{\mathcal{J}{\bf b}(P(t))}{|{\bf b}(P(t))|}$ is in fact the  outward unit normal vector of $\mathcal{C}$ at $x=P(t)$. We shall prove \eqref{liu-05} in the following two parts.

\medskip

\noindent{\bf Part 1.} We establish \eqref{liu-05} under the assumption
 \begin{equation}\label{liu0729-2}
     \begin{split}
         &{\bf b}(t,r)\cdot \mathcal{J}{\bf b}(P(t))\leq 0, \qquad\forall (t,r)\in[0,T)\times (0,\delta),\\
     &{\bf b}(t,r)\cdot \mathcal{J}{\bf b}(P(t))\geq 0, \qquad \forall (t,r)\in[0,T)\times (-\delta,0).
     \end{split}
 \end{equation}
This a special case under our assumption that  the limit cycle $\mathcal{C}$ is stable, which is considered first to illustrate our ideas of the proof for the general case.

{\bf Step 1}. Given any $\epsilon>0$, we shall define some $\overline{\phi}\in C^2(\mathcal{O}_{\delta})$ and choose $\delta$ small such that
\begin{equation}\label{liu-04}
-\Delta\overline{\phi}-A\mathbf{b}\cdot\nabla\overline{\phi}+c(x)\overline{\phi}\geq \left[\bbint c(P(s)){\rm d}s-\epsilon\right]\overline{\phi},\quad \forall x\in \mathcal{O}_{\delta},
\end{equation}
provided that  $A$ is sufficiently large, and
\begin{equation}\label{liu0524-3}
\nabla\overline{\phi}(x) \cdot \nu_1(x) >\varepsilon \delta\overline{\phi}(x), 
\quad \forall x\in \partial   \mathcal{O}_{\delta},
\end{equation}
where 
$\nu_1$ denotes the outward unit normal vector  on $ \mathcal{O}_{\delta}$, and $\varepsilon>0$ is a constant independent of $A$ to be determined later.

By \eqref{liu-change}, direct calculations yield
\begin{equation}\label{liu-01}
\begin{split}
   \partial_t x={\bf b}(P(t))+r\frac{{\rm d}}{{\rm d} t}\frac{\mathcal{J}{\bf b} (P(t))}{|{\bf b}(P(t))|} \quad \text{and}\quad\partial_r x=\frac{\mathcal{J}{\bf b} (P(t))}{|{\bf b}(P(t))|}.
    \end{split}
\end{equation}
Define
\begin{equation}\label{liu0524-2}
    \overline{\phi}(t,r):=\exp{\left\{\frac{1}{A}\left[\int_0^t c(P(s)){\rm d}s-t\bbint c(P(s)){\rm d}s\right]+\varepsilon r^2\right\}}, \quad \forall (t,r)\in\mathcal{O}_\delta.
\end{equation}
Then by \eqref{liu-change} and \eqref{liu-01} we can calculate that
\begin{equation}\label{liu-02}
\begin{split}
    \mathbf{b}(t,r)\cdot\nabla\overline{\phi}&= \mathbf{b}(t,r)\cdot\left[ \left(
  \begin{array}{cc}
  \smallskip
    \partial_t x_1 & \partial_t x_2 \\
   \partial_r x_1 & \partial_r x_2 \\
  \end{array}
\right)^{-1}\left(\begin{array}{c}
\smallskip
\partial_t \overline{\phi}\\
\partial_r \overline{\phi}
\end{array}
\right)\right]\\
=&\frac{1}{|{\bf b}(P(t))|+O(\delta)}\mathbf{b}(t,r)\cdot\left[ \left(
  \begin{array}{cc}
  \smallskip
    \partial_r x_2 & -\partial_t x_2 \\
   -\partial_r x_1 &  \partial_t x_1\\
  \end{array}
\right)\left(\begin{array}{c}
\smallskip
\partial_t \overline{\phi}\\
\partial_r \overline{\phi}
\end{array}
\right)\right]\\
=&\frac{1}{|{\bf b}(P(t))|+O(\delta)}\Bigg[(|{\bf b}(P(t))|+O(\delta))\partial_t \overline{\phi}+{\bf b}(t,r)\cdot \mathcal{J}{\bf b}(P(t)) \partial_r \overline{\phi}  \\
&-r\left(
{\bf b}(t,r)\cdot \frac{{\rm d}}{{\rm d} t}\frac{ {\bf b}(P(t))}{|{\bf b}(P(t))|}\right)\partial_r \overline{\phi} \Bigg],\quad  \forall (t,r)\in \mathcal{O}_\delta,
\end{split}
\end{equation}
where $O(\delta)$ is some constant independent of $A$ and satisfying  $O(\delta)\to 0$ as $\delta\searrow 0$.
By direct calculations, we derive that
\begin{align}\label{liu0628-5}
\frac{{\rm d}}{{\rm d} t}\frac{ {\bf b}(P(t))}{|{\bf b}(P(t))|}
=\left[ b_1(P(t))\frac{{\rm d}b_2(P(t))}{{\rm d} t}- b_2(P(t))\frac{{\rm d}b_1(P(t))}{{\rm d} t} \right]\frac{\mathcal{J}{\bf b}(P(t))}{|{\bf b}(P(t))|}=:Q(t)\mathcal{J}{\bf b}(P(t)).
\end{align}
We can derive from  
\eqref{liu0524-2}, \eqref{liu-02}, and \eqref{liu0628-5} that
\begin{align*}
    &-A\mathbf{b}(x)\cdot\nabla\overline{\phi}\\
    =&-\frac{A}{|{\bf b}(P(t))|+O(\delta)}\Bigg[(|{\bf b}(P(t))|+O(\delta))\partial_t \overline{\phi}+(1+Q(t)r){\bf b}(t,r)\cdot \mathcal{J}{\bf b}(P(t)) \partial_r \overline{\phi} \Bigg]\\
=&(1+O(\delta))\left[-c(P(t))+\bbint c(P(s)){\rm d}s\right]\overline{\phi}-\underbrace{\frac{2A\varepsilon r(1+ Q(t)r){\bf b}(t,r)\cdot \mathcal{J}{\bf b}(P(t))}{|{\bf b}(P(t))|+O(\delta)} \overline{\phi}}_{\leq 0 \,\,\, (\text{by } \eqref{liu0729-2})}\\
\geq&(1+O(\delta))\left[-c(P(t))+\bbint c(P(s)){\rm d}s\right]\overline{\phi}, 
\end{align*}
which together with \eqref{liu0524-2} gives 
\begin{equation}\label{liu0729-6}
    \begin{split}
         &-\Delta\overline{\phi}-A\mathbf{b}(x)\cdot\nabla\overline{\phi}+c(x)\overline{\phi}\\
\geq&\left[O(1/A)+O(\varepsilon)\right]\overline{\phi}+(1+O(\delta))\left[-c(P(t))+\bbint c(P(s)){\rm d}s\right]\overline{\phi}+(c(P(t))+O(\delta))\overline{\phi}\\
\geq& \left[\bbint c(P(s)){\rm d}s+O(1/A)+O(\varepsilon)+O(\delta)\right]\overline{\phi},\qquad \forall (t,r)\in \mathcal{O}_\delta.
    \end{split}
\end{equation}
Hence, for the given $\epsilon>0$, we  may choose $\delta, \varepsilon$ small and $A$ large such that \eqref{liu-04} holds.

It remains to verify \eqref{liu0524-3}. By \eqref{liu0628-5}  we have
 $${\bf b}(P(t))\cdot\frac{{\rm d}}{{\rm d} t}\frac{ {\bf b}(P(t))}{|{\bf b}(P(t))|}=Q(t){\bf b}(P(t))\cdot \mathcal{J}{\bf b}(P(t))=0.$$
 Hence, for any $x\in \partial \mathcal{O}_\delta$, under the coordinate \eqref{liu-change} we deduce that
\begin{equation*}
\begin{split}
&\nabla\overline{\phi}(x) \cdot \nu_1(x)\\
=& \frac{1}{|{\bf b}(P(t))|+O(\delta)}\left[ \left(
  \begin{array}{cc}
  \smallskip
    \partial_r x_2 & -\partial_t x_2 \\
   -\partial_r x_1 &  \partial_t x_1\\
  \end{array}
\right)\left(\begin{array}{c}
\smallskip
\partial_t \overline{\phi}\\
\partial_r \overline{\phi}
\end{array}
\right)\right] \cdot \frac{\mathcal{J}{\bf b}(P(t))}{|{\bf b}(P(t))|}\Bigg |_{r=\delta}\\
=&\frac{1}{|{\bf b}(P(t))|+O(\delta)}\Big[ |{\bf b}(P(t))|-\frac{r}{|{\bf b}(P(t))|}\left(
{\bf b}(P(t))\cdot\frac{{\rm d}}{{\rm d} t}\frac{ {\bf b}(P(t))}{|{\bf b}(P(t))|}\right)\Big]\partial_r \overline{\phi}\Bigg |_{r=\delta}\\
=&\frac{2\varepsilon \delta|{\bf b}(P(t))|}{|{\bf b}(P(t))|+O(\delta)}\overline{\phi},
\end{split}
\end{equation*}
which implies \eqref{liu0524-3} holds for $r=\delta$. The verification of \eqref{liu0524-3} for $r=-\delta$ is similar. Therefore, \eqref{liu-04} and \eqref{liu0524-3} are proved, and Step 1 is now complete.


{\bf Step 2}. We prove \eqref{liu-05}. Let $\{x_1,\cdots,x_n\}$ be the set of fixed points of \eqref{system} with $n\geq k$. Set
   $\tilde{\mathcal{O}}_\delta:=\{x\in\Omega:\exists 1\leq i\leq n, \,\, |x-x_i|<\delta \}$.
Given any $\epsilon>0$, we shall construct  a positive super-solution $\overline{\varphi}\in C(\overline{\Omega})\cap C^2(\Omega\setminus(\partial \mathcal{O}_\delta\cup \partial \tilde{\mathcal{O}}_\delta))$ such that
\begin{equation}\label{liu0524-1}
 \left\{\begin{array}{ll}
-\Delta\overline{\varphi}-A\mathbf{b}\cdot\nabla\overline{\varphi}+c(x)\overline{\varphi}\\
\medskip
\qquad\qquad\geq\left[\min\left\{\bbint c(P(t)) {\rm d}t, \,\, \min\limits_{1\leq i\leq k}\{c(x_i)\}\right\}-\epsilon\right] \overline{\varphi}&\mathrm{in} \,\, \Omega\setminus(\partial \mathcal{O}_\delta\cup \partial \tilde{\mathcal{O}}_\delta),\\
\medskip
(\nabla_-\overline{\varphi}(x)-\nabla_+\overline{\varphi}(x))\cdot \nu(x)> 0 & \mathrm{on}~\partial\mathcal{O}_\delta\cup \partial \tilde{\mathcal{O}}_\delta,\\
  \nabla\overline{\varphi}\cdot \mathbf{n}\geq 0 & \mathrm{on}~\partial\Omega,  
  \end{array}\right.
 \end{equation}
 provided that $A$ is sufficiently large. Here  
 $\nu(x)$ is the outward unit normal vector of $\partial\mathcal{O}_\delta\cup \partial \tilde{\mathcal{O}}_\delta$ (so that $\nu(x)=\nu_1(x)$, $\forall  x\in \partial\mathcal{O}_\delta$), and $\nabla\overline{\varphi}_\pm\cdot \nu$ is defined as in \eqref{liu0621-3}.
 Then \eqref{liu-05}  follows from the comparison principle.

By same arguments as in Theorem \ref{liuprop1}, we may define $C^2$ function $\overline{\psi}(x)$ such that
\begin{equation*}
-\Delta\overline{\psi}-A\mathbf{b}\cdot\nabla\overline{\psi}+c(x)\overline{\psi}\geq \left[\min\limits_{1\leq i\leq k}\{c(x_i)\}-\epsilon\right]\overline{\psi},\quad \forall x\in \tilde{\mathcal{O}}_\delta,
\end{equation*}
for small $\delta>0$. 
 Let $ \overline{\phi}(x)$ be defined in Step 1 which satisfies \eqref{liu-04} and \eqref{liu0524-3}.  Define 
\begin{equation}\label{liu0524-4}
    \overline{\varphi}(x)=\overline{\phi}(x), \,\,\forall x\in \mathcal{O}_\delta,\\
    \quad\text{and}\quad \overline{\varphi}(x)=\overline{\psi}(x), \,\,\forall x\in \tilde{\mathcal{O}}_\delta.
\end{equation}
We then define $ \overline{\varphi}(x)$  in $\Omega\setminus(\mathcal{O}_\delta\cup  \tilde{\mathcal{O}}_\delta)$  as follows.  
Since ${\bf b}(x)\cdot {\bf n}(x)<0$ on $\partial \Omega$, and ${\bf b}(x)\cdot \nu_1(x)<0$ on $\partial \mathcal{O}_\delta$ (as assumed in \eqref{liu0729-2}), we find the orbits of \eqref{system}
remain in $\Omega\setminus(\mathcal{O}_\delta\cup  \tilde{\mathcal{O}}_\delta)$ only a finite time.
Together with \eqref{liu0524-3},
we can apply  \cite[Lemma 2.3]{DEF1974} and its proof to show that there exists some $C^2$ function $G>0$ such that $G=\overline{\varphi}$ and $\nabla G\cdot \nu<\nabla\overline{\phi}\cdot \nu$ on $\partial \mathcal{O}_\delta\cup \partial \tilde{\mathcal{O}}_\delta$, and  
\begin{equation}\label{liu0524-5}
{\bf b}(x)\cdot\nabla G(x)<0, \,\,\forall x\in \Omega\setminus(\mathcal{O}_\delta\cup  \tilde{\mathcal{O}}_\delta), \quad \text{and}\quad \nabla G\cdot\mathbf{n}(x)\geq 0,\,\,\forall x\in\partial\Omega.
\end{equation}
Then we define $\overline{\varphi}=G$ in $\Omega\setminus(\mathcal{O}_\delta\cup  \tilde{\mathcal{O}}_\delta)$.

Next, we verify the constructed  $\overline{\varphi}$ satisfies \eqref{liu0524-1}. Indeed, by \eqref{liu0524-4} and the definitions of $\overline{\phi}$ and $\overline{\psi}$, the first equation of \eqref{liu0524-1} holds in $\mathcal{O}_\delta\cup  \tilde{\mathcal{O}}_\delta$. On
$\Omega\setminus(\mathcal{O}_\delta\cup  \tilde{\mathcal{O}}_\delta)$, in view of  $\overline{\varphi}=G$, by \eqref{liu0524-5} we find the boundary condition $\nabla\overline{\varphi}\cdot\mathbf{n}\geq 0$ on $\partial\Omega$ holds, and we can choose $A$ large such that the first equation of \eqref{liu0524-1} holds due to  ${\bf b}\cdot\nabla \overline{\varphi}<0$ on $\Omega\setminus(\mathcal{O}_\delta\cup  \tilde{\mathcal{O}}_\delta)$ as given by \eqref{liu0524-5}.
Moreover, by definition we find
\begin{align*}
   (\nabla_-\overline{\varphi}(x)-\nabla_+\overline{\varphi}(x))\cdot \nu(x)&=\nabla\overline{\phi}(x)\cdot \mathbf{\nu}(x)
  -\nabla G(x)\cdot \nu(x)>0,\quad \forall x\in \partial \mathcal{O}_\delta\cup \partial \tilde{\mathcal{O}}_\delta,
\end{align*}
and thus \eqref{liu-05} follows. Part 1 is now complete.

\medskip

\noindent{\bf Part 2.} We establish \eqref{liu-05} for the general case that the limit cycle $\mathcal{C}$ is stable. The proof is divided into the following two steps.

{\bf Step 1}. We prove that for any $\ell\in(-\delta,\delta)$, there exists $\eta_\ell\in\mathbb{R}$ satisfying $\ell \eta_\ell\geq 0$  such that $|\eta_\ell|$ is increasing in $|\ell|$, and the solution $x_\ell(\tau)$ of the initial value problem
\begin{equation}\label{liu_04-0729}
\left\{\aligned
  &\,\,\frac{{\rm d} x(\tau)}{{\rm d} \tau}= ({\bf I}+\eta_\ell \mathcal{J})\mathbf{b}(x(\tau)),\qquad t>0,\\
  &\,\, x(0)=P(0)+\ell\mathcal{J}{\bf b}(P(0))
 \endaligned \right.
\end{equation}
is periodic, namely, $x_\ell(T_\ell)=x_\ell(0)$ for some $T_\ell>0$. We will establish this result for $\ell\geq 0$ and the proof for $\ell<0$ is rather similar.



To this end, we  assume without loss of generality that  the initial value in \eqref{liu_04-0729} satisfies $x_\ell(0)=(0,\ell)$.
For each $\eta\geq 0$ and $\ell\in(0,\delta)$, we define $x_\ell(\tau,\eta)$ as the solution of
\begin{equation}\label{liu_04}
  \frac{{\rm d} x(\tau)}{{\rm d} \tau}= ({\bf I}+\eta \mathcal{J})\mathbf{b}(x(\tau)),\qquad t>0,
\end{equation}
 with initial data $x(0)=(0, \ell)$.  We can  choose $\delta$ small if necessary and select some $\eta^*>0$ independent of $\delta$  such that for any $\eta\in[0,\eta^*]$ and $\ell\in[0,\delta]$, the
trajectory  $x_\ell(\tau,\eta)$ 
returns to $y$-axis in some time 
close to $T$, and intersects $y$-axis at $(0, e_\ell(\eta))$.

 \begin{figure}[http!!]
  \centering
\includegraphics[height=2.8in]{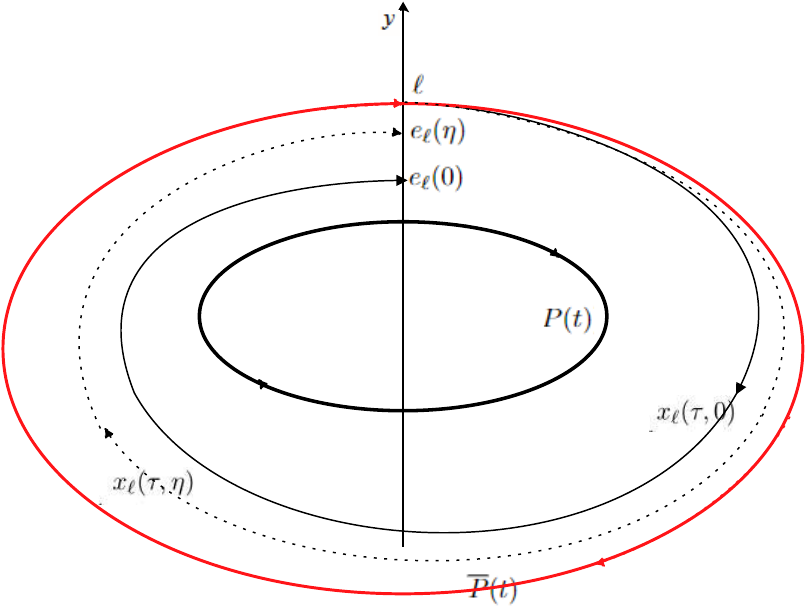}
  \caption{\small Illustration for the solution $x_\ell(\tau,\eta)$ of \eqref{liu_04} as well as the increasing function $e_\ell(\eta)$.
  }\label{figureliu1}
  \end{figure}

We first claim $e_\ell(\eta)$ is increasing in $\eta$ for fixed $\ell\in[0,\delta)$. Indeed, suppose on the contrary that $e_\ell(\eta_1)\geq e_\ell(\eta_2)$ for some  $0<\eta_1<\eta_2\leq \eta^*$.  In view of  $x_\ell(0,\eta_1)=x_\ell(0,\eta_2)$ and
$$\frac{{\rm d}x_\ell(\cdot,\eta_1)}{{\rm d}\tau}\Big |_{t=0}\cdot (0,1)<\frac{{\rm d}x_\ell(\cdot,\eta_2)}{{\rm d}\tau}\Big |_{t=0}\cdot (0,1),$$
we can find some $\tau_1,\tau_2>0$ such that $x_\ell(\tau_1,\eta_1)=x_\ell(\tau_2,\eta_2)=:x_*$, and the curves $\{x_\ell(\tau,\eta_1): \tau\in (0,\tau_1)\}$ and $\{x_\ell(\tau,\eta_2): \tau\in (0,\tau_2)\}$ do not intersect. Then there exist $c_1,c_2\geq 0$ such that
$$\frac{{\rm d} x_\ell(\cdot, \eta_1)}{{\rm d} \tau}\Big |_{\tau=\tau_1}=c_1 \frac{{\rm d} x_\ell(\cdot, \eta_2)}{{\rm d}\tau}\Big |_{\tau=\tau_2}+c_2\mathcal{J}\frac{{\rm d} x_\ell(\cdot, \eta_2)}{{\rm d} \tau}\Big |_{\tau=\tau_2},$$
which together with \eqref{liu_04} gives
$${\bf b}(x_*)+\eta_1 \mathcal{J}{\bf b}(x_*)=c_1{\bf b}(x_*)+c_1 \eta_2 \mathcal{J}{\bf b}(x_*)+c_2  \mathcal{J}{\bf b}(x_*)-c_2 \eta_2 {\bf b}(x_*).$$
This leads to $c_1-c_2\eta_2=1$ and $c_1\eta_2+c_2=\eta_1$, and thus $\eta_1-\eta_2=c_2(1+\eta_2^2)\geq 0$, which is a contradiction. Hence, $e_\ell(\eta)$ is  increasing in $\eta$.

We next prove that for any $\ell\in(0,\delta)$,  there exists some  $\eta_\ell\in (0,\eta^*]$ such that $e_\ell(\eta_\ell)=\ell$, i.e. the solution of \eqref{liu_04} with $\eta=\eta_\ell$ starting from $(0,\ell)$ is a periodic solution, and then the monotonicity of  $\eta_\ell$ in $\ell$ is a direct consequence of that of $e_\ell(\eta)$, which completes Step 1. Since the underlying limit cycle $\mathcal{C}$ is stable, it is easily seen that $e_\ell(0)<\ell$ for all $\ell\in (0,\delta)$. Thus,  by the monotonicity of $e_\ell(\eta)$ in $\eta$, it suffices to show $e_\ell(\eta^*)>\ell$.  Note that $x_0(t,0)$ is the periodic solution of \eqref{system}, i.e. $e_0(0)=0$, and thus $e_0(\eta^*)>0$ by the  monotonicity of $e_0(\eta)$. By continuity we can choose $\delta>0$ small if necessary such that $e_\ell(\eta^*)>\ell$ for all $\ell\in [0,\delta]$. Therefore, 
 \eqref{liu_04-0729} admits a periodic solution. Step 1 is complete.

{\bf Step 2}. We prove \eqref{liu-05}.  For any $\ell\in(-\delta,\delta)$, let $x_\ell(\tau)$ be the periodic solution satisfying \eqref{liu_04-0729}. We perform a $C^2$-smooth change of coordinate $x\mapsto(\tau, \ell)$ ($\tau\in[0,T_\ell)$) such that $x= x_\ell(\tau)$. Then we define the region
\begin{equation}\label{Odelta}
    \mathbb{O}_\delta:=\{(\tau, \ell): \ell\in (-\delta,\delta) \text{ and }\tau\in[0,T_\ell)\},
\end{equation}
 and let $\nu_2(x)=\pm \mathcal{J}\frac{{\rm d} x_{\pm \delta}(\tau)}{{\rm d} \tau}/|\frac{{\rm d} x_{\pm \delta}(\tau)}{{\rm d} \tau}|$ denote the outward unit normal vector  on $\partial\mathbb{O}_\delta$. Thus,
 by \eqref{liu_04-0729} we have
 \begin{equation}\label{liu0729-3}
    \begin{split}
       {\bf b}(x)\cdot \nu_2(x)&=\pm\frac{1}{|\frac{{\rm d} x_{\pm \delta}(\tau)}{{\rm d} \tau}|}{\bf b}(x_{\pm \delta}(\tau))\cdot \mathcal{J}({\bf I}+\eta_{\pm \delta} \mathcal{J})\mathbf{b}(x_{\pm\delta}(\tau))\\
     &=\mp\frac{\eta_{\pm \delta}}{|\frac{{\rm d} x_{\pm \delta}(\tau)}{{\rm d} \tau}|}|\mathbf{b}(x_{\pm\delta}(\tau))|^2<0, \quad \forall x\in  \partial\mathbb{O}_\delta.
    \end{split}
 \end{equation}

 Given any $\epsilon>0$, analogous to Step 1 of Part 1,  we shall define some $\overline{\phi}\in C^2(\mathbb{O}_{\delta})$ such that \eqref{liu-04}  holds on $\mathbb{O}_{\delta}$ for large  $A$, and similar to \eqref{liu0524-3},
\begin{equation}\label{liu0524-3-0729}
\nabla\overline{\phi}(x) \cdot \nu(x) >\varepsilon \delta\overline{\phi}(x), 
\quad \forall x\in \partial   \mathbb{O}_{\delta},
\end{equation}
where $\varepsilon>0$ will be determined later. Then based on \eqref{liu-04}, \eqref{liu0729-3}, and  \eqref{liu0524-3-0729}, we can apply  Step 2 of Part 1  to establish \eqref{liu-05} by constructing a positive super-solution satisfying \eqref{liu0524-1}.

To this end, we define
\begin{equation}\label{liu0729-4}
    \overline{\phi}(\tau,\ell):=\exp{\left\{\frac{1}{A}\left[\int_0^\tau c(x_\ell(s)){\rm d}s-\tau\bbint c(x_\ell (s)){\rm d}s\right]+\varepsilon \ell^2\right\}}, \quad \forall (\tau, \ell)\in\mathbb{O}_\delta.
\end{equation}
Note that $x_\ell(\tau)\to P(\tau)$ as $\ell\to 0$. It follows from \eqref{liu-01} that
$$\lim_{\ell\to 0}\partial_\ell x\cdot \mathcal{J}{\bf b}(x_\ell(\tau))=|{\bf b}(P(\tau))|,$$
and thus $\partial_\ell x\cdot \mathcal{J}{\bf b}(x_\ell(\tau))>0$ in $\mathbb{O}_\delta$ for small $\delta$.  Then by \eqref{liu_04-0729} and \eqref{liu0729-4} we have
\begin{align}\label{liu0729-5}
    \mathbf{b}(\tau, \ell)\cdot\nabla\overline{\phi}&= \mathbf{b}(x_\ell(\tau))\cdot\left[ \left(
  \begin{array}{cc}
  \smallskip
    \partial_\tau x_1 & \partial_\tau x_2 \\
   \partial_\ell x_1 & \partial_\ell x_2 \\
  \end{array}
\right)^{-1}\left(\begin{array}{c}
\smallskip
\partial_\tau \overline{\phi}\\
\partial_\ell \overline{\phi}
\end{array}
\right)\right] \notag\\
=&\frac{\mathbf{b}(x_\ell(\tau))}{\partial_\ell x\cdot \mathcal{J}{\bf b}+O(\eta_\ell)}\cdot\left[ \left(
  \begin{array}{cc}
  \smallskip
    \partial_\ell x_2 & -\partial_\tau x_2 \\
   -\partial_\ell x_1 &  \partial_\tau x_1\\
  \end{array}
\right)\left(\begin{array}{c}
\smallskip
\partial_\tau \overline{\phi}\\
\partial_\ell \overline{\phi}
\end{array}
\right)\right] \notag\\
=&\frac{\partial_\ell x\cdot \mathcal{J}{\bf b} }{\partial_\ell x\cdot \mathcal{J}{\bf b}+O(\eta_\ell)}\partial_\tau \overline{\phi}-\frac{\partial_\tau x\cdot \mathcal{J}{\bf b} }{\partial_\ell x\cdot \mathcal{J}{\bf b}+O(\eta_\ell)}\partial_\ell \overline{\phi}  \notag\\
=&(1+O(\eta_\ell))\partial_\tau \overline{\phi}-\frac{({\bf I}+\eta_\ell \mathcal{J})\mathbf{b}(x_\ell(\tau))\cdot \mathcal{J}{\bf b}((x_\ell(\tau)) }{\partial_\ell x\cdot \mathcal{J}{\bf b}+O(\eta_\ell)}\partial_\ell \overline{\phi}  \\
=&\frac{1+O(\eta_\ell))}{A}\left[c(x_\ell(\tau))-\bbint c(x_\ell (s)){\rm d}s\right] \overline{\phi}-\underbrace{\frac{2\varepsilon\ell\eta_\ell|\mathbf{b}(x_\ell(\tau))|^2 }{\partial_\ell x\cdot \mathcal{J}{\bf b}+O(\eta_\ell)}\overline{\phi}}_{\geq 0}  \notag\\
\leq &\frac{1+O(\eta_\ell))}{A}\left[c(x_\ell(\tau))-\bbint c(x_\ell (s)){\rm d}s\right] \overline{\phi},  \qquad \forall (\tau,\ell)\in \mathbb{O}_\delta.\notag
\end{align}
Note that $|\eta_\ell|\leq \max \{|\eta_\delta|,|\eta_{-\delta}|\}$ for $\ell\in[-\delta,\delta]$ and $\eta_{\pm \delta} \to 0$ as $\delta\searrow 0$ as proved in Step 1, and thus $O(\eta_\ell)\to 0$ as $\delta\searrow 0$. Hence, similar to \eqref{liu0729-6}, by \eqref{liu0729-4} and \eqref{liu0729-5} we deduce that
\begin{align*}
    &-\Delta\overline{\phi}-A\mathbf{b}(x)\cdot\nabla\overline{\phi}+c(x)\overline{\phi}\\
\geq&\left[O(1/A)+O(\varepsilon)\right]\overline{\phi}+(1+O(\eta_\ell))\left[-c(x_\ell(\tau))+\bbint c(x_\ell(s)){\rm d}s\right]\overline{\phi}+c(x_\ell(\tau))\overline{\phi}\\
\geq& \left[\bbint c(P(s)){\rm d}s+O(1/A)+O(\varepsilon)+O(\delta)\right]\overline{\phi},\qquad \forall (\tau,\ell)\in \mathbb{O}_\delta,
\end{align*}
where the last inequality holds due to $|x_\ell(\tau)-P(\tau)|\leq O(\delta)$. Then
we  can choose $\delta, \varepsilon$ small and $A$ large such that \eqref{liu-04} holds.

It remains to prove \eqref{liu0524-3-0729}. As in Step 1, we only verify \eqref{liu0524-3-0729} for $\ell=\delta$, as the verification for  $\ell=-\delta$ is similar. By virtue of \eqref{liu_04-0729} and \eqref{liu0729-4} we calculate that
\begin{equation}\label{liu_0730-6}
\begin{split}
&\nabla\overline{\phi}(x) \cdot \nu_2(x)\\
=& \frac{1}{\partial_\ell x\cdot \mathcal{J}{\bf b}+O(\eta_\delta)}\left[ \left(
  \begin{array}{cc}
  \smallskip
    \partial_\ell x_2 & -\partial_\tau x_2 \\
   -\partial_\ell x_1 &  \partial_\tau x_1\\
  \end{array}
\right)\left(\begin{array}{c}
\smallskip
\partial_\tau \overline{\phi}\\
\partial_\ell \overline{\phi}
\end{array}
\right)\right] \cdot \frac{\mathcal{J}\frac{{\rm d} x_{ \delta}(\tau)}{{\rm d} \tau}}{|\frac{{\rm d} x_{\delta}(\tau)}{{\rm d} \tau}|}\Bigg |_{\ell=\delta}\\
=&\frac{\mathcal{J}\partial_\tau x \cdot\mathcal{J}({\bf I}+\eta_{\delta}\mathcal{J}){\bf b}\partial_\ell \overline{\phi}+O(1/A)\overline{\phi}}{(\partial_\ell x\cdot \mathcal{J}{\bf b}+O(\eta_\delta))|\frac{{\rm d} x_{\delta}(\tau)}{{\rm d} \tau}|}\Bigg |_{\ell=\delta}\\
=&\frac{2\varepsilon \delta|({\bf I}+\eta_{\delta}\mathcal{J}){\bf b}|^2+O(1/A)}{(\partial_\ell x\cdot \mathcal{J}{\bf b}+O(\eta_\delta))(|{\bf b}(x_\delta(\tau))|+O(\eta_\delta))}\overline{\phi}.
\end{split}
\end{equation}
 Together with $\partial_\ell x\cdot \mathcal{J}{\bf b}\to |{\bf b}|$ and $\eta_\delta\to 0$ as $\delta\searrow 0$, this implies \eqref{liu0524-3-0729} holds by choosing $\delta$ small and $A$ large if necessary. Therefore, we have proved   \eqref{liu-04} and  \eqref{liu0524-3-0729}, so that \eqref{liu-05} can be   established by  Step 2 of Part 1.

\medskip

{\bf Upper bound estimate}: We next prove that the upper bound estimate
\begin{equation*}
  \limsup_{A\to\infty}\lambda(A)\leq \min\left\{\bbint c(P(t)) {\rm d}t,\,\, \min\limits_{1\leq i\leq k}\{c(x_i)\}\right\}.
\end{equation*}
The estimate $\limsup\limits_{A\to\infty}\lambda(A)\leq \min\limits_{1\leq i\leq k}\{c(x_i)\}$ can be proved by the same arguments as in Theorem \ref{liuprop1}. It remains to establish
\begin{equation}\label{liu-05-3}
  \limsup_{A\to\infty}\lambda(A)\leq \bbint c(P(t)) {\rm d}t.
\end{equation}
Let $\mathbb{O}_\delta$ be defined in \eqref{Odelta}.  Under the coordinate $x=(\tau,\ell)$ defined in Pert 2, we define
\begin{equation*}
    \underline{\phi}(\tau, \ell):=\exp{\left\{\frac{1}{A}\left[\int_0^\tau c(x_\ell(s)){\rm d}s-\tau\bbint c(x_\ell(s)){\rm d}s\right]+\varepsilon(\delta^2-\ell^2)\right\}},\quad \forall (\tau,\ell)\in\mathbb{O}_\delta.
\end{equation*}
Given any $\epsilon>0$, similar to Step 2 of Part 2 we can verify  $\overline{\phi}(x)$ satisfies
\begin{equation}\label{liu-04-1}
-\Delta\underline{\phi}-A\mathbf{b}\cdot\nabla\underline{\phi}+c(x)\underline{\phi}\leq \left[\bbint c(P(s)){\rm d}s+\epsilon\right]\underline{\phi},\quad \forall x\in \mathbb{O}_{\delta},
\end{equation}
for large  $A$ and small $\delta$, and
\begin{equation}\label{liu0524-3-1}
\nabla\underline{\phi}(x) \cdot \nu_2(x) <-\varepsilon\delta\underline{\phi}(x), \quad \forall x\in \partial   \mathbb{O}_{\delta}.
\end{equation}
Here $\nu_2$ denotes the outward unit normal vector  on $ \partial\mathcal{O}_{\delta}$.

We next choose some subset $U\subset\Omega$ such that {\rm (i)} $\mathbb{O}_\delta\subset U$; {\rm (ii)} $ {\bf b}(x)\cdot \nu_3(x)<0$, $\forall x\in\partial U$, with $\nu_3$ denoting the outward unit normal vector  on $\partial U$, and {\rm (iii)} there are no limit points of  system \eqref{system}  in $\overline{U}\setminus\mathbb{O}_\delta$. In view of  $\mathcal{C}\subset \mathbb{O}_\delta$, and ${\bf b}(x)\cdot \nu_1(x)<0$ on $\partial \mathbb{O}_\delta$, we may define $\underline{G}\in C^2(\overline{U}\setminus\mathbb{O}_\delta)$ such that $\underline{G}=\underline{\phi}$ on $\partial \mathbb{O}_\delta$,  $\underline{G}=0$ on $ \partial U$, and furthermore,
\begin{equation}\label{liu0524-5-1}
\underline{G}(x)>0 \quad\text{and}\quad {\bf b}(x)\cdot\nabla \underline{G}(x)>0, \,\,\quad \forall x\in \overline{U}\setminus\mathbb{O}_\delta.
\end{equation}
It thus follows that $\nabla\underline{G}(x)\cdot \nu_3(x)<0$, $\forall x\in\partial U$. Then we define $\underline{\varphi}\in C(\overline{U})$ such that
\begin{equation*}
    \underline{\varphi}(x):=\underline{\phi}(x), \,\,\forall x\in \mathbb{O}_\delta,
    \quad\text{and}\quad \underline{\varphi}(x)=\underline{G}(x), \,\,\quad \forall x\in \overline{U}\setminus\mathbb{O}_\delta.
\end{equation*}
Similar to Step 3, by \eqref{liu-04-1}, \eqref{liu0524-3-1}, and \eqref{liu0524-5-1}, we can choose $\delta$ small and $A$ large such that
\begin{equation*}
 \left\{\begin{array}{ll}
\medskip
-\Delta\underline{\varphi}-A\mathbf{b}\cdot\nabla\underline{\varphi}+c(x)\underline{\varphi}\leq\left[\bbint c(P(s)){\rm d}s+\epsilon\right] \underline{\varphi}&\mathrm{in} \,\, U\setminus\partial\mathbb{O}_\delta,\\
\medskip
(\nabla_-\underline{\varphi}(x)-\nabla_+\underline{\varphi}(x))\cdot \nu_1(x)<0 & \mathrm{on}~\partial\mathbb{O}_\delta,\\
  \underline{\varphi}=0,\,\,\nabla\underline{\varphi}\cdot \nu_3< 0 & \mathrm{on}~\partial U. 
  \end{array}\right.
 \end{equation*}
Then the upper bound estimate \eqref{liu-05-3} follows from the comparison principle and the Arbitrariness of $\epsilon$. This completes the proof.
\end{proof}


Our next result concerns the case when the limit set of system \eqref{system} contains an unstable limit cycle.
\begin{theorem}[Unstable case]\label{Limit-cycle-2}
Suppose that the limit set of system \eqref{system} consists of a finite number of hyperbolic fixed points and a 
{\bf unstable limit cycle 
} (periodic repeller) $\mathcal{C}$ inside $\Omega$ such that ${\bf b}\neq 0$ on $\mathcal{C}$. Let $\{x_1,\cdots,x_k\}$ denote the set of stable fixed points. Then
$$\lim_{A\to\infty}\lambda(A)= \min_{1\leq i\leq k}\{c(x_i)\}.$$
\end{theorem}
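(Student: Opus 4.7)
The plan is to follow the two-sided estimate scheme of Theorem~\ref{liuprop1}, reusing the local constructions near hyperbolic fixed points and inserting a new piece near the unstable cycle $\mathcal{C}$ inspired by the unstable-fixed-point super-solution \eqref{supersolution2}.  The upper bound $\limsup_{A\to\infty}\lambda(A)\leq \min_{1\leq i\leq k}c(x_i)$ is immediate: for each stable fixed point $x_i$, the sub-solution $\underline{\varphi}(x)=1-\tfrac{\epsilon}{8}\|x-x_i\|^2$ constructed in the upper bound portion of Theorem~\ref{liuprop1}, extended to $\Omega$ via \cite[Lemma~2.3]{DEF1974}, works verbatim, since it is supported in a small ball that avoids $\mathcal{C}$.

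For the lower bound I would build a global super-solution $\overline{\varphi}$ of the same type as in \eqref{liu0619-2}, with jumps allowed across both $\cup_i\partial B^i_\delta$ and the boundary of a tubular neighborhood $\mathcal{O}_\delta$ of $\mathcal{C}$.  Near the hyperbolic fixed points, the case distinctions of Theorem~\ref{liuprop1} apply without change.  The only new ingredient is the piece in $\mathcal{O}_\delta$.  To produce it I would adapt Part~2 of the proof of Theorem~\ref{Limit-cycle}: because $\mathcal{C}$ is now unstable, the return function $e_\ell(\eta)$ from Step~1 there satisfies $e_\ell(0)>\ell$ for $\ell>0$ small, and the same monotonicity argument (which is in fact sign-agnostic) yields $\eta_\ell\in\mathbb{R}$ with $\ell\eta_\ell\leq 0$ --- opposite sign to the stable case --- such that \eqref{liu_04-0729} admits a periodic solution, giving a $C^2$ coordinate $(\tau,\ell)$ on $\mathcal{O}_\delta$.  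I then set
\[
\overline{\phi}(\tau,\ell):=\frac{8}{\epsilon^{3}}-\frac{\ell^{2}}{\sqrt{\delta}},
\]
in direct analogy with \eqref{supersolution2}.  A computation parallel to \eqref{liu0729-5}, using $\mathbf{b}\cdot\mathcal{J}\mathbf{b}=0$ and $|\mathcal{J}\mathbf{b}|=|\mathbf{b}|$, gives
\[
\mathbf{b}\cdot\nabla\overline{\phi}\;=\;\frac{2\ell\eta_{\ell}\,|\mathbf{b}|^{2}}{\sqrt{\delta}\,\bigl(\partial_{\ell}x\cdot\mathcal{J}\mathbf{b}+O(\eta_{\ell})\bigr)}\;\leq\;0,
\]
whence $-A\mathbf{b}\cdot\nabla\overline{\phi}\geq 0$, while $-\Delta\overline{\phi}\geq \tfrac{2}{\sqrt{\delta}}-O(1)$ dominates the zero-order contribution (bounded by $\tfrac{16\|c\|_\infty}{\epsilon^{3}}$) once $\delta$ is small.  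Hence $\overline{\phi}$ is a super-solution on $\mathcal{O}_\delta$ with spectral parameter $\min_{1\leq i\leq k}c(x_{i})-\epsilon$.

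The gluing step then mirrors Step~2 of Part~1 of Theorem~\ref{Limit-cycle}.  Since $\mathcal{C}$ is unstable, $\mathbf{b}\cdot\nu>0$ on $\partial\mathcal{O}_\delta$, so orbits starting in $\Omega\setminus(\mathcal{O}_\delta\cup\tilde{\mathcal{O}}_\delta)$ stay there only for a finite time; after enforcing the size comparison \eqref{liu0621-1} between $\overline{\phi}$, the stable-fixed-point pieces ($\sim\tfrac{8}{\epsilon}$), and the unstable-fixed-point pieces ($\sim\tfrac{8}{\epsilon^{3}}$), \cite[Lemma~2.3]{DEF1974} produces a positive $C^{2}$ function $G$ on $\Omega\setminus(\mathcal{O}_\delta\cup\tilde{\mathcal{O}}_\delta)$ with $\mathbf{b}\cdot\nabla G<0$, $\nabla G\cdot\mathbf{n}\geq 0$ on $\partial\Omega$, $G=\overline{\varphi}$ on all inner boundaries, and $\nabla G\cdot\nu<\nabla\overline{\varphi}\cdot\nu$ there.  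Taking $A$ large lets $-A\mathbf{b}\cdot\nabla G$ absorb every remaining lower-order term, and the jump and boundary conditions of \eqref{liu0619-2} hold.  The comparison principle then yields $\liminf_{A\to\infty}\lambda(A)\geq\min_{1\leq i\leq k}c(x_{i})-\epsilon$, and $\epsilon\to 0$ finishes the proof.

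The one genuinely delicate point I would expect is the adaptation of Part~2, Step~1 of Theorem~\ref{Limit-cycle} to the unstable regime.  One must check that the monotonicity of $e_\ell(\eta)$ in $\eta$ extends to negative $\eta$, and that $|\eta_\ell|\to 0$ as $\delta\to 0$ uniformly in $\ell$, so that the $O(\eta_\ell)$ errors appearing in the coordinate change and in the computation of $\mathbf{b}\cdot\nabla\overline{\phi}$ can actually be neglected.  The rest of the argument is essentially a sign-flipped replay of already established constructions, with the profile $K-\ell^{2}/\sqrt{\delta}$ on $\mathcal{O}_\delta$ playing exactly the role that $K-\|x-x_{i}\|^{2}/\sqrt{\delta}$ plays for an unstable fixed point in Theorem~\ref{liuprop1}.
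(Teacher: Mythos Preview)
Your proposal is correct and follows essentially the same route as the paper's proof: the upper bound is reused verbatim from Theorem~\ref{liuprop1}, and for the lower bound the paper likewise builds the $(\tau,\ell)$ coordinates near $\mathcal{C}$ with the sign flip $\ell\eta_\ell\le 0$, takes the super-solution profile $\overline{\phi}(\tau,\ell)=1-\ell^2/\sqrt{\delta}$ (your additive constant $8/\epsilon^3$ is a harmless variant chosen to match the unstable-fixed-point pieces), uses $\ell\eta_\ell\le 0$ to get $\mathbf{b}\cdot\nabla\overline{\phi}\le 0$, lets $-\Delta\overline{\phi}\sim 2/\sqrt{\delta}$ dominate the zero-order terms, and glues to the fixed-point pieces via \cite[Lemma~2.3]{DEF1974} exactly as you describe. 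Your identification of the only genuinely new technical point --- extending the monotonicity of $e_\ell(\eta)$ and the smallness of $|\eta_\ell|$ to the unstable regime --- is also on target; the paper handles this by noting that the geometric intersection argument in Step~1 of Part~2 of Theorem~\ref{Limit-cycle} is indeed sign-agnostic.
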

\begin{proof}
Note that the upper bound estimate $$\limsup\limits_{A\to\infty}\lambda(A)\leq \min\limits_{1\leq i\leq k}\{c(x_i)\}$$
can be established by the same arguments as Step 2 of Theorem \ref{liuprop1}. It suffices to show the lower bound estimate
\begin{equation}\label{liu0627}
  \liminf_{A\to\infty}\lambda(A)\geq \min\limits_{1\leq i\leq k}\{c(x_i)\}.
\end{equation}

{\bf Step 1}. For any small $\delta>0$, we shall define a region $\mathbb{O}_{\delta}\subset \Omega$ such that ${\bf b}(x)\cdot \nu(x)>0$, $\forall x\in\partial\mathbb{O}_{\delta}$, where  $\nu(x)$ is the outward unit normal vector of  $\partial \mathbb{O}_\delta$. Then we will construct some positive $\overline{\phi}\in C^2(\mathbb{O}_{\delta})$ such that
\begin{equation}\label{liu0627-1}
-\Delta\overline{\phi}-A\mathbf{b}\cdot\nabla\overline{\phi}+c(x)\overline{\phi}\geq \left[\min\limits_{1\leq i\leq k}\{c(x_i)\}\right]\overline{\phi},\quad \forall x\in \mathbb{O}_{\delta},
\end{equation}
provided that  $\delta$ is small, and
\begin{equation}\label{liu0628-1}
-2C\sqrt{\delta} <\nabla\overline{\phi}(x) \cdot \nu(x) <0, \quad \forall x\in \partial   \mathbb{O}_{\delta},
\end{equation}
 for some constant $C>0$ independent of $\delta$.

Set $\mathcal{C}:=\{P(t): t\in [0,T)\}$ for some $T$-periodic solution $P(t)$ of system \eqref{system}. Under the coordinate $x\mapsto(t,r)$ introduced by \eqref{liu-change},  for each  $(0,r)$ with $r\in[-\delta,\delta]$,  similar to Step 1 of Part 2 in the proof of Theorem \ref{Limit-cycle}, 
there exists some $\eta_\ell\in\mathbb{R}$ satisfying $\ell\eta_\ell\leq 0$ (in fact, it holds $\ell\eta_\ell<0$ if $r\neq 0$) such that the solution of \eqref{liu_04-0729}  
is a periodic solution with period $T_\ell$, which is denoted by $x_\ell(\tau)$. Here $\ell\eta_\ell\leq 0$ is due to the unstability of $\mathcal{C}$. As in Theorem \ref{Limit-cycle} we perform a $C^2$-smooth change of coordinate $x\mapsto(\tau, \ell)$ ($\tau\in[0,T_\ell)$) such that $x= x_\ell(\tau)$. We now define the region
$\mathbb{O}_\delta:=\{(\tau, \ell): \tau\in[0,T_\ell) \text{ and }  \ell\in[-\delta,\delta]\}$. Then the outward unit normal vector $\nu$ can be written as
 $\nu(x)=\pm \mathcal{J}\frac{{\rm d} x_{\pm \delta}(\tau)}{{\rm d} \tau}/|\frac{{\rm d} x_{\pm \delta}(\tau)}{{\rm d} \tau}|$, and thus by \eqref{liu_04-0729} we have
 \begin{equation}\label{liu0728-1}
    \begin{split}
       {\bf b}(x)\cdot \nu(x)
     >0, \quad \forall x\in\partial\mathbb{O}_\delta,
    \end{split}
 \end{equation}
which follows from the same arguments as in \eqref{liu0729-3}.

Next,  we define
\begin{equation}\label{liu-0727-1}
   \overline{\phi}(\tau, \ell):=1-\frac{\ell^2}{\sqrt{\delta}},\qquad  \forall(\tau, \ell)\in \mathbb{O}_\delta.
\end{equation}
By \eqref{liu_04-0729} and \eqref{liu-0727-1}, it is not difficult to verify that
\begin{equation}\label{liu-0728-7}
\begin{split}
     \mathbf{b}(\tau, \ell)\cdot\nabla\overline{\phi}=&-\frac{\ell|\nabla\overline{\phi}|}{|\ell|\left|\frac{{\rm d} x_\ell(\tau)}{{\rm d} \tau}\right|}\mathbf{b}(x_\ell(\tau) \cdot \mathcal{J}\frac{{\rm d} x_\ell(\tau)}{{\rm d} \tau}\\
     =&-\frac{\ell|\nabla\overline{\phi}|}{|\ell|\left|\frac{{\rm d} x_\ell(\tau)}{{\rm d} \tau}\right|}\mathbf{b}(x_\ell(\tau) \cdot \mathcal{J}({\bf I}+\eta_\ell \mathcal{J})\mathbf{b}(x_\ell(\tau))\\
     =&\frac{\ell\eta_\ell|\nabla\overline{\phi}|}{|\ell|\left|\frac{{\rm d} x_\ell(\tau)}{{\rm d} \tau}\right|}|\mathbf{b}(x_\ell(\tau)|^2\leq 0,\quad \forall (\tau, \ell)\in \mathbb{O}_\delta,
     \end{split}
\end{equation}
where the last inequality is due to $\ell\eta_\ell\leq 0$.
Hence, by \eqref{liu-0728-7} we can calculate that
\begin{align}\label{liu0628-4}
    &-\Delta\overline{\phi}-A\mathbf{b}\cdot\nabla\overline{\phi}+\left[c(x)-\min\limits_{1\leq i\leq k}\{c(x_i)\}\right]\overline{\phi}\notag\\
   \geq &-\Delta\overline{\phi}+\left[c(x)-\min\limits_{1\leq i\leq k}\{c(x_i)\}\right]\overline{\phi}\\
   =&\frac{2}{\sqrt{\delta}}\left(\left(\partial_{x_1} \ell\right)^2+\left(\partial_{x_2} \ell\right)^2\right)+\frac{2r}{\sqrt{\delta}}\left[\partial^2_{x_1} \ell+\partial^2_{x_2} \ell\right]\notag\\
   &+\left[c(x)-\min\limits_{1\leq i\leq k}\{c(x_i)\}\right]\overline{\phi}.\notag
\end{align}
In view of  $(\partial_{x_1}r)^2+(\partial_{x_2}r)^2>0$ and $|\frac{2\ell}{\sqrt{\delta}}|\leq 2 \sqrt{\delta}$ for all $\ell\in[-\delta,\delta]$,  by \eqref{liu0628-4} we can choose $\delta$ small such that \eqref{liu0627-1} holds.

It remains to verify \eqref{liu0628-1}. Indeed, by \eqref{liu-0727-1}, for any $x\in\partial\mathbb{O}_\delta$, it follows that
\begin{align*}
    \nabla\overline{\phi}(x) \cdot \nu(x)&=-|\nabla\overline{\phi}(x)|=-2\sqrt{\delta}\sqrt{(\partial_{x_1}\ell)^2+(\partial_{x_2}\ell)^2},
\end{align*}
which implies \eqref{liu0628-1} holds.  Step 1 is thus complete.

\smallskip

{\bf Step 2}. We establish \eqref{liu0627}. Let $\{x_1,\cdots,x_n\}$ be the set of fixed points of \eqref{system} with $n\geq k$ and set $\tilde{\mathcal{O}}_\delta:=\{x\in\Omega:\exists 1\leq i\leq n, \,\, |x-x_i|<\delta \}$.
Given any $\epsilon>0$, applying same arguments as in Theorem \ref{liuprop1}, we can construct $\overline{\psi}\in C^2(\tilde{\mathcal{O}}_\delta)$ such that
\begin{equation}\label{liu-010}
-\Delta\overline{\psi}-A\mathbf{b}\cdot\nabla\overline{\psi}+c(x)\overline{\psi}\geq \left[\min\limits_{1\leq i\leq k}\{c(x_i)\}-\epsilon\right]\overline{\psi},\quad \forall x\in \tilde{\mathcal{O}}_\delta,
\end{equation}
provided that $\delta>0$ is taken small. Let the region $\mathbb{O}_\delta$ and function $\overline{\phi}>0$ be defined in Step 1.
Similar to Step 2 of Part 1 in the proof of  Theorem \ref{Limit-cycle}, based on  \eqref{liu0627-1},  \eqref{liu0628-1}, and \eqref{liu0728-1}, we can define $\overline{\varphi}\in C(\overline{\Omega})\cap C^2(\Omega\setminus(\partial \mathbb{O}_\delta\cup \partial \tilde{\mathcal{O}}_\delta))$ such that
 $$
    \overline{\varphi}(x)=\overline{\phi}(x), \,\,\,\,\forall x\in \mathbb{O}_\delta,\\
    \quad\text{and}\quad \overline{\varphi}(x)=\overline{\psi}(x), \,\,\,\,\forall x\in \tilde{\mathcal{O}}_\delta,
$$
and for sufficiently large $A$, there holds
\begin{equation}\label{liu0524-1-0726}
 \left\{\begin{array}{ll}
-\Delta\overline{\varphi}-A\mathbf{b}\cdot\nabla\overline{\varphi}+c(x)\overline{\varphi}\geq\left[ \min\limits_{1\leq i\leq k}\{c(x_i)\}-\epsilon\right] \overline{\varphi}&\mathrm{in} \,\, \Omega\setminus(\partial \mathbb{O}_\delta\cup \partial \tilde{\mathcal{O}}_\delta),\\
\medskip
(\nabla_-\overline{\varphi}(x)-\nabla_+\overline{\varphi}(x))\cdot \nu(x)> 0 & \mathrm{on}~\partial\mathbb{O}_\delta\cup \partial \tilde{\mathcal{O}}_\delta,\\
  \nabla\overline{\varphi}\cdot \mathbf{n}\geq 0 & \mathrm{on}~\partial\Omega.  
  \end{array}\right.
 \end{equation}
Then \eqref{liu0627} can be derived by the comparison principle. The proof is complete.
\end{proof}


\begin{theorem}[Semi-stable case]\label{Limit-cycle-3}
Suppose that the limit set of system \eqref{system} consists of a finite number of hyperbolic fixed points and a {\bf semi-stable limit cycle} $\mathcal{C}$ inside $\Omega$. 
Then
$$\lim_{A\to\infty}\lambda(A)= \min_{1\leq i\leq k}\{c(x_i)\},$$
where $\{x_1,\cdots,x_k\}$ denote the set of stable fixed points.
\end{theorem}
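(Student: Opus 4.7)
The plan is to mirror the strategy of Theorem \ref{Limit-cycle-2}, with one decisive modification in the tube around $\mathcal{C}$. I would first obtain the upper bound $\limsup_{A\to\infty}\lambda(A)\le \min_{1\le i\le k}c(x_i)$ exactly as in Theorem \ref{liuprop1}: place a positive sub-solution on a small ball around a stable fixed point that achieves the minimum; the limit cycle $\mathcal{C}$ enters nowhere in this direction.

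For the lower bound $\liminf_{A\to\infty}\lambda(A)\ge\min_{1\le i\le k}c(x_i)$, I would set up the reparametrized coordinates of Part~2 of Theorem \ref{Limit-cycle}: for each $\ell\in(-\delta,\delta)$ pick $\eta_\ell\in\mathbb{R}$ so that the solution of $\dot x=(\mathrm{I}+\eta_\ell\mathcal{J})\mathbf{b}(x)$ with $x_\ell(0)=P(0)+\ell\mathcal{J}\mathbf{b}(P(0))/|\mathbf{b}(P(0))|$ is periodic, and coordinatize the tube $\mathbb{O}_\delta=\{(\tau,\ell)\}$ accordingly. The key new observation is that in the semi-stable case the Poincar\'e-map difference $\pi(\ell)-\ell$ keeps a fixed sign across $\ell=0$, so the resulting $\eta_\ell$ has a single sign on $(-\delta,\delta)\setminus\{0\}$ (say $\eta_\ell<0$), while the product $\ell\eta_\ell$ changes sign across $\mathcal{C}$. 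Consequently the symmetric Theorem~\ref{Limit-cycle-2} choice $1-\ell^2/\sqrt{\delta}$ produces $\mathbf{b}\cdot\nabla\overline{\phi}$ with the wrong sign on one side of $\mathcal{C}$. My remedy is the tilted super-solution
\[
\overline{\phi}(\tau,\ell)=1+M\ell-\frac{\ell^2}{\sqrt{\delta}},
\]
with $M$ taken opposite in sign to $\eta_\ell$ and $|M|>2\sqrt{\delta}$ (e.g.\ $M=-1$ when $\eta_\ell<0$). For such $M$ and small $\delta$, $\partial_\ell\overline{\phi}=M-2\ell/\sqrt{\delta}$ has a uniform sign on $\mathbb{O}_\delta$, and the general formula \eqref{liu0729-5} (together with $\partial_\ell x\cdot\mathcal{J}\mathbf{b}>0$ for small $\delta$) then forces $\mathbf{b}\cdot\nabla\overline{\phi}\le 0$ throughout. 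The dominant curvature bound $-\Delta\overline{\phi}\ge C/\sqrt{\delta}$ from the $\ell^2$-term is unchanged, so for $\delta$ small and $A$ large I expect
\[
-\Delta\overline{\phi}-A\mathbf{b}\cdot\nabla\overline{\phi}+c(x)\overline{\phi}\ge\Bigl(\min_{1\le i\le k}c(x_i)-\epsilon\Bigr)\overline{\phi}\quad\text{on }\mathbb{O}_\delta.
\]

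The rest of the argument parallels Theorem~\ref{Limit-cycle-2} step for step: handle each hyperbolic fixed point with the case analysis of Theorem~\ref{liuprop1}, fill the complement $\Omega\setminus(\mathbb{O}_\delta\cup\tilde{\mathcal{O}}_\delta)$ via \cite[Lemma~2.3]{DEF1974} (transience follows from the Poincar\'e--Bendixson classification of the limit set), rescale $\overline{\phi}$ by a factor of order $\epsilon^{-1}$ so that the piecewise $\overline{\varphi}$ satisfies the ordering between entries and exits demanded by the Diederich--Fornæss construction, verify the jump condition $(\nabla_-\overline{\varphi}-\nabla_+\overline{\varphi})\cdot\nu>0$ on the interfaces, and apply the comparison principle before letting $\epsilon\to 0$. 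The main obstacle I foresee is precisely the uniform sign of $\mathbf{b}\cdot\nabla\overline{\phi}$ across the semi-stable $\mathcal{C}$: the symmetric choice fails on one side because $\ell\eta_\ell$ flips sign, and the asymmetric linear tilt $M\ell$---with $M$ of definite sign opposite to $\eta_\ell$, and $|M|$ dominating the boundary contribution $2\sqrt{\delta}$ of the quadratic term---is what allows a single super-solution to work on both sides of $\mathcal{C}$ while preserving the leading $1/\sqrt{\delta}$ diffusion that dominates the rest of the estimate.
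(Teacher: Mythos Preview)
Your approach is essentially the paper's: both cure the semi-stable case by inserting a linear tilt in $\ell$ so that $\partial_\ell\overline{\phi}$ keeps a single sign across $\mathcal{C}$, which via \eqref{liu0729-5} forces $\mathbf{b}\cdot\nabla\overline{\phi}\le0$ even though $\ell\eta_\ell$ changes sign. Two small corrections: (i) the required condition is $\eta_\ell\,\partial_\ell\overline{\phi}\ge0$, so $M$ must have the \emph{same} sign as $\eta_\ell$---your example $M=-1$ when $\eta_\ell<0$ is right, but the phrase ``opposite in sign'' is backwards; (ii) the paper actually chooses the smaller tilt $\overline{\phi}=1-4\sqrt{\delta}\,\ell-\ell^2/\sqrt{\delta}$, which keeps $\nabla\overline{\phi}\cdot\nu=O(\sqrt{\delta})$ on $\partial\mathbb{O}_\delta$ so that the gluing step of Theorem~\ref{Limit-cycle-2} (the reference is Devinatz--Ellis--Friedman, not Diederich--Forn\ae ss) applies verbatim, whereas your $|M|=O(1)$ gives $O(1)$ normal derivatives and would require a slightly steeper exterior patch.
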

\begin{proof}
As in the proof of Theorem \ref{Limit-cycle-2}, it suffices to prove the lower bound estimate
\begin{equation}\label{liu0726-1}
  \liminf_{A\to\infty}\lambda(A)\geq \min\limits_{1\leq i\leq k}\{c(x_i)\},
\end{equation}
since the upper bound estimate can be established as in Step 2 of Theorem \ref{liuprop1}. To this end, we set $\mathcal{C}:=\{P(t): t\in [0,T)\}$ for some periodic solution $P(t)$ of system \eqref{system}. Under coordinate $x\mapsto(t,r)$ introduced by \eqref{liu-change},
 for each  $(0, \ell)$ with $\ell\in[-\delta,\delta]$,  we 
  can find $\eta_\ell\in\mathbb{R}$  such that the solution  denoted by $x_\ell(\tau)$ of \eqref{liu_04-0729} is a $T_\ell$-periodic.
Noting that $\mathcal{C}$ is semi-stable, we may assume without loss of generality that $\eta_\ell\leq 0$.
Then we define the region
$\mathbb{O}_\delta:=\{(\tau, \ell): \ell\in[-\delta,\delta] \text{ and }\tau\in[0,T_\ell)\}$ as in Step 1 of Theorem \ref{Limit-cycle-2}. Similar to \eqref{liu0729-3}, due to $\eta_\ell<0$ for $\ell\neq 0$ we verify that
 \begin{equation}\label{liu0728-2}
       {\bf b}(x)\cdot \nu(x)\Big |_{\ell=\delta}>0\quad\text{and}\quad  {\bf b}(x)\cdot \nu(x)\Big |_{\ell=-\delta}<0.
 \end{equation}

Define
\begin{equation}\label{liu0726-3}
   \overline{\phi}(\tau, \ell):=1-4\sqrt{\delta}\ell-\frac{\ell^2}{\sqrt{\delta}}, \qquad \forall (\tau, \ell)\in \mathbb{O}_{\delta}.
\end{equation}
Let $\nu(x)$ denote the outward unit normal vector on  $\partial \mathbb{O}_\delta$. By definitions we have
$$\nabla\overline{\phi} \cdot \nu\Big|_{\ell=\pm \delta}=\mp |\nabla\overline{\phi}|=\mp |\partial_\ell\overline{\phi}|\sqrt{(\partial_{x_1}\ell)^2+(\partial_{x_2}\ell)^2}.$$
Due to  $(\partial_{x_1}\ell)^2+(\partial_{x_2}\ell)^2>0$, 
this implies
\begin{equation}\label{liu0726-4}
\begin{split}
-2C\sqrt{\delta}<&\nabla\overline{\phi} \cdot \nu\Big |_{\ell=\delta} <0,\\
0<&\nabla\overline{\phi} \cdot \nu\Big |_{\ell=-\delta} <2 C\sqrt{\delta},
\end{split}
\end{equation}
for some $C>0$ independent of $\delta$.

We next verify \eqref{liu0627-1} for small  $\delta$.  As in \eqref{liu-0728-7}, in view $\eta_\ell\leq 0$, by \eqref{liu_04-0729} and \eqref{liu0726-3} we calculate that
  \begin{equation*}
\begin{split}
     \mathbf{b}(\tau, \ell)\cdot\nabla\overline{\phi}=&-\frac{|\nabla\overline{\phi}|}{\left|\frac{{\rm d} x_\ell(\tau)}{{\rm d} \tau}\right|}\mathbf{b}(x_\ell(\tau) \cdot \mathcal{J}\frac{{\rm d} x_\ell(\tau)}{{\rm d} \tau}\\
     =&\frac{\eta_\ell|\nabla\overline{\phi}|}{\left|\frac{{\rm d} x_\ell(\tau)}{{\rm d} \tau}\right|}|\mathbf{b}(x_\ell(\tau)|^2\leq 0,\quad \forall (\tau, \ell)\in \mathbb{O}_\delta.
     \end{split}
\end{equation*}
Hence,  similar to \eqref{liu0628-4} we can deduce that
\begin{align*}
    &-\Delta\overline{\phi}-A\mathbf{b}\cdot\nabla\overline{\phi}+\left[c(x)-\min\limits_{1\leq i\leq k}\{c(x_i)\}\right]\overline{\phi}\notag\\
   \geq &-\Delta\overline{\phi}+\left[c(x)-\min\limits_{1\leq i\leq k}\{c(x_i)\}\right]\overline{\phi}\\
   =&\frac{2}{\sqrt{\delta}}\left[\left(\partial_{x_1}\ell\right)^2+\left(\partial_{x_2}\ell\right)^2\right]+(4\sqrt{\delta}+\frac{2r}{\sqrt{\delta}})\left(\partial^2_{x_1}\ell+\partial^2_{x_2}\ell\right)\notag\\
   &+\left[c(x)-\min\limits_{1\leq i\leq k}\{c(x_i)\}\right]\overline{\phi}\geq 0, \quad \forall (\tau, \ell)\in\mathbb{O}_\delta,
\end{align*}
by choosing $\delta$ small if necessary. This implies  \eqref{liu0627-1}. 

Next, we can apply the same arguments as in the Step 2 of Theorem \ref{Limit-cycle-2} to define  $\overline{\varphi}\in C(\overline{\Omega})$ satisfying
 $\overline{\varphi}(x)=\overline{\phi}(x)$
 on $\mathbb{O}_\delta$, such that \eqref{liu0524-1-0726} holds for large $A$. 
 Then the lower bound estimate \eqref{liu0726-1} can be derived by the comparison principle. This completes the proof.
\end{proof}

Combining  Theorems \ref{Limit-cycle}-\ref{Limit-cycle-3}, we conclude this section by the following
\begin{theorem}\label{Limit-cycle-4}
Suppose that the limit set of system \eqref{system} consists of a finite number of limit cycles and hyperbolic fixed points in $\Omega$, whereas the stable fixed points are denoted by $\{x_1,\cdots,x_k\}$ and stable limit cycles are  denoted by $\{\mathcal{C}_1,\cdots,\mathcal{C}_m\}$ with  $\mathcal{C}_i:=\{P_i(t): t\in [0,T_i)\}$ for periodic solutions $P_i$ of \eqref{system} with period $T_i$. Then
$$\lim_{A\to\infty}\lambda(A)=\min\left\{\min_{1\leq i\leq m}\left\{\frac{1}{T_i}\int_0^{T_i} c(P_i(t)) {\rm d}t\right\}, \,\,\, \min_{1\leq i\leq k}\{c(x_i)\}\right\}.$$
\end{theorem}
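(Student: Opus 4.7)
The plan is to deduce Theorem \ref{Limit-cycle-4} by patching together the local constructions already developed in Theorems \ref{liuprop1}, \ref{Limit-cycle}, \ref{Limit-cycle-2}, and \ref{Limit-cycle-3}. Denote
$$\mu:=\min\Bigl\{\min_{1\le i\le m}\Bigl\{\tfrac{1}{T_i}\int_0^{T_i} c(P_i(t))\,{\rm d}t\Bigr\},\; \min_{1\le i\le k}\{c(x_i)\}\Bigr\}.$$
For the upper bound, the argument is essentially local. Each stable fixed point $x_i$ and each stable limit cycle $\mathcal{C}_j$ lies in an isolated neighborhood, so the sub-solutions supported in a small tube around that single component (the one built in the upper bound half of Theorem \ref{liuprop1}, respectively in the second half of Theorem \ref{Limit-cycle}) can be reused verbatim to give
$\limsup_{A\to\infty}\lambda(A)\le c(x_i)$ and $\limsup_{A\to\infty}\lambda(A)\le \tfrac{1}{T_j}\int_0^{T_j}c(P_j)\,{\rm d}t$ for every $i,j$. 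Taking the minimum yields $\limsup_{A\to\infty}\lambda(A)\le \mu$.

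For the lower bound we construct a single global positive super-solution $\overline{\varphi}\in C(\overline{\Omega})$, smooth away from a finite union of interfaces, satisfying
$$-\Delta\overline{\varphi}-A\mathbf{b}\cdot\nabla\overline{\varphi}+c(x)\overline{\varphi}\ge (\mu-\epsilon)\overline{\varphi}$$
in the complement of the interfaces, with the correct jump condition $(\nabla_-\overline{\varphi}-\nabla_+\overline{\varphi})\cdot\nu>0$ across each interface and $\nabla\overline{\varphi}\cdot\mathbf{n}\ge 0$ on $\partial\Omega$. Choose disjoint neighborhoods: a small ball $B^i_\delta$ around each fixed point $x_i$ (stable or unstable/saddle), and the tubular domain $\mathbb{O}_\delta^{(j)}$ around each limit cycle $\mathcal{C}_j$ introduced in \eqref{Odelta}. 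On each such neighborhood we plug in the local super-solution from the appropriate earlier theorem: near stable fixed points from Theorem \ref{liuprop1} Step 1 Case 1(i)/Case 2(i)/Case 3, near unstable fixed points or saddles from the corresponding ``unstable'' sub-cases of Theorem \ref{liuprop1}, near stable limit cycles from \eqref{liu0729-4}, near unstable limit cycles from \eqref{liu-0727-1}, and near semi-stable limit cycles from \eqref{liu0726-3}. By pre-multiplying each local super-solution by an appropriate positive scalar, we arrange the heights so that the maxima over all ``sink'' components (stable fixed points and stable limit cycles) lie strictly below the minima over all ``source'' components (unstable fixed points, saddles, unstable/semi-stable limit cycles), the analogue of \eqref{liu0621-1}.

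On the open set $W:=\Omega\setminus\bigl(\bigcup_i \overline{B^i_\delta}\cup\bigcup_j\overline{\mathbb{O}_\delta^{(j)}}\bigr)$, every orbit of \eqref{system} spends only a bounded time by Hypothesis \ref{assum1} and the Poincar\'e--Bendixson theorem, since no limit points of \eqref{system} remain in $\overline{W}$; moreover $\mathbf{b}\cdot\mathbf{n}<0$ on $\partial\Omega$, and on the pieces of $\partial W$ coming from limit cycles one verifies as in \eqref{liu0729-3} and \eqref{liu0728-1}, \eqref{liu0728-2} that $\mathbf{b}\cdot\nu$ has the correct sign (pointing out of the stable tubes, into the unstable/semi-stable ones). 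Under these sign conditions and the height ordering, Lemma 2.3 of \cite{DEF1974} supplies a positive $C^2$ function $G$ on $\overline{W}$ matching the prescribed boundary values with $\mathbf{b}\cdot\nabla G<0$ in $\overline{W}$ and $\nabla G\cdot\mathbf{n}\ge 0$ on $\partial\Omega$, and with $\nabla G\cdot\nu$ on each interior interface dominated by the interior normal derivative of the local super-solution so that the desired jump inequality holds. Setting $\overline{\varphi}:=G$ on $W$ and using the local pieces inside the neighborhoods, for $A$ large the differential inequality reduces on $W$ to $-A\mathbf{b}\cdot\nabla\overline{\varphi}\ge (\mu-\epsilon+\|c\|_\infty)\overline{\varphi}$, which holds by the strict negativity of $\mathbf{b}\cdot\nabla G$. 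The comparison principle then gives $\lambda(A)\ge \mu-\epsilon$ for all large $A$, and letting $\epsilon\searrow 0$ finishes the proof.

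The main obstacle is the bookkeeping at the patching step: one must normalize all local super-solutions simultaneously so that the height ordering ``stable maxima $<$ unstable minima'' holds across the entire collection of components, and simultaneously verify that on every piece of $\partial W$ the outward flux of $\mathbf{b}$ has the correct sign so that Lemma 2.3 of \cite{DEF1974} can be applied globally. Once this ordering and these sign conditions are checked, the local constructions from Theorems \ref{liuprop1}--\ref{Limit-cycle-3} slot in without modification, and only the unstable, semi-stable, and saddle components contribute $+\infty$ to the effective minimum, leaving $\mu$ unchanged.
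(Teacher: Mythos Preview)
Your proposal is correct and follows essentially the same approach as the paper, which simply states that Theorem~\ref{Limit-cycle-4} ``can be proved by combining the proofs of Theorems \ref{Limit-cycle}--\ref{Limit-cycle-3} and the details are omitted.'' Your outline of the patching procedure (local super/sub-solutions near each component, height ordering analogous to \eqref{liu0621-1}, and the use of \cite[Lemma~2.3]{DEF1974} on the complement) is precisely the mechanism the paper employs throughout Sections~\ref{S2}--\ref{S3} and later makes explicit in the proof of Theorem~\ref{mainresult}.
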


 Theorem \ref{Limit-cycle-4} can be proved by combining the proofs of Theorems \ref{Limit-cycle}-\ref{Limit-cycle-3} and the details are omitted.




\section{\bf Case of saddle-note homoclinic orbits}\label{S5}

In this section, we assume the the limit set of system \eqref{system} includes homoclinic orbits. Under Hypothesis \ref{assum1}, we first consider a stable combination of two  homoclinic orbits  connected by a hyperbolic saddle.

\begin{theorem}\label{theorem20220722}
Suppose that the limit set of system \eqref{system} consists of two unstable fixed points and two  homoclinic orbits $\mathcal{C}_+$ and $\mathcal{C}_-$ connected by a hyperbolic saddle  $x_*$.  Assume the union of  homoclinic orbits $\mathcal{C}_+\cup\mathcal{C}_-$ is stable. Then
 $$\lim_{A\to\infty}\lambda(A)=c(x_*).$$
\end{theorem}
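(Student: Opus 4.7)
The plan is to establish matching bounds $\liminf_{A\to\infty}\lambda(A)\geq c(x_*)$ and $\limsup_{A\to\infty}\lambda(A)\leq c(x_*)$ via constructions of super- and sub-solutions, combining the hyperbolic fixed point analysis from Theorem \ref{liuprop1} (near $x_*$) with the rotated-flow periodic orbit coordinates from Section \ref{S3} (along $\mathcal{C}_+\cup\mathcal{C}_-$). The conceptual key, foreshadowed by Remark \ref{remarkcoro}, is that along periodic orbits of the rotated flow $({\bf I}+\eta_\ell\mathcal{J}){\bf b}$ that accumulate onto the stable $\infty$-shaped loop, the averaged value $\bbint c(x_\ell(s))\,{\rm d}s$ converges to $c(x_*)$ as $\ell\to 0$, because the sojourn time near the hyperbolic saddle $x_*$ (where $|{\bf b}|\to 0$) dominates the total period.

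For the upper bound, I would construct a positive local sub-solution $\underline{\varphi}$ in a ball $B_r(x_*)$. After a linear change of variables assume $D{\bf b}(x_*)={\rm diag}(\lambda_1,\lambda_2)$ with $\lambda_1<0<\lambda_2$, and set
$$\underline{\varphi}(x):=1-a(x_1-x_{*,1})^2+b(x_2-x_{*,2})^2,\qquad x\in B_r(x_*),$$
for suitable $0<a<b$. A direct computation gives ${\bf b}\cdot\nabla\underline{\varphi}=2(-a\lambda_1)(x_1-x_{*,1})^2+2(b\lambda_2)(x_2-x_{*,2})^2+o(|x-x_*|^2)\geq 0$ and $-\Delta\underline{\varphi}=2(a-b)<0$, so shrinking $r$ so that $|c(x)-c(x_*)|<\epsilon/2$ on $B_r(x_*)$ yields $-\Delta\underline{\varphi}-A{\bf b}\cdot\nabla\underline{\varphi}+c(x)\underline{\varphi}\leq(c(x_*)+\epsilon)\underline{\varphi}$ for all $A>0$. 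Following Step 2 of the upper bound part of Theorem \ref{liuprop1}, extend $\underline{\varphi}$ to a domain $U\subset\Omega$ with $\underline{\varphi}=0$ on $\partial U$, using Lemma 2.3 of \cite{DEF1974} and the fact that the homoclinic orbits in $\overline{\mathcal{C}_+\cup\mathcal{C}_-}\setminus\{x_*\}$ have non-vanishing ${\bf b}$ and carry the extension of $\underline{\varphi}$ along their flow; take $A$ large and apply the comparison principle.

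For the lower bound, cover $\overline{\Omega}$ by: (i) a ball $B_r(x_*)$ around the saddle, (ii) a tubular neighborhood $\mathbb{O}_\delta$ of $(\mathcal{C}_+\cup\mathcal{C}_-)\setminus B_r(x_*)$ parameterized by $(\tau,\ell)$ as in Part 2 of the proof of Theorem \ref{Limit-cycle}, (iii) balls $\tilde{\mathcal{O}}_\delta$ around the two unstable fixed points, and (iv) the remainder. On $B_r(x_*)$ define a local super-solution as in Case 1(iii) of Theorem \ref{liuprop1} with bound $c(x_*)-\epsilon$. On $\mathbb{O}_\delta$ use the rotated-flow periodic orbit $x_\ell(\tau)$ satisfying \eqref{liu_04-0729} (which exists for small $\ell\neq 0$ by the stability of $\mathcal{C}_+\cup\mathcal{C}_-$, with $\ell\eta_\ell\geq 0$ and period $T_\ell\to+\infty$ as $\ell\to 0$), and set
$$\overline{\phi}(\tau,\ell):=\exp\left\{\frac{1}{A}\left[\int_0^\tau c(x_\ell(s))\,{\rm d}s-\tau\bbint c(x_\ell(s))\,{\rm d}s\right]+\varepsilon\ell^2\right\};$$
the computation \eqref{liu0729-5} shows $\overline{\phi}$ is a super-solution with eigenvalue bound $\bbint c(x_\ell(s))\,{\rm d}s+O(1/A)+O(\varepsilon)+O(\delta)$, which converges to $c(x_*)$ as $\ell\to 0$. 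On $\tilde{\mathcal{O}}_\delta$ use Case 1(ii) of Theorem \ref{liuprop1} (with super-solution giving an arbitrarily large bound). Glue these pieces via Lemma 2.3 of \cite{DEF1974} so that $(\nabla_-\overline{\varphi}-\nabla_+\overline{\varphi})\cdot\nu>0$ on the interior interfaces, and conclude by the comparison principle.

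The main obstacle is the matching across $\partial B_r(x_*)$. The coordinate change $x\mapsto(\tau,\ell)$ degenerates at $x_*$ since ${\bf b}$ vanishes there, so the Jacobian-based estimates in \eqref{liu0729-5}--\eqref{liu_0730-6} deteriorate as one approaches the saddle. One must choose $r=r(\delta)\to 0$ slowly enough that both the local saddle super-solution and the $(\tau,\ell)$ super-solution give the same limiting bound $c(x_*)$, while ensuring the jump condition $(\nabla_-\overline{\varphi}-\nabla_+\overline{\varphi})\cdot\nu>0$ on $\partial B_r(x_*)$. This requires a quantitative estimate of the sojourn time: linearizing around $x_*$ gives $T_\ell\sim C|\log\ell|$ while the time spent in $\{|x-x_*|>r\}$ remains bounded uniformly in $\ell$, so the average of $c$ over $[0,T_\ell]$ is dominated by its value at $x_*$. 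Making this quantitative and compatible with the coordinate change is the technical heart of the proof.
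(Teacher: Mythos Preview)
Your upper bound argument has a genuine gap. The local quadratic sub-solution on $B_r(x_*)$ is fine, but the extension via \cite[Lemma 2.3]{DEF1974} cannot work as you describe. That lemma produces a function with ${\bf b}\cdot\nabla\underline{\varphi}>0$ on a region where orbits remain only finite time; for a stable fixed point (as in Step 2 of the upper bound in Theorem \ref{liuprop1}) the ball $B_\delta$ is forward-invariant and orbits in $B_\delta\setminus B_r$ reach $B_r$ in finite time, so the construction goes through. For the saddle $x_*$ this fails: orbits exit $B_r(x_*)$ along the unstable direction, travel around $\mathcal{C}_\pm$, and re-enter along the stable direction, so no annular region $U\setminus B_r$ has finite sojourn. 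Worse, if you try to carry $\underline{\varphi}$ along the homoclinic loop with ${\bf b}\cdot\nabla\underline{\varphi}>0$, the value strictly increases around the loop and cannot match up when the orbit returns to $\partial B_r$---you would need a downward jump, which violates the sub-solution interface condition. This is not a technicality: isolated saddles contribute $\Lambda=+\infty$ in Theorem \ref{mainresult}(v), so a purely local construction at $x_*$ cannot yield the finite value $c(x_*)$. The upper bound, like the lower bound, must be built on a tube around the full loop and must exploit the slow passage near the saddle; the paper simply says the upper bound follows by ``similar arguments'' to the lower bound.

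For the lower bound your outline is in the right spirit, but the paper's execution differs in an important way. Rather than gluing a fixed-radius Case 1(iii) saddle super-solution on $B_r(x_*)$ to the periodic-orbit super-solution \eqref{liu0729-4} on the tube, the paper builds a single super-solution $\overline{\varphi}(y,\ell)=\exp\{\overline{\phi}(y,\ell)/A+\varepsilon\ell^2\}$ on the whole tube $O_\delta$, where $\overline{\phi}$ is \emph{not} the averaged-$c$ expression but is instead prescribed by differential inequalities: $-\partial_y\overline{\phi}\geq |c(x)-c(x_*)|/|{\bf b}|$ away from the saddle (so the bound is $c(x_*)$ directly, with no $\ell$-dependent average), and $|\partial_y\overline{\phi}(y,0)|\leq \epsilon/(\lambda_2(L-y))$ on the approach to the saddle. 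The matching you flag as the main obstacle is resolved by taking the inner region to have the $A$-dependent radius $1/\sqrt{\lambda_2 A}$: the super-solution must increase by $O(1/A)$ when passing from the outgoing to the incoming branch at the saddle (equation \eqref{liu-0722-8}), and the logarithmic integral $\int_{L-2\delta}^{L-2/\sqrt{\lambda_2 A}}\frac{\epsilon}{\lambda_2(L-y)}\,dy\sim \frac{\epsilon}{\lambda_2}\log\sqrt{A}$ supplies exactly this increment inside the exponent $\overline{\phi}/A$ (equation \eqref{liu-0722-9-1}). Your fixed-$r$ scheme with the time-averaged $\overline\phi$ would face the additional difficulty that the bound $\bbint c(x_\ell(s))\,ds$ is $\ell$-dependent and only converges to $c(x_*)$ as $\ell\to 0$, so uniformity in $\ell\in[-\delta,\delta]$ and the interface matching would have to be controlled simultaneously.
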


 \begin{figure}[http!!]
  \centering
\includegraphics[height=2.3in]{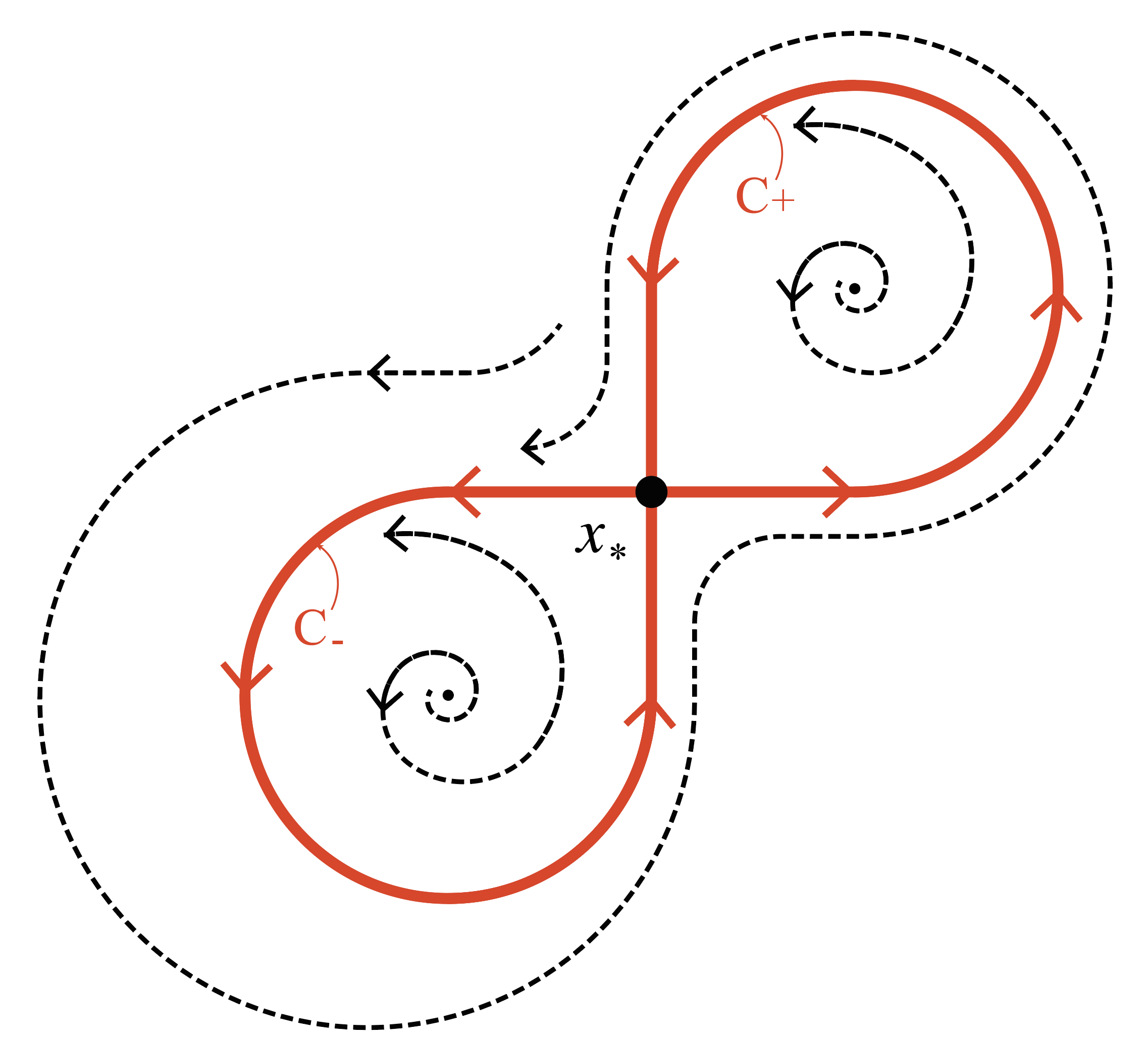}
  \caption{\small Illustration for the phase-portrait of \eqref{system} near homoclinic orbits as assumed in Theorem \ref{theorem20220722}.}\label{figure1-1-1}
  \end{figure}

\begin{proof}
We shall prove the lower bound estimate
\begin{equation}\label{liu20220722-2}
   \liminf_{A\to\infty}\lambda(A)\geq c(x_*),
\end{equation}
then the upper  bound estimate can be established by the similar arguments.

Given any $\epsilon>0$, we first choose $\delta>0$ small such that
\begin{equation}\label{liu20220722-1}
    |c(x)-c(x_*)|\leq \epsilon, \quad \forall x\in B_{4\delta}:=\{x\in\Omega: |x-x_*|<4\delta\}.
\end{equation}
Denote $\mathcal{C}:= \mathcal{C}_+\cup\mathcal{C}_-$ as the stable saddle-note homoclinic orbit.
Similar to Part 2 in the proof of Theorem \ref{Limit-cycle}, we can define a domain $O_\delta$ such that ${d_\mathcal{H}}(x,\mathcal{C})\leq \delta$, $\forall x\in O_\delta$, and ${\bf b}(x)\cdot \nu(x)<0$, $\forall x\in \partial O_\delta$, where $\nu(x)$ is the outward unit normal vector of $\partial O_\delta$. We define a curve $\Gamma\subset O_\delta$ such that $\Gamma\cap \mathcal{C}_+=\Gamma\cap \mathcal{C}_-=\{x_*\}$ and
$${d_\mathcal{H}}(x, \mathcal{C}_+)={d_\mathcal{H}}(x, \mathcal{C}_-), \qquad \forall x\in\Gamma.$$
Next, we will construct a positive super-solution $\overline{\varphi}\in C(\overline{\Omega})$ such that
\begin{equation}\label{liu20220722-3}
 \left\{\begin{array}{ll}
\medskip
-\Delta\overline{\varphi}-A\mathbf{b}\cdot\nabla\overline{\varphi}+c(x)\overline{\varphi}\geq\left(c(x_*)-4\epsilon\right) \overline{\varphi}&\mathrm{in} \,\, \Omega\setminus(\partial O_\delta\cup\Gamma),\\
\medskip
(\nabla_-\overline{\varphi}(x)-\nabla_+\overline{\varphi}(x))\cdot \nu(x)> 0 & \mathrm{on}~\partial O_\delta\cup \Gamma,\\
  \nabla\overline{\varphi}\cdot \mathbf{n}\geq 0 & \mathrm{on}~\partial\Omega,  
  \end{array}\right.
 \end{equation}
 provided that $A$ is sufficiently large, where for any $x\in\Gamma$, $\nu(x)$ denotes a unit vector being orthogonal to the curve $\Gamma$. Then \eqref{liu20220722-2} follows from the comparison principle and the arbitrariness of $\epsilon$. The construction can be completed by the following two steps.

 {\bf Step 1.} The construction of $\overline{\varphi}$ on the region $O_\delta$. Assume the domain $O_\delta$ is divided into two parts $O_{\delta+}$ and $O_{\delta-}$ by $\Gamma$ such that $\mathcal{C}_{\pm}\subset \overline{O}_{\delta\pm}$ and $\overline{O}_{\delta+}\cap\overline{O}_{\delta-}=\Gamma$. We perform the change of coordinate $x=X_\pm(y, z)$  in domains $O_{\delta\pm}$ with $y$ representing the arc length along the $\mathcal{C}_{\pm}$ and $z$ being the coordinate in the hyperplanes orthogonal to $\mathcal{C}_{\pm}$, such that $x_*=(0,0)$.
 This change of coordinate is $C^2$ smooth in $O_\delta\setminus B_{\rho}$  for any $\rho>0$, and
 $$x-\zeta(t(y))=z \frac{\mathcal{J}{\bf b}(y,0)}{|{\bf b}(y,0)|}, \quad \forall x\in O_\delta\setminus B_{\rho},$$
 where the rotation matrix $\mathcal{J}$ is defined below  \eqref{liu-change}, and $\zeta(t)$ and $t(y)$ satisfy
 $$ \frac{{\rm d}\zeta(t)}{{\rm d} t} =\mathbf{b}(\zeta(t))\quad\text{and}\quad\int_0^{t(y)}|{\bf b}(\zeta(s))|{\rm d}s=y.$$
 Thus, it holds
 \begin{equation}\label{liu-0728-22}
    \partial_z x=\frac{\mathcal{J}{\bf b}(y,0)}{|{\bf b}(y,0)|}.
 \end{equation}
Similar to Part 2 of Theorem \ref{Limit-cycle}, due to the stability of $\mathcal{C}$,  we can derive that for any $\ell\in(-\delta,\delta)$, there exists $\eta_\ell\in\mathbb{R}$ such that  the solution $x_\ell(\tau)$ of
\begin{equation}\label{liu_0730-1}
\left\{\aligned
  &\,\,\frac{{\rm d} x(\tau)}{{\rm d} \tau}= ({\bf I}+\eta_\ell \mathcal{J})\mathbf{b}(x(\tau)),\qquad \tau>0,\\
  &\,\, x(0)=X_+(4\delta,0)+\ell\mathcal{J}{\bf b}(X_+(4\delta,0))
 \endaligned \right.
\end{equation}
is periodic, $\ell \eta_\ell\geq 0$, and $|\eta_\ell|$ is  increasing in $ |\ell|$, where  $X_+(4\delta,0)\in\mathcal{C}_+$ satisfying $y=4\delta$. For any $\ell\in [-\delta,\delta]$, we let $y$ denote the arc length along the solution  $x_\ell(\tau)$, namely,
\begin{equation}\label{liu_0730-2}
   \int_0^{\tau(y)}|({\bf I}+\eta_\ell \mathcal{J})\mathbf{b}(x_\ell(s))|{\rm d}s=y.
 \end{equation}
In what follows, we will complete the proof under the coordinate $x\mapsto(y,\ell)$ such that $x=x_\ell(\tau(y))$, which is $C^2$ smooth in $O_\delta\setminus B_{\rho}$  for any $\rho>0$.

 We define
\begin{equation}\label{liu-0722-7}
\overline{\varphi}(y,\ell):=\exp\left\{\frac{\overline{\phi}(y,\ell)}{A}+\varepsilon\ell^2\right\},\qquad \forall (y,\ell)\in O_\delta\setminus B_\rho.
\end{equation}
Here function $\overline{\phi}$ and  constant $\rho\in (0, 4\delta)$ will be defined as follows.

{\bf (1)} The construction of $\overline{\varphi}(x)$ on the region $O_\delta\setminus B_{4\delta}$. We define $\overline{\phi}(y,\ell)\equiv\overline{\phi}(y)$ on $O_\delta\setminus B_{4\delta}$ such that
 \begin{equation}\label{liu-0722-5}
     -\frac{{\rm d}\overline{\phi}(y)}{{\rm d}y}\geq \frac{|c(x)-c(x_*)|}{|{\bf b}(y,\ell)|},\quad \forall x=(y,\ell)\in O_\delta\setminus B_{4\delta}.
 \end{equation}
 In virtue of \eqref{liu_0730-1} and  \eqref{liu_0730-2}, we have
 \begin{equation}\label{liu_0730-3}
     \partial_y x(y,\ell)=\frac{({\bf I}+\eta_\ell \mathcal{J})\mathbf{b}(y,\ell)}{|({\bf I}+\eta_\ell \mathcal{J})\mathbf{b}(y,\ell)|}, \quad \forall (y,\ell)\in O_\delta\setminus B_{4\delta}.
 \end{equation}
Similar to \eqref{liu0729-5}, by \eqref{liu_0730-3}  we calculate that
 \begin{equation}\label{liu-0728-23}
\begin{split}
    \mathbf{b}(y,\ell)\cdot\nabla\overline{\varphi}&= \mathbf{b}(y,\ell)\cdot\left[ \left(
  \begin{array}{cc}
  \smallskip
    \partial_y x_1 & \partial_y x_2 \\
   \partial_\ell x_1 & \partial_\ell x_2 \\
  \end{array}
\right)^{-1}\left(\begin{array}{c}
\smallskip
\partial_y \overline{\varphi}\\
\partial_z \overline{\varphi}
\end{array}
\right)\right]\\
=&\frac{|({\bf I}+\eta_\ell \mathcal{J})\mathbf{b}|}{\partial_\ell x\cdot(\mathcal{J}\mathbf{b}-\eta_\ell \mathbf{b} )} {\bf b}(y,\ell)\cdot\left[ \left(
  \begin{array}{cc}
  \smallskip
    \partial_\ell x_2 & -\partial_y x_2 \\
   -\partial_\ell x_1 &  \partial_y x_1\\
  \end{array}
\right)\left(\begin{array}{c}
\smallskip
\partial_y \overline{\varphi}\\
\partial_\ell \overline{\varphi}
\end{array}
\right)\right] \\
=&\frac{|({\bf I}+\eta_\ell \mathcal{J})\mathbf{b}|}{\partial_\ell x\cdot(\mathcal{J}\mathbf{b}-\eta_\ell \mathbf{b} )}\left(\partial_\ell x\cdot\mathcal{J}{\bf b}\partial_y \overline{\varphi}-\partial_y x\cdot\mathcal{J}{\bf b}\partial_\ell \overline{\varphi}\right)  \\
=&\frac{|({\bf I}+\eta_\ell \mathcal{J})\mathbf{b}|}{\partial_\ell x\cdot(\mathcal{J}\mathbf{b}-\eta_\ell \mathbf{b} )}\left(\partial_\ell x\cdot\mathcal{J}{\bf b}\partial_y \overline{\varphi}-\frac{2\varepsilon \ell\eta_\ell |{\bf b}|^2}{|({\bf I}+\eta_\ell \mathcal{J})\mathbf{b}|} \overline{\varphi}\right)  \\
\leq&\frac{|({\bf I}+\eta_\ell \mathcal{J})\mathbf{b}|}{\partial_\ell x\cdot(\mathcal{J}\mathbf{b}-\eta_\ell \mathbf{b} )}\partial_\ell x\cdot\mathcal{J}{\bf b}\partial_y \overline{\varphi},
\end{split}
\end{equation}
where the last inequality holds since $\ell\eta_\ell\geq 0$ and 
$$\lim_{\ell\to 0}\partial_\ell x\cdot(\mathcal{J}\mathbf{b}(y,\ell)-\eta_\ell \mathbf{b}(y,\ell))=|{\bf b}(y,0)|>4\delta, \quad \forall (y,\ell)\in O_\delta\setminus B_{4\delta},$$
due to \eqref{liu-0728-22}.
Hence,  by \eqref{liu-0722-7} and \eqref{liu-0722-5}  we deduce from \eqref{liu-0728-23} that
 \begin{align*}
    -\Delta\overline{\varphi}-A\mathbf{b}(x)\cdot\nabla\overline{\varphi}+c(x)\overline{\varphi}
\geq&\left[O(1/A)+O(\varepsilon)\right]\overline{\varphi}+(1+O(\delta))|c(x)-c(x_*)|\overline{\varphi}+c(x)\overline{\varphi}\\
\geq& (c(x_*)-\epsilon)\overline{\varphi}, \qquad \forall x\in O_\delta\setminus B_{4\delta},
\end{align*}
by choosing $\delta$ small and then $A$ large. Hence, the constructed $\overline{\varphi}(x)$ in \eqref{liu-0722-7} and \eqref{liu-0722-5} satisfies \eqref{liu20220722-3} in $O_\delta\setminus B_{4\delta}$.

  \begin{figure}[http!!]
  \centering
\includegraphics[height=2.6in]{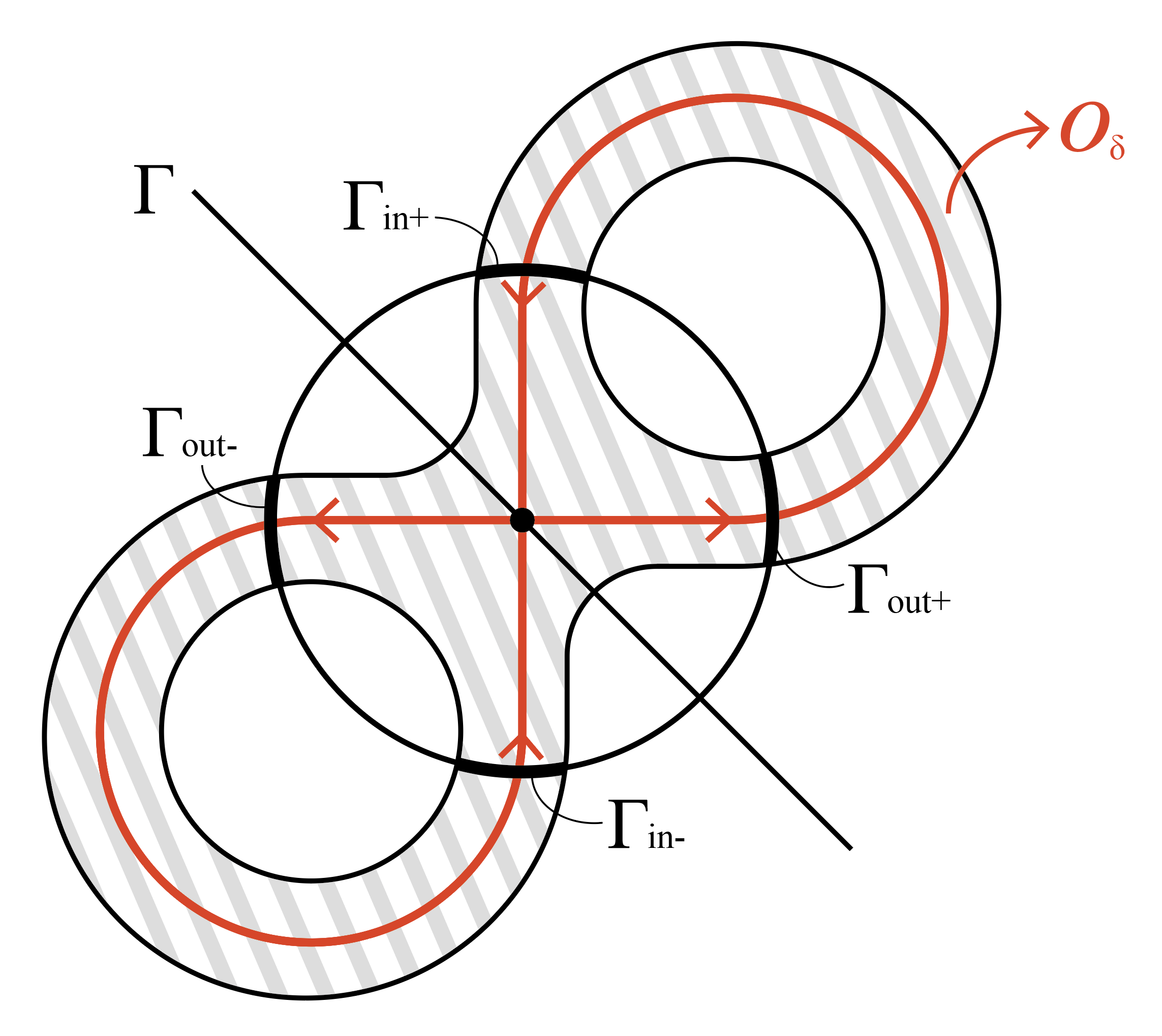}
  \caption{\small Illustration for some notations in the construction of $\overline{\varphi}$.}\label{figure1-1}
  \end{figure}

{\bf (2)} The construction of $\overline{\varphi}(x)$ on the region $O_{\delta+}\cap B_{4\delta}$. We denote $\partial B_{4\delta}\cap O_{\delta+}=\Gamma_{{\rm out}+}\cup \Gamma_{{\rm in}+}$ such that ${{\bf b}}(x)\cdot \nu(x)>0$ for all $x\in \Gamma_{{\rm out}+}$ and ${{\bf b}}(x)\cdot \nu(x)<0$ for all $x\in \Gamma_{{\rm in}+}$, see Fig. \ref{figure1-1} for some illustrations.  Let $L$ be the arc length of $\mathcal{C}_+$.
For any $x\in \Gamma_{{\rm in}+}$, 
it is easily seen that  $x=(y,\ell)$ satisfies $y\approx L-4\delta$, and similarly, each $x\in \Gamma_{{\rm out}+}$ can parameterized by $x=(y,\ell)$ with  $y\approx 4\delta$. By definitions \eqref{liu-0722-7} and \eqref{liu-0722-5}, it can be verified that there exists some ${\rm C}>0$ independent of $A$ such that for large $A$,
\begin{equation}\label{liu-0722-8}
    \max_{x\in \Gamma_{{\rm out}+}}\overline{\varphi}(y,\ell)-\min_{x\in \Gamma_{{\rm in}+}}\overline{\varphi}(y,\ell)\leq {\rm C}/A.
\end{equation}
Since $x_*$ is a hyperbolic saddle, we may assume without loss of generality that
\begin{equation}\label{liu0722-4}
    D{\bf b}(x_*)=\left(\begin{array}{cc}
    \lambda_1 & 0  \\
   0  & -\lambda_2
\end{array}\right)
\end{equation}
for some $\lambda_1,\lambda_2>0$. Then  the change of coordinate $x\mapsto(y,\ell)$ is $C^2$ smooth in $O_\delta\setminus B_{1/\sqrt{\lambda_2 A}}$, where $1/\sqrt{\lambda_2 A}<2\delta$ by letting $A$ be large if necessary.



For any $x=(y,\ell)\in O_{\delta+}\cap B_{4\delta}$ satisfying $y\leq L-1/\sqrt{\lambda_2 A}$, we shall define $\overline{\varphi}(x)$ as in \eqref{liu-0722-7}, in which the $C^2$ function $\overline{\phi}(y,\ell)$ is chosen such that  
\begin{equation}\label{liu-0722-9}
          \left|\partial_y\overline{\phi}(y,0)\right|\leq \frac{\epsilon}{\lambda_2(L-y)},\,\, \quad \forall y\in [L-4\delta,\,\,L-1/\sqrt{\lambda_2 A}],
\end{equation}
\begin{equation}\label{liu-0722-9-3}
          \left|\Delta\overline{\phi}(y,\ell)\right|\leq \frac{\epsilon}{\lambda_2(L-y)^2},\,\, \quad \forall y\in [L-4\delta,\,\,L-1/\sqrt{\lambda_2 A}],
\end{equation}
\begin{equation}\label{liu-0722-9-1}
 \overline{\varphi}\left(L-\frac{1}{\sqrt{\lambda_2 A}},\ell\right)\geq \max_{x\in \Gamma_{{\rm out}+}}\overline{\varphi}(x), \quad \forall \ell\in (-\delta,\delta).
\end{equation}
Here the inequality \eqref{liu-0722-9-1} is possible since we can choose $\overline{\phi}$ such that $\partial_y\overline{\phi}(y,0)= \frac{\epsilon}{\lambda_2(L-y)}$ for $y\in[L-2\delta, L-2/\sqrt{\lambda_2 A}] $ (which implies such $(y,\ell)$ belongs to the interior of $( O_{\delta+}\cap B_{4\delta})\setminus B_{1/\sqrt{\lambda_2 A}}$), and thus by \eqref{liu-0722-7}, we have
\begin{align*}
    \overline{\varphi}\left(L-\frac{1}{\sqrt{\lambda_2 A}},0\right)-\overline{\varphi}\left(L-2\delta,0\right)\geq& \int_{L-2\delta}^{L-\frac{2}{\sqrt{\lambda_2 A}}} \partial_y \overline{\varphi}(y,0) {\rm d}y=\frac{\epsilon \overline{\varphi}}{A\lambda_2 } \int_{L-2\delta}^{L-\frac{2}{\sqrt{\lambda_2 A}}} \frac{1}{L-y} {\rm d}y\\
    =&\frac{\epsilon \overline{\varphi}}{A\lambda_2 }\left[\ln (2\delta)+\ln(\sqrt{\lambda_2 A}/2)\right]>\frac{{\rm C}}{A},
\end{align*}
provided that $A$ is chosen large if necessary. By  continuity, we may derive from \eqref{liu-0722-7} that $ \overline{\varphi}\left(L-\frac{1}{\sqrt{\lambda_2 A}},\ell\right)-\overline{\varphi}\left(y,\ell\right)\geq C/A$ holds for all $(y,\ell)\in\Gamma_{{\rm in}+}$. 
This together with \eqref{liu-0722-8} yields that the choice of $\overline{\varphi}(x)$ satisfying \eqref{liu-0722-7},  \eqref{liu-0722-9}, and \eqref{liu-0722-9-1} is achievable.

For the  remaining $x=(y,\ell)\in O_{\delta+}\cap B_{4\delta}$ satisfying $y>L-1/\sqrt{\lambda_2 A}$, by \eqref{liu-0722-9}, we can construct $\overline{\varphi}\in C^2(O_{\delta+}\cap B_{4\delta})$ such that
\begin{equation}\label{liu-0722-9-2}
         {\bf b}\cdot\nabla\overline{\varphi}\leq 0\quad \text{and}\quad\left|\Delta\overline{\varphi}\right|\leq O(1/A)\overline{\varphi}, \quad \forall x\in O_{\delta+}\cap B_{4\delta} \text{ and } y>L-1/\sqrt{\lambda_2 A}.
\end{equation}

We next verify that the constructed $\overline{\varphi}$ above  satisfies \eqref{liu20220722-3} in $O_{\delta+}\cap B_{4\delta}$. First, analogue to \eqref{liu-0728-23}, by 
\eqref{liu-0722-7} 
we can calculate that
\begin{equation}\label{liu_0730-5}
\begin{split}
     A\mathbf{b}(y,\ell)\cdot\nabla\overline{\varphi}
    =&\frac{A|({\bf I}+\eta_\ell \mathcal{J})\mathbf{b}|}{\partial_\ell x\cdot(\mathcal{J}\mathbf{b}-\eta_\ell \mathbf{b} )}\left(\partial_\ell x\cdot\mathcal{J}{\bf b}\partial_y \overline{\varphi}-\frac{\eta_\ell |{\bf b}|^2}{|({\bf I}+\eta_\ell \mathcal{J})\mathbf{b}|} \partial_\ell \overline{\varphi}\right)\\
    =&\frac{A|({\bf I}+\eta_\ell \mathcal{J})\mathbf{b}|}{\partial_\ell x\cdot(\mathcal{J}\mathbf{b}-\eta_\ell \mathbf{b} )}\left(\partial_\ell x\cdot\mathcal{J}{\bf b}\partial_y \overline{\varphi}-\frac{\eta_\ell |{\bf b}|^2\partial_\ell \overline{\phi}}{A|({\bf I}+\eta_\ell \mathcal{J})\mathbf{b}|}  \overline{\varphi}-\frac{2\varepsilon \ell\eta_\ell |{\bf b}|^2}{|({\bf I}+\eta_\ell \mathcal{J})\mathbf{b}|} \overline{\varphi}\right)  \\
\leq &\left[\frac{|({\bf I}+\eta_\ell \mathcal{J})\mathbf{b}|}{\partial_\ell x\cdot(\mathcal{J}\mathbf{b}-\eta_\ell \mathbf{b} )}\partial_\ell x\cdot\mathcal{J}{\bf b}\partial_y \overline{\phi}-\frac{\eta_\ell |{\bf b}|^2\partial_\ell \overline{\phi}}{\partial_\ell x\cdot(\mathcal{J}\mathbf{b}-\eta_\ell \mathbf{b})} \right]\overline{\varphi}=: F(\ell).
\end{split}
\end{equation}
For any $x=(y,\ell)\in O_{\delta+}\cap B_{4\delta}$ and $y\in (L-4\delta,L-1/\sqrt{\lambda_2 A})$,  by choosing $\delta$ small if necessary, we find  ${\bf b}\approx (\lambda_1 x_1,-\lambda_2 x_2)$ and $x\approx (\ell, L-y)$.
 In view of $\eta_0=0$, by \eqref{liu-0722-9} we derive
\begin{align*}
   F(0)= |\mathbf{b}|\partial_y \overline{\phi}(y,0)\overline{\varphi}\leq (L-y)|\overline{\phi}(y,0)|\overline{\varphi}\leq \epsilon \overline{\varphi},
\end{align*}
and thus by \eqref{liu_0730-5} we conclude that $A{\bf b}\cdot \nabla\overline{\varphi}\leq F(\ell)\leq 2\epsilon \overline{\varphi}$ for small $\delta>0$.
Accordingly, for any $(y,\ell)\in O_{\delta+}\cap B_{4\delta}$ satisfying $y\in (L-4\delta, L-1/\sqrt{\lambda_2 A})$, by \eqref{liu-0722-7} and \eqref{liu-0722-9-3}  we have
\begin{align}\label{liu-0723-1}
    &-\Delta\overline{\varphi}-A\mathbf{b}(x)\cdot\nabla\overline{\varphi}+c(x)\overline{\varphi}\notag\\
\geq&\left[-\frac{1}{ A } \frac{\epsilon}{\lambda_2 (\ell-y)^2 }+O(\varepsilon)\right]\overline{\varphi}+(c(x_*)-2\epsilon)\overline{\varphi}\\
\geq&(c(x_*)-4\epsilon)\overline{\varphi}, \qquad \forall y\in (\ell-4\delta,\ell-1/\sqrt{\lambda_2 A}), \notag
\end{align}
for small $\delta$ and large $A$. Next, for $x=(y,\ell)\in O_{\delta+}\cap B_{4\delta}$ such that $y\geq \ell-1/\sqrt{\lambda_2 A}$, 
by \eqref{liu-0722-9-2} we can  choose $A$ large such that
\begin{align*}
    -\Delta\overline{\varphi}-A\mathbf{b}(x)\cdot\nabla\overline{\varphi}+c(x)\overline{\varphi}\geq &\left[c(x)+O(1/A)\right]\overline{\varphi}
\geq (c(x_*)-2\epsilon)\overline{\varphi}, \quad \forall y \geq \ell-1/\sqrt{\lambda_2 A}.
\end{align*}
 This together with  \eqref{liu-0723-1} verifies  \eqref{liu20220722-3} in $O_{\delta+}\cap B_{4\delta}$.

{\bf (3)} The super-solution $\overline{\varphi}(y)$ on the region $O_{\delta-}\cap B_{4\delta}$ can be  constructed symmetrically  as in  {\bf (2)}. Then we can verify the first equation in  \eqref{liu20220722-3} holds on  $O_{\delta-}\cap B_{4\delta}$. Furthermore, by \eqref{liu20220722-3} we can choose $A$ large and $\delta$ small  such that
\begin{equation}\label{liu-0723-3-2}
    \nabla_-\overline{\varphi}(x)\cdot \nu(x)>0 \quad\text{and} \quad \nabla_+\overline{\varphi}(x)\cdot \nu(x)<0, \quad \forall x\in\Gamma,
\end{equation}
which implies the boundary conditions on $\Gamma$ given in \eqref{liu20220722-3} is satisfied.

Combining with the constructions in {\bf (1)}-{\bf (3)}, we conclude the constructed $\overline{\varphi}$ in \eqref{liu-0722-7} satisfies \eqref{liu20220722-3} in $O_\delta$, which completes Step 1.


 {\bf Step 2.} The construction of $\overline{\varphi}$ on the region $\Omega\setminus O_\delta$.
Let $\Gamma\cap \partial O_\delta=\{x_+,x_-\}$. Notice from \eqref{liu20220722-3} that $\overline{\varphi}$ is increasing in $|x-x_*|$ for any $x\in\Gamma$. 
We deduce that  $\nabla_+\overline{\varphi}(x)\cdot \nu(x)\geq \eta \overline{\varphi}(x)$, $\forall x\in \partial O_\delta \cap B_\eta(\{x_+,x_-\})$, for some $\eta$ small depending upon $\varepsilon$, which is independent of $A$.
Furthermore, 
similar to \eqref{liu_0730-6}, we derive from \eqref{liu-0722-7} and \eqref{liu_0730-3}  that
\begin{equation}\label{liu-0723-3-1}
\begin{split}
&\nabla\overline{\varphi}(x) \cdot \nu(x)\\
=& \frac{|({\bf I}+\eta_\delta \mathcal{J})\mathbf{b}|}{\partial_\ell x\cdot(\mathcal{J}\mathbf{b}-\eta_\delta \mathbf{b})}\left[ \left(
  \begin{array}{cc}
  \smallskip
    \partial_\ell x_2 & -\partial_y x_2 \\
   -\partial_\ell x_1 &  \partial_y x_1\\
  \end{array}
\right)\left(\begin{array}{c}
\smallskip
\partial_y \overline{\varphi}\\
\partial_\ell \overline{\varphi}
\end{array}
\right)\right] \cdot \frac{\mathcal{J}\frac{{\rm d} x_{ \delta}(y)}{{\rm d} y}}{|\frac{{\rm d} x_{\delta}(y)}{{\rm d} y}|}\Bigg |_{\ell=\delta}\\
=&\frac{\mathcal{J}\partial_y x \cdot\mathcal{J}({\bf I}+\eta_{\delta}\mathcal{J}){\bf b}\partial_\ell \overline{\varphi}+O(1/A)\overline{\varphi}}{\partial_\ell x\cdot(\mathcal{J}\mathbf{b}-\eta_\delta \mathbf{b})}\Bigg |_{\ell=\delta}\\
=&\frac{2\varepsilon \delta|({\bf I}+\eta_{\delta}\mathcal{J}){\bf b}|+O(1/A)}{\partial_\ell x\cdot(\mathcal{J}\mathbf{b}-\eta_\delta \mathbf{b})}\overline{\varphi}\\
=&2\varepsilon\delta(1+O(\delta))|{\bf b}(y,0)|, \quad \forall x\in \partial O_\delta \setminus B_\eta(\{x_+,x_-\}).
\end{split}
\end{equation}
Thus we have verify that
\begin{equation*}
\nabla\overline{\varphi}(x) \cdot \nu(x)\geq \rho(\eta,\delta,\varepsilon)\overline{\varphi}, \quad \forall x\in \partial O_\delta \setminus \{x_+,x_-\},
\end{equation*}
with constant $\rho>0$ being independent of $A$.


 Since ${\bf b}(x)\cdot {\bf n}(x)<0$ on $\partial \Omega$  and ${\bf b}(x)\cdot \nu(x)<0$ on $\partial O_\delta$ as constructed,  the orbits of \eqref{system}
remain in $\Omega\setminus O_\delta$ only a finite time.
We then apply  \cite[Lemma 2.3]{DEF1974}  to show that there exists some $C^2$ function $G>0$ such that $G(x)=\overline{\varphi}(x)$ on $\partial B_{4\delta}\cap O_\delta$ and
\begin{equation}\label{liu-0723-4}
{\bf b}(x)\cdot\nabla G(x)<0, \quad \forall x\in \Omega\setminus O_\delta,
\end{equation}
\begin{equation}\label{liu-0723-5}
\nabla G(x)\cdot \nu(x)<\rho G(x), \quad \forall x\in \partial O_\delta, \quad \text{and}\quad \nabla G\cdot\mathbf{n}(x)\geq 0,\,\,\forall x\in\partial\Omega.
\end{equation}
We next define $\overline{\varphi}=G$ in $\Omega\setminus O_\delta$. Then by \eqref{liu-0723-4}, we can choose $A$ large such that the first equation in  \eqref{liu20220722-3} holds on  $\Omega\setminus O_{\delta}$. Finally, the boundary conditions in \eqref{liu20220722-3} can be verified by \eqref{liu-0723-3-2} and \eqref{liu-0723-5}.

Summarily, by Steps 1 and 2, we have constructed a positive super-solution $\overline{\varphi}\in C(\overline{\Omega})$ such that \eqref{liu20220722-3} holds. Then \eqref{liu20220722-2} can be deduced by the comparison principle. The proof of Theorem \ref{theorem20220722} is now complete.
\end{proof}

For the next result, we assume the union of two homoclinic orbits connected by a saddle is unstable, where the surrounding orbits are gradually farther from the homoclinic connections.

\begin{theorem}[Unstable case]\label{theorem20220804}
 Suppose that the limit set of system \eqref{system} consists of a finite number of hyperbolic fixed points and a unstable union composed of two homoclinic orbits 
 connected by a hyperbolic saddle.
 Then
$$\lim_{A\to\infty}\lambda(A)= \min_{1\leq i\leq k}\{c(x_i)\},$$
where $\{x_1,\cdots,x_k\}$ denotes the set of stable fixed points.
\end{theorem}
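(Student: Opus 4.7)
The upper bound $\limsup_{A\to\infty}\lambda(A) \leq \min_{1\leq i\leq k} c(x_i)$ is obtained without modification from the local sub-solution construction near any stable fixed point carried out in Theorem \ref{liuprop1}, so nothing new is required there. All the work goes into proving the lower bound
$$\liminf_{A\to\infty}\lambda(A) \geq \min_{1\leq i\leq k}\{c(x_i)\},$$
which I plan to obtain, as in Theorem \ref{Limit-cycle-2}, by exhibiting a piecewise $C^2$ positive super-solution $\overline{\varphi} \in C(\overline{\Omega})$ satisfying $-\Delta \overline{\varphi} - A\mathbf{b}\cdot\nabla \overline{\varphi} + c(x)\overline{\varphi} \geq (\min_i c(x_i) - \epsilon)\overline{\varphi}$ off a finite collection of interior interfaces, with a favourable jump of the normal derivative across each interface and $\nabla\overline{\varphi}\cdot\mathbf{n} \geq 0$ on $\partial\Omega$.

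The template is that of Theorem \ref{Limit-cycle-2}, the only novelty being the treatment of the saddle $x_*$ that now sits on the figure-eight $\mathcal{C} = \mathcal{C}_+ \cup \mathcal{C}_-$. Away from $x_*$, I adapt Step 1 of Theorem \ref{Limit-cycle-2}: instability of $\mathcal{C}$ permits a shooting argument producing, for each small $\ell \in (-\delta,\delta)$, a unique $\eta_\ell \in \mathbb{R}$ with $\ell\eta_\ell \leq 0$ such that the perturbed ODE $\dot x = (\mathbf{I} + \eta_\ell \mathcal{J})\mathbf{b}(x)$ with initial datum at signed transverse distance $\ell$ from one of the branches returns periodically. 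This produces a smooth coordinate $(\tau,\ell)$ on the tubular set $\mathbb{O}_\delta \setminus B_\rho(x_*)$ along which $\mathbf{b}\cdot\nu > 0$ on $\partial \mathbb{O}_\delta$. Setting $\overline{\varphi}(\tau,\ell) = 1 - \ell^2/\sqrt{\delta}$ as in \eqref{liu-0727-1}, the computation \eqref{liu-0728-7} yields $-A\mathbf{b}\cdot\nabla\overline{\varphi} \geq 0$ thanks to $\ell\eta_\ell \leq 0$, and the Laplacian term, of order $1/\sqrt{\delta}$, dominates $c(x)\overline{\varphi}$ once $\delta$ is small enough.

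Near $x_*$, where the $(\tau,\ell)$ coordinate degenerates, I paste in on a ball $B_{4\delta}(x_*)$ the quadratic super-solution of Case~1(iii) in the proof of Theorem \ref{liuprop1}: after putting $D\mathbf{b}(x_*)$ in the form $\mathrm{diag}(\lambda_1,-\lambda_2)$ with $\lambda_1,\lambda_2 > 0$, the function
$$\overline{\varphi}(x) = K - (x-x_*)^{\mathrm{T}}\begin{pmatrix} \lambda_1/\sqrt{\delta} & 0 \\ 0 & \lambda_2 \end{pmatrix}(x-x_*)$$
gives $\mathbf{b}\cdot\nabla\overline{\varphi}\leq 0$ and $-\Delta\overline{\varphi}\geq 2\lambda_1/\sqrt{\delta}+2\lambda_2$, which dominates the $c(x)\overline{\varphi}$ term. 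The constant $K$ is selected so that $\overline{\varphi}$ agrees with the outer construction along $\partial B_{4\delta}(x_*)\cap\mathbb{O}_\delta$, and the favourable jump of the normal derivative there is verified along the lines of Step~1(2) of Theorem \ref{theorem20220722}. Outside $\mathbb{O}_\delta$ and outside small balls around all remaining fixed points of \eqref{system}, Lemma~2.3 of \cite{DEF1974} together with \eqref{assumption1} furnishes a $C^2$ function $G>0$ with $\mathbf{b}\cdot\nabla G<0$, $\nabla G\cdot\mathbf{n}\geq 0$ on $\partial\Omega$, and boundary values matching the inner pieces; at each stable, unstable or remaining saddle fixed point I reuse the corresponding local super-solution from Theorem \ref{liuprop1}.

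Gluing the pieces and invoking the comparison principle yields the lower bound up to $\epsilon$, and sending $\epsilon\to 0$ finishes the proof. The main obstacle I expect is the match between the periodic-perturbation coordinate and the linearised-saddle coordinate on $\partial B_{4\delta}(x_*)\cap\mathbb{O}_\delta$: the return time along the perturbed orbits through that interface blows up as $\ell \to 0$, so uniform control of $\overline{\varphi}$ and of its normal derivative is required across the entire interface, not merely away from the unstable manifold of $x_*$. This is precisely the delicate boundary analysis carried out in Steps~1(2)--(3) of Theorem \ref{theorem20220722}, and the corresponding adaptation to the unstable setting is the crux of the argument.
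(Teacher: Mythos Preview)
Your overall architecture is correct and matches the paper: upper bound from Theorem~\ref{liuprop1}, lower bound by a glued super-solution with the tubular piece modeled on Theorem~\ref{Limit-cycle-2} and special treatment near the saddle $x_*$. The gap is in the saddle piece.

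First, a sign problem. With $D\mathbf{b}(x_*)=\mathrm{diag}(\lambda_1,-\lambda_2)$ and your quadratic $\overline{\varphi}=K-(x-x_*)^{\mathrm T}\mathrm{diag}(\lambda_1/\sqrt{\delta},\lambda_2)(x-x_*)$, one computes
\[
\mathbf{b}\cdot\nabla\overline{\varphi}\approx -\tfrac{2\lambda_1^2}{\sqrt{\delta}}(x_1{-}x_{*1})^2+2\lambda_2^2(x_2{-}x_{*2})^2,
\]
which is \emph{positive} along the stable direction, so $-A\mathbf{b}\cdot\nabla\overline{\varphi}\to-\infty$ there. The Case~1(iii) quadratic in Theorem~\ref{liuprop1} only works because its diagonal entries carry the signs of the eigenvalues; you have copied the wrong sign pattern.

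Second, and more seriously, the continuity matching ``choose $K$ so that $\overline{\varphi}$ agrees along $\partial B_{4\delta}(x_*)\cap\mathbb O_\delta$'' cannot be done with a single constant: the outer function $1-\ell^2/\sqrt{\delta}$ varies along that curve (by $O(\delta^{3/2})$), and its profile as a function of the angle on $\partial B_{4\delta}$ has nothing to do with the profile of your quadratic. Referring to Steps~1(2)--(3) of Theorem~\ref{theorem20220722} does not help here, because those steps do \emph{not} use a fixed-radius ball: they work on the $A$-dependent annulus $L-4\delta\le y\le L-1/\sqrt{\lambda_2 A}$.

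The paper resolves both issues simultaneously by a different mechanism. It writes the super-solution in exponential form throughout $O_\delta$,
\[
\overline{\varphi}(y,\ell)=\exp\Big\{\frac{\overline{\phi}(y)}{A}-\frac{\ell^2}{\sqrt{\delta}}\Big\}\ \text{on }O_\delta\setminus B_{1/\sqrt{\lambda_2 A}},\qquad
\overline{\varphi}(x)=\exp\Big\{-D\|x-x_*\|^2+\frac{\rho(x)}{A}\Big\}\ \text{on }B_{1/\sqrt{\lambda_2 A}},
\]
with $\overline{\phi},\rho$ chosen so that $\overline{\varphi}\in C^2(O_\delta)$ (no interior interface at all). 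The shrinking radius $1/\sqrt{\lambda_2 A}$ is the key point: on that ball $\|x-x_*\|^2\le 1/(\lambda_2 A)$, so even though $\mathbf{b}\cdot\nabla\overline{\varphi}$ may be positive near the saddle, one gets $A\,\mathbf{b}\cdot\nabla\overline{\varphi}\le D\overline{\varphi}+o(1)$, which is then beaten by $-\Delta\overline{\varphi}\approx 4D\overline{\varphi}$ once $D$ is large. The $O(1/A)$ correctors $\overline{\phi},\rho$ are small enough not to disturb the main estimates, yet flexible enough to enforce the $C^2$ join at $\partial B_{1/\sqrt{\lambda_2 A}}$. If you want to carry out your plan, replace the fixed ball $B_{4\delta}$ by $B_{1/\sqrt{\lambda_2 A}}$ and switch to the exponential ansatz; the rest of your outline then goes through as in Step~2 of the paper's proof.
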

\begin{proof}
As in the proof of Theorem \ref{Limit-cycle-2}, the upper bound estimate $$\limsup\limits_{A\to\infty}\lambda(A)\leq \min\limits_{1\leq i\leq k}\{c(x_i)\}$$
can be established by  Step 2 of Theorem \ref{liuprop1}. It suffices to show the lower bound estimate
\begin{equation}\label{liu0804-1}
  \liminf_{A\to\infty}\lambda(A)\geq \min\limits_{1\leq i\leq k}\{c(x_i)\}.
\end{equation}

Denote by $\mathcal{C}:=\mathcal{C}_+\cup\mathcal{C}_-$ the two connected  homoclinic orbits $\mathcal{C}_+$ and $\mathcal{C}_-$.  We first define  region $O_\delta$ (as in Part 2 in the proof of Theorem \ref{Limit-cycle}) such that ${d_\mathcal{H}}(x,\mathcal{C})\leq \delta$, $\forall x\in O_\delta$, and ${\bf b}(x)\cdot \nu(x)<0$, $\forall x\in \partial O_\delta$, with $\nu(x)$ being the outward unit normal vector of $\partial O_\delta$. The proof of \eqref{liu0804-1} is divided into the following two steps.

{\bf Step 1}.  We shall  construct the positive super-solution $\overline{\varphi}\in C(O_{\delta})$  such that
\begin{equation}\label{liu0807-2}
-\Delta\overline{\varphi}-A\mathbf{b}\cdot\nabla\overline{\varphi}+c(x)\overline{\varphi}\geq \left[\min\limits_{1\leq i\leq k}\{c(x_i)\}\right]\overline{\varphi},\quad \forall x\in O_{\delta},
\end{equation}
for small $\delta$ and large $A$, and
\begin{equation}\label{liu0807-3}
\nabla\overline{\varphi}(x) \cdot \nu(x) \geq-3\sqrt{\delta} \overline{\varphi}(x), 
\quad \forall x\in \partial   O_{\delta}.
\end{equation}

Let $B_{4\delta}$ be defined by \eqref{liu20220722-1} and $\lambda_2>0$ be defined in \eqref{liu-0722-8}. Under the coordinate $x\mapsto(y,\ell)$ introduced in  the proof of Theorem \ref{theorem20220722}, we define
 \begin{equation}\label{liu0807-4}
\overline{\varphi}(y,\ell):=\exp\left\{\frac{\overline{\phi}(y)}{A}-\frac{\ell^2}{\sqrt{\delta}}\right\},\qquad \forall (y,\ell)\in O_\delta\setminus B_{1/\sqrt{\lambda_2 A}},
\end{equation}
and
\begin{equation}\label{liu0807-7}
\overline{\varphi}(x):=\exp\left\{-D\|x-x_*\|^2
+\frac{\rho(x)}{A}\right\},\qquad \forall x\in B_{1/\sqrt{\lambda_2 A}},
\end{equation}
where the functions $\overline{\phi}$ and $\rho$ are chosen such that $\varphi\in C^2(O_\delta)$, and the  constant  $D>0$ will be defined as follows.

{\bf (1)} The verification of \eqref{liu0807-2} on  $O_\delta\setminus B_{1/\sqrt{\lambda_2 A}}$.
By the unstablity of $\mathcal{C}$, it easily seen that $\ell\eta_\ell\leq0$, where $\eta_\ell$ is chosen such that \eqref{liu_0730-1} admits a periodic solution $x_\ell(\tau)$. Hence, as in \eqref{liu-0728-23}, we can calculate that
  \begin{equation}\label{liu0815-6}
\begin{split}
    \mathbf{b}(y,\ell)\cdot\nabla\overline{\varphi}
=&\frac{|({\bf I}+\eta_\ell \mathcal{J})\mathbf{b}|}{\partial_\ell x\cdot(\mathcal{J}\mathbf{b}-\eta_\ell \mathbf{b} )}\left(\partial_\ell x\cdot\mathcal{J}{\bf b}\partial_y \overline{\varphi}-\partial_y x\cdot\mathcal{J}{\bf b}\partial_\ell \overline{\varphi}\right)  \\
=&\frac{|({\bf I}+\eta_\ell \mathcal{J})\mathbf{b}|}{\partial_\ell x\cdot(\mathcal{J}\mathbf{b}-\eta_\ell \mathbf{b} )}\left(\partial_\ell x\cdot\mathcal{J}{\bf b}\partial_y \overline{\varphi}+\frac{2 \ell\eta_\ell |{\bf b}|^2}{\sqrt{\delta}|({\bf I}+\eta_\ell \mathcal{J})\mathbf{b}|} \overline{\varphi}\right)  \\
\leq&\frac{|({\bf I}+\eta_\ell \mathcal{J})\mathbf{b}|}{\partial_\ell x\cdot(\mathcal{J}\mathbf{b}-\eta_\ell \mathbf{b} )}\partial_\ell x\cdot\mathcal{J}{\bf b}\partial_y \overline{\varphi}\\
=&\frac{1}{A}\frac{|({\bf I}+\eta_\ell \mathcal{J})\mathbf{b}| \partial_\ell x\cdot\mathcal{J}{\bf b}}{\partial_\ell x\cdot(\mathcal{J}\mathbf{b}-\eta_\ell \mathbf{b} )}\frac{{\rm d}\overline{\phi}(y)}{{\rm d}y} \overline{\varphi}.
\end{split}
\end{equation}
In view of $\eta_\ell\to 0$ as $\ell\to 0$,  we can derive from \eqref{liu0807-4} and \eqref{liu0815-6}  that
 \begin{equation}\label{liu0815-7}
     \begin{split}
          &-\Delta\overline{\varphi}-A\mathbf{b}(x)\cdot\nabla\overline{\varphi}+c(x)\overline{\varphi}\\
\geq&\left[\frac{2}{\sqrt{\delta}}+O(1/A)+O(\sqrt{\delta})\right]\overline{\varphi}+(|{\bf b}(x)|+O(\delta))\left|\frac{{\rm d}\overline{\phi}(y)}{{\rm d}y}\right|\overline{\varphi}+c(x)\overline{\varphi}\\
\geq& \left[\min\limits_{1\leq i\leq k}\{c(x_i)\}\right]\overline{\varphi}, \qquad \forall x\in O_\delta\setminus B_{1/\sqrt{\lambda_2 A}},
     \end{split}
 \end{equation}
by choosing $\delta$ small and then $A$ large. Hence,  \eqref{liu0807-2} holds on $O_\delta\setminus B_{1/\sqrt{\lambda_2 A}}$. 

 {\bf (2)} For any $x\in B_{1/\sqrt{\lambda_2 A}}$,
by \eqref{liu0722-4} and \eqref{liu0807-7},  direct calculation yields
\begin{align*}
\mathbf{b}\cdot\nabla\overline{\varphi}(x)=&(x-x_*)^{\rm T}\left(\begin{array}{cc}
    \lambda_1 & 0  \\
   0  & -\lambda_2
\end{array}\right)\nabla\overline{\varphi}(x)+O(\|x-x_*\|^3)\\
\leq& 
D\lambda_2\|x-x_*\|^2+O(\|x-x_*\|^3)\\
\leq&D/A+O(1/A^{3/2}), \qquad\forall x\in B_{1/\sqrt{\lambda_2 A}},
\end{align*}
whence we can calculate that for any $x\in B_{1/\sqrt{\lambda_2 A}}$,
\begin{align*}
&-\Delta\overline{\varphi}-A\mathbf{b}\cdot\nabla\overline{\varphi}+(c(x)-\min\limits_{1\leq i\leq k}\{c(x_i)\})\overline{\varphi}\\
\geq&\left[3D+O(1/\sqrt{A})+(c(x)-\min\limits_{1\leq i\leq k}\{c(x_i)\})\right]\overline{\varphi}.
\end{align*}
Therefore, we may choose $D$ large such that \eqref{liu0807-2} holds in $B_{1/\sqrt{\lambda_2 A}}$.

It remains to verify \eqref{liu0807-3}. By \eqref{liu-0723-3-1}, we calculate from \eqref{liu0807-4} that
\begin{equation}\label{liu0821-1}
\begin{split}
\nabla\overline{\varphi}(x) \cdot \nu(x)
=&\frac{\mathcal{J}\partial_y x \cdot\mathcal{J}({\bf I}+\eta_{\delta}\mathcal{J}){\bf b}\partial_\ell \overline{\varphi}+O(1/A)\overline{\varphi}}{\partial_\ell x\cdot(\mathcal{J}\mathbf{b}-\eta_\delta \mathbf{b})}\Bigg |_{\ell=\delta}\\
=&-\frac{2\sqrt{\delta}|({\bf I}+\eta_{\delta}\mathcal{J}){\bf b}|+O(1/A)}{\partial_\ell x\cdot(\mathcal{J}\mathbf{b}-\eta_\delta \mathbf{b})}\overline{\varphi}\\
\geq&-2\sqrt{\delta}(1+O(\delta))\overline{\varphi}, \quad \forall x\in \partial O_\delta,
\end{split}
\end{equation}
and thus \eqref{liu0807-3} follows. Step 1 is complete.

{\bf Step 2}. We prove \eqref{liu0804-1}. Let $\{x_1,\cdots,x_n\}$ be the set of fixed points of \eqref{system} with $n\geq k$. Recall  $\tilde{O}_\delta:=\{x\in\Omega:\exists 1\leq i\leq n, \,\, |x-x_i|<\delta \}$ defined in Step 2 of  Theorem \ref{Limit-cycle-2}.
Given any $\epsilon>0$, we can apply the arguments in the proof of Theorem \ref{liuprop1} to construct $\overline{\varphi}\in C^2(\tilde{O}_\delta)$ such that \eqref{liu-010} holds for 
small $\delta>0$.
Notice that there are no limit points of  system \eqref{system}  in $\Omega\setminus(O_\delta\cup\tilde{O}_\delta)$. In view of  ${\bf b}(x)\cdot \nu(x)>0$ on $\partial O_\delta$, by \eqref{liu0807-3} we can define $\overline{\varphi}\in C^2(\Omega\setminus(O_\delta\cup\tilde{O}_\delta))$ such that ${\bf b}(x)\cdot\nabla \overline{\varphi}(x)<0$ on $\Omega\setminus(O_\delta\cup\tilde{O}_\delta)$ and
$$(\nabla_-\overline{\varphi}(x)-\nabla_+\overline{\varphi}(x))\cdot \nu(x)> 0, \qquad \forall x\in\partial O_\delta\cup \partial \tilde{O}_\delta.$$
Hence, \eqref{liu-010} holds on  $\Omega\setminus(O_\delta\cup\tilde{O}_\delta)$ by choosing $A$ large. Combining with \eqref{liu0807-2}, we have  constructed a positive super-solution $\overline{\varphi}\in C(\overline{\Omega})$ such that
\begin{equation*}
 \left\{\begin{array}{ll}
\medskip
-\Delta\overline{\varphi}-A\mathbf{b}\cdot\nabla\overline{\varphi}+c(x)\overline{\varphi}\geq\left[\min\limits_{1\leq i\leq k}\{c(x_i)\}\right] \overline{\varphi}&\mathrm{in} \,\, \Omega\setminus(\partial O_\delta\cup\partial \tilde{O}_\delta),\\
\medskip
(\nabla_-\overline{\varphi}(x)-\nabla_+\overline{\varphi}(x))\cdot \nu(x)> 0 & \mathrm{on}~\partial O_\delta\cup\partial \tilde{O}_\delta,\\
  \nabla\overline{\varphi}\cdot \mathbf{n}\geq 0 & \mathrm{on}~\partial\Omega,  
  \end{array}\right.
 \end{equation*}
for sufficiently large $A$. Then \eqref{liu0804-1} can be derived from the comparison principle. This concludes the proof. 
\end{proof}

\begin{theorem}\label{theorem20220809}
Suppose that the limit set of system \eqref{system} consists of a finite number of  fixed points and a semi-stable union of two homoclinic orbits $\mathcal{C}_+\cup\mathcal{C}_-$ connected by a hyperbolic saddle. Assume $\mathcal{C}_+\cup\mathcal{C}_-$ is stable from the outside, and  $\mathcal{C}_+$ or/and $\mathcal{C}_-$ are unstable from the inside, then
 $$\lim_{A\to\infty}\lambda(A)=\min_{1\leq i\leq k}\{c(x_i)\},$$
where $\{x_1,\cdots,x_k\}$ denotes the set of stable fixed points.
\end{theorem}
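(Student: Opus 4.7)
The upper bound $\limsup_{A\to\infty}\lambda(A) \le \min_{1\le i\le k}\{c(x_i)\}$ is immediate from Step 2 of the proof of Theorem \ref{liuprop1}: localize a positive sub-solution near any stable fixed point $x_i$. I concentrate on the lower bound, which will follow by the comparison principle once I exhibit a positive, piecewise $C^2$ super-solution $\overline{\varphi}\in C(\overline{\Omega})$ satisfying
$-\Delta\overline{\varphi}-A\mathbf{b}\cdot\nabla\overline{\varphi}+c(x)\overline{\varphi}\ge(\min_{1\le i\le k}\{c(x_i)\}-\epsilon)\overline{\varphi}$
off the interfaces, together with $(\nabla_-\overline{\varphi}-\nabla_+\overline{\varphi})\cdot\nu\ge 0$ on interfaces and $\nabla\overline{\varphi}\cdot\mathbf{n}\ge 0$ on $\partial\Omega$.

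On a thin tubular neighborhood $O_\delta$ of $\mathcal{C}:=\mathcal{C}_+\cup\mathcal{C}_-$, I use the $(y,\ell)$-coordinates built from the perturbed closed orbit family $x_\ell(\tau)$ of \eqref{liu_0730-1}, exactly as in Theorems \ref{theorem20220722} and \ref{theorem20220804}. The twist $\eta_\ell$ satisfies $\ell\eta_\ell\le 0$ on any side that is unstable and $\ell\eta_\ell\ge 0$ on any side that is stable. On the unstable-inside region (which exists by hypothesis), I adopt verbatim the super-solution of Theorem \ref{theorem20220804}: take $\overline{\varphi}=\exp\{\overline{\phi}(y)/A-\ell^2/\sqrt{\delta}\}$ outside the small ball $B_{1/\sqrt{\lambda_2 A}}$ around the saddle $x_*$, and the Gaussian $\exp\{-D\|x-x_*\|^2+\rho(x)/A\}$ inside that ball. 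Choosing $D$ large enough so that the $4D\overline{\varphi}$ coming from $-\Delta\overline{\varphi}$ dominates $\|c\|_\infty$ and the perturbative $\mathbf{b}\cdot\nabla\overline{\varphi}$ contribution, the super-solution inequality holds with threshold $\min_{1\le i\le k}\{c(x_i)\}-\epsilon$ on this region, regardless of the value of $c(x_*)$.

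The new difficulty is that the outside of $\mathcal{C}$ (and any stable inside loop) is stable, so $\ell\eta_\ell\ge 0$ there and the factor $\frac{2\ell\eta_\ell|\mathbf{b}|^2/\sqrt{\delta}}{\partial_\ell x\cdot(\mathcal{J}\mathbf{b}-\eta_\ell\mathbf{b})}$ that appears in $\mathbf{b}\cdot\nabla\overline{\varphi}$ (compare \eqref{liu0815-6}) has the unfavorable sign if the same formula is used. I will fix this by the linear-quadratic trick of Theorem \ref{Limit-cycle-3}: replace $-\ell^2/\sqrt{\delta}$ by $\alpha\ell-\ell^2/\sqrt{\delta}$ with a constant $\alpha$ whose sign makes the linear contribution $-\alpha\eta_\ell|\mathbf{b}|^2/(\partial_\ell x\cdot(\mathcal{J}\mathbf{b}-\eta_\ell\mathbf{b}))$ negative on the stable side, so that $\mathbf{b}\cdot\nabla\overline{\varphi}\le O(1/A)\overline{\varphi}$ on both sides. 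The two expressions are patched continuously at $\ell=0$ (both equal $\exp\{\overline{\phi}(y)/A\}$), and one checks as in Theorem \ref{Limit-cycle-3} that the outward normal derivative jump has the correct sign. Near the saddle on the stable side I again switch to the Gaussian zone, whose bound is insensitive to the direction of the incoming flow.

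Outside $O_\delta\cup\tilde{O}_\delta$, where $\tilde{O}_\delta$ is a neighborhood of the remaining hyperbolic fixed points, I use the super-solutions from Theorem \ref{liuprop1} on $\tilde{O}_\delta$ and extend by \cite[Lemma 2.3]{DEF1974} to produce $\overline{\varphi}\in C^2$ with $\mathbf{b}\cdot\nabla\overline{\varphi}<0$ and $\nabla\overline{\varphi}\cdot\mathbf{n}\ge 0$ on $\partial\Omega$, so that the super-solution inequality becomes trivial for large $A$ on this region. The main obstacle will be the linear correction on the stable side of $\mathcal{C}_\pm$: the parameter $\alpha$ must simultaneously restore the correct sign in $\mathbf{b}\cdot\nabla\overline{\varphi}$, produce a favorable transmission jump at $\ell=0$, and be compatible with a logarithmic slope bound on $\overline{\phi}(y)$ near the saddle analogous to \eqref{liu-0722-9}, so that matching to the Gaussian zone at $|x-x_*|=1/\sqrt{\lambda_2 A}$ succeeds. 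The first two of these are handled as in Theorem \ref{Limit-cycle-3}, while the third is the same calculation as in Step 1 of Theorem \ref{theorem20220722}; crucially, because $D$ in the Gaussian zone is free, the bound obtained there is $\min_{1\le i\le k}\{c(x_i)\}-\epsilon$ rather than $c(x_*)-\epsilon$, which is why the saddle value does not appear in the final limit.
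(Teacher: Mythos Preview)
Your plan handles the ``both loops unstable inside'' case (Part~1 in the paper) correctly, and there your linear--quadratic modification $\alpha\ell-\ell^2/\sqrt{\delta}$ is precisely what the paper does. The gap is in the other case, when one loop, say $\mathcal{C}_-$, is stable from \emph{both} sides. There $\ell\eta_\ell\ge 0$ on each side of $\mathcal{C}_-$, so $\eta_\ell$ changes sign at $\ell=0$. For the drift term $-\eta_\ell|\mathbf{b}|^2\,\partial_\ell\overline{\varphi}$ to have the favorable sign you need $\alpha-2\ell/\sqrt{\delta}\ge 0$ on the side where $\eta_\ell>0$ and $\le 0$ on the side where $\eta_\ell<0$; this forces $\alpha_+>0>\alpha_-$ if you allow different linear coefficients on the two sides. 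But then at the interface $\ell=0$ the super-solution jump reads $(\nabla_-\overline{\varphi}-\nabla_+\overline{\varphi})\cdot\nu=(\alpha_--\alpha_+)\overline{\varphi}<0$, the wrong sign. No choice of a single (or piecewise) linear term rescues this: a loop that is attracting on both sides behaves locally like a stable limit cycle, and for a genuine stable limit cycle the threshold you would obtain by your construction is the orbit average of $c$, not $\min_i c(x_i)$.

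The paper circumvents this by a different mechanism on $\mathcal{C}_-$: it introduces a one-dimensional Robin eigenvalue problem along the loop, $-\phi_{yy}-A|\mathbf{b}(y,0)|\phi_y+c(y,0)\phi=\lambda\phi$ on $(0,L)$ with $\phi_y(0)+\min\{\alpha,\epsilon\}\phi(0)=0$, $\phi_y(L)+\alpha\phi(L)=0$, chooses $\alpha=\alpha_A$ so that $\phi(0)=\phi(L)$ (intermediate value in $\alpha$), and proves via a concentration argument that $\underline{\lambda}(A,\alpha_A)\to+\infty$. The eigenfunction $\phi_{A,\alpha_A}$ is then used as the backbone $\overline{\varphi}(y,0)$ of the super-solution in a $1/A$-tube around $\mathcal{C}_-$; because its eigenvalue blows up, the inequality holds with threshold $\min_i c(x_i)$ regardless of $c$ along the loop. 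The matching at the saddle uses the boundary condition $\phi_y(L)=-\alpha_A\phi(L)<0$ together with a directional derivative condition at $x_*$ (the analogue of \eqref{liu0819-2}), not a Gaussian patch with free $D$. Your proposal is missing this entire ingredient; without it the lower bound on the stable-inside loop cannot be pushed above the orbit average of $c$.
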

\begin{proof}
Similar to Theorems \ref{Limit-cycle-2} and \ref{theorem20220804},  it suffices to show the lower bound estimate
\begin{equation}\label{liu0815-1}
  \liminf_{A\to\infty}\lambda(A)\geq \min\limits_{1\leq i\leq k}\{c(x_i)\},
\end{equation}
as the upper bound estimate
can be established by  Step 2 of Theorem \ref{liuprop1}. For clarity, we divide the proof into the following two parts.

\medskip

{\bf Part 1}. We first establish \eqref{liu0815-1} in the case that both homoclinic orbits  are  unstable from the inside.   
Set  $\mathcal{C}:= \mathcal{C}_+\cup\mathcal{C}_-$ and let $x_*$ be the hyperbolic saddle connecting $\mathcal{C}_+$ and $\mathcal{C}_-$. For any $\delta>0$, we choose $O_\delta$ as in Part 2  of Theorem \ref{Limit-cycle} such that ${d_\mathcal{H}}(x,\mathcal{C})\leq \delta$, $\forall x\in O_\delta$, and ${\bf b}(x)\cdot \nu(x)<0$, $\forall x\in \partial O_\delta$, with $\nu(x)$ being the outward unit normal vector on $\partial O_\delta$. Denote $\{x_1,\cdots,x_n\}$ as the set of fixed points of \eqref{system} with $n\geq k$. Recall  $\tilde{O}_\delta:=\{x\in\Omega:\exists 1\leq i\leq n, \,\, |x-x_i|<\delta \}$.
Given any $\epsilon>0$, inspired from the proof of Theorem \ref{theorem20220722},
we next  construct a positive super-solution $\overline{\varphi}\in C(\overline{\Omega})\cap C^2(\Omega\setminus(\partial O_\delta\cup\partial \tilde{O}_\delta))$ such that
\begin{equation}\label{liu0815-2}
 \left\{\begin{array}{ll}
-\Delta\overline{\varphi}-A\mathbf{b}\cdot\nabla\overline{\varphi}+c(x)\overline{\varphi}\geq\left[\min\limits_{1\leq i\leq k}\{c(x_i)\}-\epsilon\right] \overline{\varphi}&\mathrm{in} \,\, \Omega\setminus(\partial O_\delta\cup\partial \tilde{O}_\delta),\\
\medskip
(\nabla_-\overline{\varphi}(x)-\nabla_+\overline{\varphi}(x))\cdot \nu(x)> 0 & \mathrm{on}~\partial O_\delta\cup\partial \tilde{O}_\delta,\\
  \nabla\overline{\varphi}\cdot \mathbf{n}\geq 0 & \mathrm{on}~\partial\Omega  
  \end{array}\right.
 \end{equation}
for sufficiently large $A$ and small $\delta$. Then \eqref{liu0815-1} can be derived by the comparison principle.

First,
applying the same arguments as in Theorem \ref{liuprop1}, we can construct $\overline{\varphi}\in C^2(\tilde{O}_\delta)$ such that the first inequality in \eqref{liu0815-2} holds on $\tilde{O}_\delta$.  The construction of $\overline{\varphi}$ on $\Omega\setminus \tilde{O}_\delta$ can be preformed in the following two steps.

{\bf Step 1}. The construction of $\overline{\varphi}$ on $O_\delta$.
We first recall some notations. 
Let $B_{4\delta}$ be defined by \eqref{liu20220722-1}.
We define the curve $\Gamma\subset\Omega$ such that $\Gamma\cap \mathcal{C}_+=\Gamma\cap \mathcal{C}_-=\{x_*\}$ and
${d_\mathcal{H}}(x, \mathcal{C}_+)={d_\mathcal{H}}(x, \mathcal{C}_-)$,  $\forall x\in\Gamma$. Then the region $O_\delta$ is assumed to be divided into two parts $O_{\delta+}$ and $O_{\delta-}$ by $\Gamma$ such that $\mathcal{C}_{\pm}\subset \overline{O}_{\delta\pm}$ and $\overline{O}_{\delta+}\cap\overline{O}_{\delta-}=\Gamma$. We only complete the construction on $O_{\delta+}$, as that on $O_{\delta-}$ is
symmetrical.

As in the proof of Theorem \ref{theorem20220722},
we introduce the coordinate $x\mapsto(y,\ell)$ such that $x=x_\ell(\tau(y))$, where $x_\ell$ denotes the periodic solution of \eqref{liu_0730-1} by choosing suitable $\eta_\ell$, and  $y$ denotes the arc length along the solution  $x_\ell(\tau)$ as in \eqref{liu_0730-2}. Since the  homoclinic orbit $\mathcal{C}_+$ is semi-stable, it turns out that  $ \eta_\ell\leq 0$ and $|\eta_\ell|\searrow  0$ as $\ell\to 0$. 
The ideas of the construction is similar to Step 1 of Theorem \ref{theorem20220804}. Indeed,  we define \begin{equation}\label{liu0815-3}
\overline{\varphi}(y,\ell):=\exp\left\{\frac{\overline{\phi}(y)}{A}-4\sqrt{\delta}\ell-\frac{\ell^2}{\sqrt{\delta}}\right\},\qquad \forall (y,\ell)\in O_\delta\setminus B_{1/\sqrt{\lambda_2 A}},
\end{equation}
and
\begin{equation}\label{liu0815-4}
\overline{\varphi}(x):=\exp\left\{
-(x-x_*)^{\rm T}\left(\begin{array}{cc}
   D & {\rm K}/\lambda_1  \\
   {\rm K}/\lambda_2  & D
\end{array}\right)(x-x_*)
+\frac{\rho(x)}{A}\right\},\qquad \forall x\in B_{1/\sqrt{\lambda_2 A}}.
\end{equation}
Here $\lambda_1, \lambda_2>0$ are defined in \eqref{liu0722-4},  functions $\overline{\phi}$ and $\rho$ are chosen such that $\overline{\varphi}\in C^2(O_\delta)$, and the constants  $D,{\rm K}>0$ will be defined as follows.

{\bf (1)} For any $(y,\ell)\in O_\delta\setminus B_{1/\sqrt{\lambda_2 A}}$,  similar to \eqref{liu0815-6}, in light of  $\eta_\ell\leq 0$,
by \eqref{liu_0730-3} and \eqref{liu0815-3}  we can calculate that
  \begin{equation*}
\begin{split}
    \mathbf{b}(y,\ell)\cdot\nabla\overline{\varphi}
=&\frac{|({\bf I}+\eta_\ell \mathcal{J})\mathbf{b}|}{\partial_\ell x\cdot(\mathcal{J}\mathbf{b}-\eta_\ell \mathbf{b} )}\left(\partial_\ell x\cdot\mathcal{J}{\bf b}\partial_y \overline{\varphi}+\frac{2 (2\sqrt{\delta}+\ell/\sqrt{\delta})\eta_\ell |{\bf b}|^2}{\sqrt{\delta}|({\bf I}+\eta_\ell \mathcal{J})\mathbf{b}|} \overline{\varphi}\right)  \\
\leq&\frac{1}{A}\frac{|({\bf I}+\eta_\ell \mathcal{J})\mathbf{b}| \partial_\ell x\cdot\mathcal{J}{\bf b}}{\partial_\ell x\cdot(\mathcal{J}\mathbf{b}-\eta_\ell \mathbf{b} )}\frac{{\rm d}\overline{\phi}(y)}{{\rm d}y} \overline{\varphi},
\end{split}
\end{equation*}
whence by \eqref{liu0815-7} we can choose $A$ large and $\delta$ small such that the first inequality in \eqref{liu0815-2} holds on $O_\delta\setminus B_{1/\sqrt{\lambda_2 A}}$.

{\bf (2)} For any $x\in B_{1/\sqrt{\lambda_2 A}}$, we derive from \eqref{liu0722-4} and \eqref{liu0815-4} that
\begin{align*}
&\mathbf{b}(x)\cdot\nabla\overline{\varphi}(x)\\
=&(x-x_*)^{\rm T}\left(\begin{array}{cc}
    \lambda_1 & 0  \\
   0  & -\lambda_2
\end{array}\right)\nabla\overline{\varphi}(x)+O(\|x-x_*\|^3)\\
=& (x-x_*)^{\rm T}\left[\left(\begin{array}{cc}
    -\lambda_1 D & -\ {\rm K} \\
     {\rm K}  & \lambda_2 D
\end{array}\right)(x-x_*)+\left(\begin{array}{cc}
    \lambda_1 & 0  \\
   0  & -\lambda_2
\end{array}\right)\frac{\nabla\rho(x)}{A}\right]\overline{\varphi}+O(\|x-x_*\|^3)\\
\leq&D/A+O(1/A^{3/2}), \qquad\forall x\in B_{1/\sqrt{\lambda_2 A}}.
\end{align*}
Hence by \eqref{liu0815-4} it holds that 
\begin{align*}
&-\Delta\overline{\varphi}-A\mathbf{b}\cdot\nabla\overline{\varphi}+(c(x)-\min\limits_{1\leq i\leq k}\{c(x_i)\})\overline{\varphi}\\
=&\left[3D-\frac{\Delta \rho(x)}{A}+O(1/\sqrt{A})+(c(x)-\min\limits_{1\leq i\leq k}\{c(x_i)\})\right]\overline{\varphi}\geq 0
\end{align*}
by choosing $A$ large, so that  \eqref{liu0815-2} holds on $B_{1/\sqrt{\lambda_2 A}}$.

Furthermore, 
similar to \eqref{liu_0730-6} and \eqref{liu0807-3}, by  \eqref{liu0815-3} we can deduce  that for any $x\in\partial O_\delta$,
\begin{equation}\label{liu0815-8}
-7\sqrt{\delta} \overline{\varphi}(x)\leq \nabla\overline{\varphi}(x) \cdot \nu(x)\Big |_{\ell=\delta}\leq 0 \quad\text{and}\quad 0\leq \nabla\overline{\varphi}(x) \cdot \nu(x)\Big |_{\ell=-\delta}\leq 3\sqrt{\delta} \overline{\varphi}(x).
\end{equation}
 Step 1 is complete.

{\bf Step 2}. The construction of $\overline{\varphi}$ on $\Omega\setminus(O_\delta\cup\tilde{O}_\delta)$. By our assumption, there are no limit points of  system \eqref{system}  in $\Omega\setminus(O_\delta\cup\tilde{O}_\delta)$. Noting that  ${\bf b}(x)\cdot \nu(x)>0$ on $\partial O_\delta$, according to \eqref{liu0815-8} we can define positive $\overline{\varphi}\in C^2(\Omega\setminus(O_\delta\cup\tilde{O}_\delta))$ such that ${\bf b}\cdot\nabla \overline{\varphi}<0$ on $\Omega\setminus(O_\delta\cup\tilde{O}_\delta)$ and
$$(\nabla_+\overline{\varphi}(x)-\nabla_-\overline{\varphi}(x))\cdot \nu(x)> 0, \qquad \forall x\in\partial O_\delta\cup \partial \tilde{O}_\delta,$$
which means the boundary conditions in \eqref{liu0815-2} hold. The existence of such $\overline{\varphi}$ can be established by the same arguments in  \cite[Lemma 2.3]{DEF1974}. Moreover, in light of  ${\bf b}\cdot\nabla \overline{\varphi}<0$ on $\Omega\setminus(O_\delta\cup\tilde{O}_\delta)$, we can choose $A$ large so that the first inequality in  \eqref{liu0815-2} also holds.

Combining with Step 1,  we have  constructed a super-solution $\overline{\varphi}\in C(\overline{\Omega})\cap C^2(\Omega\setminus(\partial O_\delta\cup\partial \tilde{O}_\delta))$ satisfying \eqref{liu0815-2}, whence \eqref{liu0815-1} holds.

\medskip

{\bf Part 2}. We next prove \eqref{liu0815-1} in the case that   the homoclinic orbit $\mathcal{C}_+$ is unstable from the inside while $\mathcal{C}_-$ is stable from the inside, see Fig. \ref{figure1-1liu-1}. Recall that $O_\delta=O_{\delta+}\cup O_{\delta-}$ with $\mathcal{C}_+\subset O_{\delta+}$ and $\mathcal{C}_-\subset O_{\delta-}$. Denote $L>0$ as the arc length of the homoclinic orbit $\mathcal{C}_-$.

 \begin{figure}[http!!]
  \centering
\includegraphics[height=2.3in]{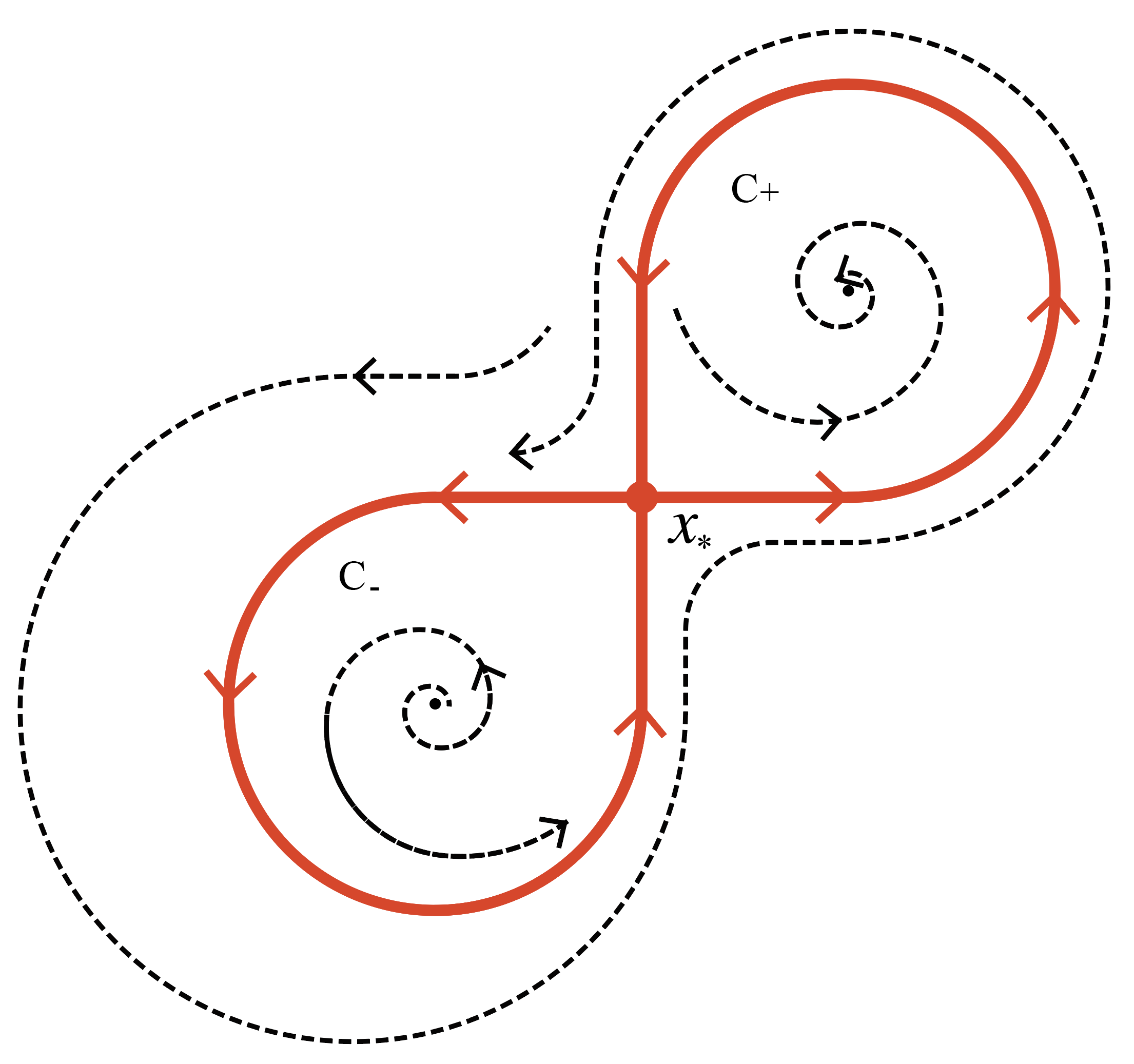}
  \caption{\small Illustration for the phase-portrait of \eqref{system} in the case that the union of two homoclinic orbits $\mathcal{C}_+\cup \mathcal{C}_+$ is semi-stable.}\label{figure1-1liu-1}
  \end{figure}

{\bf Step 1}. Before proving \eqref{liu0815-1}, we first consider the one-dimensional eigenvalue problem
\begin{equation}\label{liu0816-1}
 \left\{\begin{array}{ll}
\medskip
-\phi_{yy}-A|\mathbf{b}(y,0)|\phi_y+c(y,0)\phi=\lambda\phi, \quad  x\in(0,L),\\
\phi_y(0)+\min\{\alpha,\epsilon\}\phi(0)=0, \quad \phi_y(L)+\alpha \phi(L)=0.
  \end{array}\right.
 \end{equation}
Given any $A,\alpha>0$, let $\underline{\lambda}(A,\alpha)$ be the principal eigenvalue of problem \eqref{liu0816-1} and $\phi_{A,\alpha}>0$ be the corresponding eigenfunction. We shall claim that for each  $A$ large, there exists some $\alpha_A>0$ such that $\phi_{A,\alpha_A}(0)=\phi_{A,\alpha_A}(L)$ and it holds $\underline{\lambda}(A,\alpha_A)\to\infty$ as $A\to\infty$.

On the one hand, since $|{\bf b}(y,0)|>0$ in $(0,L)$,  if $\alpha=0$ in \eqref{liu0816-1}, it is proved by Lemmas 3.1 and 3.2 in \cite{CL2008} that  $\phi_{A,0}(y)\rightharpoonup \delta(L)$  weakly in $L^2((0,L))$, where $\delta(L)$ is the Dirac measure centered at $x=L$. This implies that $\phi_{A,0}(0)<\phi_{A,0}(L)$ for large $A$. On the other hand, given any $A>0$, by virtue of the variational characterization (see e.g. \cite{CL2008,PZ2018})
\begin{equation*}
    \begin{split}
        \underline{\lambda}(A,\alpha)=\min_{\int_0^Le^{A\int_0^y|{\bf b}|(z,0)|{\rm d}z}\phi^2{\rm d}y=1}\Big\{&-\min\{\alpha,\epsilon\}\phi^2(0)+\alpha\phi^2(L)\\
        &+\int_0^Le^{A\int_0^y|{\bf b}|(z,0)|{\rm d}z}(|\phi_y|^2+c(y,0)\phi^2){\rm d}y\Big\},
    \end{split}
\end{equation*}
we can derive 
$\underline{\lambda}(A,\alpha)\to \underline{\lambda}_\infty(A)$ as $\alpha\to\infty$ with $\underline{\lambda}_\infty(A)$ being the principal eigenvalue of 
 \begin{equation*}
 \left\{\begin{array}{ll}
\medskip
-\phi_{yy}-A|\mathbf{b}(y,0)|\phi_y+c(y,0)\phi=\underline{\lambda}_\infty\phi, \quad  x\in(0,L),\\
\phi_y(0)+\epsilon\phi(0)=0, \quad \phi(L)=0,
  \end{array}\right.
 \end{equation*}
for which  the principal eigenfunction  is denoted by $\phi_{A,\infty}$. By the elliptic regularity theory, it can be proved that $\phi_{A,\alpha}\to\phi_{A,\infty}$ (up to some  multiplication) uniformly in $[0,L]$ as $\alpha\to\infty$. In view $\phi_{A,\infty}(L)=0$ and $\phi_{A,\infty}(0)>0$, 
we have $\phi_{A,\alpha}(0)>\phi_{A,\alpha}(L)$ for large $\alpha$, which together with $\phi_{A,0}(0)<\phi_{A,0}(L)$ proves the existence of $\alpha_A$.

It remains to show $\underline{\lambda}(A,\alpha_A)\to\infty$ as $A\to\infty$. Let $\alpha=\alpha_A$ in \eqref{liu0816-1}. Multiply both sides of \eqref{liu0816-1} by $\phi_{A,\alpha_A}$ and integrate over $[0,L]$, then we find
\begin{equation}\label{liu0818-2}
\begin{split}
    \underline{\lambda}(A,\alpha_A)&\geq \min_{\int_0^Le^{A\int_0^y|{\bf b}|(z,0)|{\rm d}z}\phi^2{\rm d}y=1  \atop \phi(0)=\phi(L)}\int_0^Le^{A\int_0^y|{\bf b}|(z,0)|{\rm d}z}(|\phi_y|^2+c(y,0)\phi^2){\rm d}y\\
    &=\min_{\int_0^L w^2{\rm d}y=1  \atop w(0)=e^{A\int_0^L|{\bf b}|(z,0)|{\rm d}z}w(L)}\int_0^L\Big[\Big(w_y-\frac{A}{2}|{\bf b}(y,0)|w\Big)^2+c(y,0)w^2\Big]{\rm d}y.
    \end{split}
\end{equation}
Suppose on the contrary that $\underline{\lambda}(A_n,\alpha_{A_n})$ is uniformly bounded for some sequence $\{A_n\}$ such that $A_n\to\infty$ as $n\to\infty$. It follows from \eqref{liu0818-2} that there exists sequence $\{w_n\}_{n=1}^\infty$ satisfying $\int_0^L w_n^2{\rm d}y=1$ and $ w_n(0)=e^{A_n\int_0^L|{\bf b}|(z,0)|{\rm d}z}w_n(L)$
such that
$$\int_0^L\Big[\Big((w_n)_y-\frac{A_n}{2}|{\bf b}(y,0)|w_n\Big)^2+c(y,0)w_n^2\Big]{\rm d}y\leq C,\quad \forall n\geq 1$$
for some $C$ independent of $n$.
We may assume $w_n^2\rightharpoonup \mu$ weakly for some Radon measure $\mu$, and thus $\mu([0,L])=1$.  In view of $|{\bf b}(y,0)|>0$ in $(0,L)$, Lemmas 3.1 and 3.2 in \cite{CL2008} imply $\mu([0,L))=0$. Noting that $ w_n(0)=e^{A_n\int_0^L|{\bf b}|(z,0)|{\rm d}z}w_n(L)$, there holds $\mu(\{L\})=0$, which contradicting  $\mu([0,L])=1$. Hence, we have $\underline{\lambda}(A,\alpha_A)\to\infty$ as $A\to\infty$. Step 1 is complete.

{\bf Step 2}. Since $x_*$ is a hyperbolic saddle, we  assume the Jacobi matrix $D{\bf b}(x_*)$ is given by \eqref{liu0722-4}
with some $\lambda_1,\lambda_2>0$, see Fig. \ref{figure1-1liu-1}. 
Let $O_\delta$ and $\tilde{O}_\delta$ be defined as in Part 1. Given any $\epsilon>0$,
we shall  construct a positive super-solution $\overline{\varphi}\in C(\overline{\Omega})$ such that
\begin{equation}\label{liu0819-1}
 \left\{\begin{array}{ll}
-\Delta\overline{\varphi}-A\mathbf{b}\cdot\nabla\overline{\varphi}+c(x)\overline{\varphi}\geq\Big[\min\limits_{1\leq i\leq k}\{c(x_i)\}-\epsilon\Big] \overline{\varphi}&\mathrm{in} \,\, \Omega\setminus(\partial O_\delta\cup\partial \tilde{O}_\delta\cup\{x_*\}),\\
\medskip
(\nabla_-\overline{\varphi}(x)-\nabla_+\overline{\varphi}(x))\cdot \nu(x)> 0 & \mathrm{on}~\partial O_\delta\cup\partial \tilde{O}_\delta,\\
  \nabla\overline{\varphi}\cdot \mathbf{n}\geq 0 & \mathrm{on}~\partial\Omega  
  \end{array}\right.
 \end{equation}
for sufficiently large $A$ and small $\delta$, and moreover
\begin{equation}\label{liu0819-2}
\begin{split}
    &\lim_{t\to 0^+} \frac{1}{t}\Big[\overline{\varphi}(x_*+t(\cos\alpha,\sin\alpha))-\overline{\varphi}(x_*)\Big]< 0, \quad\forall \alpha\in(-\tfrac{\pi}{4}, \tfrac{5\pi}{4}),\\
    &\lim_{t\to 0^+} \frac{1}{t}\Big[\overline{\varphi}(x_*+t(\cos\alpha,\sin\alpha))-\overline{\varphi}(x_*)\Big]>0, \quad\forall \alpha\in( \tfrac{5\pi}{4}, \tfrac{7\pi}{4}).
    \end{split}
\end{equation}
Since  the homoclinic orbit $\mathcal{C}_+$ is unstable from the inside, the super-solution $\overline{\varphi}$ on $O_{\delta+}\cup\tilde{O}_\delta$  can be constructed as in Part 1 such that \eqref{liu0819-1} holds and \eqref{liu0819-2} is satisfied  for $\alpha\in\left(-\frac{\pi}{4}, \frac{3\pi}{4}\right)$.  To complete the construction of $\overline{\varphi}$ on the remaining regions, we will utilize the coordinate $x\mapsto(y,\ell)$ in $O_{\delta-}$ which satisfies $x=x_\ell(\tau(y))$ (where $x_\ell$ denotes the periodic solution of \eqref{liu_0730-1}). By the stability of $\mathcal{C}_-$, it turns out that $\ell \eta_\ell\geq 0$, $\forall \ell\in(-\delta,\delta)$, and $|\eta_\ell|\searrow  0$ as $\ell\to 0$.

{\bf (1)} The construction of $\overline{\varphi}$ on $O_{\frac{1}{A}-}:=\{x\in O_{\delta-}:{d_\mathcal{H}}(x,\mathcal{C}_-)\leq 1/A\}$.  For each $A>0$, let $\phi_{A,\alpha_A}>0$ be the  principal  eigenfunction of problem \eqref{liu0816-1} corresponding to principal eigenvalue $\underline{\lambda}(A,\alpha_A)$ such that $\phi_{A,\alpha_A}(0)=\phi_{A,\alpha_A}(L)$ and $\min\limits_{x\in[0,L]}\phi_{A,\alpha_A}(x)=1$. Under the coordinate $x\mapsto(y,\ell)$, we define $\overline{\varphi}\in C^2(O_{\frac{1}{A}-}\setminus\{x_*\})$ such that
\begin{equation}\label{liu0819-3-1}
\overline{\varphi}(y,0)=\phi_{A,\alpha_A}(y)\quad\text{and}\quad \partial_\ell\overline{\varphi}(y,0)=0, \quad \forall y\in[0,L],
\end{equation}
\begin{equation}\label{liu0819-3}
    {\bf b}(y,\ell)\cdot\nabla\overline{\varphi}\leq 0,\quad \forall (y,\ell)\in B_{4\delta}\cap (O_{\delta-}\setminus O_{\frac{1}{A}-}),
\end{equation}
and \eqref{liu0819-2} holds for $\alpha\in\left( \frac{3\pi}{4},\frac{7\pi}{4}\right)$.   We now illustrate that such construction is possible.  Due to $\overline{\varphi}(y,0)=\phi_{A,\alpha_A}(y)$, as in \eqref{liu0819-3-1}, the boundary conditions in \eqref{liu0816-1}  imply $\partial_y\overline{\varphi}(y,0)< 0$ for $y\in(L-4\delta,L)$ and $\lim\limits_{y\to 0^+}\partial_y\overline{\varphi}(y,0)=0$, so that we can extend $\overline{\varphi}$ to $ B_{4\delta}\cap  O_{\frac{1}{A}-}$
 such that ${\bf b}(y,\ell)\cdot\nabla\overline{\varphi}\leq 0$, $\forall (y,\ell)\in B_{4\delta}\cap \partial O_{\frac{1}{A}-}$, and thus \eqref{liu0819-3} is possible.
Moreover, noting from that $\partial_y \overline{\varphi}(L,0)<0$, we can verify that \eqref{liu0819-2} holds for $\alpha=\frac{3\pi}{2}$, so that the construction of $\overline{\varphi}$ such that \eqref{liu0819-2} holds for $\alpha\in\left( \frac{3\pi}{4},\frac{7\pi}{4}\right)$ is possible.

Next, we shall verify that the constructed $\overline{\varphi}$ satisfies \eqref{liu0819-1}. Indeed, in view of $\eta_0=0$, by \eqref{liu0815-6} we find
  \begin{equation*}
\begin{split}
    \mathbf{b}(y,0)\cdot\nabla\overline{\varphi}(y,0)
=&\frac{|({\bf I}+\eta_\ell \mathcal{J})\mathbf{b}|}{\partial_\ell x\cdot(\mathcal{J}\mathbf{b}-\eta_\ell \mathbf{b} )}\left(\partial_\ell x\cdot\mathcal{J}{\bf b}\partial_y \overline{\varphi}-\partial_y x\cdot\mathcal{J}{\bf b}\partial_\ell \overline{\varphi}\right) \Big|_{\ell=0}=|{\bf b}(y,0)|\partial_y \overline{\varphi}(y,0).
\end{split}
\end{equation*}
For any $(y,\ell)\in O_{\frac{1}{A}-}$, by \eqref{liu0819-3-1} and \eqref{liu0816-1}   we have
\begin{equation}\label{liu0819-5}
    \begin{split}
        &-\Delta\overline{\varphi}-A\mathbf{b}\cdot\nabla\overline{\varphi}+c(x)\overline{\varphi}\\
=&\left[-\Delta\overline{\varphi}+c(y,\ell)\overline{\varphi}\right]\Big |_{\ell=0}+O(1/A)-A\mathbf{b}(y,0)\cdot\nabla\overline{\varphi}(y,0)+O(1)\\
=&-\partial_{yy}\overline{\varphi}(y,0)-A|\mathbf{b}(y,0)|\partial_y\overline{\varphi}(y,0)+c(y,0)\overline{\varphi}(y,0)+O(1/A)+O(1)\\
=&\underline{\lambda}(A,\alpha_A)\overline{\varphi}(y,0)+O(1/A)+O(1)\\
\geq &(\underline{\lambda}(A,\alpha_A)/2)\overline{\varphi}(y,\ell), \quad \forall (y,\ell)\in O_{\frac{1}{A}-},
    \end{split}
\end{equation}
provided that $A$ is chosen large. 
Here the last inequality is due to $\underline{\lambda}(A,\alpha_A)\to \infty$ as $A\to\infty$ and $\overline{\varphi}(y,0)\geq 1$ by construction. Hence, \eqref{liu0819-1} follows immediately.

{\bf (2)} The construction of $\overline{\varphi}$ on $O_{\delta-}\cap B_{4\delta}$.  Based on the construction on $O_{\frac{1}{A}-}$ in Case {\bf (1)}, by \eqref{liu0819-3} and \eqref{liu0819-5}, we shall  extend $\overline{\varphi}$ to $O_{\delta-}\cap B_{4\delta}$ such that $\overline{\varphi}\in C^2((O_{\delta-}\cap B_{4\delta})\setminus\{x_*\})$ and  \eqref{liu0819-1} holds  by choosing $A$ large if necessary.

{\bf (3)} The construction of $\overline{\varphi}$ on $\Omega_{\rm remd}:=\Omega\setminus(\tilde{O}_\delta\cup O_{\frac{1}{A}-}\cup (B_\delta\cap O_{\delta-}))$. Since there are no limit points of  system \eqref{system}  in $\Omega_{\rm remd}$, as in Step 2 of Part 1,  we can define  $\overline{\varphi}\in C^2(\Omega_{\rm remd})$ such that ${\bf b}\cdot\nabla \overline{\varphi}<0$ on $\Omega_{\rm remd}$. Noting that $|{\bf b}(x)|\geq \varepsilon(\delta)$ for some constant $\varepsilon(\delta)>0$ independence of $A$, we can choose $A$ large such that  \eqref{liu0819-1} holds on $\Omega_{\rm remd}$. Step 2 is now complete.

{\bf Step 3}. We finally prove \eqref{liu0815-1}. Let $\varphi\in C^2(\Omega)$ be the principal eigenfunction of \eqref{1} corresponding to $\lambda(A)$. Up to some  multiplication we may assume that $\varphi\leq\overline{\varphi}$ in $\bar{\Omega}$ and there exists 
some $x_0\in\overline{\Omega}$ such that  $\varphi(x_0)=\overline{\varphi}(x_0)$  and $x_0\in\partial\{x\in\overline{\Omega}: \varphi(x)=\overline{\varphi}(x)\}$, where $\overline{\varphi}$ is defined in Step 2 satisfying \eqref{liu0819-1} and \eqref{liu0819-2}. It suffices to prove that for large $A$,
\begin{equation}\label{liu0820-1}
    \lambda(A)\geq \min\limits_{1\leq i\leq k}\{c(x_i)\}-\epsilon.
\end{equation}
Set $W:=\overline{\varphi}-\varphi\geq 0$, which satisfies \eqref{liu0819-1} and $W(x_0)=0$. In view of $\nabla W\cdot \mathbf{n}\geq 0$ on $\partial\Omega$, by the Hopf's boundary lemma for elliptic equations we can deduce $x_0\not\in\partial\Omega$. We next claim $x_0\in \Omega\setminus(\{x_*\}\cup\partial O_\delta\cup\partial \tilde{O}_\delta)$. Indeed, note from the boundary condition in \eqref{liu0819-1} that
$$\nabla_-W(x_0)\cdot \nu(x_0)>\nabla_+W(x_0)\cdot \nu(x_0),$$
which together with the definition of the notation $\nabla\overline{\varphi}_\pm$  in \eqref{liu0621-3} yields $x_0\not\in \partial O_\delta\cup\partial \tilde{O}_\delta$ immediately. If $x_0=x_*$, then $W(x_*)=0$. By  \eqref{liu0819-2} we find that for any $\alpha\in\left(-\frac{\pi}{4}, \frac{5\pi}{4}\right)$,
the function
$\overline{\varphi}(x_*+t(\cos\alpha,\sin\alpha))$ acting on $[0,\varepsilon]$ for some small $\varepsilon>0$, attains its maximum at $t=0$. Since $\varphi\leq\overline{\varphi}$ and  $\varphi(x_*)=\overline{\varphi}(x_*)$, it holds that $\varphi(x_*+t(\cos\alpha,\sin\alpha))$ also attains its maximum on $[0,\varepsilon]$  at $t=0$, namely, $\nabla\varphi(x_*)\cdot (\cos\alpha,\sin\alpha)\leq 0$ for all $ \alpha\in\left(-\frac{\pi}{4}, \frac{5\pi}{4}\right)$, from which we choose $\alpha=0$ and $\alpha=\pi$ particularly to derive $\partial_{x_1}\varphi(x_*)=\nabla\varphi(x_*)\cdot (1,0)=0$. Hence, letting $\alpha=0$ in \eqref{liu0819-2} gives
$$\lim_{t\to 0^+} \frac{W(x_*+t(1,0))}{t}=\lim_{t\to 0^+} \frac{1}{t}\Big[\overline{\varphi}(x_*+t(1,0))-\overline{\varphi}(x_*)\Big]<0.$$
 Since $W(x_*)=0$, this contradicts $W \geq 0$. Therefore,
there holds $x_0\in \Omega\setminus(\{x_*\}\cup\partial O_\delta\cup\partial \tilde{O}_\delta)$.

Now, we can choose some $\varepsilon>0$ small such that $B_\varepsilon(x_0)\subset \Omega\setminus(\{x_*\}\cup\partial O_\delta\cup\partial \tilde{O}_\delta)$. If \eqref{liu0820-1} fails, then by definition we have $W\in C^2(B_\varepsilon(x_0))$ and
$$-\Delta W-A\mathbf{b}\cdot\nabla W+\left[c(x)-\min\limits_{1\leq i\leq k}\{c(x_i)\}+\epsilon\right] W\geq 0\quad \text{ on }\,\, B_\varepsilon(x_0). $$
Due to $W(x_0)=0$, we may apply the classical strong maximum principle for elliptic equations  to arrive at $W\equiv 0$ on $B_\varepsilon(x_0)$. This is a contradiction since $x_0\in\partial\{x\in\overline{\Omega}: W(x)=0\}$ by our assumption. Therefore, \eqref{liu0820-1} holds and \eqref{liu0815-1} is proved. The proof is now complete.
\end{proof}

 Our next result concerns the situation where the limit set of system \eqref{system} contains an isolated homoclinic orbits with respect to a hyperbolic saddle. See Fig. \ref{figureliuintroduction3}  for an illustration.

\begin{theorem}\label{theorem20220827}
Suppose that the limit set of system \eqref{system} consists of a finite number of  fixed points and  a homoclinic orbit with respect to some hyperbolic saddle. Then
$$\lim_{A\to\infty}\lambda(A)=\min_{1\leq i\leq k}\{c(x_i)\},$$
where $\{x_1,\cdots,x_k\}$ denotes the set of stable fixed points.
\end{theorem}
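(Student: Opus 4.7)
The plan is to follow the pattern of Theorems \ref{theorem20220804} and \ref{theorem20220809}. Since by Hypothesis \ref{assum1} an isolated homoclinic orbit $\Gamma$ is not accompanied by a family of periodic orbits, the transverse dynamics near $\Gamma$ must be either unstable from both sides or semi-stable (stable from one side and unstable from the other). The upper bound $\limsup_{A\to\infty}\lambda(A)\leq \min_{1\leq i\leq k} c(x_i)$ follows verbatim from Step 2 of Theorem \ref{liuprop1}, by inserting small positive sub-solutions concentrated around each stable fixed point $x_i$; no new difficulty arises from $\Gamma$ because these sub-solutions have support disjoint from it.

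For the lower bound, I would construct a positive super-solution $\overline{\varphi}\in C(\overline{\Omega})$ satisfying the analogue of \eqref{liu0815-2} with right-hand side $[\min_{1\leq i\leq k} c(x_i)-\epsilon]\,\overline{\varphi}$, together with the usual jump inequalities on the interior interfaces. Away from a tubular neighbourhood $O_\delta$ of $\Gamma$ and from the fixed points, the construction via \cite[Lemma 2.3]{DEF1974} is completely standard, exactly as in the previous theorems; the genuine work is confined to $O_\delta$.

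Inside $O_\delta$ I split by transverse stability. If $\Gamma$ is unstable from both sides, the rotation parameters $\eta_\ell$ from Step 1 of Theorem \ref{theorem20220722} satisfy $\ell\eta_\ell\leq 0$, and I would reuse the ansatz from Theorem \ref{theorem20220804}: an exponential $\exp\{\overline{\phi}(y)/A-\ell^2/\sqrt{\delta}\}$ of the form \eqref{liu0807-4} away from the saddle and the quadratic factor $\exp\{-D\|x-x_*\|^2+\rho(x)/A\}$ as in \eqref{liu0807-7} on the small ball $B_{1/\sqrt{\lambda_2 A}}$. All the estimates \eqref{liu0815-6}--\eqref{liu0821-1} transfer because only the single loop (rather than $\mathcal{C}_+\cup\mathcal{C}_-$) is involved, so the matching across the auxiliary curve appearing in \eqref{liu-0723-3-2} is now vacuous. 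If $\Gamma$ is instead semi-stable, I would mimic Part 2 of Theorem \ref{theorem20220809}: along the stable side of the loop use the one-dimensional problem \eqref{liu0816-1} on $(0,L)$, select a Robin parameter $\alpha_A$ so that the principal eigenfunction is periodic, $\phi_{A,\alpha_A}(0)=\phi_{A,\alpha_A}(L)$, and invoke the blow-up $\underline{\lambda}(A,\alpha_A)\to\infty$ deduced from the variational identity \eqref{liu0818-2} together with Lemmas 3.1--3.2 of \cite{CL2008}; on the unstable side use the quadratic/exponential ansatz above. A smooth extension together with the sign condition $\mathbf{b}\cdot\nabla\overline{\varphi}\leq 0$ produces the super-solution on all of $O_\delta$.

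The most delicate point is to glue the two halves of the construction across the saddle $x_*$. One must verify directional-derivative inequalities of the type \eqref{liu0819-2} at $x_*$ for every angle in the unstable cone of $D\mathbf{b}(x_*)$, while simultaneously matching the one-dimensional eigenfunction on the stable side at $y=0$ and $y=L$ without losing the periodicity constraint. The existence of $\alpha_A$ is obtained by an intermediate value argument combining $\phi_{A,0}(0)<\phi_{A,0}(L)$ (concentration at $y=L$, by \cite{CL2008}) with $\phi_{A,\infty}(0)>0=\phi_{A,\infty}(L)$. Once these pieces fit together, the strong maximum principle together with the Hopf boundary lemma—exactly as in Step 3 of Part 2 of Theorem \ref{theorem20220809}—yields the desired lower bound and completes the proof.
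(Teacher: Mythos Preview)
Your outline captures the overall scheme, but there is a geometric feature of an \emph{isolated} homoclinic loop that you do not address, and this is precisely where the paper's proof differs from a straight transplant of Theorems \ref{theorem20220804}--\ref{theorem20220809}. For the $\infty$-shaped orbit all four separatrices of the saddle belong to the closed curve $\mathcal{C}=\mathcal{C}_+\cup\mathcal{C}_-$, so the rotated periodic orbits $x_\ell$ and the $(y,\ell)$ coordinates cover all of $O_\delta\setminus B_{1/\sqrt{\lambda_2 A}}$. For a single homoclinic loop $\Gamma$ only \emph{two} separatrices of $x_*$ lie on $\Gamma$; the other two are heteroclinic orbits $\xi_1,\xi_2$ to neighbouring fixed points. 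Consequently the full ball $B_\delta(x_*)\subset O_\delta$ contains a ``non-loop'' sector (the quadrant bounded by $\xi_1,\xi_2$) through which no rotated periodic orbit around $\Gamma$ passes, and your tubular ansatz \eqref{liu0807-4} simply does not define $\overline{\varphi}$ there. Shrinking the ball to $B_{1/\sqrt{\lambda_2 A}}$ does not help: the uncovered sector has size $\delta$ independent of $A$.

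The paper handles this by splitting $O_\delta$ along $\xi_1\cup\xi_2$ into $O_{\delta+}$ (the genuine tube around $\Gamma$) and the small sector $O_{\delta-}\subset B_{2\delta}(x_*)$. On $O_{\delta-}$ it uses the fixed-size saddle quadratic \eqref{liu0831-2-1} from Theorem \ref{liuprop1}{\rm(iii)}; on $O_{\delta+}$ it uses the one-dimensional eigenfunction $\phi_{A,\alpha_A}$ from Step 1 of Theorem \ref{theorem20220809} uniformly, without any case distinction on transverse stability. The two pieces are glued at $x_*$ via the directional conditions \eqref{liu0819-2} (restricted to the angular range $(\tfrac{3\pi}{4},\tfrac{7\pi}{4})$ coming from the non-loop side), and the conclusion follows from the maximum-principle argument of Step 3 of Theorem \ref{theorem20220809}. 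So your semi-stable case is close in spirit to what the paper does, but your ``unstable from both sides'' case needs the additional $O_{\delta-}$ construction, and in either case the splitting by transverse stability is unnecessary.
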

\begin{proof}
By the same arguments as in  Step 2 of Theorem \ref{liuprop1}, we can establish the upper bound estimate \eqref{liu0621-2}.
 It remains to show the lower bound estimate
\begin{equation}\label{liu0830-1}
  \liminf_{A\to\infty}\lambda(A)\geq \min\limits_{1\leq i\leq k}\{c(x_i)\}.
\end{equation}

To this end, we first introduce some notations. Let $\mathcal{C}$ be the homoclinic loop and $x_*$ be the associated hyperbolic saddle.  Set  $O_\delta:=\{x\in\Omega: {d_\mathcal{H}}(x,\mathcal{C})<\delta\}$ as the $\delta$-neighborhood of the  homoclinic loop $\mathcal{C}$. Since the saddle point $x_*$ is hyperbolic, we  assume the Jacobi matrix $D{\bf b}(x_*)$ is given \eqref{liu0722-4}
with $\lambda_1,\lambda_2>0$. We then define two  heteroclinic orbits $\xi_1$ and $\xi_2$ connecting $x_*$ to a stable fixed point and a  unstable fixed point, respectively. 
The domain $O_\delta$ can be divided  into two parts $O_{\delta+}$ and $O_{\delta-}$ by $\xi_1\cup\xi_2$ such that $\mathcal{C}\subset \overline{O}_{\delta+}$ and $O_{\delta-}\subset B_{2\delta}(x_*)$. Moreover, define $\{x_{k+1},\cdots,x_n\}\subset\Omega$ with $n>k$ as the set of the unstable fixed points  of \eqref{system}. Set  $\tilde{O}_\delta:=\{x\in\Omega:\exists 1\leq i\leq n, \,\, |x-x_i|<\delta \}$.

To prove \eqref{liu0830-1}, for any given  $\epsilon>0$, we shall construct the positive super-solution $\overline{\varphi}\in C(\overline{\Omega})$ 
such that for the sufficiently large $A$, there holds
\begin{equation}\label{liu0830-2}
 \left\{\begin{array}{ll}
-\Delta\overline{\varphi}-A\mathbf{b}\cdot\nabla\overline{\varphi}+c(x)\overline{\varphi}\geq\Big[\min\limits_{1\leq i\leq k}\{c(x_i)\}-\epsilon\Big] \overline{\varphi}\\
\qquad\qquad\qquad \qquad\qquad \qquad\qquad\qquad\quad\,\,\,\mathrm{in} \,\, \Omega\setminus(\partial O_\delta\cup\partial \tilde{O}_\delta\cup(O_\delta\cap \Gamma)),\\
\medskip
(\nabla_-\overline{\varphi}(x)-\nabla_+\overline{\varphi}(x))\cdot \nu(x)> 0 \qquad\qquad \mathrm{on}~\partial O_\delta\cup\partial \tilde{O}_\delta\cup (O_\delta\cap \Gamma),\\
  \nabla\overline{\varphi}\cdot \mathbf{n}\geq 0  \qquad\qquad\qquad\qquad\qquad\qquad\quad \mathrm{on}~\partial\Omega,  
  \end{array}\right.
 \end{equation}
and \eqref{liu0819-2} holds for $\alpha\in\left( \frac{3\pi}{4},\frac{7\pi}{4}\right)$, where $\nu(x)$ denotes the outward unit normal vector on $\partial O_\delta\cup\partial \tilde{O}_\delta$. Then \eqref{liu0830-1}  can follow from the same arguments as in Step 3 in Part 2 of Theorem \ref{theorem20220809}.

 The construction of such $\overline{\varphi}$ on the region $\Omega\setminus O_\delta$ can be completed by same arguments  in the proof of Theorem \ref{liuprop1}, as the limit set of system \eqref{system} on $\Omega\setminus O_\delta$  includes only  the stable and unstable fixed points. It remains to construct the super-solution $\overline{\varphi}\in  C^2(O_\delta\setminus(\{x_*\}))$ such that \eqref{liu0830-2} holds on $O_\delta$.

We first define
\begin{equation}\label{liu0831-2-1}
    \overline{\varphi}(x):=1+(x-x_*)^{\rm T}\left(\begin{array}{cc}
    -\lambda_1/\sqrt{\delta} & 0  \\
   0  & \lambda_2
\end{array}\right)(x-x_*), \qquad \forall x\in O_{\delta-}.
\end{equation}
Analogue to {\rm (iii)} in Step 1 of Theorem \ref{liuprop1}, we can verify that function $\overline{\varphi}$ satisfies \eqref{liu0830-2}  on $O_{\delta-}$ by choosing $\delta$ small if necessary. The construction of $\overline{\varphi}$ on $O_{\delta+}$ is similar to  Step 2 of Theorem \ref{theorem20220809}, and we only give a sketch here. As in the proof of Theorem \ref{theorem20220722}, we introduce the coordinate $x\mapsto(y,\ell)$ on $O_{\delta+}$ such that $x=x_\ell(\tau(y))$, where $x_\ell$ denotes the periodic solution of \eqref{liu_0730-1}, and  $y$ denotes the arc length along the solution  $x_\ell(\tau)$ as in \eqref{liu_0730-2}. Denote by $L>0$ the arc length of $\mathcal{C}$. For any $x\in O_{\frac{1}{A}+}:=O_{\frac{1}{A}}\cap O_{\delta+}$, we define $\overline{\varphi}\in C^2(O_{\frac{1}{A}+})$ such that
\begin{equation*}
\begin{array}{ll}
\medskip
   \overline{\varphi}(y,0)=\phi_{A,\alpha_A}(y),  & \forall y\in[0,L],  \\
    {\bf b}(y,\ell)\cdot\nabla\overline{\varphi}\leq 0, & \forall (y,\ell)\in B_{2\delta}(x_*)\cap (O_{\delta+}\setminus O_{\frac{1}{A}+}),
\end{array}
\end{equation*}
and \eqref{liu0819-2} holds for $\alpha\in\left( \frac{3\pi}{4},\frac{7\pi}{4}\right)$, where $\phi_{A,\alpha_A}(y)>0$ is the principal eigenfunction of problem \eqref{liu0816-1} satisfying $\phi_{A,\alpha_A}(0)=\phi_{A,\alpha_A}(L)$, and moreover $\phi_{A,\alpha_A}(0)=1$.  This together with \eqref{liu0831-2-1} implies  the continuity of function $\overline{\varphi}$ at $x_*$.  The realizability of the choice of such $\overline{\varphi}$ is illustrated below \eqref{liu0819-3}. By \eqref{liu0819-5} we can verify \eqref{liu0830-2} holds.
Then we extent the construction on $x\in O_{\frac{1}{A}+}$ to $B_{2\delta}(x_*)\cap O_{\delta+}$ such that \eqref{liu0830-2} holds as in case {\bf (2)} in Step 2 of Theorem \ref{theorem20220809}.  Finally,
for the construction of $\overline{\varphi}$ on $O_{\delta+}\setminus O_{\frac{1}{A}+}$, noting  that there are no limit points of  system \eqref{system}  in $O_{\delta+}\setminus O_{\frac{1}{A}+}$, we can define  $\overline{\varphi}\in C^2(O_{\delta+}\setminus O_{\frac{1}{A}+})$ such that ${\bf b}\cdot\nabla \overline{\varphi}<0$ on $O_{\delta+}\setminus O_{\frac{1}{A}+}$. Then \eqref{liu0830-2} follows by letting  $A$ large. The proof is  complete.
\end{proof}

\section{\bf Case of closed orbits family 
}\label{S4-1}

In this section, we are concerned with the case where the limit set of system  \eqref{system} may include the family of closed orbits. The results turn out to be analogue to those in \cite{BHN2005}, although the divergence-free assumption for the vector field ${\bf b}$ is not necessary in our setting.
\begin{proposition}\label{theorem0820}
Suppose that  
the orbits of \eqref{system} in $\Omega$ are composed by the periodic orbits, the union of two homoclinic orbits connected by a hyperbolic saddle, 
and the center points.  Assume \eqref{system} admits a finite number of fixed points in $\Omega$, and any saddle point is  hyperbolic. 
Then the following assertions hold.
\begin{itemize}
    \item[\rm(i)] 
     Let $\lambda(A)$ be the principal eigenvalue of \eqref{1}.
Then there holds
\begin{equation*}
    \lim_{A\rightarrow\infty}\lambda(A)=\inf_{\phi\in \mathbf{I}}\left\{\frac{
    \int_\Omega (|\nabla\phi|^2+c(x)\phi^2)\mathrm{d}x}{\int_\Omega \phi^2\mathrm{d}x}\right\},
\end{equation*}
whereas $\mathbf{I}=\{\phi\in H^1(\Omega): \mathbf{b}\cdot\nabla\phi =0 \text{ a.e. in }\Omega\}$.
\medskip
\item[{\rm (ii)}] Let $\lambda_\infty(A)$ be the principal eigenvalue of the problem
\begin{equation*}
 \left\{\begin{array}{ll}
\medskip
-\Delta\varphi-A\mathbf{b}\cdot\nabla\varphi+c(x)\varphi=\lambda_\infty(A)\varphi &\mathrm{in} \,\, \Omega,\\
  \varphi=0 & \mathrm{on}~\partial\Omega.  
  \end{array}\right.
 \end{equation*}
Then we have
\begin{equation*}
    \lim_{A\rightarrow\infty}\lambda_\infty(A)=\inf_{\phi\in \mathbf{I}_0}\left\{\frac{\int_\Omega (|\nabla\phi|^2+c(x)\phi^2)\mathrm{d}x}{\int_\Omega \phi^2\mathrm{d}x}\right\},
\end{equation*}
whereas $\mathbf{I}_0=\{\phi\in H^1_0(\Omega): \mathbf{b}\cdot\nabla\phi =0 \text{ a.e. in }\Omega\}$.
\end{itemize}
\end{proposition}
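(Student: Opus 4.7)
The plan is to adapt the variational argument of \cite{BHN2005} to the present non-divergence-free setting by exploiting the closed-orbit structure. Since every trajectory of \eqref{system} is periodic, homoclinic, or a fixed point, $\mathbf{b}$ must be tangent to $\partial\Omega$, i.e.\ $\mathbf{b}\cdot\mathbf{n}\equiv 0$, and $\Omega$ admits a (generally only Lipschitz) first integral $H$ labelling the orbits up to the finitely many separatrices arising from the $\infty$-shaped saddle connections. I will establish matching upper and lower bounds on $\lim_{A\to\infty}\lambda(A)$ (the proof of (ii) will be identical after replacing $H^1$ by $H^1_0$, since the Dirichlet condition is inherited by weak limits and compactly supported first integrals are admissible test functions).

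For the lower bound, I would take $\varphi_A>0$ to be the principal eigenfunction and $\psi_A>0$ to be the principal eigenfunction of the formal adjoint operator $L_A^*=-\Delta+A\nabla\!\cdot\!(\mathbf{b}\,\cdot\,)+c$, normalized so that $\int_\Omega\varphi_A\psi_A\,dx=1$. Testing $L_A\varphi_A=\lambda(A)\varphi_A$ against $\psi_A$ and integrating by parts using $\mathbf{b}\cdot\mathbf{n}=0$ gives
\[
\lambda(A)=\int_\Omega\bigl(\nabla\varphi_A\cdot\nabla\psi_A+c\,\varphi_A\psi_A\bigr)\,dx-A\int_\Omega \psi_A\,\mathbf{b}\cdot\nabla\varphi_A\,dx,
\]
from which I would extract $H^1$-bounds on both $\varphi_A$ and $\psi_A$ via a Cauchy--Schwarz-type manipulation of the symmetric quadratic form $(\varphi,\psi)\mapsto\int\nabla\varphi\cdot\nabla\psi+c\varphi\psi$. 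Since $\lambda(A)$ stays bounded, the coefficient $A$ in front of $\int\psi_A\mathbf{b}\cdot\nabla\varphi_A\,dx$ forces, up to a subsequence and after strong $L^2$-convergence, the weak limit $\varphi_*\in\mathbf{I}$, i.e.\ $\mathbf{b}\cdot\nabla\varphi_*=0$ a.e. Lower semicontinuity of the Dirichlet energy then yields
\[
\liminf_{A\to\infty}\lambda(A)\ \ge\ \frac{\int_\Omega(|\nabla\varphi_*|^2+c\varphi_*^2)\,dx}{\int_\Omega\varphi_*^2\,dx}\ \ge\ \inf_{\phi\in\mathbf{I}}R(\phi).
\]

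For the upper bound I would fix $\varepsilon>0$ and choose a near-minimizer $\phi\in\mathbf{I}\cap H^1(\Omega)$ of $R$. The crucial observation is that $L_A\phi=-\Delta\phi+c\phi$, independently of $A$, whenever $\phi\in\mathbf{I}$. Since $L_A$ is not self-adjoint, I cannot pass directly from the identity $\int\phi L_A\phi=\int(|\nabla\phi|^2+c\phi^2)$ to $\lambda(A)\le R(\phi)$; instead, from $\phi$ I would construct a positive sub-solution $\underline\varphi_A\in C^2(\overline\Omega)$ satisfying $\partial_n\underline\varphi_A=0$ and
\[
-\Delta\underline\varphi_A-A\mathbf{b}\cdot\nabla\underline\varphi_A+c\underline\varphi_A\ \le\ (R(\phi)+\varepsilon)\underline\varphi_A\quad\mathrm{in}\ \Omega,
\]
by first smoothing $\phi$ by mollification along the flow (which preserves membership in $\mathbf{I}$) and then adjusting near the saddle and $\partial\Omega$ to enforce the Neumann compatibility and positivity. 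The comparison principle then gives $\lambda(A)\le R(\phi)+\varepsilon$, and the arbitrariness of $\varepsilon$ and $\phi$ yields $\limsup_{A\to\infty}\lambda(A)\le\inf_{\mathbf{I}}R$.

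The main obstacle is the upper-bound construction in the presence of the $\infty$-shaped saddle--homoclinic connections: a generic first integral has a logarithmic singularity at the hyperbolic saddle (trajectories spend unbounded time near it), so mollification in the direction transverse to the flow cannot be done globally without destroying membership in $\mathbf{I}$. The workaround I envisage is a tubular-neighbourhood cut-off around the saddle on a scale $\sim 1/\sqrt{A}$ (as in Section \ref{S5}), combined with an interpolation that preserves the tangency $\mathbf{b}\cdot\nabla\phi=0$ outside this neighbourhood and only introduces an $O(\varepsilon)$ error in the pointwise ratio $(-\Delta\phi+c\phi)/\phi$. Once the saddle region is handled, the divergence of $\mathbf{b}$ disappears from the effective computation because the remaining error terms are $L^2$-coerced by the bound $\mathbf{b}\cdot\nabla\phi=0$ on the bulk of $\Omega$, which is ultimately why the conclusion matches \cite{BHN2005} despite the absence of the divergence-free hypothesis.
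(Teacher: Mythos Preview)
Your lower-bound strategy has a genuine gap. The BHN2005 compactness argument hinges on the identity
\[
\lambda(A)\|\varphi_A\|_{L^2}^2=\int_\Omega|\nabla\varphi_A|^2+c\varphi_A^2\,dx,
\]
obtained by testing the equation against $\varphi_A$ itself and using $\nabla\!\cdot\mathbf{b}=0$ to kill the drift term. Here $\nabla\!\cdot\mathbf{b}\neq 0$, so the same test produces instead
\[
\lambda(A)=\int_\Omega|\nabla\varphi_A|^2+c\varphi_A^2\,dx+\frac{A}{2}\int_\Omega(\nabla\!\cdot\mathbf{b})\,\varphi_A^2\,dx,
\]
and the last term is $O(A)$ a priori, so no uniform $H^1$ bound follows. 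Your fix---pairing with the adjoint eigenfunction $\psi_A$---does not rescue this: the resulting bilinear form $\int\nabla\varphi_A\!\cdot\!\nabla\psi_A$ is neither coercive nor weakly lower semicontinuous, so you cannot extract $H^1$ bounds on either factor from it ``by Cauchy--Schwarz'', and even if you could, the limiting quantity would be $\int\nabla\varphi_*\!\cdot\!\nabla\psi_*+c\varphi_*\psi_*$ rather than $R(\varphi_*)$. There is no reason for $\varphi_*=\psi_*$: the adjoint operator is $-\Delta+A\mathbf{b}\!\cdot\!\nabla+A(\nabla\!\cdot\mathbf{b})+c$, whose zeroth-order coefficient blows up with $A$, so $\psi_A$ has entirely different concentration behaviour. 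In short, the variational route is genuinely obstructed by the missing divergence-free hypothesis, which is exactly why the paper abandons it.

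The paper proves \emph{both} inequalities by explicit super/sub-solutions and the comparison principle. For the lower bound it takes a smooth approximation $\overline\phi_\epsilon\in\mathbf{I}$ of the Rayleigh minimizer, adds a correction $\overline\psi/A$ where $\overline\psi$ is defined orbit-by-orbit via the ODE
\[
\tfrac{d}{dt}\overline\psi(\xi(t))=-\Delta\overline\phi_\epsilon(\xi(t))+(c(\xi(t))-\Lambda_\delta)\overline\phi_\epsilon(\xi(t))-\text{(orbit average)},
\]
so that $-\Delta\overline\varphi-A\mathbf{b}\!\cdot\!\nabla\overline\varphi+c\overline\varphi\ge(\Lambda_\delta-C\sqrt\epsilon)\overline\varphi$ on the region $\Omega_\delta$ away from centers and saddles, and then patches in explicit local barriers near each center and each hyperbolic saddle (the latter using the $(y,\ell)$ coordinates and the $1/\sqrt{\lambda_2 A}$ scale from Section~\ref{S5}). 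Your upper-bound plan is close in spirit to this, but the same $\overline\psi/A$ device---not a variational limit---is what makes the lower bound go through.
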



\begin{proof}
We only prove the assertion {\rm (i)}, as the assertion {\rm (ii)} can be proved by the rather similar arguments. We first introduce some notations. Denote by $\{x_{\rm c}^i\}_{i\in \mathbb{C}}$  the set of  center points of \eqref{system}, and define $\{x_{\rm s}^i\}_{i\in \mathbb{S}}$ as the set of saddles connecting two
homoclinic orbits, the combination of which is denoted by $\mathcal{C}_i$. For each $\delta>0$,  set
\begin{equation*}
  \begin{array}{l}
  \smallskip
 O^i_\delta:=\{\mathcal{C}\subset\Omega: \mathcal{C} \text{ is a periodic orbit and }{d_\mathcal{H}}(\mathcal{C}, \mathcal{C}_i)<\delta\}, \quad i\in \mathbb{S},\\
    \tilde{O}^i_{\delta}:=\{\mathcal{C}\subset\Omega: \mathcal{C} \text{ is a periodic orbit and }{d_\mathcal{H}}(\mathcal{C}, x_{\rm c}^i)<\delta\},\quad i\in \mathbb{C}.
   \end{array}
\end{equation*}
Set $\Omega_\delta:=\Omega\setminus[(\cup_{i\in \mathbb{S}}O^i_\delta)\cup (\cup_{i\in \mathbb{C}}\tilde{O}^i_\delta)]$ and let $\nu(x)$ be  the outward unit normal vector on $\partial  \Omega_\delta$, which implies $\nu(x)={\bf n}(x)$ on $\partial \Omega$. 
 For clarify,  we  divide the proof into the following two steps.

\medskip
{\bf Step 1. Lower bound estimate}:
We establish the lower bound estimate
\begin{equation}\label{liu20220714-1}
 \liminf_{A\to\infty}\lambda(A)\geq \inf_{\phi\in \mathbf{I}}\left\{ \frac{
 \int_\Omega (|\nabla\phi|^2+c(x)\phi^2)\mathrm{d}x}{\int_\Omega \phi^2\mathrm{d}x}\right\} =:\Lambda.
\end{equation}
To this end,  for each $\delta>0$, we  define
\begin{equation}\label{liu20220714-3}
    \Lambda_\delta:=\inf_{\phi\in \mathbf{I}}\left\{\frac{
    \int_{\Omega_\delta} (|\nabla\phi|^2+c(x)\phi^2)\mathrm{d}x}{\int_{\Omega_\delta} \phi^2\mathrm{d}x}\right\}.
\end{equation}
Notice 
that $\Lambda_\delta\to\Lambda$ as $\delta\to 0$. Given any $\epsilon>0$, it suffices to prove
\begin{equation}\label{liu20220113}
 \liminf_{A\to\infty}\lambda(A)\geq \Lambda_\delta-{\rm C}\sqrt{\epsilon},
\end{equation}
where ${\rm C}$ is some positive constant independent of $\epsilon$ to be determined later.
Then \eqref{liu20220714-1} follows from the arbitrariness of $\delta$ and $\epsilon$. To this end, we shall construct a positive super-solution  $\overline{\varphi}\in C(\overline{\Omega})\cap C^2(\Omega\setminus\partial\Omega_\delta)$ such that
\begin{equation}\label{super-1}
 \left\{\begin{array}{ll}
\medskip
-\Delta\overline{\varphi}-A\mathbf{b}\cdot\nabla\overline{\varphi}+c(x)\overline{\varphi}\geq (\Lambda_\delta-{\rm C}\sqrt{\epsilon})\overline{\varphi} &\mathrm{in} \,\, \Omega\setminus\partial\Omega_\delta,\\
\medskip
(\nabla_+\overline{\varphi}(x)-\nabla_-\overline{\varphi}(x))\cdot \nu(x)>0 & \mathrm{on}~\partial\Omega_\delta,\\
  \nabla\overline{\varphi}\cdot \mathbf{n}\geq 0 & \mathrm{on}~\partial\Omega,  
  \end{array}\right.
 \end{equation}
provided that $A$ is taken large. Here $\nabla\overline{\varphi}_\pm\cdot \nu$ is defined as in \eqref{liu0621-3}.
    Then \eqref{liu20220113} follows from the comparison principle. The construction is divided into the following three parts.

{\bf (1)} The construction of $\overline{\varphi}$ on $\Omega_\delta$.
By the proof of \cite[Theorem2.2]{BHN2005}, there exists some $\phi\in{\rm I}$ attaining the infimum  in \eqref{liu20220714-1}, which implies
\begin{equation}\label{liu1017}
    \int_{\Omega_\delta}\nabla\phi\cdot\nabla\zeta{\rm d}x+\int_{\Omega_\delta}c(x)\phi\zeta{\rm d}x=\Lambda_\delta\int_{\Omega_\delta}\phi\zeta{\rm d}x,\quad \forall \zeta\in {\bf I}.
\end{equation}
Choose $\{\phi_\ell\}_{\ell>0}\subset{\bf I}$ satisfying $\phi_\ell\in C^2(\overline{\Omega}_\delta)$,   $\phi_\ell\geq \sqrt{\ell}$  in $\Omega_\delta$, and  $\nabla\phi_\ell\cdot \nu \geq \ell \phi_\ell$ on  $\partial\Omega_\delta$ such that $\phi_\ell\to\phi$ in $H^1(\Omega_\delta)$ as $\ell\to 0$. Then \eqref{liu1017} implies
$$\lim_{\ell\to 0}\left[\int_{\Omega_\delta} (-\Delta \phi_\ell+(c(x)-\Lambda_\delta) \phi_\ell)\zeta \mathrm{d}x\right]=0, \quad \forall \zeta\in {\bf I}.$$
Given any $\epsilon>0$, by  Banach-Steinhaus theorem, there exists
 $\overline{\phi}_\epsilon\in\{\phi_\ell\}_{\ell>0}$, which satisfies  $\overline{\phi}_\epsilon\geq \sqrt{\epsilon}$  in $\Omega_\delta$ and $\nabla\overline{\phi}_\epsilon\cdot {\bf n} \geq \sqrt{\epsilon} \overline{\phi}_\epsilon $ on $\partial\Omega$, such that
\begin{equation}\label{liu-1213}
 \left|\int_{\Omega_\delta} (-\Delta \overline{\phi}_\epsilon+(c(x)-\Lambda_\delta) \overline{\phi}_\epsilon)\zeta \mathrm{d}x\right|\leq \epsilon,\quad \forall \zeta\in{\bf I}.
\end{equation}
By choosing the  test function $\zeta\in{\bf I}$ in \eqref{liu-1213}, we can assert that  for any periodic orbit $\mathcal{C}\subset\Omega_\delta$,
\begin{equation}\label{liu-20220111-1}
  \left|\int_{\mathcal{C}} (-\Delta \overline{\phi}_\epsilon+(c(x)-\Lambda_\delta) \overline{\phi}_\epsilon)\mathrm{d}s\right|\leq \epsilon,
\end{equation}
where  ${\rm d}s$ denotes the arc-length element along the periodic orbit $\mathcal{C}$. We next define $\overline{\psi}\in C^2(\overline{\Omega}_\delta)
$ such that for any periodic orbit $\mathcal{C}=\{\xi(t): 0\leq t<T\}\subset\Omega_\delta$ with period $T$,
  it holds that
\begin{equation}\label{20220110-2}
\begin{split}
 \frac{\mathrm{d}\overline{\psi}(\xi(t))}{\mathrm{d}t}=&-\Delta \overline{\phi}_\epsilon(\xi(t))+(c(\xi(t))-\Lambda_\delta) \overline{\phi}_\epsilon(\xi(t))\\
 &-\int_{0}^T (-\Delta \overline{\phi}_\epsilon+(c(\xi(t))-\Lambda_\delta) \overline{\phi}_\epsilon)\mathrm{d}t,
 \end{split}
\end{equation}
where $\overline{\psi}\in C^2(\Omega_\delta)$ is possible since there are no fixed points of \eqref{system} in $\Omega_\delta$.
Define
\begin{equation}\label{liu0810-2}
    \overline{\varphi}(x):=\overline{\phi}_\epsilon(x)+\frac{\overline{\psi}(x)}{A},\qquad \forall x\in \Omega_\delta.
\end{equation}
 For any periodic orbit $\mathcal{C}=\{\xi(t): 0\leq t<T\}\subset\Omega_\delta$, in view of $\overline{\phi}_\epsilon\in {\bf I}$,   we can deduce from  \eqref{20220110-2} and \eqref{liu0810-2} that for any $x\in\mathcal{C}$,
 \begin{align}\label{liu20220111}
    &-\Delta\overline{\varphi}(x)-A\mathbf{b}\cdot\nabla\overline{\varphi}(x)+c(x)\overline{\varphi}(x)\notag\\
    =&-\Delta\overline{\phi}_\epsilon(x)-\mathbf{b}\cdot\nabla\overline{\psi}(x)+c(x)\overline{\phi}_\epsilon(x)+(-\Delta\overline{\psi}(x)+c(x)\overline{\psi}(x))/A \notag\\
    = &-\Delta\overline{\phi}_\epsilon(\xi(t))-\frac{\mathrm{d}\overline{\psi}(\xi(t))}{\mathrm{d}t}+c(\xi(t))\overline{\phi}_\epsilon(\xi(t))+O(1/A)\\
    =&\int_{0}^T (-\Delta \overline{\phi}_\epsilon(\xi(t))+(c(\xi(t))-\Lambda_\delta)\overline{\phi}_\epsilon(\xi(t)))\mathrm{d}t\notag\\
    &+\Lambda_\delta\overline{\phi}_\epsilon(\xi(t))+O(1/A).\notag
 \end{align}
In light of $\sqrt{\epsilon}\leq \overline{\phi}_\epsilon\leq \overline{\varphi}$, by choosing $A$ large such that $\Lambda_\delta\overline{\psi}/A\leq \epsilon$, we can deduce from  \eqref{liu-20220111-1} and \eqref{liu20220111}  that
 \begin{align*}
 &-\Delta\overline{\varphi}_\epsilon^r(x)+A\mathbf{b}\cdot\nabla\overline{\varphi}_\epsilon^r(x)+c(x)\overline{\varphi}_\epsilon(x) \\
 \geq& \Lambda_\delta\overline{\phi}_\epsilon(x)-\frac{\epsilon}{\min\limits_{x\in\mathcal{C}}|{\bf b}(x)|}\\
 \geq& \Lambda_\delta\overline{\phi}_\epsilon(x)+\Lambda_\delta\frac{\overline{\psi}(x)}{A}-{\rm C}\epsilon\\
 \geq& (\Lambda_\delta-{\rm C}\sqrt{\epsilon})\overline{\varphi}_\epsilon(x),\qquad \forall x\in \mathcal{C},
  \end{align*}
with some constant ${\rm C}>0$ 
independent of $A$ and $\epsilon$,
  which implies $\overline{\varphi} \in C^2(\Omega_\delta)$ defined by \eqref{liu0810-2} satisfies \eqref{super-1} on $\Omega_\delta$.

{\bf (2)} The construction of $\overline{\varphi}$ on $\tilde{O}_\delta^i$ for any $i\in \mathbb{C}$. Let $x_{\rm c}^i\in \tilde{O}_\delta^i$  be the   center point of \eqref{system}. For each $r\in (0,\delta)$, we define $\mathcal{C}_r\subset \tilde{O}_\delta^i$ as a periodic orbit of \eqref{system} such that ${d_\mathcal{H}}(\mathcal{C}_r, x_{\rm c}^i)=r$ which can be parameterized by $\{\xi_r(t): 0\leq t<T_r\}$ with period $T_r$. For any $x\in \tilde{O}_\delta^i$, we  perform a $C^2$ smooth change of coordinate $x\mapsto(t,r)$ such that $x=\xi_r(t)$. Since $\overline{\phi}_\epsilon\in{\bf I}$, we find $\overline{\phi}_\epsilon(\xi_\delta(t))$ is a constant for all $t\in[0,T_\delta)$, which is denoted by  $\overline{\phi}_\epsilon(\xi_\delta)$ for simplicity. Recall $\overline{\psi}$ defined in \eqref{20220110-2}.
We define
\begin{equation}\label{liu0810-3}
    \overline{\varphi}(t,r):=\overline{\phi}_\epsilon(\xi_\delta)-\frac{r^2}{\sqrt{\delta}}+\delta^{3/2}+\frac{\overline{\psi}(\xi_\delta(t))}{A}, \quad \forall (t,r)\in \tilde{O}_\delta^i.
\end{equation}
This together with the construction in part {\bf (1)} implies obviously that $\overline{\varphi}$ is continuous on $\partial \tilde{O}_\delta^i$. We can choose $\delta$ small and $A$ large if necessary such that
\begin{equation*}
    \begin{split}
        \nabla_-\overline{\varphi}(x)\cdot \nu(x)&=-\partial_r\overline{\varphi}(t,r)\Big |_{r=\delta}+O(1/A)=2\sqrt{\delta}+O(1/A)\\
        &< \sqrt{\epsilon}\overline{\varphi}(x)=\nabla_+\overline{\varphi}(x)\cdot \nu(x), \quad \forall x\in \partial \tilde{O}_\delta^i,
    \end{split}
\end{equation*}
which verifies the boundary conditions in \eqref{super-1} holds on $\partial \tilde{O}_\delta^i$. Next, we shall verify the defined $\overline{\varphi}$ in \eqref{liu0810-3}  satisfies the first inequality in \eqref{super-1}. To see this, direct calculations yield
\begin{align}\label{liu0821-6}
    &-\Delta\overline{\varphi}(x)-A\mathbf{b}\cdot\nabla\overline{\varphi}(x)+c(x)\overline{\varphi}(x)-\Lambda_\delta\overline{\varphi}(x)\notag\\
    = &\frac{2}{\sqrt{\delta}}+O(\sqrt{\delta})+O(1/A)-\frac{\mathrm{d}\overline{\psi}(\xi_\delta(t))}{\mathrm{d}t}+(c(x)-\Lambda_\delta)\overline{\varphi}\\
    \geq&\frac{2}{\sqrt{\delta}}-\max_{x\in \partial \tilde{O}_\delta^i}|-\Delta \overline{\phi}_\epsilon(x)+c(x)|-1 -(c(x)-\Lambda_\delta)\overline{\varphi},\notag
 \end{align}
 whence we can take $\delta$ small such that \eqref{super-1} holds on $\tilde{O}_\delta^i$.

 {\bf (3)} The construction on $O_\delta^i$ for any $i\in \mathbb{S}$. The construction is rather similar to that in Step 1 of Theorem \ref{theorem20220804}, and we only give a sketch.  Under the coordinate $x\mapsto(y,\ell)$ introduced in  the proof of Theorem \ref{theorem20220722}, we shall define $\overline{\varphi}\in C^2(O_\delta^i)$ as in \eqref{liu0807-4} and \eqref{liu0807-7},
where the functions $\overline{\phi}$ and $\rho$ there is chosen such that $\overline{\varphi}$ is continuous  on $\partial O_\delta^i$. Observe that
$$\nabla_+\overline{\varphi}(x)\cdot \nu(x)=(\nabla\overline{\phi}_\epsilon+\nabla\overline{\psi}/A)\cdot \nu \geq (\sqrt{\epsilon}+O(1/A)) \overline{\varphi},\quad \forall x\in \partial O_\delta.$$
Applying \eqref{liu0821-1} we can confirm the boundary conditions on $\partial O_\delta$ in \eqref{super-1} can be satisfied by choosing $\delta>0$ small if necessary.
 Then by the same arguments as in Step 1 of  Theorem \ref{theorem20220804}, we can verify the first inequality in \eqref{super-1} also holds on  $O_\delta^i$.

Summarily, we have constructed the super-solution $\overline{\varphi}\in C(\overline{\Omega})\cap C^2(\Omega_\delta)$ satisfying \eqref{super-1}, then \eqref{liu20220113} can be derived by the comparison principle. Letting $\epsilon\to 0$, we conclude \eqref{liu20220113} holds. This complets Step 1.

\medskip
{\bf Step 2. Upper bound estimate}:
We next prove the upper bound estimate
\begin{equation}\label{liu20220113-1}
 \limsup_{A\to\infty}\lambda(A)\leq \Lambda=\inf_{\phi\in \mathbf{I}}\left\{\frac{
 \int_\Omega (|\nabla\phi|^2+c(x)\phi^2)\mathrm{d}x}{\int_\Omega \phi^2\mathrm{d}x}\right\}.
\end{equation}
Let $\Lambda_\delta$ be defined by 
\eqref{liu20220714-3}.
It suffices to show
\begin{equation}\label{liu0821-2}
 \limsup_{A\to\infty}\lambda(A)\leq \Lambda_\delta+{\rm C}\sqrt{\epsilon}
\end{equation}
for some ${\rm C}>0$ independent of $\epsilon$. Then \eqref{liu20220113-1} follows from the arbitrariness of $\delta$ and $\epsilon$. As in Step 1, \eqref{liu0821-2} can be proved by constructing a sub-solution  $\underline{\varphi}\in C(\overline{\Omega})\cap C^2(\Omega\setminus\Omega_\delta)$ satisfying
\begin{equation}\label{liu0821-3}
 \left\{\begin{array}{ll}
\medskip
-\Delta\underline{\varphi}-A\mathbf{b}\cdot\nabla\underline{\varphi}+c(x)\underline{\varphi}\leq (\Lambda_\delta+{\rm C}\sqrt{\epsilon})\underline{\varphi} &\mathrm{in} \,\, \Omega,\\
\medskip
(\nabla_+\underline{\varphi}(x)-\nabla_-\underline{\varphi}(x))\cdot \nu(x)<0 & \mathrm{on}~\partial\Omega_\delta\\
  \nabla\underline{\varphi}\cdot \mathbf{n}\leq 0 & \mathrm{on}~\partial\Omega.  
  \end{array}\right.
 \end{equation}
The construction is similar to that in Step 1, and we shall give a sketch for completeness.

{\bf (1)} The construction of $\underline{\varphi}$ on $\Omega_\delta$. For any $\epsilon>0$, similar to Step 1 we can define  $\underline{\phi}_\epsilon\in\mathbf{I}$ satisfying $\underline{\phi}_\epsilon\in C^2(\Omega_\delta)$,  $\underline{\phi}_\epsilon\geq \sqrt{\epsilon}$  in $\Omega_\delta$, and $\nabla\underline{\phi}_\epsilon\cdot \nu\leq -\sqrt{\epsilon} \underline{\phi}_\epsilon$ on $\partial\Omega_\delta$
 such that
\begin{equation*}
 \left|\int_{\Omega_\delta} (-\Delta \underline{\phi}_\epsilon+(c(x)-\Lambda_\delta) \underline{\phi}_\epsilon)\zeta \mathrm{d}x\right|\leq \epsilon,\quad \forall \zeta\in{\bf I},
\end{equation*}
and thus \eqref{liu-1213} holds for $\underline{\phi}_\epsilon$. We then define
\begin{equation}\label{liu0821-3-1}
    \underline{\varphi}(x):=\underline{\phi}_\epsilon(x)+\frac{\underline{\psi}(x)}{A},\qquad \forall x\in \Omega_\delta,
\end{equation}
where $\underline{\psi}$ is defined in \eqref{20220110-2} by replacing $\overline{\phi}_\epsilon$ by $\underline{\phi}_\epsilon$. Then analogue to \eqref{liu20220111}, we can verify that such $\underline{\varphi}$ satisfies \eqref{liu0821-3} on $\Omega_\delta$.

{\bf (2)}  The construction of $\underline{\varphi}$ on $\tilde{O}_\delta^i$ for any $i\in \mathbb{C}$. Under the coordinate $x\mapsto(t,r)$ introduced in Step 1, we define
\begin{equation}\label{liu0821-5}
    \underline{\varphi}(t,r):=\underline{\phi}_\epsilon(\xi_\delta)+\frac{r^2}{\sqrt{\delta}}-\delta^{3/2}+\frac{\underline{\psi}(\xi_\delta(t))}{A}, \quad \forall (t,r)\in \tilde{O}_\delta^i.
\end{equation}
Together with \eqref{liu0821-3-1}, we find that $\overline{\varphi}$ is continuous on $\partial \tilde{O}_\delta^i$. Similar to \eqref{liu0821-6}, by direct calculations  we can choose $\delta$ small such that the first inequality in \eqref{liu0821-3} holds on $ \tilde{O}_\delta^i$. Furthermore, in light of $ \nabla\underline{\phi}_\epsilon\cdot \nu\leq -\sqrt{\epsilon}\underline{\phi}_\epsilon$ on $\partial\tilde{O}_\delta$, by \eqref{liu0821-3-1}  we can derive that
\begin{equation*}
    \begin{split}
        \nabla_-\underline{\varphi}(x)\cdot \nu(x)&=-\partial_r\underline{\varphi}(t,r)\Big |_{r=\delta}+O(1/A)=-2\sqrt{\delta}+O(1/A)\\
        &>-\sqrt{\epsilon}\overline{\varphi}(x)=\nabla_+\underline{\varphi}(x)\cdot \nu(x), \quad \forall x\in \partial \tilde{O}_\delta^i
    \end{split}
\end{equation*}
by letting $\delta$ small and $A$ large if necessary. This implies  the boundary conditions in \eqref{liu0821-3} also  hold on $\partial \tilde{O}_\delta^i$.

{\bf (3)} The construction of $\underline{\varphi}$ on $O_\delta^i$ for any $i\in \mathbb{S}$. We assume the Jacobi matrix $D{\bf b}(x_{\rm s}^i)$ is given by \eqref{liu-0722-8}.
Let the coordinate $x\mapsto(y,\ell)$ be introduced as in  the proof of Theorem \ref{theorem20220722} such that $x=x_\ell(\tau(y))$, where $x_\ell$ denots the periodic solution of \eqref{liu_0730-1} with $\eta_\ell=0$.  Analogue to \eqref{liu0807-4} and \eqref{liu0807-7}, we  define
 \begin{equation}\label{liu0821-6-1}
\underline{\varphi}(y,\ell):=\exp\left\{\frac{\rho_1(y)}{A}+\frac{\ell^2}{\sqrt{\delta}}\right\},\qquad \forall (y,\ell)\in O_\delta^i\setminus B_{1/\sqrt{\lambda_2 A}}(x_{\rm s}^i),
\end{equation}
and
\begin{equation}\label{liu0821-7}
\underline{\varphi}(x):=\exp\left\{D\|x-x_{\rm s}^i\|^2
+\frac{\rho_2(x)}{A}\right\},\qquad \forall x\in B_{1/\sqrt{\lambda_2 A}}(x_{\rm s}^i),
\end{equation}
where $\lambda_2>0$ be defined in \eqref{liu-0722-8}, and the functions  $\rho_1$ and $\rho_2$ are chosen such that $\underline{\varphi}\in C^2(O_\delta^i)$ as well as $\underline{\varphi}$ is continuous on $\partial O_\delta^i$.

For any $x\in O_\delta^i\setminus B_{1/\sqrt{\lambda_2 A}}(x_{\rm s}^i)$, due to $\eta_\ell=0$, using the similar arguments as in  \eqref{liu0815-6}  and \eqref{liu0815-7}, by \eqref{liu0821-6-1} we can choose $\delta$ small such that
 \begin{align*}
          &-\Delta\underline{\varphi}-A\mathbf{b}(x)\cdot\nabla\underline{\varphi}+c(x)\underline{\varphi}\\
          =&-\Delta\underline{\varphi}-\frac{A|({\bf I}+\eta_\ell \mathcal{J})\mathbf{b}|}{\partial_\ell x\cdot(\mathcal{J}\mathbf{b}-\eta_\ell \mathbf{b} )}\partial_\ell x\cdot\mathcal{J}{\bf b}\partial_y \underline{\varphi}+c(x)\underline{\varphi}\\
=&\left[-\frac{2}{\sqrt{\delta}}+O(1/A)+O(\sqrt{\delta})\right]\underline{\varphi}+(|{\bf b}(x)|+O(\delta))\left|\frac{{\rm d}\rho_1(y)}{{\rm d}y}\right|\underline{\varphi}+c(x)\underline{\varphi}\\
\leq& \Lambda_\delta\underline{\varphi}, \qquad \forall x\in O_\delta^i\setminus B_{1/\sqrt{\lambda_2 A}}(x_{\rm s}^i).
 \end{align*}
Hence, \eqref{liu0821-3}  holds on $O_\delta^i\setminus B_{1/\sqrt{\lambda_2 A}}(x_{\rm s}^i)$. For any $x\in  B_{1/\sqrt{\lambda_2 A}}(x_{\rm s}^i)$,  the first inequality in \eqref{liu0821-3} can also be verified by letting $D$ large. Moreover, similar to \eqref{liu0821-1}, we can find the boundary condition in  \eqref{liu0821-3} on $\partial O_\delta^i$ holds true.

Until now we have constructed the postive sub-solution  $\underline{\varphi}\in C(\overline{\Omega})\cap C^2(\Omega\setminus\partial \Omega_\delta)$ satisfying \eqref{liu0821-3}. Then by comparison principle we can derive \eqref{liu0821-2}, which implies
the upper bound estimate
\eqref{liu20220113-1} holds. The proof is now complete.
\end{proof}

The main result in this section can be formulated as follows.

\begin{theorem}\label{theorem0825}
Suppose that  the limit set of system  \eqref{system} consists of a finite number of hyperbolic fixed points, and a  family $\Omega_{\mathbb{F}}$  of closed orbits which is made of the periodic orbits, two homoclinic orbits connected by a hyperbolic saddle, or  center points as in Proposition {\rm \ref{theorem0820}}.
Assume ${\bf b}(x)\neq 0$, $\forall x\in\partial\Omega_{\mathbb{F}}$.
Let $\{x_1,\cdots,x_k\}$ denote the set of stable fixed points. 
\begin{itemize}
    \item[{\rm(i)}] If the family $\Omega_{\mathbb{F}}$ of closed orbits  is stable (attracting), then
    $$\lim_{A\to\infty}\lambda(A)= \min\left\{\min_{1\leq i\leq k}\{c(x_i)\},\,\,\,\, \inf_{\phi\in \mathbf{I}}\left[\frac{\int_{\Omega_{\mathbb{F}}} (|\nabla\phi|^2+c(x)\phi^2)\mathrm{d}x}{\int_{\Omega_{\mathbb{F}}} \phi^2\mathrm{d}x}\right] \right\};$$
     whereas $\mathbf{I}=\{\phi\in H^1(\Omega_{\mathbb{F}}): \mathbf{b}\cdot\nabla\phi =0 \text{ a.e. in }\Omega_{\mathbb{F}}\}$.

     \medskip

    \item[(ii)] If the family $\Omega_{\mathbb{F}}$ of closed orbits is unstable (repelling), then
    $$\lim_{A\to\infty}\lambda(A)= \min\left\{\min_{1\leq i\leq k}\{c(x_i)\},\,\,\,\, \inf_{\phi\in \mathbf{I}_0}\left[\frac{\int_{\Omega_{\mathbb{F}}} (|\nabla\phi|^2+c(x)\phi^2)\mathrm{d}x}{\int_{\Omega_{\mathbb{F}}} \phi^2\mathrm{d}x}\right] \right\}.$$
   whereas $\mathbf{I}_0=\{\phi\in H^1_0(\Omega_{\mathbb{F}}): \mathbf{b}\cdot\nabla\phi =0 \text{ a.e. in }\Omega_{\mathbb{F}}\}$.
\end{itemize}
Here the definition of the stability of $\Omega_\mathbb{F}$ is given in Definition {\rm \ref{definition2}}.
\end{theorem}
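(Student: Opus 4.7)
The plan is to localize and then glue together the super/sub-solutions already assembled for the two elementary pieces of the limit set: the ones near hyperbolic fixed points from Theorem \ref{liuprop1}, and the one over a maximal closed-orbit region from Proposition \ref{theorem0820}. For small $\delta>0$, write $\tilde{O}_\delta^i := B_\delta(x_i)$ for the hyperbolic fixed points $\{x_i\}_{i=1}^n$ (with $\{x_i\}_{i=1}^k$ the stable ones), and let $\Omega_\mathbb{F}^\delta$ be a $\delta$-thickening of $\Omega_\mathbb{F}$. By the hypothesis ${\bf b}\neq 0$ on $\partial\Omega_\mathbb{F}$, the collar $\Omega_\mathbb{F}^\delta\setminus\overline{\Omega_\mathbb{F}}$ is foliated by trajectories whose direction is dictated solely by the stability of $\Omega_\mathbb{F}$, while the remainder $\Omega_{\rm rem} := \Omega\setminus(\Omega_\mathbb{F}^\delta \cup \bigcup_i \tilde{O}_\delta^i)$ contains no limit points of \eqref{system} and is therefore amenable to a transport-type construction.

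For the lower bound in part (i), I would build a continuous super-solution $\overline{\varphi}$ piecewise: on each stable $\tilde{O}_\delta^i$ insert the quadratic super-solution of Theorem \ref{liuprop1} certifying the value $c(x_i)-\varepsilon$; on each unstable or saddle $\tilde{O}_\delta^i$ insert the ``large-ceiling'' variant from that same theorem; and on $\Omega_\mathbb{F}$ copy the first-integral super-solution $\overline{\varphi} = \overline{\phi}_\varepsilon + \overline{\psi}/A$ from Step 1(1) of Proposition \ref{theorem0820}, where $\overline{\phi}_\varepsilon \in \mathbf{I}$ satisfies $\nabla \overline{\phi}_\varepsilon \cdot \nu \ge \sqrt{\varepsilon}\,\overline{\phi}_\varepsilon$ on $\partial\Omega_\mathbb{F}$. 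Because $\Omega_\mathbb{F}$ is stable, ${\bf b}\cdot \nu_{\partial\Omega_\mathbb{F}^\delta} < 0$, so a smooth transport function $G>0$ with ${\bf b}\cdot\nabla G<0$ and $\nabla G\cdot \mathbf{n}\ge 0$ on $\partial\Omega$, furnished by \cite[Lemma 2.3]{DEF1974}, can be inserted on $\Omega_{\rm rem}$ with the correct interior-jump sign $(\nabla_-\overline{\varphi} - \nabla_+\overline{\varphi})\cdot \nu > 0$ on $\partial\Omega_\mathbb{F}^\delta$ and $\partial \tilde{O}_\delta^i$. Applying the comparison principle for large $A$ and letting $\varepsilon,\delta\to 0$ yields the desired lower bound. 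For part (ii) only the $\Omega_\mathbb{F}$-piece and its collar change: now ${\bf b}\cdot \nu_{\partial\Omega_\mathbb{F}^\delta}>0$, so I would start with $\overline{\phi}_\varepsilon\in \mathbf{I}_0$ (vanishing on $\partial\Omega_\mathbb{F}$) and extend across the collar by an exponential profile $\exp\{-D\,d_\mathcal{H}(x,\partial\Omega_\mathbb{F})^2 + \rho/A\}$ in the style of Step 1 of Theorem \ref{theorem20220804}, which produces a matching Dirichlet trace and the correct jump at $\partial\Omega_\mathbb{F}^\delta$ when meeting the transport function on $\Omega_{\rm rem}$. The upper bounds in both cases are symmetric: assemble sub-solutions from Step 2 of Theorem \ref{liuprop1} near each stable $x_i$, from the sub-solution construction of Proposition \ref{theorem0820} on $\Omega_\mathbb{F}$ (with test function in $\mathbf{I}$ or $\mathbf{I}_0$ respectively), and from a transport function with ${\bf b}\cdot\nabla G>0$ on $\Omega_{\rm rem}$, mirroring the gluing procedure above.

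The hard part is the matching on $\partial\Omega_\mathbb{F}^\delta$, where the first-integral super-solution inside $\Omega_\mathbb{F}$ must be reconciled with the transport extension outside, while enforcing that the variational inequality inside $\Omega_\mathbb{F}$ is genuinely posed over the correct space ($\mathbf{I}$ in (i), $\mathbf{I}_0$ in (ii)). For (i), the density of smooth first integrals with $\nabla\phi\cdot \nu\ge \sqrt{\varepsilon}\phi$ on $\partial\Omega_\mathbb{F}$ inside $\mathbf{I}$ is controlled by the Banach--Steinhaus argument of Proposition \ref{theorem0820}, itself borrowed from \cite[Theorem 2.2]{BHN2005}. For (ii) a parallel density statement is needed in $\mathbf{I}_0$, which is more subtle because the trace of a first integral on a boundary periodic orbit (where ${\bf b}$ is tangent) need not vanish; this requires regularizing $\phi$ by a cut-off in the transverse coordinate $(\tau,\ell)$ introduced in Section \ref{S3} and showing that the cut-off error is absorbed by the $H^1$-norm. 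Passing through the analogue of \eqref{liu-1213} for $\mathbf{I}_0$ while keeping the correction $\overline{\psi}/A$ of order $1/A$ is the main technical point, and it is here that the non-degeneracy ${\bf b}\neq 0$ on $\partial\Omega_\mathbb{F}$ is used crucially to guarantee that such a cut-off in $\ell$ is even defined in a neighborhood of $\partial\Omega_\mathbb{F}$.
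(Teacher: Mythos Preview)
Your overall strategy matches the paper's: localize near the hyperbolic fixed points (Theorem \ref{liuprop1}), over the closed-orbit family (Proposition \ref{theorem0820}), and bridge with a transport function on the leftover region via \cite[Lemma 2.3]{DEF1974}. For part (i) this is exactly what the paper does; the only piece you leave implicit is the explicit collar on $O_\delta(\Omega_\mathbb{F})\setminus\Omega_\mathbb{F}$, which the paper writes out in the $(\tau,\ell)$-coordinates of Section \ref{S3} as $\overline{\varphi}(\tau,\ell)=[\mathrm{C}+\overline{\psi}(\tau,0)/A]\exp\{(\sqrt{\epsilon}/2)\ell-\ell^2/\sqrt{\delta}\}$ (equation \eqref{liu0825-7}) and then verifies the jump at $\partial\Omega_\mathbb{F}$ by a direct computation (equation \eqref{liu0826-1}).

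For part (ii) there are two points of divergence. First, for the upper bound the paper does \emph{not} glue a sub-solution across the collar at all: it simply observes $\lambda(A)\le\lambda_\infty(A)$, the Dirichlet principal eigenvalue on $\Omega_\mathbb{F}$, and invokes Proposition \ref{theorem0820}(ii) directly to get $\lambda_\infty(A)\to\inf_{\phi\in\mathbf{I}_0}[\cdots]$. This one-line comparison is much cleaner than your direct sub-solution construction and completely bypasses the density-in-$\mathbf{I}_0$ issue you flag in your last paragraph.

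Second, your (ii) lower bound has a slip: you say to take $\overline{\phi}_\varepsilon\in\mathbf{I}_0$ \emph{vanishing} on $\partial\Omega_\mathbb{F}$ and then match a collar profile $\exp\{-D\,d_\mathcal{H}^2+\rho/A\}$ with ``matching Dirichlet trace''. But a super-solution must be strictly positive, and $\exp\{0\}=1$ on $\partial\Omega_\mathbb{F}$ cannot match a zero trace. What is needed (and what the paper's reference to ``the same arguments as in Step 1 and Proposition \ref{theorem0820}'' implies) is $\overline{\phi}_\varepsilon\ge\sqrt{\varepsilon}>0$ approximating the $\mathbf{I}_0$-minimizer in $H^1$, so the super-solution is uniformly positive while the Rayleigh quotient it certifies is the $\mathbf{I}_0$ value up to $O(\sqrt{\varepsilon})$; the collar is then handled as in the \emph{unstable} limit-cycle construction (Theorem \ref{Limit-cycle-2}), with the flux sign reversed relative to (i), rather than the homoclinic construction of Theorem \ref{theorem20220804}. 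Your transverse cut-off argument is unnecessary.
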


\begin{proof}
The proof is based on Proposition \ref{theorem0820}.

\smallskip

{\bf Step 1}. We first prove the assertion {\rm (i)}. We only  establish the lower bound estimate
\begin{equation}\label{liu0825-2}
 \liminf_{A\to\infty}\lambda(A)\geq \min\left\{\min_{1\leq i\leq k}\{c(x_i)\},\,\,\,\, \inf_{\phi\in \mathbf{I}}\left[\frac{\int_{\Omega_{\mathbb{F}}} (|\nabla\phi|^2+c(x)\phi^2)\mathrm{d}x}{\int_{\Omega_{\mathbb{F}}} \phi^2\mathrm{d}x}\right] \right\}=:\Lambda,
\end{equation}
since the corresponding upper bound estimate can be proved similarly. 
Define $O_\delta(\Omega_{\mathbb{F}}):=\{x\in\Omega: {d_\mathcal{H}}(x,\Omega_{\mathbb{F}})<\delta\}$ as a $\delta$-neighbourhood of  $\Omega_{\mathbb{F}}$, and denote by $\nu(x)$ the the outward unit normal vector on $\partial\Omega_{\mathbb{F}}\cup\partial O_\delta(\Omega_{\mathbb{F}})$. For any given $\epsilon>0$,  we next construct a super-solution  $\overline{\varphi}\in C(\overline{\Omega})\cap C^2(\Omega\setminus\Omega_\delta)$ such that
\begin{equation}\label{liu0825-3}
 \left\{\begin{array}{ll}
\medskip
-\Delta\overline{\varphi}-A\mathbf{b}\cdot\nabla\overline{\varphi}+c(x)\overline{\varphi}\geq (\Lambda-2\epsilon)\overline{\varphi} &\mathrm{in} \,\, \Omega\setminus(\partial\Omega_{\mathbb{F}}\cup\partial O_\delta(\Omega_{\mathbb{F}})\cup\partial\Omega_\delta),\\
\medskip
(\nabla_-\overline{\varphi}(x)-\nabla_+\overline{\varphi}(x))\cdot \nu(x)>0 & \mathrm{on}~\partial\Omega_{\mathbb{F}}\cup\partial O_\delta(\Omega_{\mathbb{F}})\cup\partial\Omega_\delta,\\
  \nabla\overline{\varphi}\cdot \mathbf{n}\geq 0 & \mathrm{on}~\partial\Omega,  
  \end{array}\right.
 \end{equation}
provided that $A$ is chosen large, where $\Omega_\delta\subset\Omega_{\mathbb{F}}$ is defined in the proof of Proposition \ref{theorem0820}. Then \eqref{liu0825-2} follows from the comparison principle. 

{\bf (1)} The construction on $\Omega_{\mathbb{F}}$. 
Let $\Lambda_\delta$ be defined by \eqref{liu20220714-3}.
By the proof of Proposition \ref{theorem0820} we can construct $\overline{\varphi}\in C(\Omega_\mathbb{F})\cap C^2(\Omega_{\mathbb{F}}\setminus\partial\Omega_\delta)$ satisfying
\begin{equation}\label{liu0825-6-1}
 \left\{\begin{array}{ll}
\medskip
-\Delta\overline{\varphi}-A\mathbf{b}\cdot\nabla\overline{\varphi}+c(x)\overline{\varphi}\geq(\Lambda_\delta-\epsilon)\overline{\varphi} &\mathrm{in} \,\, \Omega_{\mathbb{F}}\setminus\partial\Omega_\delta,\\
\medskip
(\nabla_-\overline{\varphi}(x)-\nabla_+\overline{\varphi}(x))\cdot \nu(x)>0 & \mathrm{on}~\partial\Omega_\delta,\\
  \nabla\overline{\varphi}\cdot \nu (x)\geq\sqrt{\ep}\overline{\varphi} & \mathrm{on}~\partial\Omega_{\mathbb{F}},  
  \end{array}\right.
 \end{equation}
 for sufficiently large $A$.
Hence, 
the first inequality in \eqref{liu0825-3} holds for small $\delta$.  Furthermore, by \eqref{liu0810-2} and $\overline{\phi}_\epsilon\in{\bf I}$ we find that the constructed  function $\overline{\varphi}$  satisfies
\begin{equation}\label{liu0825-7-1}
    \overline{\varphi}(x)={\rm C}+\frac{\overline{\psi}(x)}{A},\qquad \forall x\in \partial\Omega_{\mathbb{F}},
\end{equation}
where ${\rm C}$ is a constant determined by $\overline{\phi}_\epsilon(x)\equiv {\rm C}$ on  $\partial\Omega_{\mathbb{F}}$,  and $\overline{\psi}$ is defined as in \eqref{20220110-2}.

{\bf (2)} The construction on $O_\delta(\Omega_{\mathbb{F}})\setminus\Omega_{\mathbb{F}}$. We
first introduce the coordinate $x\mapsto(\tau, \ell)$ in each connected component of $O_\delta(\Omega_{\mathbb{F}})\setminus \Omega_{\mathbb{F}}$ as in Part 2 of Theorem \ref{Limit-cycle}. Indeed,  given any closed orbit on $\partial \Omega_{\mathbb{F}}$ which can be parameterized by $\{P(t):0\leq t<T\}$, due to the stability of $\Omega_{\mathbb{F}}$, for any $\ell\in(0,\delta)$, we can
 choose $\eta_\ell>0$ such that equation \eqref{liu_04-0729} admits a periodic solution  $x_\ell(\tau)$. Then we  perform a $C^2$ smooth change of coordinate $x\mapsto(\tau,\ell)$ such that $x=x_\ell(\tau)$.

 Then we define
\begin{equation}\label{liu0825-7}
   \overline{\varphi}(\tau, \ell):=\left[{\rm C}+\frac{\overline{\psi}(\tau, 0)}{A}\right]\exp\left\{\frac{\sqrt{\epsilon}}{2}\ell-\frac{\ell^2}{\sqrt{\delta}}\right\}, \qquad \forall (\tau, \ell)\in O_\delta(\Omega_{\mathbb{F}})\setminus\Omega_{\mathbb{F}}.
\end{equation}
 Here the constant ${\rm C}$ and function $\overline{\psi}$ are given as in \eqref{liu0825-7-1} to ensure the continuity of $\overline{\varphi}$ on $\partial \Omega_{\mathbb{F}}$. As in \eqref{liu0729-5}, by direct calculations we can choose $\delta$ small such that
 \begin{align*}
    \mathbf{b}(\tau, \ell)\cdot\nabla\overline{\varphi}
=&\frac{\partial_\ell x\cdot \mathcal{J}{\bf b} }{\partial_\ell x\cdot \mathcal{J}{\bf b}+O(\eta_\ell)}\partial_\tau \overline{\varphi}-\frac{\partial_\tau x\cdot \mathcal{J}{\bf b} }{\partial_\ell x\cdot \mathcal{J}{\bf b}+O(\eta_\ell)}\partial_\ell \overline{\varphi}  \\
=&(1+O(\eta_\ell))\partial_\tau \overline{\varphi}-\frac{({\bf I}+\eta_\ell \mathcal{J})\mathbf{b}(x_\ell(\tau))\cdot \mathcal{J}{\bf b}((x_\ell(\tau)) }{\partial_\ell x\cdot \mathcal{J}{\bf b}+O(\eta_\ell)}\partial_\ell \overline{\varphi}  \\
=&\frac{(1+O(\eta_\ell))\partial_\tau\overline{\psi}}{A}-\underbrace{\frac{(\sqrt{\epsilon}-2\ell/\sqrt{\delta})\eta_\ell|\mathbf{b}(x_\ell(\tau))|^2 }{\partial_\ell x\cdot \mathcal{J}{\bf b}+O(\eta_\ell)}}_{\geq 0} \\
\leq &\frac{(1+O(\eta_\ell))\partial_\tau\overline{\psi}}{A},  \qquad \forall (\tau, \ell)\in O_\delta(\Omega_{\mathbb{F}})\setminus\Omega_{\mathbb{F}},
\end{align*}
where the last inequality is due to $\eta_\ell\geq 0$ and $\ell\in (0,\delta)$.
Hence, by \eqref{liu0825-7} we have
\begin{align*}
    &-\Delta\overline{\varphi}-A\mathbf{b}\cdot\nabla\overline{\varphi}+\left(c(x)-\Lambda\right)\overline{\varphi}\notag\\
   \geq &\frac{2}{\sqrt{\delta}}\left(\left(\partial_{x_1} \ell\right)^2+\left(\partial_{x_2} \ell\right)^2\right)\overline{\varphi}-\frac{\sqrt{\epsilon}}{2}\left(\partial^2_{x_1} \ell+\partial^2_{x_2} \ell\right)\overline{\varphi}\\
   &-(1+O(\eta_\ell)))\partial_\tau\overline{\psi}(\tau,0)+\left(c(x)-\Lambda\right)\overline{\varphi}.
\end{align*}
In view of  $(\partial_{x_1}r)^2+(\partial_{x_2}r)^2>0$,  by choosing $\delta$ small we can derive  the first inequality in \eqref{liu0825-3} holds on $O_\delta(\Omega_{\mathbb{F}})\setminus\Omega_{\mathbb{F}}$.

Furthermore, combining \eqref{liu0825-6-1}  and \eqref{liu0825-7}, similar to \eqref{liu_0730-6}, we can verify that
\begin{equation}\label{liu0826-1}
\begin{split}
\nabla_+\overline{\varphi}(x)\cdot \nu(x)\Big |_{\ell=0}=&\frac{\mathcal{J}\partial_\tau x \cdot\mathcal{J}({\bf I}+\eta_{\delta}\mathcal{J}){\bf b}\partial_\ell \overline{\varphi}+O(1/A)}{(\partial_\ell x\cdot \mathcal{J}{\bf b}+O(\eta_\delta))|\frac{{\rm d} x_{\delta}(\tau)}{{\rm d} \tau}|}\Bigg |_{\ell=0}\\
=&\frac{(\sqrt{\epsilon}/2)|({\bf I}+\eta_{\delta}\mathcal{J}){\bf b}|^2\overline{\varphi}+O(1/A)}{(\partial_\ell x\cdot \mathcal{J}{\bf b}+O(\eta_\delta))(|{\bf b}(x_\delta(\tau))|+O(\eta_\delta))}\\
=&\frac{\sqrt{\epsilon}}{2}(1+O(\delta))\overline{\varphi}+O(1/A)\\
<&\sqrt{\epsilon} \overline{\varphi}\leq\nabla_-\overline{\varphi}(x)\cdot \nu(x)\Big |_{\ell=0},\quad \forall x\in\partial\Omega_{\mathbb{F}},
\end{split}
\end{equation}
by choosing $A$ large and $\delta$ small if necessary. This implies the boundary conditions in \eqref{liu0825-2} hold on $\partial\Omega_{\mathbb{F}}$. The construction on $O_\delta(\Omega_{\mathbb{F}})\setminus\Omega_{\mathbb{F}}$ is completed.

{\bf (3)} The construction on $\Omega\setminus O_\delta(\Omega_{\mathbb{F}})$. For any $x\in\partial O_\delta(\Omega_{\mathbb{F}})$, analogue to \eqref{liu0826-1}, by \eqref{liu0825-7} we  choose $A$ large and $\delta$ small such that
\begin{equation*}
\begin{split}
\nabla_+\overline{\varphi}(x)\cdot \nu(x)\Big |_{\ell=\delta}
=&\frac{(\sqrt{\epsilon}/2-2\sqrt{\delta})|({\bf I}+\eta_{\delta}\mathcal{J}){\bf b}|^2\overline{\varphi}+O(1/A)}{(\partial_\ell x\cdot \mathcal{J}{\bf b}+O(\eta_\delta))(|{\bf b}(x_\delta(\tau))|+O(\eta_\delta))}\\
\geq &\frac{\sqrt{\epsilon}}{4} \overline{\varphi},\quad \forall x\in\partial O_\delta(\Omega_{\mathbb{F}}).
\end{split}
\end{equation*}
Hence, we can apply the arguments developed in Step 2 of Part 1 in the proof of Theorem \ref{Limit-cycle} to complete the construction of $\overline{\varphi}$  on $\Omega\setminus O_\delta(\Omega_{\mathbb{F}})$ such that  \eqref{liu0825-2} holds. The proof of assertion {\rm (i)} is now complete.

{\bf Step 2}. We next prove the assertion {\rm (ii)}. By same arguments as in Step 2 of Theorem \ref{liuprop1}, we can drive that $$\limsup\limits_{A\to\infty}\lambda(A)\leq \min\limits_{1\leq i\leq k}\{c(x_i)\}.$$
Let $\lambda_\infty(A)$ denote the principal eigenvalue of
the problem
\begin{equation*}
 \left\{\begin{array}{ll}
\medskip
-\Delta\varphi-A\mathbf{b}\cdot\nabla\varphi+c(x)\varphi=\lambda\varphi&\mathrm{in} \,\, \Omega_{\mathbb{F}},\\
\varphi=0 & \mathrm{on}~\partial\Omega_{\mathbb{F}}.  
  \end{array}\right.
 \end{equation*}
 By the comparison principle, it is easily seen that $\lambda(A)\leq \lambda_\infty(A)$ for all $A>0$, which together with Proposition \ref{theorem0820}{\rm (ii)} yields
\begin{equation*}
     \limsup_{A\rightarrow\infty}\lambda(A)\leq \lim_{A\rightarrow\infty}\lambda_\infty(A)=\inf_{\phi\in \mathbf{I}_0}\left[\frac{\int_{\Omega_{\mathbb{F}}} (|\nabla\phi|^2+c(x)\phi^2)\mathrm{d}x}{\int_{\Omega_{\mathbb{F}}} \phi^2\mathrm{d}x}\right].
\end{equation*}
Hence, the upper bound estimate of assertion {\rm (ii)} follows. It remains to establish 
\begin{equation}\label{liu0827-1}
        \liminf_{A\to\infty}\lambda(A)\geq \min\left\{\min_{1\leq i\leq k}\{c(x_i)\},\,\,\,\, \inf_{\phi\in \mathbf{I}_0}\left[\frac{\int_{\Omega_{\mathbb{F}}} (|\nabla\phi|^2+c(x)\phi^2)\mathrm{d}x}{\int_{\Omega_{\mathbb{F}}} \phi^2\mathrm{d}x}\right] \right\}.
\end{equation}

As in Step 1, we set
$$\Lambda_\delta:=\inf_{\phi\in \mathbf{I}_0}\left[\frac{\int_{\Omega_{\delta}} (|\nabla\phi|^2+c(x)\phi^2)\mathrm{d}x}{\int_{\Omega_{\delta}} \phi^2\mathrm{d}x}\right]. $$
For any given $\epsilon>0$, the fact that $\Omega_{\mathbb{F}}$ is unstable
enables us to apply the same arguments as in Step 1 and Proposition \ref{theorem0820} to construct a super-solution  $\overline{\varphi}\in C(\overline{\Omega})\cap C^2(\Omega\setminus\Omega_\delta)$ such that
\begin{equation*}
 \left\{\begin{array}{ll}
\medskip
-\Delta\overline{\varphi}-A\mathbf{b}\cdot\nabla\overline{\varphi}+c(x)\overline{\varphi}\geq \left[\min\left\{\min\limits_{1\leq i\leq k}\{c(x_i)\},\,\, \Lambda_\delta \right\}-2\epsilon\right]\overline{\varphi}\\
\qquad\qquad\qquad\qquad\qquad\qquad\qquad\qquad\quad\qquad\mathrm{in} \,\, \Omega\setminus(\partial\Omega_{\mathbb{F}}\cup\partial O_\delta(\Omega_{\mathbb{F}})\cup\partial\Omega_\delta),\\
\medskip
(\nabla_-\overline{\varphi}(x)-\nabla_+\overline{\varphi}(x))\cdot \nu(x)>0 \qquad\qquad\quad\,\, \mathrm{on}~\partial\Omega_{\mathbb{F}}\cup\partial O_\delta(\Omega_{\mathbb{F}})\cup\partial\Omega_\delta,\\
  \nabla\overline{\varphi}\cdot \mathbf{n}\geq 0 \qquad\qquad\qquad\qquad\qquad\qquad\qquad\,\, \mathrm{on}~\partial\Omega,  
  \end{array}\right.
 \end{equation*}
for sufficiently large $A$.  Then by the comparison principle and letting $|\alpha|\to\infty$ we can deduce \eqref{liu0827-1}. The proof is now complete.
\end{proof}

By  rather similar arguments as in the proof of Theorem \ref{theorem0825}, we can derive the following result, for which the proof is omitted.
\begin{theorem}\label{theorem0912}
Under the assumptions in Theorem {\rm\ref{theorem0825}},
let $\Gamma$ be the set of limit cycles on $\partial \Omega_\mathbb{F}$ such that  $\Omega_\mathbb{F}$ is unstable  on $\Gamma$ and is stable on  $\partial\Omega_\mathbb{F}\setminus \Gamma$, which might be an empty set.
Then
  $$\lim_{A\to\infty}\lambda(A)= \min\left\{\min_{1\leq i\leq k}\{c(x_i)\},\,\,\,\, \inf_{\phi\in \mathbf{I}}\left[\frac{\int_{\Omega_{\mathbb{F}}} (|\nabla\phi|^2+c(x)\phi^2)\mathrm{d}x}{\int_{\Omega_{\mathbb{F}}} \phi^2\mathrm{d}x}\right] \right\},$$
 where $\mathbf{I}:=\{\phi\in H^1(\Omega_{\mathbb{F}}): \phi=0 \text{ on }  \Gamma, \quad \mathbf{b}\cdot\nabla\phi =0 \text{ a.e. in }\Omega_{\mathbb{F}}\}$
\end{theorem}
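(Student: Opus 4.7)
The plan is to run essentially the same two-sided construction as for Theorem \ref{theorem0825}, but to treat the two kinds of boundary components of $\Omega_{\mathbb{F}}$ separately: on the unstable part $\Gamma$ I will borrow the sub/super-solution profile from the unstable limit cycle case (Theorem \ref{Limit-cycle-2}), while on the stable part $\partial\Omega_{\mathbb{F}}\setminus\Gamma$ I will borrow the profile from the stable limit cycle case (Theorem \ref{Limit-cycle}). Away from $\Omega_{\mathbb{F}}$ and from the hyperbolic fixed points, no orbits of \eqref{system} accumulate, so the standard flow-based extension from \cite[Lemma 2.3]{DEF1974}, used repeatedly in Sections \ref{S2}--\ref{S4-1}, will finish the global patch. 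Writing
\[
\Lambda_{\mathbb{F}}:=\inf_{\phi\in \mathbf{I}}\frac{\int_{\Omega_{\mathbb{F}}}(|\nabla\phi|^2+c\phi^2)\,{\rm d}x}{\int_{\Omega_{\mathbb{F}}}\phi^2\,{\rm d}x},
\qquad \Lambda:=\min\Bigl\{\min_{1\le i\le k}c(x_i),\,\Lambda_{\mathbb{F}}\Bigr\},
\]
the claim reduces to the two one-sided bounds $\liminf_A\lambda(A)\ge\Lambda$ and $\limsup_A\lambda(A)\le\Lambda$.

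For the lower bound I will fix $\epsilon>0$ and mimic Step 1 of Theorem \ref{theorem0825}(i). First exhaust $\Omega_{\mathbb{F}}$ by the region $\Omega_\delta$ that avoids the $\delta$-neighbourhood of all saddle connections and centers, and let $\Lambda_\delta$ denote the corresponding Rayleigh infimum over the restricted space $\{\phi\in H^1(\Omega_\delta):\phi=0\text{ on }\Gamma,\ \mathbf{b}\cdot\nabla\phi=0\}$; the only change from Proposition \ref{theorem0820} is the extra Dirichlet constraint on $\Gamma$, which is compatible with the variational argument and yields $\Lambda_\delta\to \Lambda_{\mathbb{F}}$ as $\delta\searrow 0$ by a standard capacitary/density approximation. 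Choose an almost-minimizer $\overline\phi_\epsilon$ that is $C^2$, bounded below by $\sqrt\epsilon$ away from $\Gamma$, satisfies $\nabla\overline\phi_\epsilon\cdot\nu\ge \sqrt\epsilon\,\overline\phi_\epsilon$ on the stable component of $\partial\Omega_\delta$, and vanishes linearly on $\Gamma$; then build the super-solution $\overline\varphi:=\overline\phi_\epsilon+A^{-1}\overline\psi$ exactly as in \eqref{liu0810-2}, where $\overline\psi$ is defined fiber-wise by \eqref{20220110-2}. On the center and saddle-connection neighbourhoods inside $\Omega_{\mathbb{F}}$ I reuse \eqref{liu0810-3} and the construction from Step 1(3) of Proposition \ref{theorem0820}. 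In the exterior collar $O_\delta(\Omega_{\mathbb{F}})\setminus\overline\Omega_{\mathbb{F}}$ I use the $(\tau,\ell)$-coordinate of Part 2 of Theorem \ref{Limit-cycle}: near a stable boundary cycle the exterior orbits are $\Phi^t$-periodic with $\eta_\ell\ge 0$, so the profile $\overline\varphi\sim(\text{const}+A^{-1}\overline\psi)\exp\{\tfrac{\sqrt\epsilon}{2}\ell-\ell^2/\sqrt\delta\}$ of \eqref{liu0825-7} still gives $\mathbf{b}\cdot\nabla\overline\varphi\le O(1/A)$; near a component of $\Gamma$ the exterior has $\eta_\ell\le 0$, so I instead mimic Step 1 of Theorem \ref{Limit-cycle-2} and use a \emph{concave} profile $1-\ell^2/\sqrt\delta$ that dies outward, rescaled by the boundary value of $\overline\phi_\epsilon$ on $\Gamma$ (which is $0$ to leading order); the Dirichlet-type decay on $\Gamma$ is precisely what makes the normal-derivative jump condition $(\nabla_-\overline\varphi-\nabla_+\overline\varphi)\cdot\nu>0$ consistent with the interior profile. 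Outside $O_\delta(\Omega_{\mathbb{F}})\cup\tilde O_\delta$ (neighbourhood of the fixed points) I glue in the fixed-point super-solution of Theorem \ref{liuprop1} and close up using the transport function $G$ from \cite[Lemma 2.3]{DEF1974}, just as in Step 2 of Theorem \ref{Limit-cycle}. The comparison principle then yields $\liminf_A\lambda(A)\ge \Lambda_\delta-{\rm C}\sqrt\epsilon\to\Lambda$.

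For the upper bound I will build a sub-solution $\underline\varphi$ by the symmetric construction (reversing all inequalities), following Step 2 of Theorem \ref{theorem0825}(ii). The key point is that the Dirichlet condition on $\Gamma$ allows me to cap $\underline\varphi$ by the subsolution of the Dirichlet problem on a slightly enlarged region crossing $\Gamma$, recovering the $\mathbf{I}_0$-type bound on $\Gamma$ while keeping the Neumann-type bound on the stable portion of $\partial\Omega_{\mathbb{F}}$. Concretely, near the stable boundary cycles I use the profile \eqref{liu0821-5}, while near $\Gamma$ I insert a thin layer $O_{1/\sqrt{\lambda_2 A}}$ as in \eqref{liu0821-6-1}--\eqref{liu0821-7} to transition from positive interior values to almost-zero values on $\Gamma$. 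For the fixed-point contribution I use the one-point sub-solutions from the Upper bound estimate of Theorem \ref{liuprop1}. Taking the maximum of these two sub-solutions delivers the bound $\limsup_A\lambda(A)\le\Lambda+O(\sqrt\epsilon)$.

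The main technical obstacle, in my view, is the careful matching across $\Gamma$ where the variational minimizer is forced to vanish: one needs $\overline\phi_\epsilon$ to vanish on $\Gamma$ while still being strictly positive in the interior, and the lifted correction $\overline\psi/A$ together with the exterior decaying profile must combine to produce a \emph{strict} inward normal-derivative jump on $\Gamma$ (so that the comparison principle applies across the interface, as in \eqref{liu0826-1}). Because $\overline\phi_\epsilon$ is only almost-vanishing and the $O(1/A)$ correction may swing either sign, the sign of the jump must be controlled by choosing $\epsilon$, then $\delta$, then $A$ in the right order---exactly the hierarchy used in Steps 1(2) and 1(3) of Theorem \ref{theorem0825}(i). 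Once this interface analysis is in place, the rest of the argument is essentially a bookkeeping exercise combining the four constructions already developed in Sections \ref{S2}, \ref{S3}, \ref{S5}, and Proposition \ref{theorem0820}.
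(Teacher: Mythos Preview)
Your proposal is correct and follows essentially the same route the paper intends: the paper omits the proof of Theorem \ref{theorem0912} entirely, stating only that it follows ``by rather similar arguments as in the proof of Theorem \ref{theorem0825},'' and your outline is precisely an unpacking of what those ``similar arguments'' are---namely, running the super/sub-solution machinery of Theorem \ref{theorem0825} while treating the stable boundary components via the collar construction of Theorem \ref{Limit-cycle} (as in \eqref{liu0825-7}) and the unstable components $\Gamma$ via the concave profile of Theorem \ref{Limit-cycle-2}, with the interior Rayleigh minimizer now taken over the mixed space $\mathbf{I}$ that enforces Dirichlet decay on $\Gamma$. You have also correctly singled out the one place where extra care is needed: the normal-derivative jump across $\Gamma$, where the almost-minimizer $\overline\phi_\epsilon$ must be allowed to vanish while the exterior collar profile stays uniformly positive, and the ordering of $\epsilon$, $\delta$, $A$ must be chosen so that the $O(1/A)$ correction does not spoil the strict sign of the jump.
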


We are now ready to prove Theorem \ref{mainresult}.

\begin{proof}[\bf Proof of Theorem {\rm \ref{mainresult}}]
     Theorem \ref{mainresult} can be established by constructing the suitable super/sub-solutions as before.
     Let  $\{\mathcal{K}_i:1\leq i\leq n\}$ be the connected components in
     the limit set of system \eqref{system}, which satisfy Hypothesis  \ref{assum1}.   Define $O_\delta(\mathcal{K}_i):=\{x\in\Omega: d_{\mathcal{H}}(x,\mathcal{K}_i)<\delta\}$ with some small $\delta>0$ to be determined later.  On the region $O_\delta(\mathcal{K}_i)$, we may
construct the desired super/sub-solution  by using directly the arguments
in Sections \ref{S2}-\ref{S4-1}.  More precisely, if $\mathcal{K}_i$ is
 a hyperbolic fixed point, then  the constructions can follow
those in Theorem \ref{liuprop1}, while 
the proofs in Theorems \ref{Limit-cycle}-\ref{Limit-cycle-3} are applicable  when $\mathcal{K}_i$ is a limit cycle. If $\mathcal{K}_i$ contains homoclinic orbits satisfying Hypothesis  \ref{assum1} ({\rm iii}) and ({\rm v}), we can apply  the arguments in Theorem \ref{theorem20220722}-\ref{theorem20220827} to complete the constructions on $O_\delta(\mathcal{K}_i)$. Moreover, if  $\mathcal{K}_i$ is a family of closed orbits as given by Hypothesis  \ref{assum1}({\rm v}),  the constructions are same to those in Theorem \ref{theorem0825}.  Finally, the constructions for the remaining region $\Omega\setminus \cup_{i=1}^n O_\delta(\mathcal{K}_i)$ can be completed by integrating the ideas in Theorems \ref{liuprop1}-\ref{theorem0825}. Therefore,
 Theorem \ref{mainresult} can be proved by choosing $\delta$ small.
\end{proof}

\section{\bf  Discussions on the degenerate case}\label{S5-1}
Set $\Omega_*:=\{x\in\Omega: {\bf b}(x)=0\}$. It is assumed in Hypothesis \ref{assum1} that $|\Omega_*|=0$. This section is devoted to some  discussions on the degenerate case $|\Omega_*|>0$ for complement. We first state the following result.

\begin{theorem}\label{theorem0831}
Suppose that ${\bf b}\equiv 0$ on $\Omega_*$ and there are no limit sets of \eqref{system} in $\Omega\setminus\Omega_*$ for some connected subset $\Omega_* \subset\Omega$ satisfying $|\Omega_*|>0$. Assume that there exists some  $\delta_*>0$ such that for any $\delta\in(0,\delta_*)$, there exists some neighborhood $\Omega_\delta$ satisfying $\Omega_*\subset\Omega_\delta$ and ${d_\mathcal{H}}\{\partial\Omega_\delta,\Omega_*\}=\delta$ such that {\rm (1)} ${\bf b}(x)\cdot \nu_\delta(x)<0$, $\forall x\in\partial\Omega_\delta$, or {\rm (2)} ${\bf b}(x)\cdot \nu_\delta(x)>0$, $\forall x\in\partial\Omega_\delta$, where  $\nu_\delta$ denotes  the unit normal vector on   $\partial\Omega_\delta$.
\begin{itemize}
    \item[{\rm(i)}] If  {\rm (1)} holds,  then $\lambda(A)\to \Lambda_{\mathbf{N}}$ as $A\to\infty$,
     where $\Lambda_{\mathbf{N}}$ is the principal eigenvalue of 
     \begin{equation}\label{liu0902-4}
-\Delta\varphi+c(x)\varphi=\Lambda_{\mathbf{N}}\varphi \,\,\,\,\mathrm{in} \,\, \Omega_*, \qquad
  \nabla\varphi\cdot \nu=0 \,\,\,\, \mathrm{on}~\partial\Omega_*,
 \end{equation}
 where $\nu$ is the unit normal vector on   $\partial\Omega_*$.

     \medskip

    \item[{\rm(ii)}] If  {\rm (2)} holds,  then $\lambda(A)\to \Lambda_{\mathbf{D}}$ as $A\to\infty$,
     where $\Lambda_{\mathbf{D}}$ is the principal eigenvalue of 
     \begin{equation}\label{liu0902-5}
-\Delta\varphi+c(x)\varphi=\Lambda_{\mathbf{D}}\varphi\,\,\,\,\mathrm{in} \,\, \Omega_*, \qquad
  \varphi=0 \,\,\,\, \mathrm{on}~\partial\Omega_*.
 \end{equation}
\end{itemize}
\end{theorem}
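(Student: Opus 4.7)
The plan is to prove both assertions by the super/sub-solution scheme developed throughout Sections \ref{S2}--\ref{S4-1}, together with the comparison principle, exploiting the decisive simplification that $\mathbf{b}\equiv 0$ on $\Omega_*$ so that the operator $-\Delta-A\mathbf{b}\cdot\nabla+c(x)$ reduces to $-\Delta+c(x)$ on $\Omega_*$ for every $A$. In both cases, the limit profile on $\Omega_*$ will be taken to be the principal eigenfunction of the limiting problem \eqref{liu0902-4} or \eqref{liu0902-5}; the only genuinely new task is to build the extension of this profile to $\Omega\setminus\Omega_*$ in a way compatible with the sign condition on $\mathbf{b}\cdot\nu_\delta$.

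For assertion {\rm (i)}, let $\varphi_*>0$ denote the principal eigenfunction of \eqref{liu0902-4}, normalised so that $\min_{\Omega_*}\varphi_*\geq 1$. Given $\epsilon>0$, I first construct a positive super-solution $\overline{\varphi}\in C(\overline{\Omega})\cap C^2(\Omega\setminus\partial\Omega_*)$ satisfying
\begin{equation*}
-\Delta\overline{\varphi}-A\mathbf{b}\cdot\nabla\overline{\varphi}+c(x)\overline{\varphi}\geq(\Lambda_{\mathbf{N}}-\epsilon)\overline{\varphi}\quad\text{in }\Omega\setminus\partial\Omega_*,
\end{equation*}
together with $(\nabla_-\overline{\varphi}-\nabla_+\overline{\varphi})\cdot\nu>0$ on $\partial\Omega_*$ and $\nabla\overline{\varphi}\cdot\mathbf{n}\geq 0$ on $\partial\Omega$, for every sufficiently large $A$. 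On $\Omega_*$ I set $\overline{\varphi}:=\varphi_*$, which solves the limiting equation exactly. Since ${\bf b}\cdot\nu_\delta<0$ on $\partial\Omega_\delta$ for every small $\delta$ and there are no limit sets of \eqref{system} in $\Omega\setminus\Omega_*$, trajectories enter $\Omega_\delta$ in finite time and are eventually absorbed by $\Omega_*$. An application of \cite[Lemma 2.3]{DEF1974} thus yields a positive $G\in C^2(\Omega\setminus\Omega_*)$ with $G=\varphi_*$ on $\partial\Omega_*$, $\mathbf{b}\cdot\nabla G<0$ in $\Omega\setminus\Omega_*$, and $\nabla G\cdot\mathbf{n}\geq 0$ on $\partial\Omega$. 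Setting $\overline{\varphi}=G$ on $\Omega\setminus\Omega_*$, the sign of $-A\mathbf{b}\cdot\nabla\overline{\varphi}$ forces the differential inequality for $A$ large, while the jump condition on $\partial\Omega_*$ is obtained from the Neumann boundary condition $\nabla\varphi_*\cdot\nu=0$ (giving $\nabla_-\overline{\varphi}\cdot\nu=0$) combined with $\mathbf{b}\cdot\nabla G<0$ and the fact that $\mathbf{b}$ aligns with $\nu$ near $\partial\Omega_*$ due to ${\bf b}\cdot\nu_\delta<0$, which forces $\nabla_+\overline{\varphi}\cdot\nu<0$. The comparison principle then yields $\liminf_{A\to\infty}\lambda(A)\geq\Lambda_{\mathbf{N}}-\epsilon$, and letting $\epsilon\to 0$ gives the lower bound. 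The matching upper bound is obtained by a symmetric sub-solution construction, with $\underline{\varphi}=\varphi_*$ on $\Omega_*$ and $\underline{\varphi}$ extended in $\Omega\setminus\Omega_*$ to be increasing along the flow toward $\partial\Omega_*$ and to vanish near $\partial\Omega$, again via \cite[Lemma 2.3]{DEF1974}, together with the reversed jump condition on $\partial\Omega_*$.

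Assertion {\rm (ii)} follows the same scheme with $\varphi_*$ replaced by the Dirichlet eigenfunction of \eqref{liu0902-5}, which vanishes on $\partial\Omega_*$. Now the sign condition ${\bf b}\cdot\nu_\delta>0$ reverses the direction of monotonicity along the flow for the extension $G$ outside $\Omega_*$. The vanishing of $\varphi_*$ on $\partial\Omega_*$ automatically matches $\overline{\varphi}$ continuously to a positive extension in $\Omega\setminus\Omega_*$, and the jump condition on $\partial\Omega_*$ is enforced by comparing the Hopf-type boundary gradients of $\varphi_*$ from the inside of $\Omega_*$ (strictly negative along $\nu$) with the gradient of the extension on the outside, whose sign is controlled by the alignment of $\mathbf{b}$ with $\nu$ via ${\bf b}\cdot\nu_\delta>0$.

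The main obstacle will be the rigorous verification of the jump condition across $\partial\Omega_*$, since $\partial\Omega_*$ is a priori only the topological boundary of a set of positive measure; the limiting problems \eqref{liu0902-4} and \eqref{liu0902-5} implicitly require $\partial\Omega_*$ to be sufficiently regular. Assuming this regularity, the local analysis near $\partial\Omega_*$ follows the pattern of Theorem \ref{theorem20220722} and Theorem \ref{theorem0825}, with the simplification that $\mathbf{b}(x)\to 0$ as $x\to\partial\Omega_*$ so that the dominant balance in a thin tubular neighbourhood of $\partial\Omega_*$ is between $\Delta$ and $c(x)$, while the $A\mathbf{b}\cdot\nabla$ term transfers the transverse flux information from the outer region to the boundary of $\Omega_*$ and accounts for the emergence of the Neumann or Dirichlet condition according to the sign of ${\bf b}\cdot\nu_\delta$.
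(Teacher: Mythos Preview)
Your overall architecture (super/sub-solutions plus comparison) matches the paper, but there is a real gap in how you handle the region near $\partial\Omega_*$, and the paper's proof uses a device you have not identified.

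You set $\overline\varphi=\varphi_*$ on $\Omega_*$ with $\varphi_*$ the \emph{Neumann} eigenfunction, and then invoke \cite[Lemma 2.3]{DEF1974} directly on $\Omega\setminus\Omega_*$. Two things fail here. First, that lemma requires orbits to leave the region in finite time, but in $\Omega\setminus\Omega_*$ orbits converge to $\partial\Omega_*$ and $|\mathbf{b}|$ is not bounded below; hence ``choose $A$ large so that $-A\mathbf{b}\cdot\nabla G$ dominates'' does not give a uniform inequality near $\partial\Omega_*$. Second, your jump argument on $\partial\Omega_*$ is circular: from $\mathbf{b}\cdot\nabla G<0$ in $\Omega\setminus\Omega_*$ together with $\mathbf{b}(x)\to 0$ as $x\to\partial\Omega_*$ you cannot conclude anything about the sign of $\nabla_+G\cdot\nu$ on $\partial\Omega_*$; the inequality $\mathbf{b}\cdot\nabla G<0$ degenerates there. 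With $\nabla_-\overline\varphi\cdot\nu=0$ from the Neumann condition, you have no margin. The same problem arises in~(ii): the Dirichlet eigenfunction vanishes on $\partial\Omega_*$, so it cannot serve as the interior piece of a \emph{positive} super-solution on $\Omega$.

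The paper's remedy is to replace $\varphi_*$ by the principal eigenfunction $\varphi_\alpha$ of the \emph{Robin} problem
\[
-\Delta\varphi+c\varphi=\Lambda_\alpha\varphi\ \text{in }\Omega_*,\qquad \nabla\varphi\cdot\nu=\alpha\varphi\ \text{on }\partial\Omega_*,
\]
with $\alpha>0$ for~(i) and $\alpha<0$ for~(ii). The point is that $\nabla\varphi_\alpha\cdot\nu=\alpha\varphi_\alpha$ has a definite sign on $\partial\Omega_*$, so $\varphi_\alpha$ extends $C^2$-smoothly across $\partial\Omega_*$ to a thin collar $\Omega_\delta\setminus\Omega_*$ with $\mathbf{b}\cdot\nabla\overline\varphi\le 0$ there (the sign of $\mathbf{b}\cdot\nu_\delta$ now matches the monotonicity of the extension), and on this collar the differential inequality holds by \emph{continuity} with the $\epsilon$-slack, not by largeness of $A$. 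The jump is placed at $\partial\Omega_\delta$, where $|\mathbf{b}|$ has a positive lower bound and \cite[Lemma 2.3]{DEF1974} applies on $\Omega\setminus\Omega_\delta$. One then lets $\alpha\to 0$ (resp.\ $\alpha\to-\infty$) and uses $\Lambda_\alpha\to\Lambda_{\mathbf N}$ (resp.\ $\Lambda_{\mathbf D}$). This Robin approximation is the missing ingredient in your plan.
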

\begin{proof}
Given any $\alpha\in\mathbb{R}$, we begin with the
following auxiliary  problem:
\begin{equation}\label{liu0831-1}
-\Delta\varphi+c(x)\varphi=\lambda\varphi \,\,\,\,\mathrm{in} \,\, \Omega_*,\qquad
  \nabla\varphi\cdot\nu=\alpha\varphi\,\,\,\, \mathrm{on}~\partial\Omega_*.
 \end{equation}
Denote by $\Lambda_{\alpha}$ its principal eigenvalue and let $\varphi_{\alpha}>0$ be the associated eigenfunction. It is well-known that $\Lambda_\alpha$ is increasing and analytical with respect to $\alpha\in\mathbb{R}$. Clearly,
\begin{equation}\label{liu0831-2}
\lim_{\alpha\to 0}\Lambda_\alpha=\Lambda_{\mathbf{N}}\quad\,\, \text{and} \quad \,\,\lim_{\alpha\to-\infty}\Lambda_\alpha=\Lambda_{\mathbf{D}}.
\end{equation}


\noindent {\bf Step 1. } We establish the assertion {\rm(i)}.  We shall prove the  lower  estimate
 $\liminf\limits_{A\to\infty}\lambda(A)\geq\Lambda_{\mathbf{N}}$,
and the  upper  estimate can be proved by the similar arguments.
By \eqref{liu0831-2} it suffices to show
\begin{equation}\label{ldn5}
 \liminf_{A\to\infty}\lambda(A)\geq\Lambda_\alpha
\end{equation}
for sufficiently small $\delta>0$ and $\alpha>0$. To this end,
given any $\epsilon>0$, we will construct a positive  super-solution $\overline \varphi$ such that for sufficiently large $A$ and small $\delta$, there holds
\begin{equation}\label{ldn6}
 \left\{\begin{array}{ll}
\medskip
-\Delta\overline{\varphi}-A\mathbf{b}\cdot\nabla\overline{\varphi}+c(x)\overline{\varphi}\geq (\Lambda_\alpha-\epsilon) \overline{\varphi} &\mathrm{in} \,\, \Omega\setminus\partial\Omega_\delta,\\
\medskip
(\nabla_+\overline{\varphi}(x)-\nabla_-\overline{\varphi}(x))\cdot \nu_\delta(x)>0 & \mathrm{on}~\partial\Omega_\delta,\\
  \nabla\overline{\varphi}\cdot {\bf n}(x)\geq0 & \mathrm{on}~\partial\Omega.  
  \end{array}\right.
 \end{equation}
  Then \eqref{ldn5} follows from the camparison principle. Such a super-solution $\overline \varphi$ will be constructed  on the following different regions.

\smallskip

 $\mathbf{(1)}$ The construction on $\Omega_*$. For $ x \in \Omega_*$, we define $\overline \varphi(x):=\varphi_\alpha(x)$ with $\alpha>0$. In view of ${\bf b}=0$ on  $\Omega_*$, by the definition of $\varphi_\alpha$ in \eqref{liu0831-1}, one can check that  function $\overline \varphi$ satisfies the first inequality in \eqref{ldn6}  on $\Omega_*$ for all $A>0$.

\smallskip

 $\mathbf{(2)}$ The construction on $\Omega_\delta\setminus\Omega_*$.
We shall define $\overline{\varphi}\in C^2(\Omega_\delta)$  such that
 \begin{equation}\label{liu0902-1}
     {\bf b}\cdot\nabla\overline{\varphi}(x)\leq 0,\quad \forall x\in\Omega_\delta\setminus\Omega_*, \quad \text{ and }\quad
  \nabla\overline{\varphi}(x)\cdot\nu_\delta(x)\geq (\alpha/2)\overline{\varphi}(x), \,\,\,\,\forall x\in\partial\Omega_\delta.
 \end{equation}
 Since $\nabla\varphi\cdot\nu=\alpha\varphi$ on $\partial\Omega_*$, by our assumption on  vector field ${\bf b}$, 
 \eqref{liu0902-1} holds for small $\delta$. Then by 
 the continuity of $\overline{\varphi}$, we can choose $\delta$ small
 such that
 \begin{align*}
    &-\Delta\overline{\varphi}-A\mathbf{b}\cdot\nabla\overline{\varphi}+\left(c(x)-\Lambda_\alpha+\epsilon\right)\overline{\varphi}\notag\\
   \geq&-\Delta\overline{\varphi}+\left(c(x)-\Lambda_\alpha\right)\overline{\varphi}+\epsilon\overline{\varphi}
   \geq 0, \quad \forall x\in \Omega_\delta\setminus\Omega_*.
\end{align*}
 This implies \eqref{ldn6} holds on $\Omega_\delta\setminus\Omega_*$.

 $\mathbf{(3)}$ The construction on $\Omega\setminus\Omega_\delta$. By assumption, there are no limit sets in $\Omega\setminus\Omega_\delta$, so that the orbits of \eqref{system}
remain in $\Omega\setminus\Omega_\delta$ only a finite time.
Hence, we can apply  \cite[Lemma 2.3]{DEF1974}  to define $\overline{\varphi}\in C(\Omega)\cap C^2(\Omega\setminus\Omega_\delta)$ satisfying 
\begin{equation}\label{liu0902-2}
{\bf b}(x)\cdot\nabla \overline{\varphi}(x)<0, \,\,\quad \forall x\in \Omega\setminus\Omega_\delta,
\end{equation}
\begin{equation}\label{liu0902-3}
 \nabla_+ \overline{\varphi}(x)\cdot \nu_\delta(x)<(\alpha/2)\overline{\varphi}(x), \,\,\forall x\in \partial \Omega_\delta, \quad \text{and}\quad \nabla \overline{\varphi}\cdot\mathbf{n}(x)\geq 0,\,\,\forall x\in\partial\Omega.
\end{equation}
Since $|{\bf b}(x)|$ has a uniform lower bound  on $\Omega\setminus\Omega_\delta$  independent of $A$, by \eqref{liu0902-2} we can choose $A$ large such that the first inequality in \eqref{ldn6} holds on $\Omega\setminus\Omega_\delta$. Finally, the boundary conditions in  \eqref{ldn6} can be verified by \eqref{liu0902-1} and \eqref{liu0902-3}.

Therefore, we  conclude \eqref{ldn5} holds and the proof of the assertion {\rm (i)} is complete.

\medskip

\noindent {\bf Step 2. } We prove the assertion {\rm(ii)}. Let $\underline{\varphi}>0$ denote the principal eigenfunction of \eqref{liu0902-5} corresponding to $\Lambda_{\bf D}$, which can be viewed as a sub-solution of \eqref{1} as ${\bf b}=0$ on $\Omega_*$. By the comparison principle, it is easily seen that $\lambda(A)\leq \Lambda_{\bf D}$ for all $A>0$. Hence, it remains to show the lower  estimate
 $\liminf\limits_{A\to\infty}\lambda(A)\geq\Lambda_{\mathbf{D}}$.  Letting $\alpha<0$ in \eqref{liu0831-1}, by \eqref{liu0831-2} it suffices to prove \eqref{ldn5} when $|\alpha|$ is sufficiently large. This can be proved by the rather similar arguments as in Step 1, and thus the details are omitted. The proof is now complete.
\end{proof}

Theorem \ref{theorem0831} establishes the asymptotic behavior of  principal eigenvalue when the degenerate region  $\Omega_*$ is a  sink or source, which is determined by  the principal eigenvalue of the operator $-\Delta+c(x)$ in $\Omega_*$  complemented by zero Neumann or Dirichlet boundary conditions. The next result is concerned with the more complicated case.

\begin{theorem}\label{theorem0920}
Suppose that ${\bf b}=0$ on $\Omega_*$ and there are no limit sets of \eqref{system} in $\Omega\setminus\Omega_*$ for some connected domain $\Omega_* \subset\Omega$ satisfying $|\Omega_*|>0$. Let $\nu$ be  the unit outward normal vector on   $\partial\Omega_*$. Assume that $\partial\Omega_*=\overline{\Gamma}_1\cup\overline{\Gamma}_2$  for two connected open sets 
$\Gamma_1$ and $\Gamma_2$ satisfying  $\Gamma_1\cap\Gamma_2=\emptyset$. If there exits some  $\delta_*>0$ such that for any $\delta\in(0,\delta_*)$, it holds ${\bf b}(x+\delta \nu (x))\cdot \nu (x)>0$ on $\Gamma_1$ and ${\bf b}(x+\delta \nu (x))\cdot \nu (x)<0$ on $\Gamma_2$, then  $\lambda(A)\to \Lambda_{\mathbf{DN}}$ as $A\to\infty$,
     where $\Lambda_{\mathbf{DN}}$ is the principal eigenvalue of the problem
    \begin{equation*}
 \left\{\begin{array}{ll}
-\Delta\varphi+c(x)\varphi=\lambda\varphi &\mathrm{in} \,\, \Omega_*,\\
 \varphi=0 & \mathrm{on}~\overline{\Gamma}_1\\
  \nabla\varphi\cdot \nu=0 & \mathrm{on}~\Gamma_2.
  \end{array}\right.
 \end{equation*}
 \end{theorem}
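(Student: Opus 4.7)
The plan is to adapt the super/sub-solution scheme of Theorem~\ref{theorem0831} by interpolating between its two cases via a mixed Robin auxiliary problem on $\Omega_*$. For $\alpha,\beta\in\mathbb{R}$, let $\Lambda_{\alpha,\beta}$ denote the principal eigenvalue of
\begin{equation*}
-\Delta\varphi+c(x)\varphi=\Lambda\varphi\ \text{in}\ \Omega_*,\quad \nabla\varphi\cdot\nu=\alpha\varphi\ \text{on}\ \Gamma_2,\quad \nabla\varphi\cdot\nu=\beta\varphi\ \text{on}\ \Gamma_1,
\end{equation*}
with positive principal eigenfunction $\varphi_{\alpha,\beta}$. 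Standard monotonicity and variational arguments give $\Lambda_{\alpha,\beta}\to\Lambda_{\mathbf{DN}}$ as $\alpha\to 0$ and $\beta\to+\infty$. It therefore suffices to establish, for every $\epsilon>0$ and suitable admissible pairs, the sandwich $\Lambda_{\alpha_1,\beta_1}-\epsilon\le\liminf_{A\to\infty}\lambda(A)$ and $\limsup_{A\to\infty}\lambda(A)\le\Lambda_{\alpha_2,\beta_2}+\epsilon$, then send the parameters to their limits.

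For the lower bound, take $\alpha_1>0$ small and $\beta_1>0$ large (so that $\Lambda_{\alpha_1,\beta_1}\downarrow\Lambda_{\mathbf{DN}}$ and $\varphi_{\alpha_1,\beta_1}$ has outward-pointing Robin slope at both $\Gamma_2$ and $\Gamma_1$). Set $\overline\varphi:=\varphi_{\alpha_1,\beta_1}$ on $\Omega_*$; the super-solution inequality holds identically since $\mathbf{b}\equiv 0$. On the tubular neighborhood $\Omega_\delta\setminus\Omega_*$, extend $\overline\varphi$ so that it continues to grow outward near $\Gamma_2$ (the hypothesis $\mathbf{b}\cdot\nu<0$ there then forces $\mathbf{b}\cdot\nabla\overline\varphi\le 0$) and decays outward near $\Gamma_1$ (the hypothesis $\mathbf{b}\cdot\nu>0$ there again forces $\mathbf{b}\cdot\nabla\overline\varphi\le 0$); by choosing the outer slopes slightly larger in magnitude than the interior Robin values, the super-solution jump condition $(\nabla_-\overline\varphi-\nabla_+\overline\varphi)\cdot\nu_\delta>0$ is automatic at $\partial\Omega_*$. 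Outside $\Omega_\delta$, where \eqref{system} has no limit points by assumption, \cite[Lemma~2.3]{DEF1974} furnishes an extension with $\mathbf{b}\cdot\nabla\overline\varphi<0$ and compatible Neumann condition on $\partial\Omega$, after which one takes $A$ large and applies the comparison principle. The upper bound is obtained dually, using $\alpha_2<0$ small in magnitude and $\beta_2>0$ large (so that $\Lambda_{\alpha_2,\beta_2}\uparrow\Lambda_{\mathbf{DN}}$) and reversing the sign conventions on the exterior extension—decay outward near $\Gamma_2$ with slope magnitude smaller than the interior, growth outward near $\Gamma_1$ with slope larger than the interior—to enforce $\mathbf{b}\cdot\nabla\underline\varphi\ge 0$ and the sub-solution jump $(\nabla_-\underline\varphi-\nabla_+\underline\varphi)\cdot\nu_\delta<0$.

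The main obstacle I anticipate is the transition set $\overline\Gamma_1\cap\overline\Gamma_2$, where $\mathbf{b}\cdot\nu$ changes sign along $\partial\Omega_*$: the prescribed direction of normal extrusion of $\varphi_{\alpha,\beta}$ must rotate across this set, so both the $C^2$ regularity of the patched super/sub-solution on $\Omega_\delta\setminus\Omega_*$ and the sharp interface jump require careful handling. I would address this by localizing via a partition of unity subordinate to neighborhoods of $\Gamma_1$ and $\Gamma_2$, reducing locally to the two constructions of Theorem~\ref{theorem0831}(i)--(ii), and smoothing across an intermediate annular zone whose contribution to the super/sub-solution inequality vanishes as $\delta\to 0$. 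A secondary technical issue is the regularity of $\varphi_{\alpha,\beta}$ at the relative boundary of $\Gamma_1$ inside $\partial\Omega_*$, which may require a mild preliminary mollification of $\varphi_{\alpha,\beta}$ before the normal extension step; this should not affect the spectral convergence $\Lambda_{\alpha,\beta}\to\Lambda_{\mathbf{DN}}$, which only needs the variational formulation of the mixed Robin problem.
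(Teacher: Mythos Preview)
Your overall strategy---a mixed-Robin auxiliary problem on $\Omega_*$, extension across $\Omega_\delta\setminus\Omega_*$ governed by the sign of $\mathbf{b}\cdot\nu$, and \cite{DEF1974} outside---matches the paper's. However, there is a genuine sign error on $\Gamma_1$ that breaks the argument. With the convention $\nabla\varphi\cdot\nu=\beta\varphi$, the variational formula $\Lambda_{\alpha,\beta}=\inf\bigl(\int_{\Omega_*}(|\nabla\phi|^2+c\phi^2)-\int_{\partial\Omega_*}\alpha(x)\phi^2\bigr)/\int_{\Omega_*}\phi^2$ shows that $\Lambda_{\alpha,\beta}$ is \emph{decreasing} in $\beta$ and approaches the Dirichlet value as $\beta\to-\infty$, not $+\infty$. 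Your choice $\beta_1>0$ gives $\Lambda_{\alpha_1,\beta_1}<\Lambda_{\mathbf{N}}<\Lambda_{\mathbf{DN}}$, and $\beta_1\to+\infty$ sends the eigenvalue to $-\infty$; the super-solution then yields only $\liminf\lambda(A)\ge\Lambda_{\alpha_1,\beta_1}$ with the right-hand side far below the target. The correct choice is $\beta_1<0$ with $|\beta_1|$ large: then $\partial_\nu\varphi_{\alpha_1,\beta_1}<0$ on $\Gamma_1$, so the eigenfunction already decays outward, the extension satisfying $\mathbf{b}\cdot\nabla\overline\varphi\le 0$ is compatible (no jump at $\partial\Omega_*$ is needed on that piece), and $\Lambda_{\alpha_1,\beta_1}\to\Lambda_{\mathbf{DN}}$ as $(\alpha_1,\beta_1)\to(0^+,-\infty)$. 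The same correction applies to your upper bound. This also dissolves your internal inconsistency: with $\beta_1>0$ the eigenfunction \emph{grows} outward at $\Gamma_1$, contradicting your ``decays outward'' extension.

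On the junction $\overline\Gamma_1\cap\overline\Gamma_2$, the paper bypasses your mollification step by building the transition directly into the auxiliary problem: it takes a \emph{continuous} Robin coefficient $\alpha(x)$ on $\partial\Omega_*$ equal to $\alpha_1<0$ on $\Gamma_1\setminus\Gamma_\varepsilon$ and $\alpha_2>0$ on $\Gamma_2\setminus\Gamma_\varepsilon$, interpolating monotonically through $0$ across an $\varepsilon$-neighbourhood $\Gamma_\varepsilon$ of the junction points, and then sends $(\alpha_1,\alpha_2,\varepsilon)\to(-\infty,0,0)$. This keeps the Robin eigenfunction uniformly smooth and avoids both the piecewise-Robin regularity issue and the partition-of-unity patching you anticipate. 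For the upper bound the paper works directly with the genuine mixed problem (Dirichlet on $\overline\Gamma_1$, Robin with parameter $\alpha<0$ on $\Gamma_2$) and extends its eigenfunction by zero past $\Gamma_1$, which is simpler than a dual Robin construction.
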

 \begin{figure}[http!!]
  \centering
\includegraphics[height=2.0in]{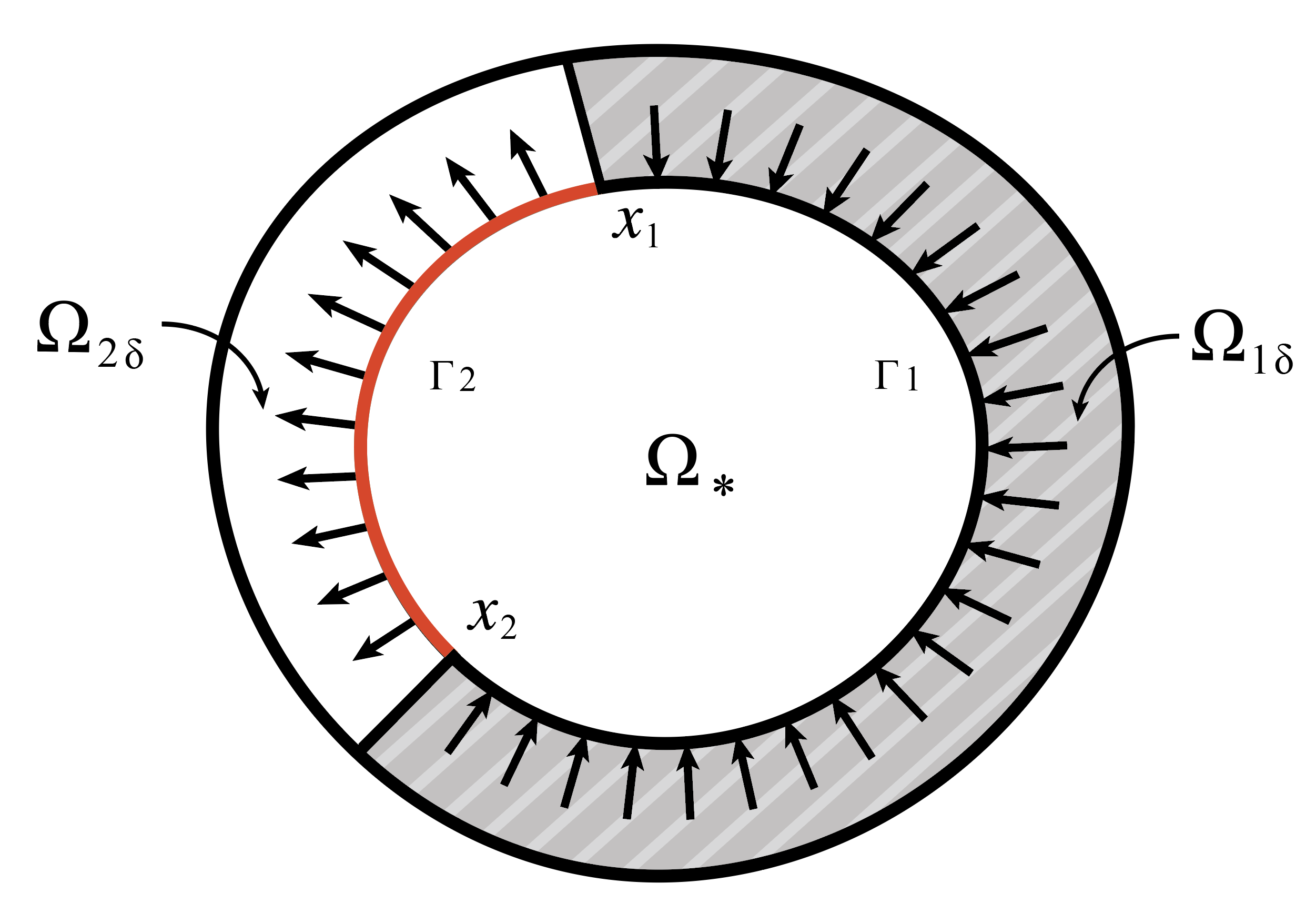}
  \caption{\small An example for the phase-portrait of the \eqref{system} near the boundary of degenerate region $\Omega_*$ as assumed in Theorem  \ref{theorem0920}.}\label{figure1-1-3}
  \end{figure}
  \begin{proof}
 The strategy of the proof follows ideas as in Theorem \ref{theorem0831}. We first prove
 \begin{equation}\label{liu0921-3}
 \liminf\limits_{A\to\infty}\lambda(A)\geq\Lambda_{\mathbf{DN}}.
\end{equation}
Denote $\overline{\Gamma}_1\cap \overline{\Gamma}_2=\{x_1,x_2\}$ with $x_1,x_2\in\partial\Omega_*$.  Given any $\varepsilon>0$, we denote by $\Gamma_{\varepsilon}$   the $\varepsilon$-neighbourhood of $\{x_1,x_2\}$ on $\partial \Omega_*$.  For each $\alpha_1<0$ and $\alpha_2>0$, we define $\alpha\in C(\partial \Omega_*)$ such that  $\alpha$ is monotone on $\Gamma_\varepsilon$, and
 \begin{equation}\label{liu0921-1}
 \alpha(x)=\left\{\begin{array}{ll}
\alpha_1, & x\in \Gamma_1\setminus \Gamma_{\varepsilon},\\
  \alpha_2, & x\in\Gamma_2\setminus \Gamma_{\varepsilon},\\
  0, &x=x_1,x_2.
  \end{array}\right.
 \end{equation}
We consider the
following auxiliary  problem:
  \begin{equation*}
 \left\{\begin{array}{ll}
-\Delta\varphi+c(x)\varphi=\lambda\varphi &\mathrm{in} \,\, \Omega_*,\\
  \nabla\varphi\cdot \nu=\alpha(x) \varphi & \mathrm{on}~\partial\Omega_*,
  \end{array}\right.
 \end{equation*}
 for which we denote $\Lambda(\alpha_1,\alpha_2,\varepsilon)$ as the principal eigenvalue and $\varphi_{\alpha,\varepsilon}>0$ as the associated  eigenfunction.  The principal eigenvalue can be variationally characterized as 
 \begin{equation}\label{liu0921-2}
     \Lambda(\alpha_1,\alpha_2,\varepsilon)=\inf_{\phi\in H^1(\Omega_*)}\left[\frac{-\int_{\partial\Omega_*}\alpha(x)\phi^2{\rm d} S+\int_{\Omega_*} (|\nabla\phi|^2+c(x)\phi^2)\mathrm{d}x}{\int_{\Omega_*} \phi^2\mathrm{d}x}\right].
 \end{equation}
Together with \eqref{liu0921-1} and \eqref{liu0921-2}, it is easily seen that
$$\lim_{(\alpha_1,\alpha_2,\varepsilon)\to (-\infty,0,0)} \Lambda(\alpha_1,\alpha_2,\varepsilon)=\Lambda_{\mathbf{DN}}.$$
Hence, it suffices to prove
\begin{equation}\label{liu0921-4}
 \liminf\limits_{A\to\infty}\lambda(A)\geq\Lambda(\alpha_1,\alpha_2,\varepsilon),
\end{equation}
for all $\alpha_1<0$, $\alpha_2>0$, and  $\varepsilon>0$. To this end, 
given any $\delta>0$, let $\Omega_\delta$ be the $\delta$-neighborhood of $\Omega_*$. Divide $\Omega_\delta$ into three disjoint connected components $\Omega_*$, $\Omega_{1\delta}$, and $\Omega_{2\delta}$ such that $\Omega_\delta=\overline{\Omega}_*\cup \overline{\Omega}_{1\delta} \cup \overline{\Omega}_{2\delta}$, 
which satisfies 
$\Gamma_2\subset \overline{\Omega}_{2\delta}$ and the omega-limit set of each point $x\in \Omega_{1\delta}$  locates on $\Gamma_1$, while the alpha-limit set of  each point $x\in \Omega_{2\delta}$ locates on $\Gamma_2$. The division is possible due to our assumption on  vector field ${\bf b}$ near $\Omega_*$. See Fig. \ref{figure1-1-3} for some illustrations. Denote $\Gamma_{\rm b}$ as the boundary of the open set $\Omega_*\cup\Omega_{1\delta}\cup\Omega_{2\delta}$, which can be depicted by $\Gamma_{\rm b}=\partial\Omega_\delta\cup \partial\Omega_*\cup(\overline{\Omega}_{1\delta}\cap\overline{\Omega}_{2\delta})$. We then denote by $\nu(x)$ the unit normal vector on $\Gamma_{\rm b}$ such that $\nu(x)$ serves as the outward normal vector of $\Omega_\delta$ on $\partial\Omega_\delta$, the outward normal vector of $\Omega_*$ on $\partial\Omega_*$, as well as the outward normal vector of $\Omega_{1\delta}$ on $\overline{\Omega}_{1\delta}\cap\overline{\Omega}_{2\delta}$.  Fix any $\alpha_1<0$, $\alpha_2>0$, and  $\varepsilon>0$. We shall construct some positive super-solution $\overline \varphi$ such that
\begin{equation}\label{liu0921-5}
 \left\{\begin{array}{ll}
\medskip
-\Delta\overline{\varphi}-A\mathbf{b}\cdot\nabla\overline{\varphi}+c(x)\overline{\varphi}\geq \Lambda(\alpha_1,\alpha_2,\varepsilon)\overline{\varphi} &\mathrm{in} \,\, \Omega\setminus\Gamma_{\rm b},\\
\medskip
(\nabla_+\overline{\varphi}(x)-\nabla_-\overline{\varphi}(x))\cdot \nu(x)>0 & \mathrm{on}~\Gamma_{\rm b},\\
  \nabla\overline{\varphi}\cdot {\bf n}(x)\geq0 & \mathrm{on}~\partial\Omega,  
  \end{array}\right.
 \end{equation}
provided that $A$ is sufficiently large.
  Then \eqref{liu0921-4} follows from the comparison principle.

  The construction of such super-solution $\overline{\varphi}$ can be performed by similar arguments in the proof of Theorem \ref{theorem0831}. More precisely,  we first define $\overline \varphi(x):=\varphi_{\alpha,\varepsilon}(x)$ for all $x\in\Omega_*$. Due to ${\bf b}=0$ in  $\Omega_*$, by the definition of $\varphi_{\alpha,\varepsilon}$, one can check  function $\overline \varphi$ satisfies the first inequality in \eqref{liu0921-5}  on $\Omega_*$ for all $A>0$. On $\Omega_\delta\setminus\Omega_*$,
we define $\overline{\varphi}\in C(\Omega_\delta) \cap C^2(\Omega_\delta\setminus\Gamma_{\rm b})$  such that
 \begin{equation}\label{liu0921-6}
 \begin{array}{cc}
 \smallskip
     {\bf b}\cdot\nabla\overline{\varphi}(x)\leq 0, & \forall x\in\Omega_\delta\setminus\Omega_*,  \\
     (\nabla_+\overline{\varphi}(x)-\nabla_-\overline{\varphi}(x))\cdot \nu(x)>0, & \forall x\in\Gamma_{\rm b}\setminus \partial\Omega_\delta.
 \end{array}
 \end{equation}
 This is possible by observing the boundary conditions satisfied by  $\varphi_{\alpha,\varepsilon}$.  As in the proof of Theorem \ref{theorem0831}, by \eqref{liu0921-6} and the continuity of $\overline{\varphi}$, we can choose $\delta$ small if necessary such that
 \eqref{liu0921-5} holds on $\Omega_\delta\setminus\Omega_*$.
  The contraction of  $\overline{\varphi}$ on $\Omega\setminus\Omega_\delta$ can be completed by applying \cite[Lemma 2.3]{DEF1974}  as before. Hence, the super-solution satisfying \eqref{liu0921-5} does exist, which implies \eqref{liu0921-4}, and thus \eqref{liu0921-3} holds true.

  It remains to  establish the upper bound estimate
 \begin{equation}\label{liu0921-8}
 \liminf\limits_{A\to\infty}\lambda(A)\leq\Lambda_{\mathbf{DN}}.
\end{equation}
For any $\alpha<0$, let $\Lambda_\alpha$ denote the principal eigenvalue of the following auxiliary  problem:
   \begin{equation}\label{liu0921-7}
 \left\{\begin{array}{ll}
-\Delta\varphi+c(x)\varphi=\lambda\varphi &\mathrm{in} \,\, \Omega_*,\\
 \varphi=0 & \mathrm{on}~\overline{\Gamma}_1\\
  \nabla\varphi\cdot \nu=\alpha\varphi & \mathrm{on}~\Gamma_2,
  \end{array}\right.
 \end{equation}
 for which $\varphi_\alpha>0$ denotes the corresponding eigenfunction.
 It is clearly that $\Lambda_\alpha\to \Lambda_{\mathbf{DN}}$ as $\alpha\to 0$.
 We shall define some nonnegative function $\underline{\varphi}\in C(\Omega_\delta)$ such that ${\rm spt} (\underline{\varphi})=\overline{\Omega}_*\cup\overline{\Omega}_{2\delta}$, and   $\underline{\varphi}=\varphi_\alpha$ on $\Omega_*$. Due to $\alpha<0$, similar to \eqref{liu0921-5} we can construct $\underline{\varphi}\in C(\Omega_\delta)$ such that
 \begin{equation*}
 \left\{\begin{array}{ll}
\medskip
-\Delta\underline{\varphi}-A\mathbf{b}\cdot\nabla\underline{\varphi}+c(x)\underline{\varphi}\leq \Lambda_\alpha\underline{\varphi} &\mathrm{in} \,\, \Omega\setminus\Gamma_{\rm b},\\
\medskip
(\nabla_+\underline{\varphi}(x)-\nabla_-\underline{\varphi}(x))\cdot \nu(x)<0 & \mathrm{on}~\Gamma_{\rm b},\\
  \nabla\underline{\varphi}\cdot {\bf n}(x)\leq0 & \mathrm{on}~\partial\Omega,  
  \end{array}\right.
 \end{equation*}
for sufficiently large $A$. Then by the comparison principle we derive $\lambda(A)\leq\Lambda_\alpha$ for large $A$.
Then \eqref{liu0921-8} follows by letting $\alpha\to0$. This completes the proof.
  \end{proof}

\begin{appendices}
\section{Proof of Proposition \ref{theorem0822}}
This section is devoted to proving Proposition \ref{theorem0822}.

\begin{proof}[Proof of Proposition {\rm\ref{theorem0822}}]
{\bf Part 1. Proof of the assertion (i)}. If $\alpha=0$, then Proposition \ref{theorem0822}{\rm (i)} is a direct consequence of Proposition \ref{theorem0820}, see also  \cite[Theorem 2.2]{BHN2005}. In this part,  we assume $\alpha\in (0,\frac{1}{2})$ and prove Proposition \ref{theorem0822}{\rm (i)} by the following two steps.

{\bf Step 1}.
We first prove the lower bound estimate
\begin{equation}\label{liu-1}
\liminf_{A\to\infty}\lambda_\alpha(A)\geq \inf_{\phi\in \mathbf{I}_\alpha}\frac{\int_{B_\alpha} (|\nabla\phi|^2+c(x)\phi^2){\rm d}x}{\int_{B_\alpha} \phi^2{\rm d}x}=:\Lambda_\alpha.
\end{equation}
 For any $\delta>0$, we set $O_{\delta}=\{x\in \mathbb{R}^2: |x-(0,-\frac{\alpha}{1-\alpha})|
    <\frac{1-2\alpha}{1-\alpha}+\delta\}$. Given any $\epsilon>0$, by the proof of Proposition \ref{theorem0820}{\rm(i)} we can construct a positive super-solution $\overline{\phi}_{\delta}\in C^2(O_\delta)$ satisfying
 \begin{equation}\label{liu-2}
 \left\{\begin{array}{ll}
\medskip
-\Delta\overline{\phi}_{\delta}-A\mathbf{b}\cdot\nabla\overline{\phi}_{\delta}+c(x)\overline{\phi}_{\delta}\geq (\Lambda_\alpha-\epsilon)\overline{\phi}_{\delta} & \mathrm{in} \,\, \,\,O_{\delta},\\
  \nabla\overline{\phi}_{\delta}\cdot \nu\geq\sqrt{\epsilon}\overline{\phi}_{\delta} &\mathrm{on}\,\,\,\,\partial  O_{\delta} 
 \end{array}\right.
 \end{equation}
for sufficiently large $A$.
In what follows, we shall construct  
$\overline{\varphi}\in C^2(\Omega)$  such that
 \begin{equation}\label{liu1004-1}
 \left\{\begin{array}{ll}
\medskip
-\Delta\overline{\varphi}+A\mathbf{b}\cdot\nabla\overline{\varphi}+c(x)\overline{\varphi}\geq (\Lambda_\alpha-2\epsilon)\overline{\varphi} & \mathrm{in} \,\, \Omega,\\
  \nabla\overline{\varphi}\cdot \mathbf{n}\geq 0 &\mathrm{on}~\partial \Omega, 
 \end{array}\right.
 \end{equation}
 provided  that $A$ is chosen large. 
Then the desired \eqref{liu-1} follows by the comparison principle. 


 To this end,  we introduce the polar coordinate $x\mapsto(r,\theta)$ such that
 $x-(0,-\frac{\alpha}{1-\alpha})=(r\cos \theta, r\sin \theta)$ with $r\in(0,\frac{1}{1-\alpha})$ and  $\theta\in(-\pi,\,\pi)$.
Set $\zeta\in C^2([0,\frac{1-2\alpha}{1-\alpha}+\delta])$ to satisfy
\begin{equation}\label{zeta_1}
    \left\{\begin{array}{cl}
    \medskip
       \zeta(r)\equiv 0,  &  r\in [0,\tfrac{1-2\alpha}{1-\alpha}+\delta/2],  \\
        \medskip
        0<\zeta(r)\leq \delta, &  r\in (\tfrac{1-2\alpha}{1-\alpha}+\delta/2,\,\, \tfrac{1-2\alpha}{1-\alpha}+\delta],\\
         |\zeta'(r)|+|\zeta''(r)|\leq \delta, & r\in (0,\tfrac{1-2\alpha}{1-\alpha}+\delta].
    \end{array}
    \right.
\end{equation}
Then we define
 \begin{equation}\label{liu-definition1}
   \overline{\varphi}(x):=\overline{\phi}_{\delta}(x)-\zeta(r) \theta, \quad\,\, \forall x\in O_{\delta}.
 \end{equation}
 Direct calculations give
  \begin{align*}
\mathbf{b}\cdot\nabla(\zeta(r) \theta)=&(-(1-\alpha)x_2-\alpha, \,\,\,\,(1-\alpha)x_1)\\ &\cdot\left[\theta\zeta'(r)\frac{x_1}{r}-\zeta(r)\frac{x_2+\frac{\alpha}{1-\alpha}}{r^2},\,\,\,\,\theta\zeta'(r)\frac{x_2+\frac{\alpha}{1-\alpha}}{r}+\zeta(r)\frac{x_1}{r^2}\right]\\
&=(1-\alpha)\zeta(r)\geq 0, \qquad \forall x\in O_\delta.
\end{align*}
In view of $\Delta r=1/r$, $\Delta \theta=0$, and  $|\nabla  r|=r$, by \eqref{zeta_1} we can calculate that
  \begin{equation*}
\begin{split}
\Delta(\zeta(r) \theta)=\zeta''(r) \theta+ \frac{\zeta'(r)}{r}\theta\leq \pi \left[1+\frac{1-2\alpha}{1-\alpha}\right]\delta, \quad \forall x\in O_\delta.
\end{split}
 \end{equation*}
 Hence, by \eqref{liu-definition1} and the definition of $\overline{\phi}_{\delta}$ in \eqref{liu-2}, we can derive
 \begin{align*}
&-\Delta\overline{\varphi}-A\mathbf{b}\cdot\nabla\overline{\varphi}+c(x)\overline{\varphi}\notag\\
\geq&-\Delta\overline{\phi}_{\delta}-A\mathbf{b}\cdot\nabla \overline{\phi}_{\delta}+c(x)\overline{\phi}_{\delta}+\Delta(\zeta(r) \theta)-c\zeta(r) \theta\\
\geq& (\Lambda_\alpha-\epsilon-{\rm C}\delta)\overline{\varphi},\quad \forall  x\in O_{\delta},\notag
 \end{align*}
 where ${\rm C}>0$ depends only upon $\alpha$, $\|c\|_\infty$, and the uniform lower bound of $\overline{\phi}_{\delta}$. Hence,  such $\overline{\varphi}$ 
 verifies the first inequality of \eqref{liu1004-1} on $O_{\delta}$  by choosing $\delta$ small if necessary.

 For any $x\in \partial  \Omega\cap O_{\delta}$, due to $|x|=1$ we calculate that
 \begin{align*}
    \nabla\overline{\varphi}\cdot \mathbf{n}&=\nabla\overline{\phi}_{\delta}\cdot \mathbf{n}+\left[\theta\zeta'(r)\frac{x_1}{r}-\zeta(r)\frac{x_2+\frac{\alpha}{1-\alpha}}{r^2},\,\,\,\,\theta\zeta'(r)\frac{x_2+\frac{\alpha}{1-\alpha}}{r}+\zeta(r)\frac{x_1}{r^2}\right]\cdot x\\
    &=\nabla\overline{\phi}_{\delta}\cdot \mathbf{n}+\theta\zeta'(r)\frac{1+\frac{\alpha}{1-\alpha} x_2}{r}-\zeta(r)\frac{\alpha x_1}{(1-\alpha)r^2}.
 \end{align*}
 Hence, by \eqref{liu-2} and the choice of $\zeta$ in \eqref{zeta_1}, we can choose $\delta$ small such that
 \begin{equation}\label{liu-9}
\nabla\overline{\varphi}\cdot \mathbf{n}\geq (\sqrt{\epsilon}/2)\overline{\varphi}, \quad \forall x\in \partial  \Omega\cap O_{\delta}.
\end{equation}

Next, we construct the super-solution $\overline{\varphi}$ on $\Omega\setminus \overline{O}_{\delta}$. Set $\zeta_*:=\zeta(\frac{1-2\alpha}{1-\alpha}+\delta)\in (0,\delta)$.
For any $x\in\partial O_{\delta}\cap \Omega$, by \eqref{liu-definition1}  we have $\overline{\varphi}(x)=\overline{\phi}_{\delta}(x)-\zeta_* \theta$, and thus
$$\partial_\theta\overline{\varphi}=\partial_\theta\overline{\phi}_{\delta}-\zeta_*={\bf b}\cdot \nabla \overline{\phi}_{\delta} -\zeta_*\in ( -2\zeta_*, -\zeta_*/2),$$
provided that $A$ is chosen 
large enough. Therefore, we can extend $\overline{\varphi}$ to $\Omega$ such that $\overline{\varphi}\in C^2(\Omega\setminus \overline{O}_{\delta})$ and the following inequalities hold:
\begin{equation}\label{liu0828-1}
 \left\{\begin{array}{lll}
\medskip
{\rm (i)} &-3\zeta_* \leq \partial_\theta \overline{\varphi}(x)\leq -\zeta_*/4, &\forall x\in \Omega\setminus \overline{O}_{\delta}, \\
\medskip
{\rm (ii)} &  \nabla\overline{\varphi}\cdot \mathbf{n}\geq 0, & \forall x\in \partial \Omega\cap(O_{2\delta}\setminus O_{\delta}), \\
{\rm (iii)} &  \partial_r \overline{\varphi}(x)\geq \frac{3\alpha(1-\alpha)\zeta_*}{(1-2\alpha)^2}, & x\in \overline{\Omega}\setminus O_{2\delta},
 \end{array}\right.
 \end{equation}
where the choice {\rm (iii)}  is possible due to  \eqref{liu-9}.

We next verify the constructed  $\overline{\varphi}$ satisfies \eqref{liu1004-1}.
Using {\rm (i)} in \eqref{liu0828-1}, we calculate that 
 \begin{equation*}
 \begin{split}
-\Delta\overline{\varphi}-A\mathbf{b}\cdot\nabla\overline{\varphi}+c(x)\overline{\varphi}
=&-\Delta\overline{\varphi}-A\partial_\theta \overline{\varphi}+c(x)\overline{\varphi}\\
\geq&-\Delta\overline{\varphi}+A\zeta_*/4+c(x)\overline{\varphi}
\geq \Lambda_\alpha\overline{\varphi},\qquad \forall x\in \Omega\setminus \overline{O}_{\delta}
 \end{split}
 \end{equation*}
 for sufficiently large $A$.
 For any $x\in \partial \Omega\setminus O_{2\delta}$, by {\rm(i)} and {\rm(iii)} in \eqref{liu0828-1} we deduce that
 \begin{align}\label{liu-11}
    \nabla\overline{\varphi}\cdot \mathbf{n}&= \left[\frac{x_1\partial_r \overline{\varphi}}{r}-\frac{(x_2+\frac{\alpha}{1-\alpha})\partial_\theta\overline{\varphi}}{r^2},\,\,\frac{(x_2+\frac{\alpha}{1-\alpha})\partial_r \overline{\varphi}}{r}+\frac{x_1\partial_\theta\overline{\varphi}}{r^2}\right]\cdot x\notag\\
    &=\frac{(1+\frac{\alpha}{1-\alpha} x_2)\partial_r \overline{\varphi}}{r}-\frac{\frac{\alpha}{1-\alpha} x_1\partial_\theta \overline{\varphi}}{r^2}\\
    &\geq \frac{(1-2\alpha)\partial_r \overline{\varphi} }{(1-\alpha)r}-\frac{3\alpha\zeta_* }{(1-\alpha)r^2}\geq 0, \quad \forall x\in \partial \Omega\setminus O_{2\delta}. \notag
 \end{align}
 This verifies \eqref{liu1004-1}, and thus
\eqref{liu-1} follows.


 {\bf Step 2}. We next establish the upper bound estimate
\begin{equation}\label{liu-1-1}
\liminf_{A\to\infty}\lambda_\alpha(A)\leq \Lambda_\alpha=\inf_{\phi\in \mathbf{I}_\alpha}\frac{\int_{B_\alpha} (|\nabla\phi|^2+c(x)\phi^2){\rm d}x}{\int_{B_\alpha} \phi^2{\rm d}x}.
\end{equation}
Let $O_\delta$ be given in Step 1. Given any $\epsilon>0$, similar to the proof of Proposition \ref{theorem0820}{\rm(i)} we can construct  $\underline{\phi}_{\delta}\in C^2(O_\delta)$ such that
 \begin{equation*}
 \left\{\begin{array}{ll}
\medskip
-\Delta\underline{\phi}_{\delta}-A\mathbf{b}\cdot\nabla\underline{\phi}_{\delta}+c(x)\underline{\phi}_{\delta}\geq (\Lambda_\alpha+\epsilon)\underline{\phi}_{\delta} & \mathrm{in} \,\, \,\,O_{\delta},\\
  \nabla\underline{\phi}_{\delta}\cdot \nu\leq -\sqrt{\epsilon}\underline{\phi}_{\delta} &\mathrm{on}\,\,\,\,\partial  O_{\delta}, 
 \end{array}\right.
 \end{equation*}
provided that $A$ is sufficiently large.
Let $\zeta\in C^2([0,1-\alpha+\delta])$ be defined in Step 1. We define
 \begin{equation}\label{liu-definition1-1}
   \underline{\varphi}(x):=\underline{\phi}_{\delta}(x)+\zeta(r) \theta, \qquad \forall x\in O_{\delta},
 \end{equation}
  and for any $x\in \Omega\setminus \overline{O}_{\delta}$,  we define $\underline{\varphi}$ such that $\underline{\varphi}\in C^2(\Omega\setminus \overline{O}_{\delta})$, $\underline{\varphi}>0$, and
\begin{equation}\label{liu-2-1}
 \left\{\begin{array}{lll}
\medskip
{\rm (i)} &\zeta_*/4 \leq \partial_\theta \underline{\varphi}(x)\leq 3\zeta_*, &\forall x\in \Omega\setminus \overline{O}_{\delta}, \\
\medskip
{\rm (ii)} &  \nabla\underline{\varphi}\cdot \mathbf{n}\leq 0, & \forall x\in \partial \Omega\cap(O_{2\delta}\setminus O_{\delta}), \\
{\rm (iii)} &  \partial_r \underline{\varphi}(x)\leq- \frac{3\alpha(1-\alpha)\zeta_*}{(1-2\alpha)^2}, & x\in \overline{\Omega}\setminus O_{2\delta}.
 \end{array}\right.
 \end{equation}
 Similar to Step 1, we can verify that the sub-solution $\underline{\varphi}$ defined by \eqref{liu-definition1-1} and \eqref{liu-2-1} satisfies
\begin{equation*}
 \left\{\begin{array}{ll}
\medskip
-\Delta\underline{\varphi}-A\mathbf{b}\cdot\nabla\underline{\varphi}+c(x)\underline{\varphi}\leq (\Lambda_\alpha+2\epsilon)\underline{\varphi} & \mathrm{in} \,\, \Omega,\\
  \nabla\underline{\varphi}\cdot \mathbf{n}\leq 0 &\mathrm{on}~\partial \Omega. 
 \end{array}\right.
 \end{equation*}
 Then the upper bound estimate follows by the comparison principle, which completes Step 2.

Therefore, Proposition \ref{theorem0822}{\rm (i)} follows from Step 1 and Step 2.
\qed


\medskip

{\bf Part 2. Proof of the assertion (ii)}.
We assume $\alpha\in [\frac{1}{2},1]$ and prove Proposition \ref{theorem0822}{\rm (ii)}.
We only prove the lower bound estimate
\begin{equation}\label{liu-06}
\liminf_{A\to\infty}\lambda_\alpha(A)\geq c(x_\alpha),
\end{equation}
and the upper bound estimate follows by a similar argument. Given any $\epsilon>0$, we will construct a positive super-solution $\overline{\varphi}\in C^2(\Omega)$ such that
\begin{equation}\label{liu-06-1}
 \left\{\begin{array}{ll}
\medskip
-\Delta\overline{\varphi}+A\mathbf{b}\cdot\nabla\overline{\varphi}+c(x)\overline{\varphi}\geq (c(x_\alpha)-\epsilon)\overline{\varphi} & \mathrm{in} \,\, \Omega,\\
  \nabla\overline{\varphi}\cdot \mathbf{n}\geq 0 &\mathrm{on}~\partial \Omega, 
 \end{array}\right.
 \end{equation}
provided $A$ is sufficiently large. Then \eqref{liu-06} follows from the comparison principle.

To this end,
we  choose $\delta>0$ small such that
  $|c(x)-c(x_\alpha)|\leq \epsilon$, $ \forall x\in O_{2\delta}$,
where $O_{2\delta}=\{x\in\mathbb{R}^2: |x-x_\alpha|<2\delta\}$.
Set
$$\underline{r}_\delta:=\min_{x\in\overline{O}_{2\delta}}|x-(0,-\tfrac{\alpha}{1-\alpha})|,\quad\overline{r}_\delta:=\max_{x\in\overline{O}_{2\delta}}|x-(0,-\tfrac{\alpha}{1-\alpha})|.$$
It is easily verified that
$|x_\alpha-(0,-\tfrac{\alpha}{1-\alpha})|=\tfrac{\sqrt{2\alpha-1}}{1-\alpha} $ and $\underline{r}_\delta<\tfrac{\sqrt{2\alpha-1}}{1-\alpha} < \overline{r}_\delta.$

 \begin{figure}[http!!]
  \centering
\includegraphics[height=2.3in]{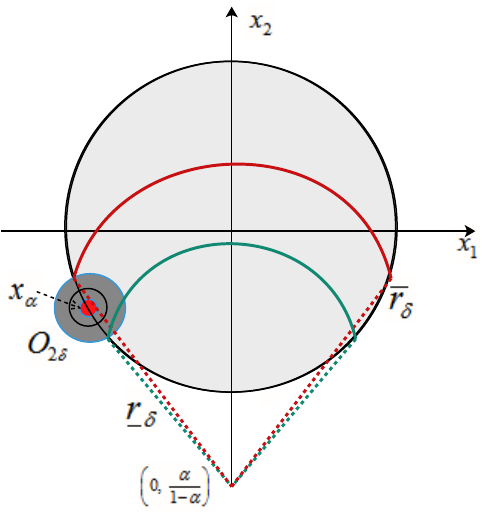}
  \caption{\small Illustrations for some notations in the proof of Proposition \ref{theorem0822}{\rm (ii)}. 
  }\label{figure1-1-1-liu}
  \end{figure}

We  first define $\overline{\varphi}\equiv 1$ in $\overline{O}_\delta\cap \overline{\Omega}$ and  introduce the polar coordinate
 $x\mapsto(r,\theta)$ such that $x-(0,-\frac{\alpha}{1-\alpha})=(r\cos \theta, r\sin \theta)$ with $r\in(0,1+\frac{\alpha}{1-\alpha})$ and  $\theta\in(0,\pi)$.
For any $r\in(0,1+\frac{\alpha}{1-\alpha})$, set $B_r:=\{x\in\mathbb{R}^2: |x-(0,-\tfrac{\alpha}{1-\alpha})|<r\}$.
We then define $\overline{\varphi}\in C^2(\Omega)$ such that
\begin{equation}\label{liu-09-1}
 \partial_r \overline{\varphi}\leq 0 \,\,\text{ on }\,\,B_{\tfrac{\sqrt{2\alpha-1}}{1-\alpha} }\cap \Omega,\quad \partial_r \overline{\varphi}\geq 0 \,\,\text{ on }\,\,\Omega\setminus \overline{B}_{\tfrac{\sqrt{2\alpha-1}}{1-\alpha}},  \quad\text{and}\quad \partial_\theta \overline{\varphi}\leq 0 \,\,\text{ in }\,\,\Omega,
\end{equation}
and moreover
\begin{equation}\label{liu-09}
  \left\{\begin{array}{ll}
           \medskip
          {\rm (i)} & \partial_r \overline{\varphi}< -\epsilon \overline{\varphi},  \quad\forall x\in B_{\underline{r}_\delta}\cap \Omega, \qquad  \partial_r \overline{\varphi}>\epsilon \overline{\varphi}, \quad \forall x\in \Omega\setminus \overline{B}_{\overline{r}_\delta}, \\
           \medskip
           {\rm (ii)} &  \partial_\theta \overline{\varphi}\leq -\epsilon_* \overline{\varphi},  \quad\forall x\in \Omega\setminus O_{2\delta}, \\
          {\rm (iii)} &   |\partial_r \overline{\varphi}|+|\partial_{rr} \overline{\varphi}|+|\partial_{\theta\theta} \overline{\varphi}|\leq \epsilon \overline{\varphi},  \quad \forall x\in  \Omega\cap O_{2\delta},\\
         \end{array}
  \right.
\end{equation}
where $\epsilon_*\in(0,\epsilon)$ will be determined later.

We now verify \eqref{liu-06-1}. On  $\Omega\cap O_{2\delta}$, by  \eqref{liu-09-1} as well as {\rm (ii)} and  {\rm (iii)} in \eqref{liu-09} we have
 \begin{equation*}
 \begin{split}
-\Delta\overline{\varphi}-A\mathbf{b}\cdot\nabla\overline{\varphi}+c(x)\overline{\varphi}
=&-\partial_{rr}\overline{\varphi}-\frac{\partial_r \overline{\varphi}}{r}-\frac{\partial_{\theta\theta} \overline{\varphi}}{r^2}-A\partial_\theta \overline{\varphi}+c(x)\overline{\varphi}\\
\geq&-\partial_{rr}\overline{\varphi}-\frac{\partial_r \overline{\varphi}}{r}-\frac{\partial_{\theta\theta} \overline{\varphi}}{r^2}+c(x)\overline{\varphi}\\
\geq&\left[c(x_\alpha) -(2+1/r+1/r^2)\epsilon\right]\overline{\varphi},\quad \forall x\in \Omega\cap O_{2\delta}.
 \end{split}
 \end{equation*}
For any $x\in \Omega\setminus O_{2\delta}$, by \eqref{liu-09} we can choose $A$ large such that
\begin{equation*}
 \begin{split}
-\Delta\overline{\varphi}+A\mathbf{b}\cdot\nabla\overline{\varphi}+c\overline{\varphi}
=&-\Delta\overline{\varphi}-A\partial_\theta \overline{\varphi}+c(x)\overline{\varphi}\\
\geq&-\Delta\overline{\varphi}+ A\epsilon_* \overline{\varphi}+c\overline{\varphi}\geq c(x_\alpha) \overline{\varphi},\quad \forall x\in \Omega\setminus O_{2\delta}.
 \end{split}
 \end{equation*}
 Hence, the first inequality in \eqref{liu-06-1} holds. It remains to prove $\nabla\overline{\varphi}\cdot \mathbf{n}\geq 0$ on $\partial \Omega$.

 For any $x\in \partial \Omega$, 
we observe from \eqref{liu-11} that
\begin{equation*}
\begin{split}
 \nabla\overline{\varphi}\cdot \mathbf{n}
    =\frac{(1+\frac{\alpha}{1-\alpha} x_2)\partial_r \overline{\varphi}}{r}-\frac{\alpha x_1\partial_\theta \overline{\varphi}}{(1-\alpha)r^2}, \quad \forall x\in\partial\Omega,
\end{split}
\end{equation*}
which together with \eqref{liu-09-1} yields $\nabla\overline{\varphi}\cdot \mathbf{n}\geq 0$ on $\partial \Omega \cap \{x\in\mathbb{R}^2: x_1>0\}$.
Moreover,  by  \eqref{liu-09}-{\rm (i)} we infer that for any $x\in (\partial \Omega\cap B_{\underline{r}_\delta})\cup (\partial \Omega\cap(\Omega\setminus \overline{B}_{\overline{r}_\delta}))$,
\begin{align*}
    \nabla\overline{\varphi}\cdot \mathbf{n}\geq 
    \frac{\delta\epsilon \overline{\varphi}}{r}-\frac{\alpha \epsilon_* \overline{\varphi}}{(1-\alpha)r^2}> 0 
 \end{align*}
by choosing $\epsilon_*$ small. This allows us to define 
$\overline{\varphi}\in C^2(\Omega)$ such that
  $\nabla\overline{\varphi}\cdot \mathbf{n}\geq 0$ on $\partial \Omega\cap O_{2\delta}$.
This verifies $\nabla\overline{\varphi}\cdot \mathbf{n}\geq 0$ on $\partial \Omega$ and the proof is complete.
\end{proof}

\section{Proof of Corollary \ref{coro1}}
This section is devoted to proving Corollary \ref{coro1}.

\begin{proof}[Proof of Corollary {\rm\ref{coro1}}]
Due to $\Omega={\rm H}^{-1}(1)$ and $\alpha<1$, by definition ${\bf b}_\alpha(x)=(-\partial_{x_2}{\rm H}, \partial_{x_1}{\rm H})-({\rm H}(x)-\alpha)\nabla {\rm H}$, it can be verified directly that assumption \eqref{assumption1} holds true.  When $\alpha\in(-\frac{1}{4},1)$, it follows from \eqref{liu1026-1} that ${\rm H}^{-1}(\alpha)$ is the unique stable connected component in the limit set of  \eqref{system} and Hypothesis \ref{assum1} is satisfied. Then Corollary \ref{coro1} is a direct consequence of Theorem \ref{mainresult}. It remains to consider the case $\alpha=-\frac{1}{4}$. 
By \eqref{liu1026-1} we find that  \eqref{system} has no periodic orbits, and $x_+=(1,0)$ and $x_-=(-1,0)$ are two stable fixed points which are not hyperbolic. Thus Theorem \ref{mainresult} is inapplicable to this case, but the proof can follow by the ideas in Theorem  \ref{liuprop1}. We only show the lower bound estimate
\begin{equation}\label{liu1026-2}
  \liminf_{A\to\infty}\lambda(A)\geq \min\{c(x_+),\,\, c(x_-)\},
\end{equation}
and the upper bound estimate is similar by referring that of Theorem  \ref{liuprop1}.

Given any $\epsilon>0$, we first choose $\delta$ small such that $d_{\mathcal{H}}(x,\{x_+,x_-\})\leq \epsilon$ for all $x\in{\rm H}^{-1}([-\frac{1}{4},-\frac{1}{4}+\delta))$ with $d_{\mathcal{H}}(\cdot,\cdot)$ being the distance between sets in the Hausdorff sense.  Set $\Gamma_\delta:={\rm H}^{-1}(-\frac{1}{4}+\delta)\cup {\rm H}^{-1}(\pm\delta)$, which serves as  the boundary of ${\rm H}^{-1}([-\frac{1}{4},-\frac{1}{4}+\delta))\cup {\rm H}^{-1}((-\delta,\delta))$ and the  outward unit normal vector is given by $\nu=\nabla{\rm H}/|\nabla{\rm H}|.$ We shall construct  a positive super-solution $\overline{\varphi}\in C(\Omega)\cap C^2(\Omega\setminus\Gamma_\delta)$ such that
\begin{equation}\label{liu1026-3}
 \left\{\begin{array}{ll}
\medskip
-\Delta\overline{\varphi}-A\mathbf{b}_{-\frac{1}{4}}\cdot\nabla\overline{\varphi}+c(x)\overline{\varphi}\geq\Big[\min\{c(x_+),\,\, c(x_-)\}-2\epsilon\Big] \overline{\varphi}&\mathrm{in} \,\, \Omega\setminus\Gamma_\delta,\\
\medskip
(\nabla_-\overline{\varphi}(x)-\nabla_+\overline{\varphi}(x))\cdot \nu(x)> 0 & \mathrm{on}~\Gamma_\delta,\\
  \nabla\overline{\varphi}\cdot \mathbf{n}\geq 0 & \mathrm{on}~\partial\Omega,  
  \end{array}\right.
 \end{equation}
provided that  $A$ is sufficiently large, where $\nabla\overline{\varphi}_\pm\cdot \nu$ is defined by \eqref{liu0621-3}. Then \eqref{liu1026-2} follows from the comparison principle and the arbitrariness of $\epsilon$.

To this end, we first define
\begin{equation}\label{liu1026-definition}
    \overline{\varphi}(x):=\frac{\epsilon }{4}{\rm H}(x)+1, \quad\forall x\in {\rm H}^{-1}([-\frac{1}{4},-\frac{1}{4}+\delta)).
\end{equation}
By definition, we observe that ${\bf b}_{-\frac{1}{4}}\cdot \nabla\overline{\varphi}=-\frac{\epsilon}{4}({\rm H}+\frac{1}{4})|\nabla {\rm{H}}|^2\leq 0$ on ${\rm H}^{-1}([-\frac{1}{4},-\frac{1}{4}+\delta))$. By \eqref{liu1026-definition}, direct calculations yield
\begin{align*}
&-\Delta\overline{\varphi}-A\mathbf{b}_{-\frac{1}{4}}\cdot\nabla\overline{\varphi}+c(x)\overline{\varphi}\\
\geq &-\Delta\overline{\varphi}+\Big[\min\{c(x_+),\,\, c(x_-)\}-\epsilon\Big] \overline{\varphi}\\
=&-\frac{3\epsilon}{4}x_1^2+\Big[\min\{c(x_+),\,\, c(x_-)\}-\epsilon\Big] \overline{\varphi}\\
\geq&\Big[\min\{c(x_+),\,\, c(x_-)\}-2\epsilon\Big] \overline{\varphi}, \quad\,\, \forall x\in{\rm H}^{-1}([-\tfrac{1}{4},-\tfrac{1}{4}+\delta)).
\end{align*}
This verifies the first equation in \eqref{liu1026-3} on ${\rm H}^{-1}([-\frac{1}{4},-\frac{1}{4}+\delta))$. Moreover, it holds that
\begin{equation}\label{liu1026-4}
  \nabla_-\overline{\varphi}\cdot \nu=\frac{\epsilon}{4}\nabla {\rm H}\cdot \frac{\nabla {\rm H}}{|\nabla {\rm H}|}=\frac{\epsilon}{4}|\nabla {\rm H}|>0, \quad \forall x\in {\rm H}^{-1}(-\tfrac{1}{4}+\delta).
\end{equation}
Notice that the limit set of \eqref{system}  restricted in $\Omega\setminus{\rm H}^{-1}([-\frac{1}{4},-\frac{1}{4}+\delta))$ contains the hyperbolic saddle only. Based on \eqref{liu1026-4}, we can use the same arguments  in the proof of Theorem  \ref{liuprop1} to construct such super-solution $\overline{\varphi}$ satisfying \eqref{liu1026-3}. This complets the proof.
\end{proof}

\end{appendices}
\bigskip
\noindent{\bf Acknowledgments.}
SL is partially supported by the NSFC grant No. 12201041, China National Postdoctoral Program for
Innovative Talents (No. BX20220377), China Postdoctoral Science Foundation (No. 2022M710391),
Beijing Institute of Technology Research Fund Program for Young Scholars, and the Shanghai Frontier Research Center of Modern Analysis.
MZ was partially supported by  National Key R\&D Program of China (2020YFA0713300) and Nankai ZhiDe Foundation.

\bigskip

\baselineskip 18pt
\renewcommand{\baselinestretch}{1.2}

\end{document}